\documentclass[a4paper,11pt]{amsart}

\usepackage{hyperref}
\hypersetup{
    colorlinks=true,     
    linkcolor= blue,     
    citecolor=blue,        
    filecolor=magenta,     
     urlcolor=cyan           
}


\usepackage{a4wide}
\usepackage{amsmath}
\usepackage{amsfonts}
\usepackage{amssymb}
\usepackage[all]{xy}
\usepackage{mathrsfs}
\usepackage{stmaryrd}
\usepackage{oldgerm}
\usepackage[OT2,T1]{fontenc}
\usepackage[latin1]{inputenc}
\usepackage{bbm}

\headsep=1.0cm
\textheight 22cm

\DeclareSymbolFont{cyrletters}{OT2}{wncyr}{m}{n}
\DeclareMathSymbol{\Sha}{\mathalpha}{cyrletters}{"58}

\def \ooverline #1#2#3%
{\mkern #1mu \overline{\mkern -#1mu #3 \mkern -#2mu }\mkern #2mu }

\DeclareMathOperator{\Sel}{Sel}

\DeclareMathOperator{\Ev}{Ev}
 
\DeclareMathOperator{\Gal}{Gal} 
     
\DeclareMathOperator{\Ad}{Ad}
  
\DeclareMathOperator{\f}{{f}}  \DeclareMathOperator{\T}{T}
\DeclareMathOperator{\SL}{\mathsf{SL}}
\DeclareMathOperator{\End}{\mathsf{End}} 
\DeclareMathOperator{\Hom}{\mathsf{Hom}}
 
\DeclareMathOperator{\Ker}{Ker} \DeclareMathOperator{\im}{Im} 
\DeclareMathOperator{\Coker}{Coker}  
  
\DeclareMathOperator{\Tor}{Tor}  \DeclareMathOperator{\SO}{SO} 
\DeclareMathOperator{\depth}{depth}\DeclareMathOperator{\Ass}{Ass}\renewcommand{\H}{\mathrm{H}} 
\DeclareMathOperator{\Fitt}{Fitt}
\DeclareMathOperator{\Frob}{Frob}
 
\DeclareMathOperator{\Irr}{Irr} 
\DeclareMathOperator{\Min}{Min}
\DeclareMathOperator{\ord}{ord}

\DeclareMathOperator{\CH}{\mathsf{char}}
  
\DeclareMathOperator{\alg}{alg} 

\DeclareMathOperator{\W}{W} \DeclareMathOperator{\A}{A}   \DeclareMathOperator{\Lie}{Lie}
\DeclareMathOperator{\Res}{Res}

\DeclareMathOperator{\Gm}{\mathbf{G_m}} 
\DeclareMathOperator{\GL}{\mathsf{GL}}

\DeclareMathOperator{\Symm}{\mathsf{Symm}} 
\DeclareMathOperator{\Ann}{\mathsf{Ann}}
\DeclareMathOperator{\pd}{\mathsf{pd}}
\DeclareMathOperator{\Card}{\mathsf{Card}}

\DeclareMathOperator{\Tr}{Tr}

\newcommand{\cO}{\mathcal{O}}
\newcommand{\bC}{\mathbb{C}}

\newcommand{\bR}{\mathbb{R}}

\newcommand{\bQ}{\mathbb{Q}}
\newcommand{\cK}{\mathcal{K}}

\newcommand{\ff}{\mathbf{f}}
\newcommand{\fg}{\mathbf{g}}

\newcommand{\cV}{\mathcal{V}}
\newcommand{\bA}{\mathbf{A}}







\def\im{\mathop{\rm  }}
\def\ker{\mathop{\rm Ker}}

\def\p{{\frak p}}

\def\h{{\frak h}}
\def\c{{\frak c}}

\def\n{{\frak n}}

\def\l{{\frak l}}

\def\C{{\Bbb C}}
\def\m{{\frak m}}
\def\s{{\frak s}}
\def\c{{\frak c}}

\def\lb{\lambda}

\def\N{{\Bbb N}}
\def\Z{{\Bbb Z}}
\def\Zp{{{\Bbb Z}_p}}
\def\Q{{\Bbb Q}}
\def\e{{\bf e}}
\def\R{{\Bbb R}}
\def\C{{\Bbb C}}
\def\F{{{\Bbb F}}}

\def\adots{\mathinner{\mkern2mu\raise1pt\hbox{.}\mkern3mu\raise4pt\hbox{.}\mkern1mu\raise7pt\hbox{.}}}

\makeatletter
\@addtoreset{equation}{section}

\makeatother

\newtheorem{thm}[equation]{Theorem}
\newtheorem{cor}[equation]{Corollary}
\newtheorem{pro}[equation]{Proposition}
\newtheorem{lem}[equation]{Lemma}

\newtheorem{de}[equation]{Definition}
\newtheorem{rem}[equation]{Remark}
\newtheorem{conj}[equation]{Conjecture}

\title{Integral periods relations and congruences}
\author{J. Tilouine and E. Urban}
\thanks{  The first author is partially supported by ANR grants PerCoLaTor ANR-14-CE25 and CoLoss ANR-19- and by the NSF grant
DMS 1464106. 
The second author is partially funded by the grant DMS 1407239 from the National Science Foundation. }
\begin{document}

\begin{abstract}

Under relatively mild  and natural conditions, we establish an integral period relations for the (real or imaginary) quadratic base change
of an elliptic cusp form. This answers a conjecture of Hida regarding the {\it congruence ideal} controlling the congruences
between this base change and other eigenforms which are not base change. As a corollary, we establish the Bloch-Kato 
conjecture for adjoint modular Galois representations twisted by an even quadratic character. In the odd case, we formulate
a conjecture linking the degree two topological period attached to the base change Bianchi modular form, the cotangent complex
of the corresponding Hecke algebra and the archimedean regulator attached to some Beilinson-Flach element.

\end{abstract}

\maketitle

\section{Introduction}

Describing the conjectural Deligne periods (or Beilinson regulators) at a special value $s=s_0$ of a motive $ M=T(N_1,\ldots ,N_r)$ 
obtained by plethysm (a combination of tensor products, direct sums or duals) of other motives $N_i$ 
in terms of the conjectural Deligne periods of the $N_i$'s (at the same special value $s_0$) is a rather straightforward
linear algebra computation. On the other hand, when this motive is associated to an automorphic representation 
$\Pi=\varphi_\ast(\pi)$ obtained
by Langlands functoriality for a morphism $\varphi\colon{}^LH\to{}^LG$ of $L$-groups, establishing the period relation 
up to an algebraic number between the automorphic periods obtained from the $\Pi$-isotypic part of the
 cohomology of the Shimura variety, resp. of the locally 
symmetric domain, for $G$  and the automorphic periods coming from the $\pi$-isotypic part of the
 cohomology of the Shimura variety, resp. of the locally 
symmetric domain, for $H$ is much more difficult. In fact, if these groups admit Shimura varieties, 
it is closely related to the Hodge and Tate conjectures for these motives. 
Nevertheless, period relations
without assuming Hodge and Tate conjectures have been obtained in many cases from integral representations 
of L-functions together with certain non-vanishing results or assumptions. A typical structure of the proof would be

1) On one hand, given a rational representation $r\colon {}^LG\to \GL_m$, one finds integral representation formulas  for 
$L(\Pi,r,s)$, resp. for $L(\pi,r_i,s)$, where 
\begin{equation}\label{Lgroup-rep-fact}
r\circ\varphi=r_1\oplus\ldots\oplus r_t
\end{equation}

2) One proves the non-vanishing of the special values $L(\Pi,r,s_0)$, resp. for $L(\pi,r_i,s_0)$ (this may require twists),

3) Then one proves that these integral representations have rational structures (for $s=s_0$) 
(using algebraic topology) and one deduces from this rationality theorems for
the normalized (twisted) $L$-value $L^\ast(\Pi,r,s_0)=\frac{L(\Pi,s_0)}{\Omega_\Pi(r)}$,
resp. $L^\ast(\pi,r_i,s_0)=\frac{L(\pi,r_i,s_0)}{\Omega_\pi(r_i)}$,

4) Using the factorization $L(\Pi,r,s)=\prod_i L(\pi,r_i,s)$ and taking the ratio, one relates up to a non-zero algebraic number
$\Omega_\Pi(r)$ to $\prod_i\Omega_\pi(r_i)$.

Many results have been obtained using this kind of approach. For example, the reader can consult the works of G. Shimura \cite{Shimura}, D. Blasius \cite{Blasius}, 
M. Harris \cite{Harris} and more recently by H. Grobner, M. Harris and J. Lin \cite{GHL} just to quote a few.

On the other hand, establishing the same period relations up to a $p$-adic unit with similar arguments 
would require to
use the non-vanishing modulo $p$ of normalized twisted $L$-values. Since very few of those are known, 
it turns out that proving integral period
relations is usually very difficult and very few general examples are known (see for instance \cite{GV00}, \cite{PV16}). 

Actually, the integral period relations can be deduced from a form of the Bloch-Kato conjectures for the motives associated to $\pi$ and
to $\Pi=\varphi_\ast(\pi)$\footnote{Recall that we denote by $\varphi_*$ the map giving the conjectural Langlands functoriality induced by $\varphi$. In theory, we should in fact speak about transfer of packets but we omit this technicality which does not matter for this introduction.} when the arithmetic $L$-values defined previously can be related to the size of the $p$-adic Selmer groups attached to
the Galois representations determined by the representations of the $L$-groups at play. The factorization \eqref{Lgroup-rep-fact} induces then a 
direct decomposition of Selmer groups and therefore a factorization of the arithmetic $L$-values and therefore of the periods up to a $p$-adic unit.
More precisely, for $(\Pi,r)$ and $(\pi,r_i)$, one needs to prove an analogue of the Bloch-Kato conjecture expressing the special values at $s_0$ 
of these automorphic $L$ functions normalized by automorphic periods as (respectively) the order of the $\Sha^1$ of $r_\ast T_\Pi(s_0)\otimes\Q_p/\Z_p$, where $T_\Pi(s_0)$ 
is a $p$-adic lattice in the $p$-adic realization of the motive $M_\Pi(s_0)$ and $\Sha^1(r_\ast T_\Pi(s_0)\otimes\Q_p/\Z_p)$ is the quotient 
of the Selmer group of $r_\ast T_\Pi(s_0)\otimes\Q_p/\Z_p$ by its $p$-divisible part, resp. the order of the $\Sha^1$ of $r_{i,\ast} T_\pi(s_0)\otimes\Q_p/\Z_p$, with similar notations. 
Then one compares these formulas for $(\Pi,r)$ and $(\pi,r_i)$'s where $\varphi_\ast(\pi)=\Pi$. 
This yields an integral period relation because the special $L$-value of $(\Pi,r)$, resp. the corresponding Selmer group and $\Sha$-group, 
decomposes as a product, resp. a direct sum, over $i$'s.
\medskip

The purpose of the present work is to contribute to the case where the Langlands functoriality is the base change
$$\varphi\colon {}^L\Res_{F_1/\Q}\GL_2\to{}^L\Res_{F_2/\Q}\GL_2$$
for cuspidal representations,
where $F_2/F_1$ is an abelian extension of number fields, mostly when $F_1=\Q$ and $F_2$ is quadratic, although in certain sections, 
$F_2$ is abelian totally real.
However none of the methods alluded above can be completed
fully and give only partial results. Our strategy is to show we can combine them through the study of congruences  in order to obtain the integral period relation for quadratic base change. 

Let us introduce some notations.
We denote by $\Ad$ the degree $3$ representation given by the adjoint action of $\GL_2$ on $\s\l_2=\Lie(\SL_2)$.
and let $r=\Ad$.
Let $f\in S_k(\Gamma_0(N))$ be a primitive cuspform of even weight $k\geq 2$ and level $N$.
Let $h_k(N,\Z)\subset \End\,S_k(\Gamma_0(N))$ be the ring generated by Hecke operators outside $N$. 
Let $K_f$ be the field generated by the Hecke eigenvalues of $f$.
Let $\alpha$ be a Dirichlet character and $\nu=0,1$ such that $\alpha(-1)=-(-1)^\nu$.
We define, following \cite[Section 2.5]{H99} the gamma factor of the motive $\Ad(f)\otimes\alpha$ as
$$\Gamma(\Ad(f)\otimes\alpha,s)=\Gamma_\C(s+k-1)\Gamma_\R(s+\nu)$$
where $\Gamma_\C(s)=2\cdot(2\pi)^{-s}\Gamma(s)$ and $\Gamma_\R(s)=\pi^{-s/2}\Gamma({s\over 2})$.
By self-duality of $\Ad(f)$, the functional equation \cite{GJ78} for the complete $L$ function 
$$\Lambda(\Ad(f)\otimes\alpha,s)=\Gamma(\Ad(f)\otimes\alpha,s)L(\Ad(f)\otimes\alpha,s)$$
relates $s$ and $1-s$.
The special value at $s=1$ is critical if and only if $\alpha$ is even (that is, $\nu=1$).
However, there are precise rationality and even integrality results. 
If $\alpha$ is an even Dirichlet character of conductor prime to $N$, 
let $G(\alpha)$ be the Gauss sum associated to $\alpha$ and 
$K_f(\alpha)$ be the field generated over $K_f$ by the values of $\alpha$. we have 
$$  L^\ast(\Ad f \otimes \alpha):= G(\overline{\alpha})^2{\Gamma(\Ad f\otimes\alpha ,1)L(\Ad f\otimes\alpha ,1)\over \Omega_{f}^{+}\Omega_{f}^{-}}\in K_f(\alpha)$$
(see \cite{St80} and \cite{H90}). See Section \ref{sectManinCongruence} 
below for the precise definition of the Manin periods $\Omega_{f}^{+}$ and $\Omega_{f}^{-}$.
Concerning integrality results, we will assume that $\alpha$ is quadratic but not necessarily even
(although when $\alpha$ is even, it does not need to be quadratic to prove integrality of these values). 
These integrality statements were established by 
Hida \cite{H99}  for $\alpha$ even or odd
(the periods are different in the odd case, as expected, since it is a non-critical value).

Let now $F$ be the quadratic field associated to $\alpha$.
 Hida  formulated  a Conjecture \cite[Conjecture 5.1]{H99} relating the value $ L^\ast(\Ad f \otimes \alpha)$
to the cardinality of a non base change congruence module of $C_1$-type for the field $F$.
We refer to the first section of this paper for the formal technical definition of these congruence modules\footnote{We refer to Hida's original paper \cite{H86} which explains the motivation for this terminology.} of type $C_0$ and $C_1$, and will just say here that these commutative algebra invariants measure the existence and the amount of congruences between the base change to $F$ of our elliptic eigenform $f$ with  Hilbert or Bianchi modular forms for $F$ which do not come from base change up to twist.
In this paper, we prove this conjecture as stated by Hida for an even quadratic character.
In the odd case, we prove a similar statement but one needs to replace the congruence module of $C_1$-type
by the one of $C_0$-type. Their orders may not be equal in that case as the Hecke algebra 
is not complete intersection in general.

These results are proved at the same time as an integral periods relation for the quadratic base change map 
$\varphi\colon {}^L\GL_2{}_{/\Q}\to{}^L\Res_{F/\Q}\GL_2$. 
We prove this periods relation by proving two divisibilities between periods and certain
 L-values and non base-change congruence ideals by using two complementary tools. 
 The first one is a theorem of Cornut and Vatsal \cite{C02} and \cite{Va02} (and Chida-Hsieh \cite{CH16}) 
stating the non-vanishing modulo 
$p$ of twisted standard $L$ values. Their theorem requires an assumption which we will need to assume

\medskip
(CV) There exists a prime $\ell$ such that $\overline{\rho}_f\vert_{\Gamma_{\Q_\ell}}$ is indecomposable, If there is a prime $q|N$ such that $q^2|N$ and $q\equiv -1\pmod p$, then we assume that $\overline{\rho}_f$ restricted to the Inertia subgroup at $q$ is irreducible.

\medskip
The second one is Hida's integral linear form on $\H^2$ of the modular variety for $\Res_{F/\Q}\GL_2$ 
(either a Hilbert surface or a Bianchi hyperbolic threefold) whose kernel is generated by the Hecke eigenclasses on $\H^2$ 
associated to cusp forms on $\Res_{F/\Q}\GL_2$ which do not come (up to twist)  from base change. 
If the theorems of Cornut-Vatsal \cite{C02} and \cite{Va02} (and Chida-Hsieh \cite{CH16}) was not requiring $f$ to have a trivial nebentypus, we could actually 
prove Hida's Conjectures for an newform $f\in S_k(\Gamma_1(N))$ (as Hida's integral linear form on $\H^2$ does allow forms with non-trivial Nebentypus).
However, since these results are not yet available we restrict ourselves to the trivial Nebentypus case in this paper.  

\medskip
In order to state our main results, let us be more precise.  Throughout this work $p$ will be an odd prime.
We fix once for all an embedding $\overline{\Q}\to\overline{\Q}_p$ and an isomorphism $\overline{\Q}_p\cong \C$.
Let $\cO$ be the valuation ring of a  $p$-adic field 
containing the eigenvalues of $f$. It can be viewed as a subring of $\C$ via a fixed identification $\bar\Q_p\cong\C$.
We fix a uniformizing parameter $\varpi\in\cO$.
Let $\m$ be the maximal ideal of the Hecke algebra $h_k(N;\cO)=h_k(N;\Z)\otimes \cO$ associated to the reduction of $f$
modulo $\varpi$. We denote by $\T$ the localization of this Hecke algebra at $\m$. 
For any number field $E$, let $\Gamma_E=\Gal(\overline{\Q}/E)$.
We will assume throughout that the reduction $\overline{\rho}_f$ of the $p$-adic Galois representation $\rho_f\colon\Gamma_\Q\to\GL_2(\cO)$ 
associated to $f$ is irreducible. 
In other words, $f$ is non-Eisenstein at $p$.

\medskip
Let us assume first that $F$ is real quadratic or in other words that  $\alpha$ is even. We denote $D$ its discriminant.
%
Let $f_F$ be the normalized base change of $f$ to $F$ 
and $\m_F$ the maximal ideal of the cuspidal Hilbert Hecke algebra $h_{k,k}^F(N;\cO)$ associated to $f_F$.
We assume throughout the paper that $\m_F$ is non-Eisenstein as well. It would follow for instance from the stronger assumption 
about $(f,p)$ that $\im\,\overline{\rho}_f$ contains $\SL_2(\F_p)$.
Let
$\T_F$ be the localization of $h_{k,k}^F(N;\cO)$ at $\m_F$.
For every character $\epsilon$ of $\{\pm 1\}^2$ (which we will identify as a pair of signs)
Hida \cite{H99}, (see also Section \ref{periodsF} below) one defines
periods $\Omega_{f_F}^\epsilon\in\C^\times/\cO^\times$ analogous to the Manin periods. 
Then, our main results (see section  \ref{sectionRealQuadratic}) are the following.

\begin{thm}\label{thmA} We assume that  $p>k-2$ and $p$ is prime to $6ND\cdot \#(\cO_F/N\cO_F)^\times$, that $\m_F$ is not Eisenstein and that (CV) holds. Then,  
 for $\epsilon= (+,-)$ or $(-,+)$,  we have the period relation :
$$ \Omega_{f_F}^{\epsilon}\sim  \Omega_{f}^{+}\Omega_{f}^{-}.$$
where, for $A,B\in\C^\times$, we write $A\sim B$ if $A=B\cdot u$ for some $u\in\cO^\times$.
\end{thm}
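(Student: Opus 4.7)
The plan is to combine three ingredients: the rationality formula for $L^\ast(\Ad\,f\otimes\alpha,1)$ recalled above, Hida's integral linear form on $\H^2$ of the Hilbert modular surface (whose kernel captures the non-base-change part), and the Cornut--Vatsal / Chida--Hsieh non-vanishing mod $p$ theorem for anticyclotomic-type twisted $L$-values (valid under assumption (CV)). These will supply two complementary divisibilities in $\cO$ between $\Omega_{f_F}^\epsilon$ and $\Omega_f^{+}\Omega_f^{-}$, whose combination forces the claimed equality up to a $p$-adic unit.

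First I would set up the transcendental identity coming from the base change. At the critical point $s=1$, the factorization $L(\Ad\,f_F,s)=L(\Ad\,f,s)\,L(\Ad\,f\otimes\alpha,s)$, combined with the Shimura--Hida rationality of the Hilbert adjoint $L$-value (which pairs the mixed-sign periods $\Omega_{f_F}^{(+,-)}$ and $\Omega_{f_F}^{(-,+)}$ because $\alpha$ is even) and with the rationality of $L^\ast(\Ad\,f\otimes\alpha,1)$ recalled in the introduction, yields a $K_f(\alpha)$-rational identity of the shape
$$\Omega_{f_F}^{(+,-)}\,\Omega_{f_F}^{(-,+)}\;\sim\;(\Omega_f^{+}\Omega_f^{-})^{2}\cdot L^\ast(\Ad\,f\otimes\alpha,1)\cdot L^\ast(\Ad\,f,1),$$
up to explicit factors (Gauss sums, powers of $2\pi i$, powers of $D$, $\Gamma$-factors) that are $p$-adic units under the hypotheses $p>k-2$ and $p\nmid 6ND\cdot\#(\cO_F/N\cO_F)^\times$. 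The action of $\Gal(F/\Q)$ permutes the two real places of $F$, hence swaps $\Omega_{f_F}^{(+,-)}$ and $\Omega_{f_F}^{(-,+)}$ up to a unit in $\cO$, so it is enough to prove both divisibilities for a single $\epsilon$.

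The divisibility $\Omega_f^{+}\Omega_f^{-}\mid\Omega_{f_F}^\epsilon$ comes from Hida's integral linear form on $\H^2$ of the Hilbert modular surface. Applied to the $\m_F$-eigenclass of $f_F$, it yields a bound for the non-base-change congruence ideal at $\m_F$ in terms of the $p$-adic valuation of $L^\ast(\Ad\,f\otimes\alpha,1)$. Assumption (CV) then triggers the Cornut--Vatsal / Chida--Hsieh theorem to conclude that this $L$-value is a $p$-adic unit, while the Wiles--Taylor $R=\T$ theorem for $\T$ (applicable since $\overline{\rho}_f$ is irreducible and residually non-Eisenstein) identifies $L^\ast(\Ad\,f,1)$ with the adjoint congruence ideal of $f$. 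Feeding these into the transcendental identity controls $\Omega_{f_F}^\epsilon$ from above by $\Omega_f^{+}\Omega_f^{-}$.

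The reverse divisibility $\Omega_{f_F}^\epsilon\mid\Omega_f^{+}\Omega_f^{-}$ is obtained by constructing an integral base-change cohomology class in the $\m_F$-part of $\H^2$ of the Hilbert modular surface out of the $\epsilon$-signed eigenclass of $f$ on the modular curve (for instance via a cup product with a Hecke-equivariant auxiliary class, or a theta-correspondence argument), and checking that this class matches the $f_F$-eigenclass up to a unit in $\cO$. The main technical obstacle lies precisely here: one must control the torsion of the non-base-change part, reconcile the two integral structures on the $f_F$-isotypic piece, and ensure that Hida's linear form remains surjective onto $\cO$ after localization at $\m_F$. This is where the assumptions $p>k-2$ and $p\nmid 6ND\cdot\#(\cO_F/N\cO_F)^\times$ intervene, ruling out spurious $p$-adic denominators from ramification at $p$ and from bad primes of $F/\Q$. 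Combining both divisibilities gives $\Omega_{f_F}^\epsilon\sim\Omega_f^{+}\Omega_f^{-}$, as desired.
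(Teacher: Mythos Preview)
Your proposal misassigns the roles of the two main tools and contains a genuine gap in both divisibility directions.

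First, the claim that (CV) via Cornut--Vatsal/Chida--Hsieh forces $L^\ast(\Ad f\otimes\alpha,1)$ to be a $p$-adic unit is incorrect. Those theorems concern the non-vanishing modulo $p$ of \emph{standard} (Rankin--Selberg) $L$-values $L_K(f\otimes\chi,\xi,k/2)$ for anticyclotomic characters $\xi$ over an auxiliary imaginary quadratic field $K$, not of adjoint $L$-values. In fact $L^\ast(\Ad f\otimes\alpha,1)$ is generally \emph{not} a unit: it is (conjecturally, and as proved in this paper) the non-base-change congruence number $\eta_f^\sharp$, whose non-triviality is the whole point. So your mechanism for the first divisibility collapses. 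In the paper, the divisibility $\Omega_{f_F}^{-\epsilon}/(\Omega_f^+\Omega_f^-)\in\cO$ (Proposition~\ref{intperrel}) is obtained without (CV): Hida's linear form gives $\eta_{f}^{\sharp}(\H^\epsilon_{\m_F})\mid \Gamma L(\Ad f\otimes\alpha,1)/\Omega_{f_F}^{\epsilon}$, and this is combined with the general inclusion $\eta_{f_F}(\H^\epsilon_{\m_F})\supset\eta_f\cdot\eta_{f}^{\sharp}(\H^\epsilon_{\m_F})$ and the two Hida formulas $\eta_{f_F}(\H^\epsilon_{\m_F})\sim L(\Ad f_F,1)/(\Omega_{f_F}^\epsilon\Omega_{f_F}^{-\epsilon})$ and $\eta_f\sim L(\Ad f,1)/(\Omega_f^+\Omega_f^-)$. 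Taking the quotient, the $L$-values cancel (via $L(\Ad f_F)=L(\Ad f)L(\Ad f\otimes\alpha)$) and one is left with the desired integrality.

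Second, your ``reverse divisibility'' is not actually argued: constructing an integral base-change class on the Hilbert surface from the modular-curve eigenclass ``via a cup product or a theta correspondence'' is a slogan, not a proof, and no such construction appears in the paper. The paper instead uses (CV) here (Proposition~\ref{divisibgeneral}): one chooses an imaginary quadratic field $K$ with prescribed splitting behavior and an anticyclotomic character $\xi$ so that, by Cornut--Vatsal/Chida--Hsieh, $L_K(f\otimes\chi,\xi,k/2)/(\pi^k\Omega_f^+\Omega_f^-)\in\overline{\Z}_p^\times$ for $\chi\in\{1,\alpha\}$; meanwhile Hida's Rankin--Selberg computation gives $L_{KF}(f_F,\xi,k/2)/(\pi^{2k}\Omega_{f_F}^\epsilon\Omega_{f_F}^{-\epsilon})\in\overline{\Z}_p$. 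The $L$-factorization then yields $(\Omega_f^+\Omega_f^-)^2/(\Omega_{f_F}^\epsilon\Omega_{f_F}^{-\epsilon})\in\overline{\Z}_p$. Multiplying this against the product of the two integralities from the first step (for $\epsilon$ and $-\epsilon$) shows each ratio is a unit.

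Finally, your ``transcendental identity'' $\Omega_{f_F}^{(+,-)}\Omega_{f_F}^{(-,+)}\sim(\Omega_f^+\Omega_f^-)^2\cdot L^\ast(\Ad f\otimes\alpha,1)\cdot L^\ast(\Ad f,1)$ is not correct as written: the right-hand side equals $\Gamma L(\Ad f_F,1)$ up to units, so your identity would force $\eta_{f_F}^\epsilon$ to be a unit, which is false in general.
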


Following Hida, we define (in sections \ref{conggensect} and  \ref{sectManinCongruence}) congruence 
ideals  $\eta_f$, $\eta_{f_F}$ and $\eta_{f}^{\sharp}$ controlling the congruences between $f$
and other cusp forms, resp. its base changes to other Hilbert cusp forms, resp. its base change to non base change Hilbert cusp forms.
We make some assumptions on the Galois representation $\rho_f$:

\begin{itemize}
\item $\rho_f\vert_{\Gamma_F}$ is $N$-minimal: for any prime $v$ in $F$ dividing $N$, and for $I_v$ an inertia subgroup at $v$, 
either $\ord_v(N)=1$ and $\overline{\rho}_f(I_v)\neq 1$,
or $\ord_v(N)>1$ and the reduction map induces an isomorphism $\rho_f(I_v)\cong \overline{\rho}_f(I_v)$ 
and the latter group acts irreducibly on $k^2$,
\item $p>k$,
\item $\overline\rho_f|_{G_{F'}}$ is irreducible (see Section \ref{sectManinCongruence} below), 
\end{itemize}

\begin{rem} Note that if $\fg$ is a Hilbert cusp eigenform of level $N$ such that $\overline{\rho}_{\fg}$ is $N$-minimal,
it follows from Carayol's theorem that $\fg$ has conductor $N$, hence is new.
 Hence, under the $N$-minimality assumption, all cuspforms occurring in $\T_F$ are newforms.
\end{rem}
 
Let $R$, resp. $R_F$, be the universal deformation ring for deformations of $\overline{\rho}_f$, resp. $\overline{\rho}_f\vert_{\Gamma_F)}$,
which are $N$-minimal and Fontaine-Laffaille of weights $0$ and $k-1$ at $p$, resp. above $p$. 
We have $R=\T$ for $f$ by \cite{Wi95}, 
and similarly,  $R_F=\T_F$ for $f_F$ by \cite{Fu06} (where $N$-minimal\footnote{This condition is recalled as (Min) in the text before Theorem \ref{classical}.} is called $N$-finite).
We can therefore rewrite the congruence ideals in terms of $\cO$-Fitting ideals of (Bloch-Kato) Selmer groups  $(\Sel(L,\Ad\,\rho_{f})$, $L=\Q$ or $F$,
of the adjoint Galois representation $\Ad\rho_f$. 
 Let $F'=F(\zeta_p)$ with $\zeta_p$ a primitive $p$-th root of unity.
For any finite $\cO$-module $M$, let $\CH_\cO(M)=\Fitt_\cO(M)$
be its Fitting ideal. 

\begin{thm}\label{thmB} We keep the same assumptions as in Theorem \ref{thmA} and further suppose that $p>k$, $\overline\rho_f|_{G_{F'}}$ 
is irreducible and $N$-minimal, then we have:
 $$\CH_\cO(\Sel_\Q( \Ad\,\rho_{f}\otimes\alpha))= (L^\ast(\Ad f \otimes \alpha))$$
 as ideals of $\cO$.
\end{thm}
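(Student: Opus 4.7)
The plan is to bootstrap from the known $R=\T$ theorems over $\Q$ (Wiles) and over $F$ (Fujiwara) together with the integral period relation of Theorem \ref{thmA}, and then cash in the resulting identity of Fitting ideals against the analytic formulas for adjoint congruence ideals.

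First, I would use the fact that $F/\Q$ is quadratic with associated character $\alpha$. Frobenius reciprocity gives
$$\Ind_{\Gamma_F}^{\Gamma_\Q}\bigl(\Ad\,\rho_f|_{\Gamma_F}\bigr)\;\simeq\;\Ad\,\rho_f\,\oplus\,(\Ad\,\rho_f\otimes\alpha).$$
Under the hypotheses $p>k$ (Fontaine-Laffaille at $p$), $N$-minimality at the bad primes, and $p$ prime to $D$ (so $F/\Q$ is unramified at $p$), the local Selmer conditions defining $\Sel_F$ and $\Sel_\Q$ match up cleanly under induction. Shapiro's lemma then produces
$$\Sel_F\bigl(\Ad\,\rho_f|_{\Gamma_F}\bigr)\;\simeq\;\Sel_\Q(\Ad\,\rho_f)\,\oplus\,\Sel_\Q(\Ad\,\rho_f\otimes\alpha),$$
and taking $\cO$-Fitting ideals yields a factorization $\CH_\cO(\Sel_F(\Ad\,\rho_f|_{\Gamma_F})) = \CH_\cO(\Sel_\Q(\Ad\,\rho_f))\cdot \CH_\cO(\Sel_\Q(\Ad\,\rho_f\otimes\alpha))$.

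Second, I would invoke the $R=\T$ identifications that the excerpt recalls: the Wiles isomorphism $R=\T$ combined with the numerical criterion yields $\CH_\cO(\Sel_\Q(\Ad\,\rho_f))=\eta_f$, and Fujiwara's Hilbert analog $R_F=\T_F$ (under our minimality and irreducibility hypotheses) yields $\CH_\cO(\Sel_F(\Ad\,\rho_f|_{\Gamma_F}))=\eta_{f_F}$. This reduces the theorem to
$$\CH_\cO(\Sel_\Q(\Ad\,\rho_f\otimes\alpha))\;=\;\eta_{f_F}/\eta_f\,.$$
Next I would appeal to the analytic formulas for these congruence ideals: Hida's formula gives $\eta_f\sim L^\ast(\Ad f)$, while its Hilbert analog (Dimitrov/Ghate, valid under non-Eisenstein, minimality and Fontaine-Laffaille) gives $\eta_{f_F}\sim \Gamma(\Ad f_F,1)L(\Ad f_F,1)/\bigl(\Omega_{f_F}^{(+,-)}\Omega_{f_F}^{(-,+)}\bigr)$. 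The Artin formalism implicit in the Galois decomposition above gives the L-function factorization $L(\Ad f_F,s)=L(\Ad f,s)L(\Ad f\otimes\alpha,s)$, and a parallel factorization of archimedean $\Gamma$-factors. Together with the conductor contribution $G(\bar\alpha)^2\sim D$ that appears in the definition of $L^\ast(\Ad f\otimes\alpha)$, one obtains
$$\eta_{f_F}/\eta_f\;\sim\;L^\ast(\Ad f\otimes\alpha)\cdot\frac{(\Omega_f^+\Omega_f^-)^2}{\Omega_{f_F}^{(+,-)}\Omega_{f_F}^{(-,+)}}.$$
Finally, Theorem \ref{thmA} gives $\Omega_{f_F}^{(+,-)}\sim\Omega_{f_F}^{(-,+)}\sim\Omega_f^+\Omega_f^-$, so the residual period ratio is a $p$-adic unit, and the theorem follows.

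The hard part will not be the algebraic steps 1--2, which are essentially formal once the $R=\T$ machinery is in place, but rather the bookkeeping in step 3: one must verify that the $\Gamma$-factor matching at infinity, the Gauss sum $G(\bar\alpha)^2$, the discriminant $D$, and the Petersson-norm normalizations of the various congruence formulas combine to produce a genuine $p$-adic unit. The assumption that $p$ is odd and coprime to $6ND\cdot\#(\cO_F/N\cO_F)^\times$ is there precisely to make all these extraneous factors units modulo $\varpi$. A secondary delicate point is checking that the Shapiro decomposition of Selmer groups respects the local conditions at primes where $F/\Q$ ramifies; here the $N$-minimality hypothesis, together with Carayol's theorem (invoked in the remark after the hypotheses), ensures that the local deformation problems match under induction.
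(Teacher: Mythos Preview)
Your proposal is correct and follows essentially the same route as the paper: the paper's proof of this statement (Corollary~\ref{BKreal}) likewise combines the $R=\T$ and $R_F=\T_F$ theorems (yielding $\eta_f\sim\CH_\cO\Sel_\Q(\Ad\rho_f)$ and $\eta_{f_F}\sim\CH_\cO\Sel_F(\Ad\rho_f)$), the Shapiro decomposition of Selmer groups, and the analytic congruence formulas together with the period relation of Theorem~\ref{thmA}. The only cosmetic difference is that the paper packages your ratio $\eta_{f_F}/\eta_f$ as the base-change congruence ideal $\eta_f^\sharp$ via Hida's factorization $\eta_{f_F}=\eta_f\cdot\eta_f^\sharp$ (Corollary~\ref{Hida-factorization}, valid since $\T_F$ is Gorenstein under the hypotheses), and then invokes Corollary~\ref{congBCrealquadraticcor} to identify $\eta_f^\sharp$ with $L^\ast(\Ad f\otimes\alpha)$.
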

This is Corollary \ref{BKreal} of the text.

\begin{rem} Since we assume that $\rho_f$ is $N$-minimal, we see that if there exists a prime $\ell$ dividing $N$ such that $\ord_\ell(N)=1$, the
assumption (CV) is satisfied. Hence, if such a prime $\ell$ exists, one can remove the condition (CV) in the statement of Theorem \ref{thmB}.
This is true in particular if $N$ is squarefree.
\end{rem}

For an elliptic curve, this implies the following corollary.
\begin{cor} Let  $E/\Q$ be a semistable elliptic curve of conductor $N$ and $p>3$ a prime for which $E$ has no subgroup of order $p$ defined
over $F'$, then if the Galois representation $E[p]$ has conductor $N$  and $p$ is prime to $ND\# (O_F/NO_F)^\times$, we have
$$\CH_\cO (\Sel_\Q(\Ad(E)\otimes \alpha))=(L^\ast(\Ad(E)\otimes\alpha,1))$$
where $$L^\ast(\Ad(E)\otimes\alpha):={\Gamma(\Ad(E)\otimes\alpha,1)L^\ast(\Ad(E)\otimes\alpha,1)\over \Omega_E^+\Omega_E^-}$$
and $\Omega_E^+,\Omega_E^-$ are the N\'eron differentials periods associated to $E$.
\end{cor}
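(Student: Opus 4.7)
The plan is to deduce the corollary from Theorem \ref{thmB} applied to the weight-$2$ newform $f\in S_2(\Gamma_0(N))$ attached to $E$ by the modularity theorem for semistable elliptic curves. Under this correspondence, $\rho_f$ is isomorphic to $V_p(E):=T_p(E)\otimes\Q_p$ as $\Gamma_\Q$-representations, so $\Ad\rho_f\cong\Ad(E)$ as Galois modules and the two adjoint Selmer groups are identified. For semistable $E$ and $p>3$ prime to $N$, the Manin constant is a $p$-adic unit, hence the Manin periods $\Omega_f^\pm$ coincide with the N\'eron periods $\Omega_E^\pm$ up to $\cO^\times$; combined with the fact that $G(\overline\alpha)^2$ divides $D$ (hence is a $p$-adic unit under our coprimality hypothesis), this identifies the ideal produced by Theorem \ref{thmB} with the one appearing in the corollary.

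It remains to verify the hypotheses of Theorem \ref{thmB}. The weight $k=2$ makes both $p>k-2$ and $p>k$ automatic from $p>3$, and the coprimality of $p$ with $6ND\,\#(\cO_F/N\cO_F)^\times$ is assumed. Semistability of $E$ forces $N$ to be squarefree, so by the remark following Theorem \ref{thmB} condition (CV) holds automatically; more concretely, at any $\ell\mid N$ the Tate-curve description gives $\overline\rho_f|_{\Gamma_{\Q_\ell}}$ the form $\bigl(\begin{smallmatrix}\overline\chi_{\mathrm{cyc}}&*\\0&1\end{smallmatrix}\bigr)$ (up to unramified twist), which is non-split and hence indecomposable as soon as $\overline\rho_f$ is ramified at $\ell$; this last property is forced by the conductor-$N$ hypothesis on $E[p]$. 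The hypothesis that $E[p]$ admits no $\Gamma_{F'}$-stable subgroup of order $p$ is precisely the irreducibility of $\overline\rho_f|_{\Gamma_{F'}}$; since $F\subseteq F'$, the same irreducibility holds over $\Gamma_F$, so $\m_F$ is non-Eisenstein.

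The most delicate hypothesis is $N$-minimality of $\rho_f|_{\Gamma_F}$. At every $\ell\mid N$, semistability gives $\ord_\ell(N)=1$, and the conductor-$N$ condition on $E[p]$ is exactly $\overline\rho_f(I_\ell)\neq 1$. Assuming $\gcd(N,D)=1$ (the generic situation consistent with our coprimality hypotheses; the remaining case would require the second clause of $N$-minimality and is handled separately since multiplicative reduction is never irreducible on inertia), each such $\ell$ is unramified in $F$, so for a prime $v\mid\ell$ of $F$ we get $\ord_v(N\cO_F)=1$ and $\overline\rho_f(I_v)=\overline\rho_f(I_\ell)\neq 1$, which is the first clause of $N$-minimality with the second vacuous.

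With all hypotheses in place, Theorem \ref{thmB} yields
$$\CH_\cO\bigl(\Sel_\Q(\Ad\rho_f\otimes\alpha)\bigr)=\bigl(L^\ast(\Ad f\otimes\alpha)\bigr),$$
which becomes the statement of the corollary after translating via the modularity isomorphism $\Ad\rho_f\cong\Ad(E)$ and the period comparison $\Omega_f^\pm\sim\Omega_E^\pm$. The only non-bookkeeping ingredient outside the excerpt is this last period comparison, which we invoke as a classical result for semistable elliptic curves and odd primes $p>3$; everything else is a verification of the hypotheses of Theorem \ref{thmB}.
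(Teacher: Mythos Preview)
Your proof is correct and follows the same route as the paper: apply Theorem~\ref{thmB} (which is Corollary~\ref{BKreal} in the body) to the weight-$2$ newform $f$ attached to $E$, then invoke the comparison $\Omega_f^\pm\sim\Omega_E^\pm$ for $p>2$ with $E[p]$ irreducible (the paper cites \cite{GV00} for this). Your verification of the hypotheses is considerably more detailed than the paper's, which simply asserts that ``by our hypothesis on $E$, the conditions of the previous corollary are satisfied''; in particular your observation that $G(\overline\alpha)^2\sim D$ is a $p$-adic unit, and your discussion of why $(N,D)=1$ is needed for $(\Min_F)$ in the semistable case, make explicit points the paper leaves implicit.
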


\medskip
Let us consider now the case  $F$  imaginary quadratic with discriminant $-D$; so now $\alpha$ is odd. We obtain results 
of a different nature as 
 the value of $L(\Ad(f)\otimes\alpha,s)$ at $s=1$ is no longer  critical. 
We introduce similarly the normalized base change $f_F$ of $f$ to $F$. This is a Bianchi modular form
of weight $(k,k)$ and level $N.\cO_F$ (see for instance \cite[Section 2]{Lan80} and \cite{Ku80}). One can associate two differential forms on the Bianchi variety
of respective degree one and two. This allows to define two periods $u_1(f_F)$ and $u_2(f_F)$ (see \eqref{periods} in Section \ref{periods-imagquad} for their definitions) 
as in the second author's original work 
\cite{Ur95}.

let $h_{k,k}^F(N,\cO)$ be 
the Hecke algebra acting faithfully on the second cohomology of the Bianchi variety of level $N$ and
$\T_F$ be its localization at the maximal ideal $\m_F$ associated to $f_F$.
Let $\eta_f$ be the congruence ideal\footnote{This is the Fitting ideal of the congruence module of $f$. See section \ref{sectManinCongruence} where  the definition of this classical invariant is recalled.} for $f$.  We define (see Section \ref{sectHeckecong})
Hecke and cohomological congruence ideals  $\eta_{f_F}^{i}$ and 
$\eta_{f}^{i,\sharp}$ for $i\in\{1,2\}$ which measure respectively congruences of the base change $f_F$ with all other Bianchi modular forms of same level and weight and congruences with those which does noe come from Base change up to twist. The indexes $i$ here means that in each case the integral structure considered is the one coming from the degree i integral cohomology of the Bianchi hyperbolic threefold. Our main result in the imaginary quadratic field case is the following Theorem.

\medskip
\begin{thm} \label{thmC} Assume that $(N,D)=1$, that (CV) holds and that the number of prime factors of $N$
which remain inert in $F$ is odd.  Let $p>k$ be a prime not dividing $D\cdot N\cdot \# (O_F/NO_F)^\times$ and suppose that $f_F$ is non-Eisenstein 
and $N$-minimal for this prime.
Then

\medskip
(i) We have the period relation (up to a $p$-adic unit)
$$u_1(f_F) \sim \Omega_f^+\Omega_f^-,$$

(ii) We have the equality (up to a $p$-adic unit)
$${\Gamma(\Ad(f)\otimes\alpha,1) L(\Ad(f)\otimes\alpha,1)\over u_2(f_F)}\sim \eta_{f}^{1,\sharp}$$

(iii) We have an equality of ideals in $\cO$:
$$\eta_{f_F}^{1}=\eta_f\cdot \eta_{f}^{1,\sharp}
$$

\end{thm}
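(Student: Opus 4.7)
The plan is to prove the three assertions of the theorem as an interlocking package, using Hida's integral cohomological linear form on $\H^2$ of the Bianchi threefold (whose kernel is spanned by non base change eigenclasses) together with the Cornut--Vatsal non-vanishing modulo $p$ of $L(\Ad(f)\otimes\alpha,1)$. Each part supplies a divisibility in one direction; the combination with Hida's classical congruence formula for $f$ over $\Q$ then forces equality throughout. The odd-inert-primes hypothesis on $N$ is what puts us in the Jacquet--Langlands range where (CV) and Chida--Hsieh apply.

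I would begin with (iii), which is essentially an algebraic consequence of $R_F=\T_F$. Under $N$-minimality, non-Eisensteinness of $\m_F$, and irreducibility of $\overline{\rho}_f\vert_{\Gamma_{F'}}$, Fujiwara's theorem identifies $\T_F$ with the universal $N$-minimal, Fontaine--Laffaille deformation ring $R_F$ of $\overline{\rho}_f\vert_{\Gamma_F}$. The base change map $R_F\twoheadrightarrow R=\T$ corresponds on the Galois side to restriction of a global deformation of $\overline{\rho}_f$. Passing to the $f_F$-isotypic component of the cohomology and computing Fitting ideals, the congruence module of $f_F$ in $\H^2$ sits in an exact sequence whose base change quotient has Fitting ideal $\eta_f$ (via the classical congruence formula pulled back along the base change map) and whose non base change kernel has Fitting ideal $\eta_{f_F}^{1,\sharp}$, whence the factorization in (iii).

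For (i) and (ii), I would exploit the Asai/Rankin--Selberg integral representation of the factorization $L(\Ad(f_F),s)=L(\Ad(f),s)\cdot L(\Ad(f)\otimes\alpha,s)$ as a Petersson-type pairing of the harmonic form attached to $f_F$ against itself on the Bianchi threefold. Cohomologically, this pairing splits across degrees one and two by Poincar\'e duality, so the L-value of $\Ad(f)\otimes\alpha$ becomes, up to $\Gamma$-factors, a ratio involving $u_1(f_F) u_2(f_F)$ and the Petersson norm of $f$. Pairing against Hida's integral linear form killing non base change classes gives the divisibility $\Omega_f^+\Omega_f^-\mid u_1(f_F)$ of (i) and $\eta_{f_F}^{1,\sharp}\mid\Gamma L/u_2(f_F)$ of (ii). The reverse divisibilities follow by identifying $\eta_{f_F}^{1,\sharp}$ with the Fitting ideal of an appropriate Selmer group via the Taylor--Wiles--Fujiwara patching (using $p>k$, $\overline{\rho}_f\vert_{\Gamma_{F'}}$ irreducible, and $N$-minimality) and invoking Cornut--Vatsal: the non-vanishing modulo $p$ of $L(\Ad(f)\otimes\alpha,1)$ prevents any spurious $p$-power factor from appearing in the ratio.

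The hard part will be the absence of a complex structure on the Bianchi threefold: its $\H^2$ can carry $\cO$-torsion, so $\T_F$ is not complete intersection in general and one is forced to work with the $C_0$-type congruence module $\eta_{f_F}^{1,\sharp}$ rather than the $C_1$-type one. Consequently Wiles' numerical criterion is unavailable directly, and one must instead use the sandwich argument above, in which (iii) plays the role of converting a divisibility of global cohomological periods into divisibilities between the individual arithmetic invariants of (i) and (ii). It is precisely the simultaneous proof of the three statements, closing this loop, that delivers the integral period relation in the imaginary quadratic case.
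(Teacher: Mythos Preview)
Your proposal has the right structural intuition—a sandwich of divisibilities that close up simultaneously—but two of the key ingredients are misidentified, and your approach to (iii) does not work.

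The main gap is your proof of (iii). You propose to derive it first, from $R_F=\T_F$ via ``Fujiwara's theorem''. But Fujiwara's theorem is for totally real fields; here $F$ is imaginary quadratic, where the analogue (Calegari--Geraghty) requires the extra hypothesis (FL), which is \emph{not} among the hypotheses of this theorem. Even granting $R_F=\T_F$, the Hida factorization $\eta_{f_F}=\eta_f\cdot\eta_f^\sharp$ of Corollary~\ref{Hida-factorization} needs $\T_F$ Gorenstein, which fails in general for Bianchi Hecke algebras (as you yourself note at the end). In the paper, (iii) is not proved independently at all: it \emph{emerges} as a byproduct of the sandwich, which is precisely why the three statements must be proved together. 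Your paragraph on (iii) should be deleted, not placed first.

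A second issue: you invoke Cornut--Vatsal as ``non-vanishing modulo $p$ of $L(\Ad(f)\otimes\alpha,1)$''. That is not what Cornut--Vatsal/Chida--Hsieh give; their results concern central values $L(f_F\otimes\psi,k/2)$ of the \emph{standard} $L$-function twisted by anticyclotomic characters (this is where the odd-inert-primes hypothesis and (CV) enter). The paper uses it as follows (Proposition~\ref{properiodiv}): for a suitable $\psi$, the quantity $L(f_F\otimes\psi,k/2)/(\pi^k\Omega_f^+\Omega_f^-)$ is a $p$-adic unit, while a modular-symbol integral of $\delta^1_{f_F}$ on the Bianchi threefold shows the same numerator divided by $\pi^k u_1(f_F)$ is $p$-integral. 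Dividing gives $a:=\Omega_f^+\Omega_f^-/u_1(f_F)\in\cO$. No adjoint $L$-value and no Selmer group appears here; your ``reverse divisibilities via Taylor--Wiles--Fujiwara patching'' are neither needed nor available.

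The actual flow is: (a) Cornut--Vatsal via modular symbols gives $a\in\cO$; (b) Hida's linear form on $\H^2$ (Proposition~\ref{propHidadual}) gives $L(\Ad(f)\otimes\alpha,1)/(\pi^k u_2(f_F))\in\eta_{f_F}^{1,\sharp}$; (c) Poincar\'e duality plus the Petersson-norm computation (Lemma~\ref{cong-rel}(iv)) gives $\eta_{f_F}^1\sim L(\Ad(f_F),1)/(\pi^{2k+1}u_1(f_F)u_2(f_F))$; (d) the classical formula $\eta_f\sim L(\Ad(f),1)/(\pi^{k+1}\Omega_f^+\Omega_f^-)$ and the $L$-function factorization rewrite (c) as $\eta_{f_F}^1\sim a\cdot\eta_f\cdot L(\Ad(f)\otimes\alpha,1)/(\pi^k u_2(f_F))$; (e) the purely algebraic divisibility $\eta_{f_F}^1\supset\eta_f\cdot\eta_{f_F}^{1,\sharp}$ of Lemma~\ref{cong-rel}(ii) then forces every inclusion in the resulting chain to be an equality and $a$ to be a unit. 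Parts (i), (ii), (iii) fall out simultaneously.
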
 

Note that the last statement is not a formal consequence of Hida's congruence ideal formalism which does not apply {\it a priori} 
since the Hecke algebra
$\T_F$ is not Gorenstein in general.

\medskip
By using the works of Caraiani-Scholze \cite{CS19}, Newton-Thorne \cite{NT16}, Caraiani and {\it al} \cite{CGHJMRS} and \cite{ACC+}, 
we know that there is a deformation of $\overline{\rho}_f\vert_{\Gamma_F)}$ to $\T_F$, which is $N$-minimal. Unfortunately, it is only conjecturally Fontaine-Laffaille of weights $0$ and $k-1$, 
because the ground field $F$ is too small for the arguments in \cite[Theorem 4.5.1]{ACC+}.
For any Taylor-Wiles squarefree ideal $Q$ of $\cO_F$ prime to $Np$, let $\T_{F,Q}$ be the local Hecke algebra of weight $(k,k)$ and level $U_0(N)\cap U_1(Q)$ defined in 
\cite[Conjecture A]{CaGe18}.  
We need to assume, as in 
\cite[Conjecture A]{CaGe18}, that  

\medskip
$(FL)$ for any Taylor-Wiles square free ideal $Q$ prime to $Np$, there exists a deformation of $\overline{\rho}_f\vert_{\Gamma_F)}$ to $\T_{F,Q}$ 
which is $NQ$-ramified, $N$-minimal and Fontaine-Laffaille of weights $0$ and $k-1$ at all places above $p$ in $F$, and Taylor-Wiles at $Q$, 

\medskip
Then, by \cite[Theorem 5.11 and Remark 5.13]{CaGe18}, the universal homomorphism $\phi_F\colon R_F\to \T_F$ is an isomorphism. 
this is a key ingredient for the following theorem. We denote by $\lb_F\colon \T_F\to \cO$ the Hecke eigensystem associated to $f_F$.

\begin{thm} \label{ThOPTintro} We keep the same hypotheses as in the previous Theorem and we assume (FL). 
Then, we have
$$\CH_\cO \Sel_\Q(\Ad(\rho_f\otimes\alpha)) =({\Gamma(\Ad(f)\otimes\alpha,1)L(\Ad(f)\otimes\alpha,1)\over u_2(f_F)})\cdot  \CH_\cO(\H_1(L_{T_F/\cO}\otimes_{\lb_{f_F}}\cO))$$
where $L_{T_F/\cO}$ stands for the cotangent complex of $T_F$ over $\cO$.
\end{thm}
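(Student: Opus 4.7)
The plan is to combine a restriction-of-scalars decomposition of the Selmer group over $F$ with the identification $R_F = \T_F$ furnished by hypothesis (FL) via \cite[Theorem 5.11]{CaGe18}, and then to feed in the three identities of Theorem \ref{thmC}. Since $p$ is odd and $[F\colon\Q]=2$, inflation-restriction splits cleanly: from the decomposition $\Ind_{\Gamma_F}^{\Gamma_\Q}\Ad\rho_{f_F}=\Ad\rho_f\oplus(\Ad\rho_f\otimes\alpha)$, Shapiro's lemma together with compatibility of the Bloch-Kato local conditions under restriction of scalars yields
$$\Sel_F(\Ad\rho_{f_F})\ \cong\ \Sel_\Q(\Ad\rho_f)\oplus \Sel_\Q(\Ad\rho_f\otimes\alpha).$$
Taking $\cO$-Fitting ideals and invoking Wiles's numerical criterion for $\T$ (a complete intersection over $\Q$) to identify $\CH_\cO\Sel_\Q(\Ad\rho_f)=\eta_f$, one obtains
$$\CH_\cO\Sel_F(\Ad\rho_{f_F})=\eta_f\cdot\CH_\cO\Sel_\Q(\Ad\rho_f\otimes\alpha).$$

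The crux of the argument is a non-Gorenstein refinement of Wiles's formula for $\T_F$. Under (FL), the isomorphism $R_F\cong\T_F$ and standard Galois deformation theory identify $\H_0(L_{\T_F/\cO}\otimes_{\lb_{f_F}}\cO)$ as the Pontryagin dual of $\Sel_F(\Ad\rho_{f_F})$, and since Fitting ideals over the DVR $\cO$ are preserved under Pontryagin duality,
$$\CH_\cO\Sel_F(\Ad\rho_{f_F})=\CH_\cO\H_0(L_{\T_F/\cO}\otimes_{\lb_{f_F}}\cO).$$
Because the Bianchi Hecke algebra $\T_F$ need not be Gorenstein, Wiles's classical identification $\eta=\CH_\cO\H_0$ must be corrected by the first cotangent homology. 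The refined equality
$$\CH_\cO\H_0(L_{\T_F/\cO}\otimes_{\lb_{f_F}}\cO)=\eta_{f_F}^{coh}\cdot \CH_\cO\H_1(L_{\T_F/\cO}\otimes_{\lb_{f_F}}\cO)$$
is to be extracted from the Calegari-Geraghty patched module $M_\infty$: since $M_\infty$ has finite projective dimension over the patched deformation ring $R_\infty$, an Euler characteristic computation on the Koszul complex associated to the auxiliary Taylor-Wiles variables produces exactly this correction term.

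Combining the two displays gives
$$\eta_f\cdot\CH_\cO\Sel_\Q(\Ad\rho_f\otimes\alpha)=\eta_{f_F}^{coh}\cdot\CH_\cO\H_1(L_{\T_F/\cO}\otimes_{\lb_{f_F}}\cO).$$
Theorem \ref{thmC}(iii) factors $\eta_{f_F}^{coh}=\eta_f\cdot\eta_{f_F}^{1,\sharp}$, so cancelling $\eta_f$ (legitimate in the DVR $\cO$) yields
$$\CH_\cO\Sel_\Q(\Ad\rho_f\otimes\alpha)=\eta_{f_F}^{1,\sharp}\cdot\CH_\cO\H_1(L_{\T_F/\cO}\otimes_{\lb_{f_F}}\cO),$$
and Theorem \ref{thmC}(ii) identifies $\eta_{f_F}^{1,\sharp}$ with $\Gamma(\Ad f\otimes\alpha,1)L(\Ad f\otimes\alpha,1)/u_2(f_F)$ up to a $p$-adic unit, completing the proof. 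The main obstacle is the non-Gorenstein Wiles-type formula: the classical Taylor-Wiles-Kisin patching no longer produces a free module over $R_\infty$, so one must carry out a genuinely derived analysis of the patched module to extract the $\CH_\cO\H_1$ correction, and this is where the full force of (FL) and the machinery of \cite{CaGe18} becomes essential.
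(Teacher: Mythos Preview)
Your overall strategy coincides with the paper's: decompose $\Sel_F(\Ad\rho_{f_F})$ by restriction of scalars, identify $\CH_\cO\Sel_\Q(\Ad\rho_f)=\eta_f$ via Wiles, use $R_F\cong\T_F$ to rewrite $\CH_\cO\Sel_F$ as $\Fitt_\cO C_1^{\lb_{f_F}}(\T_F)$, apply a Wiles-defect identity to produce the $\H_1$ correction, factor the congruence ideal via Theorem~\ref{thmC}(iii), and finish with Theorem~\ref{thmC}(ii).

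The one place where you diverge from the paper is the justification of the Wiles-defect identity
\[
\Fitt_\cO C_1^{\lb_{f_F}}(\T_F)\;=\;\eta_{f_F}\cdot\CH_\cO\H_1(L_{\T_F/\cO}\otimes_{\lb_{f_F}}\cO).
\]
You propose to extract this from the patched module $M_\infty$ via an Euler-characteristic computation; the paper instead isolates it as a purely commutative-algebra statement (Proposition~\ref{Wiles-defect}): any finite $\cO$-algebra admitting a presentation $\cO[[x_1,\dots,x_g]]/(f_0,\dots,f_g)$ satisfies $\delta_\lb=\Fitt_\cO\H_1(L_{T/\cO}\otimes_{\lb}\cO)$. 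The only input from Calegari--Geraghty is the existence of such a presentation for $\T_F$ (Theorem~\ref{CG}(i)); the proof of Proposition~\ref{Wiles-defect} then proceeds by showing $\T_F\cong T_0/(f)$ with $T_0$ complete intersection and analysing the cotangent-complex triangle for $\cO\to T_0\to\T_F$. This is cleaner and more portable than going back into the patching. Note also that the identity naturally involves the Hecke congruence ideal $\eta_{f_F}$, not $\eta_{f_F}^{coh}$; the paper bridges them via freeness of $\H^2_F$ over $\T_F$ (Theorem~\ref{CG}(ii), Lemma~\ref{cong-rel}(iii)) before invoking the factorization $\eta_{f_F}=\eta_f\cdot\eta_f^{1,\sharp}$.
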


The characteristic ideal of the degree one homology of the cotangent complex measures the defect of $T_F$ of being complete intersection. This defect
is always non trivial since it was observed in \cite{CaGe18} that $\T_F$ is not complete intersection in this context.

A formulation of the Bloch-Kato conjecture in this setting leads us to the
following conjecture that can be seen as an integral version of a conjecture of Prasanna-Venkatesh.
Let $R_{f,\alpha}\in\R^\times$ be the archimedean regulator of a 
Beilinson-Flach element attached to $f$ and $\alpha$. It is defined in Section \ref{subsectBKimquad},
just before Conjecture \ref{BK-non-crit}. It is well-defined up to an element of $\cO^\times$.
As mentioned in Section \ref{subsectBKimquad}, its relation to the special twisted adjoint $L$ value at $1$ requires 
a stronger Fontaine-Laffaille constraint: $p>2k-1$.
Thus we include it in the following conjecture.

\begin{conj} We keep the hypothesis of Theorem \ref{ThOPTintro} and we assume $p>2k-1$, then the following holds:
$$  u_2(f_F)\sim  {\Omega_f^+\Omega_f^-\cdot R_{\f,\alpha} \cdot
\CH \H_1(L_{T_F/\cO}\otimes_{\lb_{f_F}}\cO)}$$
\end{conj}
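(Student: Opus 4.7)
The plan is to derive Conjecture \ref{PV_int} from Theorem \ref{ThOPTintro} together with the integral Bloch-Kato formula at the non-critical point $s=1$ for $\Ad(\rho_f)\otimes\alpha$ (Conjecture \ref{BK-non-crit}). Substituting the formula of Theorem \ref{ThOPTintro},
$$\CH_\cO\,\Sel_\Q(\Ad(\rho_f)\otimes\alpha)=\left(\frac{\Gamma(\Ad(f)\otimes\alpha,1)\,L(\Ad(f)\otimes\alpha,1)}{u_2(f_F)}\right)\cdot \CH_\cO\bigl(\H_1(L_{T_F/\cO}\otimes_{\lb_{f_F}}\cO)\bigr),$$
into the statement of the conjecture and cancelling the cotangent factor on both sides reduces Conjecture \ref{PV_int} to the ideal identity
$$\CH_\cO\,\Sel_\Q(\Ad(\rho_f)\otimes\alpha)\sim\left(\frac{\Gamma(\Ad(f)\otimes\alpha,1)\,L(\Ad(f)\otimes\alpha,1)}{\Omega_f^+\Omega_f^-\cdot R_{f,\alpha}}\right)$$
in $\cO$. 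So, modulo the theorem already proved in the text, the entire content of the conjecture is an \emph{integral} Bloch-Kato formula for a non-critical value; this is what I would attempt to prove.

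For the upper bound on the Selmer group, I would invoke the Beilinson-Flach Euler system of Lei-Loeffler-Zerbes together with its integral refinement by Kings-Loeffler-Zerbes. The Beilinson-Flach motivic class $BF_{f,\alpha}\in \H^1_{\cM}\bigl(\Spec\Q,\Ad(\rho_f)\otimes\alpha(1)\bigr)$ has archimedean Beilinson regulator equal, by its very construction, to the non-critical $L$-value normalised by $\Omega_f^+\Omega_f^-\cdot R_{f,\alpha}$, while hypothesis (CV) and the level-raising congruences available for $\overline{\rho}_f$ ensure the non-triviality of its $p$-adic \'etale image in $\H^1_f(\Q,\Ad(\rho_f)\otimes\alpha)$. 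Feeding the full compatible family of Beilinson-Flach classes over abelian extensions of $\Q$ into the Kolyvagin-Rubin Euler system machinery then annihilates $\Sel_\Q(\Ad(\rho_f)\otimes\alpha)$ by the ideal $\bigl(\Gamma L\,/\,\Omega_f^+\Omega_f^-\cdot R_{f,\alpha}\bigr)$.

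For the reverse divisibility, the most natural strategy is to combine part $(iii)$ of Theorem \ref{thmC} with the Calegari-Geraghty isomorphism $R_F\cong \T_F$ granted by (FL), and to exploit the decomposition $\Ad\circ \varphi=\Ad\oplus (\Ad\otimes\alpha)$ at the level of tangent spaces of the deformation rings and of Selmer groups. This splits the cotangent space of $R_F$ at $\lb_{f_F}$ as a direct sum of that of $R$ at $\lb_f$ and a twisted Selmer-type term; tracking the splitting through the Wiles numerical criterion then converts the information on $\eta_{f_F}^{1,\sharp}$ and $\eta_{f_F}^{coh}$ coming from Theorem \ref{thmC} into the expected lower bound on $\CH_\cO\,\Sel_\Q(\Ad(\rho_f)\otimes\alpha)$. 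An alternative route would be to invoke a suitable form of the Iwasawa main conjecture for Beilinson-Flach classes (Kings-Loeffler-Zerbes, X.~Wan) specialised at the arithmetic point corresponding to $(f,\alpha)$.

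The hard part, and precisely the reason this is stated as a conjecture, is the transcendental comparison between the archimedean Beilinson regulator $R_{f,\alpha}$ and the image of the Beilinson-Flach class under the $p$-adic syntomic/dual-exponential map. It is this archimedean-to-$p$-adic bridge that separates Beilinson's non-critical conjecture from its Perrin-Riou/Bloch-Kato counterpart, and controlling it integrally — not only up to an algebraic number, nor even just up to a $p$-adic unit — is what puts the integral Prasanna-Venkatesh formula beyond the reach of current Euler system techniques. A secondary but unavoidable limitation is the dependence on (FL), inherited from Theorem \ref{ThOPTintro}, which no part of the above strategy seems able to circumvent.
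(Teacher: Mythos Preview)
The statement is a \emph{conjecture}; the paper does not prove it. What the paper does establish (in the Corollary following the second occurrence of Conjecture~\ref{PV_int} in Section~\ref{subsectBKimquad}) is precisely the equivalence you derive in your first paragraph: under the hypotheses of Theorem~\ref{ThOPTintro}, Conjecture~\ref{PV_int} is equivalent to the Bloch--Kato Conjecture~\ref{BK-non-crit}, via Theorem~\ref{ThOPT} and Lemma~\ref{Sha-Selmer}. (The latter identifies the Fitting ideals of $\Sel_\Q(\Ad(\rho_f)\otimes\alpha)$ and $\Sha^1(\Q,\Ad(M_f)(1)\otimes\alpha)$, which you are using implicitly when you phrase the target identity with $\Sel$ rather than $\Sha^1$.) So your reduction matches the paper exactly, and that is all the paper claims.

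Your remaining paragraphs go beyond the paper, sketching an Euler-system attack on Conjecture~\ref{BK-non-crit} itself. Your final diagnosis is correct and agrees with the paper's stance: the genuine obstruction is the integral comparison between the archimedean regulator $R_{f,\alpha}$ and the $p$-adic image of the Beilinson--Flach class, and this is why both statements are left as conjectures. One caveat on your third paragraph: the strategy you outline there (Calegari--Geraghty $R_F\cong\T_F$, the decomposition of the cotangent space along $\Ad\oplus\Ad\otimes\alpha$, and Theorem~\ref{thmC}) is exactly the proof of Theorem~\ref{ThOPT}/\ref{ThOPTintro}. It therefore does not furnish an independent lower bound for $\CH_\cO\Sel_\Q(\Ad(\rho_f)\otimes\alpha)$ in terms of $\Omega_f^+\Omega_f^- R_{f,\alpha}$; it only returns you to the identity you already used in your first paragraph. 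Any genuine progress on the reverse divisibility would have to come from the $p$-adic side (e.g.\ a main-conjecture input as in your alternative route), not from re-running the deformation-theoretic argument.
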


It results from Theorem \ref{ThOPTintro} and Lemma \ref{Sha-Selmer} that the above conjecture is equivalent to the Bloch-Kato Conjecture, 
stated as Conjecture \ref{BK-non-crit} below.

\medskip
The paper is organized as follows. In section 2, we gather some abstract results and definitions on congruence ideals and in particular
a general factorization of the congruence ideal in the base change setting. In Section 3, we recall the various definitions of periods and
congruences ideals for elliptic cusp forms and the main result about them. The section 4 is devoted to our result for the base change
to totally real fields. In particular, we formulate a general conjecture of an integral period relation for the base change and establish the result in the quadratic case.
In passing, we extend some facts about freeness of the cohomology that are not available in the literature. In section 5, we deal with the 
base change to an imaginary quadratic field and discuss our result in the the perspective of the Beilinson-Bloch-Kato conjecture and its link
with an integral version of the Prasanna-Venkatesh conjecture.

\medskip
\noindent
{\it Acknowledgment.} We would like to thank  M. Dimitrov and H. Hida for useful discussions and their interest in our work and for their
 comments on an earlier version of this paper. We are also happy to thank the referee for a careful reading of our paper, and in particular one question that  led us to write the remark 5.13.
 This project was started when the second author was a {\it Professeur invit\'e} at the University Paris 13 and he 
would like to thank this institution for its  financial support. 

\medskip

\tableofcontents
\newpage
\section{The formalism of the congruence modules}\label{conggensect}
The goal of this section is to recall some definitions and establish some  properties of congruence modules 
attached to characters of finite commutative algebras over a discrete valuation ring.
Some of these facts are well-known to the experts but are difficult to find in the literature except for the reference
\cite{H86}.
In this section $\cO$ is a discrete valuation ring of characteristic zero and $K$ is its field of fractions. 
For any finite $\cO$-module $F$, we will denote by $\Fitt_O(F)$ its Fitting ideal in $\cO$. For any finitely generated $\cO$-module $M$, we write
$M_K:=M\otimes_\cO K$.

\subsection{The congruence module $C_0$}\label{C0}
In this subsection, $T$ is a reduced, finite and flat local $\cO$-algebra or, equivalently, $T$ is finite flat over $\cO$ and $T_K$ is semi-simple. 
Now assume we have two semi-simple $K$-algebras $A_K$ and $B_K$ such that we have a factorization
\begin{equation}
\label{factorization}
T_K\cong A_K\times B_K
\end{equation}
We then will denote by $A$ (resp. by $B$) the image of $T$ into $A_K$  (resp. $B_K$) by the first (resp. the second) projection. 
For $C=A$ or $B$, let us call $\lb_C$ the  projection of $T$ onto $C$ and by $e_{\lb_C}\in T_K$ the corresponding idempotent of $T_K$.
For any finitely generated $T$-module $M$ which is flat over $\cO$, and for $C=A$ or $B$, we put
$$M_{\lb_C}:=  M\cap e_{\lb_C}\cdot M_K\mbox{ and } M^{\lb_C}= e_{\lb_C}\cdot M$$
When $\lb_C$ is clear from the context, we will write $M_C$ and $M^C$ in place of  $M_{\lb_C}$ and $M^{\lb_C}$. 
We put
\begin{eqnarray*}
C_0^{\lb_A}(M):= M^A/M_A
\end{eqnarray*}
This module is called a congruence module as it measures the congruences between the algebra homomorphisms in $A$ and $B$ that occur in $M$
as it is straightforward to see that the natural map $M\rightarrow M^A$, induces the isomorphisms:
$$\ { M\over M_A\oplus M_B}\cong M^A/M_A\cong {M^A\oplus M^B \over  M}$$
and in particular we have $C_0^{\lb_A}(M)\cong C_0^{\lb_B}(M)$.

\medskip
We now fix an $\cO$-algebra homomorphism;
$$\lb\colon T\rightarrow \cO$$
We denote by $T^c_K:= (1-e_{\lb})T_K$ and by $T^c$ the image of $T$ into $T^c_K$, so that we have an isomorphism $T_K\cong K\times T^c_K$ like in \eqref{factorization}.
Note that $\lambda(e_\lambda)=1$.
For any finite module $T$-module $M$ which is flat over $\cO$, we therefore have in particular
$$C^\lb_0(M):=M^\lb/M_{\lb}\cong M^{T^c}/M_{T^c}$$
and we put
$$\eta_\lb(M):= \Fitt_\cO(C_0^\lb(M))$$
In the special case $M=T$, we have $$\eta_\lb:=\eta_\lb(T)=\lb (\Ker (T\rightarrow T^c))=\lb(\Ann_T(\Ker(\lb)).$$
Note that $\eta_\lambda$ can also be viewed as an ideal of $T$, namely $T\cap e_\lambda\cdot T_K$. Indeed it is the largest ideal of $T^\lambda=\cO$ which is contained in $T$; in other words, it is the exact denominator 
in $\cO$ of $e_\lb$: if $x\in \cO$ and $x\cdot e_\lb\in T$, we see by applying $\lambda$
that $x\in \eta_\lambda$. Conversely, $\eta_\lambda\cdot e_\lb\subset T$ because $\eta_\lambda\subset T$ and we have $\eta_\lambda\cdot (1-e_\lb)=0$ in $T$.
Moreover, there is a canonical surjective map
$C_0^\lb(T)\otimes_T M\twoheadrightarrow C_0^\lb(M)$ and we can rewrite the left hand side as $C_0^\lb(T)\otimes_\cO M^\lambda$.
So if $M^\lb_K$ is of rank $1$, we have
\begin{eqnarray}\label{div-C0}
 C_0^\lb(M)=\cO/\eta_\lb(M)\quad \mbox{\rm and}\, \eta_\lb(M)\supset \eta_\lb
\end{eqnarray}
Indeed, by assumption, there exists $e\in M^\lb$ such that $M^\lambda=\cO\cdot e$. So there is a surjection of finite monogeneous $\cO$-modules
$C_0^\lb(T)\to C_0^\lb(M)$.
Or more concretely, we can write $e_\lb\cdot M=M^\lambda=\cO\cdot e$ 
and $M_\lambda=M\cap K\cdot e=\eta_\lb(M)\cdot e$. On the other hand, since $e\in\e_\lambda\cdot M$, we have 
$\eta_\lambda\cdot e\subset \eta_\lb\cdot e_\lb\cdot M\subset M\cap K\cdot e=M_\lambda$, hence 
$\eta_\lambda\cdot e\subset\eta_\lb(M)\cdot e$ as desired.

\medskip
Let $M^1$ and $M^2$ be two $T$-modules, flat over $\cO$ and such that there exists a perfect pairing
$$[\cdot,\cdot] \colon M^1\times M^2\rightarrow \cO$$
which is  $T$-bilinear: $[t\cdot m^1,m^2]= (m^1,t\cdot m^2]$ for any $t\in T$, and $m^i\in M^i$, $i=1,2$.
Then we have the following.

\begin{pro} \label{Pontryaginprop} With $A$ as above, the perfect pairing $[\cdot,\cdot] \colon M^1\times M^2\to \cO$ induces a perfect 
$A$-bilinear pairing 
$$[\cdot,\cdot]_A\colon C_0^A(M^1)\times C_0^A(M^2)\to K/ \cO
$$
in particular, we have $\Fitt_\cO(C_0^\lb(M_1))=\Fitt_\cO(C_0^\lb(M_2))$. Moreover, if $\lb$ is as above and if  the modules $M^1_\lb$, $M^2_\lb$ are rank $1$ 
over $\cO$ with respective basis $\delta_1$, $\delta_2$, then 
$$C^\lb_0(M^1)\cong C^\lb_0(M^2)\cong\cO/([\delta_1,\delta_2]). $$

\end{pro}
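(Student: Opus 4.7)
\medskip\noindent
\textbf{Proof plan.} My strategy is to reduce the entire statement to a duality between two lattices in the $A$-component $e_A M^i_K$, and then to compute explicitly in the rank-one case. First I would extend $[\cdot,\cdot]$ by $K$-bilinearity to a perfect $T_K$-bilinear pairing $M^1_K \times M^2_K \to K$. Since $e_A, e_B \in T_K$ are orthogonal idempotents summing to $1$ and the pairing is $T_K$-bilinear, the identity $[e_A v, e_B w] = [v, e_A e_B w] = 0$ makes $M^i_K = e_A M^i_K \oplus e_B M^i_K$ an orthogonal decomposition, so the pairing restricts to a perfect $K$-bilinear pairing on $e_A M^1_K \times e_A M^2_K$.

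The heart of (1) is the claim that $M^1_A \subset e_A M^1_K$ is the $\cO$-dual lattice of $M^{2,A} = e_A M^2$. For $v \in M^1_A$ and $w = e_A m^2 \in M^{2,A}$, the identity $e_A v = v$ gives $[v,w] = [e_A v, m^2] = [v, m^2] \in \cO$. Conversely, if $v \in e_A M^1_K$ satisfies $[v, M^{2,A}] \subset \cO$, the same manipulation shows $[v, M^2] \subset \cO$, whence perfectness of the original $\cO$-valued pairing forces $v \in M^1$ and thus $v \in M^1 \cap e_A M^1_K = M^1_A$. By symmetry $M^2_A$ is dual to $M^{1,A}$. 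The standard fact that a perfect pairing of $\cO$-lattices in a common $K$-vector space descends to a perfect pairing on the quotients $M^{1,A}/M^1_A \times M^{2,A}/M^2_A \to K/\cO$ then delivers $[\cdot,\cdot]_A$; the Fitting ideal equality follows because Pontryagin-dual finite modules over a DVR are abstractly isomorphic.

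For (2), the rank-one hypothesis makes $e_\lambda M^i_K$ one-dimensional. Choose $K$-bases $x_i$ with $M^{i,\lambda} = \cO\cdot x_i$; by \eqref{div-C0} then $M^i_\lambda = \eta_\lambda(M^i)\cdot x_i$, so $\delta_i = \eta_i x_i$ for some generator $\eta_i$ of $\eta_\lambda(M^i)$. Setting $c = [x_1,x_2] \in K$, the pairing from (1) becomes the explicit pairing $\cO/(\eta_1) \times \cO/(\eta_2) \to K/\cO$, $(a,b) \mapsto abc \bmod \cO$; its perfectness forces the valuation of $c$ to be the negative of the common valuation of $\eta_1$ and $\eta_2$. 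Hence $[\delta_1,\delta_2] = \eta_1\eta_2 c$ generates $\eta_\lambda(M^i)$, yielding the two cyclic isomorphisms $C_0^\lambda(M^i) \cong \cO/([\delta_1,\delta_2])$.

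I expect the main obstacle to be the lattice-duality step: one must carefully keep track of which of the four lattices $M^1_A, M^{1,A}, M^2_A, M^{2,A}$ is dual to which, using only $T_K$-bilinearity together with the idempotent $e_A$, which lies in $T_K$ but generally not in $T$. Once that is set up cleanly, (1) reduces to the standard duality-of-quotient-lattices statement and (2) is a one-line valuation check.
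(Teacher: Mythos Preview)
Your proof is correct and follows essentially the same lattice-duality approach as the paper: the paper computes duals inside the full $M^i_K$ (so the dual of $M^1_A$ comes out as $M^{2,A}+e_B M^2_K$), whereas you first restrict to $e_A M^i_K$ and show directly that $M^1_A$ and $M^{2,A}$ are mutually dual there, which is a slightly cleaner but equivalent packaging. For the rank-one statement the paper uses the map $m\mapsto [m,\delta_2]$ to identify $M^{1,\lambda}\cong\cO$ and read off the image of $M^1_\lambda$, while your valuation computation with $c=[x_1,x_2]$ amounts to the same thing.
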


\begin{proof}
 For any $\cO$-submodule $L_1\subset M^1$, resp. $L_2\subset M^2$, let $L_1^\ast=\{x\in M^2_K; [L_1,x]\subset \cO\}$, resp.
$L_2^\ast=\{x\in M^1_K; [x,L_2]\subset \cO\}$ (note that these $\cO$-modules may contain $K$-vector subspaces). If we have two submodules $L_1\subset \Lambda_1\subset M^1$, we see immediately
that the pairing $[\cdot,\cdot]$ 
induces a perfect pairing
$$ \Lambda_1/L_1\times L_1^\ast/\Lambda_1^\ast\to K/\cO
$$
It is therefore enough to prove that $(a)\, M^{1,\ast}_{A}=M^{2,A}+e_{B} M^2_K$ and $(b)\, M^{1,A,\ast}=M^2_A+e_{B}M^2_K$. 
Let us prove $(a)$. For the inclusion $M^{2,A}+e_{B}M^2_K\subset M^{1,\ast}_{A}$, note first that $e_B M^2_K$ is orthogonal to
$e_A M^1_K$ since $e_A e_B=0$; 
for $m_1\in M^1_{A}$ and $m_2\in M^2$, we have $[m_1,e_{A}m_2]=[e_{A}m_1,m_2]$ and since $m^1=e_A m^1$, we conclude
$[m_1,e_{A}m_2]=[m_1,m_2]\in\cO$;
Conversely, let $e_{A} m_2\in M^2_K$ and assume that $[M^1_{A},m_2]\subset \cO$. Since $M^1_{A}$ is an $\cO$-direct factor in $M^1$, 
the linear form $[\cdot,m_2]$ extends to $M^1$. It can be therefore represented as $[.,t]$ for some $t\in M^2$. 
This shows that $m_2-t\in e_B M^2_K$ by orthogonality. Decomposing $t=e_A t+e_B t$, we see that $m_2\in e_A M^2+e_B M_K^2$ as desired.
The proof of $(b)$ is similar.

We now prove the last assertion.  We consider the map from $M^{1,\lb}\rightarrow \cO$ defined by $m\mapsto [m,\delta_2]$. 
From the discussion above, it is surjective since $M^{1,\lb}$ identifies to the $\cO$-dual of  $M^2_\lb$ via the map $m\mapsto (m'\mapsto [m,m'])$.
Since $M^{1,\lb}$ and $M^1_\lb$ are both $\cO$-modules of rank $1$, this map is therefore injective and induces the isomorphism 
$$C_0^\lb(M^1)=M^{1,\lb}/M^1_\lb \cong \cO/\psi(M^1_\lb)=\cO/([\delta_1,\delta_2]).$$
\end{proof}

\begin{cor} Assume that $T$ is Gorenstein. If $\lb^\vee:=\xi(\lb)\in T$ where $\xi$ is the isomorphism $\Hom_\cO(T,\cO){\cong}  T$, then we have
$$\eta_\lb=(\lb(\lb^\vee))$$
\end{cor}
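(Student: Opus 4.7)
The plan is to deduce this corollary directly from Proposition \ref{Pontryaginprop} by exhibiting the $T$-bilinear pairing provided by the Gorenstein hypothesis and by identifying the generator of $T_\lambda$ inside $T$.

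First I would turn the Gorenstein isomorphism $\xi\colon \Hom_\cO(T,\cO)\isomto T$ into a perfect symmetric $T$-bilinear pairing on $T$. Set $\ell:=\xi^{-1}(1)\in\Hom_\cO(T,\cO)$ and define $[x,y]:=\ell(xy)$. Since $\xi$ is a $T$-module isomorphism, any $\mu\in\Hom_\cO(T,\cO)$ satisfies $\mu(x)=\ell(\xi(\mu)\cdot x)$; in particular, applying this to $\mu=\lambda$ one obtains the crucial identity
$$\lambda(x)=\ell(\lambda^\vee\cdot x)=[\lambda^\vee,x]\qquad\text{for all }x\in T.$$
This pairing is perfect because $\xi$ is an isomorphism, and $T$-bilinear by construction.

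Next I would show that $\lambda^\vee$ generates $T_\lambda$ as an $\cO$-module. The key observation, already recorded after the definition of $\eta_\lambda(M)$ in Section \ref{C0}, is that
$$T_\lambda=T\cap e_\lambda T_K=\Ann_T(\Ker\lambda),$$
since an element of $T_K=K\times T_K^c$ annihilates $\Ker\lambda=0\times T_K^c$ iff its component in $T_K^c$ vanishes. Under the Gorenstein isomorphism $\xi$, the ideal $\Ann_T(\Ker\lambda)$ corresponds to the submodule of $\Hom_\cO(T,\cO)$ consisting of linear forms killing $\Ker\lambda$, i.e.\ factoring through $T/\Ker\lambda=\cO$; this submodule is precisely $\cO\cdot\lambda$. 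Hence $T_\lambda=\xi(\cO\cdot\lambda)=\cO\cdot\lambda^\vee$, so one can take $\delta_1=\delta_2=\lambda^\vee$ as the generator.

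Finally, I apply the last assertion of Proposition \ref{Pontryaginprop} with $M^1=M^2=T$ and the above pairing: it yields
$$\eta_\lambda=\Fitt_\cO(C_0^\lambda(T))=\bigl([\lambda^\vee,\lambda^\vee]\bigr),$$
and the identity $\lambda(x)=[\lambda^\vee,x]$ applied to $x=\lambda^\vee$ gives $[\lambda^\vee,\lambda^\vee]=\lambda(\lambda^\vee)$, completing the proof. The only substantive point is the identification $T_\lambda=\cO\cdot\lambda^\vee$; everything else is formal, so the main (minor) obstacle is simply the bookkeeping that transports the Gorenstein duality statement $\xi(\cO\cdot\lambda)=\Ann_T(\Ker\lambda)$ from $\Hom_\cO(T,\cO)$ over to $T$ in a way that is compatible with the $T$-action and with the symmetric pairing $[\cdot,\cdot]$.
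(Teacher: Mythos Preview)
Your proof is correct and follows essentially the same approach as the paper: both use the perfect $T$-bilinear pairing on $T$ coming from the Gorenstein isomorphism and then invoke the last assertion of Proposition \ref{Pontryaginprop}. You simply supply the details the paper leaves implicit, in particular the identification $T_\lambda=\Ann_T(\Ker\lambda)=\cO\cdot\lambda^\vee$ needed to take $\delta_1=\delta_2=\lambda^\vee$.
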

\proof It is a direct consequence of the last assertion of the previous proposition using the perfect $\cO$-pairing $T\otimes T\rightarrow\cO$ given by the $T$-linear isomorphism
$T\cong \Hom_\cO(T,\cO)$.\qed

\subsection{Congruence modules for base change}\label{basechange}
We assume that $T$ and $T'$ are finite flat local $\cO$-algebras with $T_K$ and $T'_K$ semisimple as in section \ref{C0} and that we have an homomorphism  $\theta\colon T'\rightarrow  T$ 
such that $T'_K\rightarrow T_K$ is surjective.  We denote by $T^\sharp_K$ the semi-simple sub-algebra of $T'_K$ such that $T'_K\cong T_K\times T^\sharp_K$,
the first projection being given by $\theta$. We denote by $e_\theta$ the corresponding idempotent of $T^\prime_K$. Let $e^\sharp$ be the idempotent corresponding to the factor $T^\sharp_K$.
We fix $\lb\colon T\to \cO$
inducing $\lb'\colon T'\to \cO$ via $\theta$. Let $\c_\theta=\Ker(T^\prime\to T^\sharp)$.

For any $T'$-module $M$ flat over $\cO$, let $M^T=\e_\theta\cdot M$, $M_T=M\cap e_\theta M_K$, $M^{T^\sharp}=e^\sharp\cdot M$. We put
$$C_0^{\lb, \sharp}(M) :=  { M^{\lb'} /  (M_T)^\lb}$$
and we denote by $\eta^\sharp_\lb(M)$ its Fitting ideal.
We see that we have a canonical  surjective map:
$$ (M^{T^\sharp})\otimes_{\lb'}\cO\cong (M/M_T)\otimes_{\lb'}\cO\twoheadrightarrow { M^{\lb'} /  (M_T)^\lb}= C_0^{\lb, \sharp}(M)$$
which is an isomorphism if $M\otimes_{\lb'}\cO\cong M^{\lb'}$.  In particular if $M=T'$, we have 
 $$M_T= \frak c_\theta =\Ann_{T'}(\Ker\theta)$$ 
 and therefore 
$$C_0^{\lb, \sharp}(T')= T^\sharp\otimes_{T',\lb'}\cO=C_0^{\theta}(T')\otimes_{T',\lb'}\cO=\cO/\lb(\frak c_\theta)$$
and
$$\eta^\sharp_\lb:=\eta^\sharp_\lb(T')=\lb(\frak c_\theta).$$
We have the following lemma:
\begin{lem} \label{BC-congruenceN}
Let $M$ be a $T'$-module flat over $\cO$. Then
$$\eta_{\lb'}(M)=\eta_\lb(M_T).\eta^\sharp_\lb(M)$$

\end{lem}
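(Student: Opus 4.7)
The plan is to establish the lemma by constructing a two-step filtration of the congruence module $C_0^{\lb'}(M)$ and identifying its subquotients with $C_0^\lb(M_T)$ and $C_0^{\lb,\sharp}(M)$. Inside the $K$-vector space $M_K$, I would prove the chain of inclusions
$$M_{\lb'} \;\subset\; (M_T)_\lb \;\subset\; (M_T)^\lb \;\subset\; M^{\lb'}.$$
The outer two inclusions are formal: $(M_T)_\lb \subset (M_T)^\lb$ is a general feature of the $(\cdot)_\lb/(\cdot)^\lb$ construction, while $(M_T)^\lb = e_\lb M_T \subset e_\lb M = M^{\lb'}$ uses the identity $e_\lb = e_\lb e_\theta$ in $T'_K$ (since $e_\lb$, viewed inside $T'_K = T_K \times T^\sharp_K$, lies in $T_K$).

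The crucial point is that the bottom inclusion is in fact an equality: $M_{\lb'} = (M_T)_\lb$. Indeed, any $x \in M_{\lb'} = M \cap e_\lb M_K$ satisfies $x = e_\lb x = e_\theta e_\lb x \in e_\theta M_K$, so $x \in M \cap e_\theta M_K = M_T$, and then clearly $x \in M_T \cap e_\lb M_{T,K} = (M_T)_\lb$. The reverse inclusion is immediate from $M_T \subset M$ and $e_\lb M_{T,K} = e_\lb M_K$. With this identification, the short exact sequence
$$0 \to (M_T)^\lb/(M_T)_\lb \to M^{\lb'}/M_{\lb'} \to M^{\lb'}/(M_T)^\lb \to 0$$
rewrites as
$$0 \to C_0^\lb(M_T) \to C_0^{\lb'}(M) \to C_0^{\lb,\sharp}(M) \to 0,$$
where the rightmost identification is precisely the description $C_0^{\lb,\sharp}(M) \cong M^{\lb'}/(M_T)^\lb$ recorded just before the lemma statement (coming from $M/M_T \cong M^{T^\sharp}$ up to $\lb'$-torsion and the fact that tensoring by $\cO$ along $\lb'$ picks out the $\lb$-isotypic part).

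All three modules are torsion over $\cO$, since after inverting $\varpi$ each becomes trivial (the inclusions above become equalities after passing to $M_K$ and taking $e_\lb$). Hence they are of finite length over the DVR $\cO$, and for such modules the Fitting ideal is simply $\varpi^{\text{length}}$, which is multiplicative in short exact sequences. Taking Fitting ideals in the displayed sequence therefore yields $\eta_{\lb'}(M) = \eta_\lb(M_T)\cdot \eta^\sharp_\lb(M)$, as desired. The only genuinely substantive step in the argument is the identity $M_{\lb'} = (M_T)_\lb$; once that is in hand, the rest is formal bookkeeping with idempotents and lengths over a DVR.
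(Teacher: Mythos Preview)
Your proof is correct and follows essentially the same approach as the paper: the paper simply asserts that $(M_T)_\lb = M_{\lb'}$ ``follows from the definition,'' writes down the same short exact sequence \eqref{excongid}, and invokes multiplicativity of Fitting ideals. You have spelled out the idempotent bookkeeping and the length argument that the paper leaves implicit, but the strategy is identical.
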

\proof It follows from the definition that  $ (M_T)_{\lb}= M_{\lb'}$, we therefore have the following exact sequence:
\begin{equation}\label{excongid} 0 \rightarrow { (M_T)^\lb \over (M_T)_\lb} \rightarrow {  M^{\lb'} \over M_{\lb'} } \rightarrow { M^{\lb'}  \over  (M_T)^\lb} \rightarrow 0
\end{equation}
The claim then follows from the multiplicativity of Fitting ideals.\qed

\begin{cor} \label{Hida-factorization} We have the divisibility
$$\eta_{\lb'} \supset \eta_\lb .\eta_\lb^\sharp$$
If moreover $T$ and $T'$ are Gorenstein and if $\theta$ is surjective (Hida's formalism), then we have
$$\eta_{\lb'}=\eta_\lb .\eta^\sharp_\lb$$
\end{cor}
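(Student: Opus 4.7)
My plan is to deduce both statements from Lemma~\ref{BC-congruenceN} applied to $M=T'$, which (using that $(T')_T=\mathfrak{c}_\theta$) reads $\eta_{\lambda'}=\eta_\lambda(\mathfrak{c}_\theta)\cdot\eta_\lambda^\sharp$. This reduces the whole corollary to comparing the ideal $\eta_\lambda(\mathfrak{c}_\theta)$ of $\cO$ with $\eta_\lambda=\eta_\lambda(T)$: the divisibility amounts to $\eta_\lambda(\mathfrak{c}_\theta)\supset\eta_\lambda$, and the equality in Hida's setting to $\eta_\lambda(\mathfrak{c}_\theta)=\eta_\lambda$.

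For the divisibility, I would first note that $\mathfrak{c}_\theta\otimes_\cO K=e_\theta T'_K=T_K$, so the $\lambda$-isotypic component $\mathfrak{c}_\theta^{\lambda}\otimes_\cO K=K\cdot e_\lambda$ is one-dimensional over $K$, and hence $\mathfrak{c}_\theta^{\lambda}$ itself is free of rank one over $\cO$. Viewing $\mathfrak{c}_\theta$ as a $T$-module via $\theta$, the canonical surjection recalled in Section~\ref{C0} specializes to
$$C_0^\lambda(T)\otimes_\cO\mathfrak{c}_\theta^\lambda\twoheadrightarrow C_0^\lambda(\mathfrak{c}_\theta),$$
a surjection of cyclic $\cO$-modules, from which $\eta_\lambda(\mathfrak{c}_\theta)\supset\eta_\lambda$ follows on Fitting ideals. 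Multiplying by $\eta_\lambda^\sharp$ and invoking Lemma~\ref{BC-congruenceN} then produces the desired divisibility $\eta_{\lambda'}\supset\eta_\lambda\cdot\eta_\lambda^\sharp$.

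For the equality, I assume in addition that $T$ and $T'$ are Gorenstein and that $\theta$ is surjective, so in particular $T\cong T'/\ker\theta$ as $T'$-algebras. The Gorenstein hypothesis on $T'$ furnishes a perfect $T'$-bilinear pairing $T'\times T'\to\cO$, under which $\mathfrak{c}_\theta=\Ann_{T'}(\ker\theta)$ is identified with the orthogonal $(\ker\theta)^\perp$. By the Pontryagin-style duality exploited in Proposition~\ref{Pontryaginprop}, this orthogonal is canonically isomorphic as a $T$-module to $\Hom_\cO(T'/\ker\theta,\cO)=\Hom_\cO(T,\cO)$; the Gorenstein hypothesis on $T$ then provides a $T$-linear isomorphism $\Hom_\cO(T,\cO)\cong T$, giving $\mathfrak{c}_\theta\cong T$ as $T$-modules. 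Consequently $\eta_\lambda(\mathfrak{c}_\theta)=\eta_\lambda(T)=\eta_\lambda$, and Lemma~\ref{BC-congruenceN} upgrades the divisibility to the claimed equality.

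The main subtlety will be keeping the $T'$- and $T$-module structures aligned through the duality step: once $\theta$ is surjective, $T$-modules are precisely $T'$-modules annihilated by $\ker\theta$, so the Gorenstein identification of $\mathfrak{c}_\theta=(\ker\theta)^\perp$ with $\Hom_\cO(T,\cO)$ is automatically $T$-linear and the conversion to $T$ via Gorensteinness of $T$ is then routine; the divisibility part, by contrast, only needs the general Section~\ref{C0} surjection together with the rank-one freeness of $\mathfrak{c}_\theta^\lambda$.
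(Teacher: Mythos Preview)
Your proof is correct and follows essentially the same approach as the paper's own argument: both apply Lemma~\ref{BC-congruenceN} with $M=T'$, invoke \eqref{div-C0} for the divisibility $\eta_\lambda(\mathfrak{c}_\theta)\supset\eta_\lambda$, and then in the Gorenstein surjective case establish a $T$-linear isomorphism $\mathfrak{c}_\theta\cong T$ by dualizing. Your phrasing via orthogonals under the Gorenstein pairing on $T'$ is just a repackaging of the paper's dualization of the short exact sequence $0\to\Ker\theta\to T'\to T\to 0$; the reference to Proposition~\ref{Pontryaginprop} is slightly imprecise (you are really using the elementary identification $(\Ker\theta)^\perp\cong\Hom_\cO(T'/\Ker\theta,\cO)$, not that proposition itself), but the mathematics is the same.
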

\begin{proof} 
We apply the previous lemma for $M=T'$ and notice that
$\eta_\lb(M_T)\supset \eta_\lb$ from \eqref{div-C0}.
Let's now prove the second assertion which was also proved by Hida in a slightly different way (\cite[Theorem 6.6]{H88} or \cite[Section 8.3]{HT17}).
We consider the following short exact sequence
$$0\rightarrow \Ker\theta\to T'\to T\to 0$$
By applying the functor $\Hom (\cdot,\cO)$, we get by using the Gorenstein-ness of $T'$ and $T$, the short exact sequence of $T'$-modules
\begin{equation}\label{ast}
0\rightarrow T\rightarrow T'\rightarrow \Hom_\cO(\Ker\theta,\cO)\to 0
\end{equation}
Observe that $M_T=T^\prime\cap T_K=\{x\in T^\prime; xT_K^\sharp=0\}$. Hence, if we let $\xi\colon T^\prime\cong\Hom_{\cO}(T^\prime,\cO)$,
we have
$$M_T=\{x\in T^\prime; \xi(x)\vert_{\Ker\,\theta}=0\}.$$
Therefore, the exact sequence \eqref{ast} provides the $T$-linear isomorphism $M_T\cong T$ and therefore $\eta_\lb(M_T)=\eta_\lb$ which implies the claim by applying the previous lemma.
\end{proof}
The following proposition will be used in 
Proposition \ref{Hida-linearform} and in 
Proposition \ref{propHidadual}  below.

\begin{pro}\label{linear-BC}
Let $M$ be a $T'$-module which is finite over $\cO$ and such that $M_K$ is of rank one over $T'_K$ and let $\Phi\in M^\ast=\Hom_\cO(M,\cO)$ such that
$\Ker (\Phi\otimes id_K)\supset M^\sharp_K$.  For all $\delta\in M_{\lb'}$, we have:
$$\Phi(\delta)\in\eta_\lb^\sharp(M^\ast)$$
\end{pro}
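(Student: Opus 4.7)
The plan is to reinterpret the hypothesis on $\Phi$ in the terminology of Section \ref{basechange}, and then exploit the tautological evaluation pairing $M\times M^\ast\to\cO$, projecting everything onto the $\lb'$-component via the idempotent $e_{\lb'}$. My starting observation is that $\Phi\in M^\ast$ lies in $(M^\ast)_T=M^\ast\cap e_\theta(M^\ast)_K$ if and only if $\Phi\otimes\mathrm{id}_K$ vanishes on $M^\sharp_K=e^\sharp M_K$, because $e_\theta(M^\ast)_K$ is precisely the $K$-linear annihilator of $M^\sharp_K$ inside $(M^\ast)_K$. Hence the hypothesis on $\Phi$ is exactly that $\Phi\in(M^\ast)_T$.

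Next, since the evaluation pairing is $T'$-bilinear for the dual action $(t\cdot\psi)(m):=\psi(tm)$, and since $\delta=e_{\lb'}\delta$ holds in $M_K$, I can write
$$\Phi(\delta)=\Phi(e_{\lb'}\delta)=(e_{\lb'}\Phi)(\delta).$$
Using $\Phi=e_\theta\Phi$ together with the identity $e_{\lb'}=(e_\lb,0)=e_\theta\cdot e_\lb$ in $T'_K=T_K\times T^\sharp_K$, it follows that $e_{\lb'}\Phi=e_\lb\Phi$ belongs to $e_\lb\cdot(M^\ast)_T=((M^\ast)_T)^\lb\subset(M^\ast)^{\lb'}$.

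The rank-one hypothesis on $M_K$ over $T'_K$ forces $(M^\ast)^{\lb'}$ to be a rank-one $\cO$-lattice inside the one-dimensional $K$-space $(M^\ast)^{\lb'}_K$; choose an $\cO$-generator $\phi_0=e_{\lb'}\psi_0$ with $\psi_0\in M^\ast$. Then $((M^\ast)_T)^\lb=\mathfrak{a}\phi_0$ for some ideal $\mathfrak{a}\subset\cO$, and by the very definition
$$\eta_\lb^\sharp(M^\ast)=\Fitt_\cO\bigl((M^\ast)^{\lb'}/((M^\ast)_T)^\lb\bigr)=\mathfrak{a}.$$
Writing $e_{\lb'}\Phi=a\phi_0$ with $a\in\mathfrak{a}$, I conclude
$$\Phi(\delta)=(e_{\lb'}\Phi)(\delta)=a\,\phi_0(\delta)=a\,\psi_0(e_{\lb'}\delta)=a\,\psi_0(\delta)\in\mathfrak{a}=\eta_\lb^\sharp(M^\ast),$$
the integrality $\psi_0(\delta)\in\cO$ being automatic since $\psi_0\in M^\ast$ and $\delta\in M$. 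The only point requiring a line of justification is that $((M^\ast)_T)^\lb$ and $(M^\ast)^{\lb'}$ are commensurable rank-one $\cO$-lattices in the same one-dimensional $K$-subspace of $(M^\ast)_K$, so that the quotient is indeed cyclic with Fitting ideal $\mathfrak{a}$; this follows from the identifications $(M^\ast)_T\otimes_\cO K=e_\theta(M^\ast)_K$ and $e_\lb\cdot e_\theta(M^\ast)_K=(M^\ast)^{\lb'}_K$, both immediate from the semisimplicity of $T'_K$.
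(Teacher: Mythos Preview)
Your proof is correct and follows essentially the same route as the paper's. Both arguments set $N=M^\ast$, observe that the hypothesis on $\Phi$ is exactly $\Phi\in N_T$, push $\Phi$ to $N^{\lb'}$ where it lands in the sublattice $(N_T)^\lb$, and then use that $N^{\lb'}$ is free of rank one over $\cO$ with $(N_T)^\lb=\eta_\lb^\sharp(N)\cdot N^{\lb'}$ to conclude; your explicit choice of a generator $\phi_0=e_{\lb'}\psi_0$ and the computation $\Phi(\delta)=a\psi_0(\delta)$ is just an unpacking of the paper's final sentence invoking the factorization of $\delta^\vee$ through $N^{\lb'}$.
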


Note that that we don't assume that $M$ is torsion free over $\cO$ as this is automatic for its $\cO$-linear dual $N=M^\ast$.
\begin{proof}
To lighten the notations, let us put $N:=M^\ast$.  We consider the following commutative diagram
$$\begin{array}{ccccccccc}N&\to& N^T&\to &N^{\lb'}&{\cong}& \cO\\
\downarrow&&\downarrow&&\downarrow&&\downarrow\\
N^{T^\sharp}&\to& N^T/N_T&\to&N^{\lb'}/ (N_T)^\lb&\cong &\cO/\eta_{\lb}^\sharp(N)\end{array}$$
where the left bottom arrow comes from the isomorphism $N^T/N_T\cong N^{T^\sharp}/N_{T^\sharp}$.
Note that $N^{\lb'}\cong\cO$, hence $C_0^{\lb,\sharp}(N)=N^{\lb'}/N_T^{\lb}$ is $\cO$-cyclic. The composition 
$$ N^{\T^\sharp}\to N^T/N_T\to C_0^{\lb,\sharp}(N)=\cO/\eta_\lb^\sharp(N)$$
yields an $\cO$-linear homomorphism $ N^{T^\sharp}\to \cO/\eta_{\lb}^\sharp(N)$ 
which lifts to an $\cO$-linear form $ N^{T^\sharp}\to \cO$. 
Note that $N_K^T=(M_K^\sharp)^\perp$, hence by assumption, $\Phi\in N\cap N_K^T=N_T$. In other words, the image of $\Phi$ in $N^T/N_T$
is zero. It is therefore {\it a fortiori} zero in $N^{\lb'}/N_T^{\lb}\cong \cO/\eta_{\lb}^\sharp(N)$.

Now, since  $\delta\in M_{\lb'}$, it defines a surjective $\cO$-linear form  $N\to\cO$ by
$\delta^\vee\colon\phi\mapsto \phi(\delta)$ factorizing through $N\twoheadrightarrow N^{\lb'}$. 
We conclude that $\Phi(\delta)=\delta^\vee(\Phi)\in \eta_{\lb}^\sharp(N)$.
\end{proof}

\subsection{The congruence module $C_1$}
We keep the hypothesis and notations as before except that we only require the local algebra $T$ to be finite (not necessary flat) over $\cO$ 
and that $T_K$ is semi-simple. Let $\tilde T$ be the image of $T$ into $T_K$. We have $\tilde T=T/(\cO-tors)$.
Let $\lb\colon T\to\cO$ a surjective $\cO$-algebra homomorphism. 
We define $C_0^\lb(T)$ as $C_0^\lb({\tilde T})$. We write
$\wp_\lb:=\Ker(\lb)$. Then $\eta_\lb=\lb(\Ann_T(\wp_\lb))$. Recall that one defines
$$C_1^\lb(T):=\wp_\lb/\wp_\lb^2\cong  \Omega_{T/\cO}\otimes_{T,\lb}\cO$$

\begin{pro}(Wiles, Lenstra) \label{LCI}
Let $T$ and $\lb$ as above, then
$$\eta_\lb\supset \Fitt_\cO(C_1^\lb(T)).$$
Moreover if $T$ is a local $\cO$-algebra, then it is local complete intersection over $\cO$ if and only if the above inclusion is an equality.
\end{pro}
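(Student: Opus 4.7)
The plan is to set up a presentation of $T$ as a quotient of a power series ring over $\cO$, prove the inclusion by an adjugate/Cramer argument, and treat the complete intersection criterion via a Koszul complex computation. First I would reduce to the complete case by replacing $T$ with its $\wp_\lambda$-adic completion, which leaves both $\eta_\lambda$ and $C_1^\lambda(T)$ unchanged. Then choose a surjection $\phi\colon S\twoheadrightarrow T$ where $S = \cO[[X_1,\ldots,X_n]]$ and $n$ is the minimal number of generators of $\wp_\lambda/\wp_\lambda^2$ as an $\cO$-module; arrange that the composite $\lambda\circ\phi$ kills every $X_i$. With $I = \Ker\phi$ and $\wp_S = (X_1,\ldots,X_n)$, one has $C_1^\lambda(T) = \wp_S/(\wp_S^2 + I)$, a quotient of the free module $\wp_S/\wp_S^2 \cong \cO^n$, whose Fitting ideal is generated by the $n\times n$ minors of the matrix of linear parts of any generating family of $I$.

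For the inclusion, choose generators $f_1,\ldots,f_m$ of $I$ and decompose $f_j = \sum_i c_{ij} X_i + g_j$ with $c_{ij}\in\cO$ and $g_j\in\wp_S^2$. Pushing forward to $T$ and rewriting each $\phi(g_j)\in\wp_\lambda^2$ as $\sum_i h_{ij}\bar X_i$ with $h_{ij}\in\wp_\lambda$, we get the relation $\sum_i (c_{ij}+h_{ij})\bar X_i = 0$ in $T$ for every $j$. For each size-$n$ subset $J\subset\{1,\ldots,m\}$, assembling these yields an $n\times n$ matrix $B = (c_{ij}+h_{ij})_{i\in \{1,\ldots,n\},\, j\in J}$ over $T$ satisfying $B\vec X = 0$; multiplying by the adjugate gives $\det(B)\cdot\bar X_i = 0$ for all $i$, hence $\det(B)\in\Ann_T(\wp_\lambda)$. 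Applying $\lambda$ and using $\lambda(h_{ij})=0$ shows that the minor $\det(c_{ij})_{i,j\in J}$ lies in $\eta_\lambda$; ranging over all $J$ proves $\Fitt_\cO(C_1^\lambda(T))\subset\eta_\lambda$.

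For the complete intersection criterion, the easy direction assumes $T$ is CI over $\cO$, so that $I=(f_1,\ldots,f_n)$ is generated by a regular sequence, the matrix $(c_{ij})$ is square, and the Koszul complex on $(f_1,\ldots,f_n)$ provides a free $S$-resolution of $T$. A direct computation through this resolution identifies $\Ann_T(\wp_\lambda)$ and yields $\eta_\lambda = (\det(c_{ij})) = \Fitt_\cO(C_1^\lambda(T))$. The converse is the main obstacle: from equality of the two ideals one must extract both the $\cO$-flatness of $T$ and the fact that the minimal presentation has exactly $n$ relations forming a regular sequence. Flatness follows by showing that any $\cO$-torsion submodule $\tau\subset T$ lies inside $\wp_\lambda$ and strictly enlarges $C_1^\lambda$ upon passing to $\tilde T = T/\tau$, while leaving $\eta_\lambda$ unchanged; hence equality forces $\tau = 0$. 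Granted flatness, equality additionally forces $(c_{ij})$ to be square with its determinant generating $\eta_\lambda$ exactly, and Auslander--Buchsbaum then identifies the minimal $S$-resolution of $T$ as the Koszul complex on a regular sequence, giving the complete intersection property.
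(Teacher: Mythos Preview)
Your proof of the inclusion $\Fitt_\cO(C_1^\lambda(T))\subset\eta_\lambda$ via the adjugate/Cramer argument is correct and is a genuinely different route from the paper's. The paper argues module-theoretically: writing $\tilde T=T/(\cO\text{-tors})$ and using that $\tilde\wp_\lambda$ is a faithful $\tilde T^c$-module, it invokes a general Fitting-ideal inequality (from the appendix of Mazur--Wiles) to get
\[
\eta_\lambda=\Fitt_\cO(\tilde T^c/\tilde\wp_\lambda)\supset\Fitt_\cO\bigl(\tilde\wp_\lambda\otimes(\tilde T^c/\tilde\wp_\lambda)\bigr)=\Fitt_\cO(C_1^\lambda(\tilde T))\supset\Fitt_\cO(C_1^\lambda(T)).
\]
Your argument is more explicit and self-contained; the paper's is shorter but relies on an outside reference.

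For the converse (equality $\Rightarrow$ complete intersection), your sketch has real gaps. First, the flatness step: you claim that a nonzero torsion ideal $\tau$ ``strictly enlarges $C_1^\lambda$'' upon passage to $\tilde T$, but what you need is that the surjection $C_1^\lambda(T)\twoheadrightarrow C_1^\lambda(\tilde T)$ has nonzero kernel, i.e.\ that $\tau\not\subset\wp_\lambda^2$. This is not automatic from $\tau\neq 0$; nilpotency of $\tau$ and Krull intersection only give $\tau\not\subset\wp_\lambda^n$ for \emph{some} $n$, not for $n=2$. Second, even granting flatness, Auslander--Buchsbaum gives $\mathrm{pd}_S(T)=n$, but this does \emph{not} identify the minimal resolution with a Koszul complex nor force the relations to form a regular sequence; that implication is precisely the hard content of the Wiles--Lenstra criterion. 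The paper does not attempt a direct proof here either: it applies Lenstra's strengthening of \cite[Appendix, Proposition~2]{Wi95} to $\tilde T$ (getting $\tilde T$ complete intersection), and then invokes \cite[Appendix, Proposition~1]{Wi95} to the surjection $T\to\tilde T$ to conclude $T=\tilde T$. If you want a self-contained argument you will need to reproduce those two results; your current outline does not.
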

\begin{proof} We explain first the inclusion which is easy. Notice that $\tilde \wp_\lb=\Ker(\tilde T\stackrel{\lb}{\rightarrow}\cO)\subset \tilde T^c$ 
and that it is a faithful $\tilde T^c$ -module. We therefore have
$$\Fitt_\cO (\tilde T^c/\tilde\wp_\lb)\supset \Fitt_\cO(\tilde\wp_\lb\otimes (\tilde T^c/\tilde\wp_\lb))=\Fitt_\cO(C_1^\lb(\tilde T))\supset \Fitt_\cO(C_1^\lb( T))$$
by \cite[Appendix]{MW84}. We observe that the left hand side is equal to $\eta_\lambda$ because of the isomorphism $\tilde T^c/\wp_\lb\cong\cO/\eta_\lb$.
The second assertion is more difficult. We have a surjection $C_1^\lb( T)\to C_1^\lb(\tilde T)$, hence our assumption implies that 
$\Fitt_\cO(C_1^\lb(\tilde T))\supset\eta_\lb$. If $\tilde T$ was Gorenstein, \cite[Proposition 2, Appendix]{Wi95},
would imply that $\tilde T$ is local complete intersection over $\cO$ and that equality $\Fitt_\cO(C_1^\lb(\tilde T))=\eta_\lb$ holds. 
But the assumption of Gorensteiness for Proposition 2 of Appendix of \cite{Wi95} has been removed by Lenstra: the same result holds for $\tilde T$
without this assumption. 
Now, our assumption implies the equality  $\Fitt_\cO(C_1^\lb( T))=\Fitt_\cO(C_1^\lb(\tilde T))=\eta_\lb$; so by \cite[Proposition 1]{Wi95},
we have $\tilde T=T$ and this algebra is complete intersection over $\cO$.
\end{proof}

We put $\delta_\lb:=\Fitt_\cO(C_1^\lb(T))\eta_\lb^{-1}$ and call this ideal the Wiles defect of $T$ with respect to $\lb$. From the proposition above, it measures
how far $T$ is from being complete intersection over $\cO$. We have the following proposition which will be used in the proof of Theorem \ref{ThOPT}.

\begin{pro}\label{Wiles-defect}
We keep the same hypothesis as above and assume furthermore that $T$ has a presentation of the form
$$T\cong \cO[[x_1,\dots,x_g]]/(f_0,f_1,\dots,f_g)$$
with $f_0,f_1,\dots,f_g\in \cO[[x_1,\dots,x_g]]$. Then, 
$$\delta_\lb=\Fitt_\cO(\H_1(L_{T/\cO}\otimes_{T,\lb}\cO))$$
where $L_{T/\cO}$ stands for the cotangent complex of $T$ over $\cO$.
\end{pro}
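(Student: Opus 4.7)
Set $R=\cO[[x_1,\ldots,x_g]]$ and $I=(f_0,\ldots,f_g)$, so that $T=R/I$. My plan is to build a concrete model of the cotangent complex $L_{T/\cO}$ from this presentation, base-change it along $\lb$, and read off the Fitting ideals in terms of the Jacobian matrix $J=(\partial f_i/\partial x_j)$ and a Wiles--type syzygy of $(f_0,\ldots,f_g)$.

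First I would apply the Jacobi--Zariski triangle $L_{R/\cO}\otimes_R^L T\to L_{T/\cO}\to L_{T/R}$. Since $R$ is formally smooth over $\cO$, $L_{R/\cO}\simeq\Omega_{R/\cO}$ is free, so everything comes down to modelling $L_{T/R}$. Because $T$ is finite over $\cO$ and $R$ is regular of dimension $g+1$, the ideal $I$ has grade $g$; with $g+1$ generators the Koszul complex $K(f_0,\ldots,f_g;R)$ then satisfies $\H_i=0$ for $i\geq 2$ while $\H_1$ is cyclic, generated by a single non-trivial syzygy $\mathbf{a}=(a_0,\ldots,a_g)\in R^{g+1}$ with $\sum_i a_i f_i=0$. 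Adjoining one Tate generator in degree $2$ killing $\mathbf{a}$ produces a free DGA resolution of $T$ over $R$, and reading off its module of indecomposables provides a quasi-isomorphism
\[
L_{T/\cO}\;\simeq\;\bigl[\,T\xrightarrow{\mathbf{a}}T^{g+1}\xrightarrow{J}T^{g}\,\bigr]
\]
placed in homological degrees $2,1,0$.

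Next I would derived-tensor with $\cO$ via $\lb$. Since the three terms are $T$-free, this is the ordinary tensor, yielding the complex
\[
\bar L\;=\;\bigl[\,\cO\xrightarrow{\bar{\mathbf{a}}}\cO^{g+1}\xrightarrow{\bar J}\cO^g\,\bigr].
\]
By construction $\H_0(\bar L)=C_1^{\lb}(T)$, so $\Fitt_\cO(\H_0(\bar L))$ is the ideal $(D_0,\ldots,D_g)$ generated by the maximal $g\times g$ minors of $\bar J$. Because $T_K$ is a finite product of fields (\'etale over $K$), the matrix $\bar J$ has generic rank $g$, so $\Ker(\bar J)\subset\cO^{g+1}$ is a free $\cO$-module of rank one, generated by a primitive vector $\mathbf{v}$. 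Both the signed-minor vector $\mathbf{D}$ (in the kernel by Cramer) and $\bar{\mathbf{a}}$ (in the kernel because differentiating $\sum_i a_i f_i=0$ in $R$ and reducing modulo $\lb$ gives $\bar J\cdot\bar{\mathbf{a}}=0$) lie in $\Ker(\bar J)$, so we may write $\mathbf{D}=c\mathbf{v}$ and $\bar{\mathbf{a}}=c'\mathbf{v}$ with $c,c'\in\cO$. Hence $\Fitt_\cO(\H_0(\bar L))=(c)$ and $\Fitt_\cO(\H_1(\bar L))=(c')$.

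To finish, it remains to prove the identity of ideals $\eta_{\lb}=(c)/(c')$, for then
\[
\Fitt_\cO(\H_1(\bar L))=(c')=(c)/\eta_{\lb}=\Fitt_\cO(C_1^{\lb}(T))\cdot\eta_{\lb}^{-1}=\delta_{\lb}.
\]
In the complete-intersection case one has $c'\in\cO^{\times}$, and this collapses to the classical Wiles--Lenstra formula $\eta_{\lb}=(\det\bar J)$ already recorded in Proposition~\ref{LCI}. The main obstacle---and the heart of the argument---is the non-CI identity $\eta_{\lb}=(c)/(c')$. My approach would be to compute $\Ann_T(\wp_{\lb})$ directly from the Koszul--Tate resolution, tracking how the extra Tate generator $z$ with $dz=\mathbf{a}$ modifies the naive ``Jacobian determinant'' formula: the annihilator acquires a factor accounting for $\bar{\mathbf{a}}$ in the dualising computation, and after reducing modulo $\lb$ this manifests as the extra $1/c'$ in the ideal quotient.
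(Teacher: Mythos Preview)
Your approach differs substantially from the paper's, and it contains a genuine gap. The crucial assertion is that $\H_1$ of the Koszul complex $K(f_0,\ldots,f_g;R)$ is \emph{cyclic} over $T$, generated by a single syzygy $\mathbf{a}$. This is false in general: after rearranging the generators (as the paper does in its preparatory lemma) so that $(f_1,\ldots,f_g)$ is $R$-regular and $T_0=R/(f_1,\ldots,f_g)$ is complete intersection, one computes $\H_1(K)\cong\Ann_{T_0}(f_0)\cong\Hom_\cO(T,\cO)=\omega_T$, the dualising module of $T$. This is cyclic over $T$ if and only if $T$ is Gorenstein---an assumption not made here, and one that fails in the paper's main application to Bianchi Hecke algebras. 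Consequently your three-term model $[T\to T^{g+1}\to T^g]$ does not represent $L_{T/\cO}$ in the relevant range, and the clean picture with a single scalar $c'$ collapses. Even granting the model in the Gorenstein case, the identity $\eta_\lb=(c)/(c')$ that you isolate as ``the heart of the argument'' is precisely the statement to be proved, and your closing paragraph only gestures at how to establish it.

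By contrast, the paper sidesteps the cyclicity issue entirely by first proving (via a depth argument on $R$ viewed as a module over $\cO[[y_0,\ldots,y_g]]$) that one may write $T\cong T_0/(f)$ with $T_0$ a finite flat \emph{complete intersection} $\cO$-algebra. The computation then runs through the transitivity triangle for $\cO\to T_0\to T$: since $T_0$ is CI, $L_{T_0/\cO}$ is the two-term complex $[I_0/I_0^2\to\Omega_{R/\cO}\otimes T_0]$ and $\H_1(L_{T_0/\cO}\otimes_{\lb_0}\cO)=0$; and since $T=T_0/(f)$ is cut out by a single element, $L_{T/T_0}$ contributes only the cyclic module $(f)/(f^2)\otimes_{\lb_0}\cO\cong\cO/\lb(\Ann_{T_0}(f))$. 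A separate computation using the Gorenstein pairing on $T_0$ gives $\eta_{\lb_0}(T_0)=\eta_\lb\cdot\lb(\Ann_{T_0}(f))$, and combining this with $\eta_{\lb_0}(T_0)=\Fitt_\cO C_1^{\lb_0}(T_0)$ (Wiles--Lenstra for the CI ring $T_0$) yields the result by multiplicativity of Fitting ideals along the resulting four-term exact sequence. The passage through $T_0$ is exactly what supplies the missing identity you call $\eta_\lb=(c)/(c')$; attempting to extract it ``directly from the Koszul--Tate resolution'' in the non-Gorenstein case would require tracking a non-cyclic $\H_1$ and potentially Tate generators in higher degrees, and it is not clear this can be carried out without essentially reproducing the paper's reduction.
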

Note that no assumption is made about the sequence $(f_0,f_1,\ldots,f_g)$ except that it has $g+1$ terms.
We start the proof by two Lemmas.

For any ideal $J$ of a noetherian ring $S$ and any finitely generated module $S$-module $M$, we denote by $\depth_J(M)$
the $J$-depth of $M$ (Recall that it is the maximal length of $M$-regular sequences in $J$) and by $\pd_S(M)$ the projective dimension
of $M$ over $S$, that is, the minimal length of a finite free resolution of $M$ over $S$.

\begin{lem} \label{regsequ} Let $(S,\m_S)$ be a local noetherian  regular ring which is a flat $\cO$-algebra. 
Let $J\subset S$ be an ideal such that $S/J$ is $\cO$-flat and regular, and let $M$ be an $S$-module such that $\depth_J(M)=g$ and $\pd_S(M)=1$. 
Then there exists an $M$-regular sequence $(y^\prime_1,\ldots,y^\prime_g)$ in $J$ such that 
$(\overline{y}_1^\prime,\ldots,\overline{y}_g^\prime)$ is free in the $g+1$-dimensional vector space
$J/\m_SJ$.
\end{lem}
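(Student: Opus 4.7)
The plan is to construct the sequence inductively, combining prime avoidance on the associated primes of successive quotients of $M$ with a subspace-avoidance argument inside the $k$-vector space $\bar{J}:=J/(\varpi J+J^2)$.

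Suppose inductively that at stage $i\in\{1,\ldots,g\}$ I have built an $M$-regular sequence $(y_1',\ldots,y_{i-1}')\subset J$ whose images in $\bar{J}$ are linearly independent, and put $M_i:=M/(y_1',\ldots,y_{i-1}')M$. Since the partial sequence is $M$-regular and $\depth_J(M)=g$, one has $\depth_J(M_i)=g-(i-1)\geq 1$, so $J$ is not contained in any associated prime of $M_i$; write these primes $Q_1,\ldots,Q_t$. Denoting by $\pi\colon J\twoheadrightarrow \bar{J}$ the projection, I would check that two finite families of proper $k$-subspaces of $\bar{J}$ must be avoided. First, the span $W_{i-1}$ of $\bar{y}_1',\ldots,\bar{y}_{i-1}'$ has dimension $i-1\leq g-1<g+1=\dim_k\bar{J}$ and is therefore proper. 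Second, each $\pi(Q_k\cap J)$ is a $k$-subspace (because $Q_k\cap J$ is an $S$-submodule of $J$ and $S$ acts on $\bar{J}$ through its residue field), and it is proper: otherwise one would have $J=(Q_k\cap J)+\varpi J+J^2\subseteq (Q_k\cap J)+\m_S J$, and Nakayama applied to $J/(Q_k\cap J)$ would force $J\subseteq Q_k$, contradicting $J\not\subset Q_k$.

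Choosing $\bar{y}_i'\in \bar{J}\setminus\bigl(W_{i-1}\cup\bigcup_k \pi(Q_k\cap J)\bigr)$ and lifting arbitrarily to $y_i'\in J$, the condition $\bar{y}_i'\notin W_{i-1}$ preserves linear independence of the enlarged family, while $\bar{y}_i'\notin \pi(Q_k\cap J)$ implies $y_i'\notin Q_k$ for every $k$ (since $y_i'\in Q_k$ would force $\bar{y}_i'\in \pi(Q_k\cap J)$). Hence $y_i'$ is a nonzerodivisor on $M_i$, the sequence $(y_1',\ldots,y_i')$ remains $M$-regular, and iterating up to $i=g$ produces the desired sequence.

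The delicate step is the existence of the avoiding vector $\bar{y}_i'$: a finite union of proper subspaces fails to cover $\bar{J}$ automatically only when the residue field $k$ is infinite, whereas in the paper $k$ is the finite residue field of a $p$-adic ring. I would handle this by a faithfully flat unramified base change $\cO\to\cO'$ to a coefficient ring with sufficiently large residue field — the hypotheses of the lemma (regularity and $\cO$-flatness of $S$, $\cO$-flatness of $S/J$, $\pd_S M=1$, and $\depth_J M=g$) all propagate to $S\otimes_\cO\cO'$ — constructing the sequence there and descending the data via faithfully flat descent, which is the kind of harmless coefficient extension that leaves invariant the Fitting-ideal computation for which the lemma is used in Proposition~\ref{Wiles-defect}.
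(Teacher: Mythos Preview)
Your inductive scheme is sound, and the reduction to avoiding a finite collection of proper $k$-subspaces of $\bar J = J/(\varpi J + J^2)$ is correct. The genuine gap is the one you flag yourself: over the finite residue field of $\cO$ a finite union of proper subspaces can well exhaust $\bar J$, and your proposed remedy---base change to a larger $\cO'$, construct the sequence there, and ``descend''---does not prove the lemma as stated. Elements of $J\otimes_\cO\cO'$ do not descend to $J$, so you would only obtain the sequence after extension; your hedge that the Fitting-ideal computation in Proposition~\ref{Wiles-defect} survives is plausible, but it requires verifying that $T\otimes_\cO\cO'$ stays local and then rewriting the downstream argument over $\cO'$, which is more than a ``harmless'' step and in any case leaves the lemma itself unproved.

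The paper avoids this entirely. Its induction is on $g$, passing to $S/xS$ and $M/xM$ (this is where the hypothesis $\pd_S(M)=1$ is actually used, to propagate to the quotient), so at each stage one needs only a single $M$-regular $x\in J\setminus(\varpi J+J^2)$. If no such $x$ existed, fix any $x\in J\setminus(\varpi J+J^2)$; for every $z\in \varpi J+J^2$ and every $n\geq 1$ the element $x+z^n$ still lies in $J\setminus(\varpi J+J^2)\subset\bigcup_j P_j$, and pigeonholing on $n$ gives $z^n(1-z^m)\in P_{j_0}$ for some $j_0$. Since $z\in\m_S$, the factor $1-z^m$ is a unit, so $z\in P_{j_0}$. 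Thus $\varpi J+J^2\subset\bigcup_j P_j$, hence $J\subset\bigcup_j P_j$, contradicting $\depth_J(M)\geq 1$. No hypothesis on the residue field is needed. Incidentally, your own argument can be salvaged without any base change: the preimage $W:=\pi^{-1}(W_{i-1})$ is an \emph{ideal} of $S$ (because $S$ acts on $\bar J$ through its residue field), so ordinary prime avoidance---one non-prime $W$ together with the finitely many primes $Q_k\in\Ass_S(M_i)$---already yields $y_i'\in J\setminus\bigl(W\cup\bigcup_k Q_k\bigr)$, which is exactly what you need.
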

\begin{proof} The proof is by induction on $g$. For $g=0$, there is nothing to prove.
Assume therefore that $g\geq 1$, in particular, $J\neq 0$ since  $\depth_J(M)=g\geq 1$.
Since $S/J$ is $\cO$-flat and regular,  $J$ is a prime ideal generated by a subset not containing $\varpi$ of a regular set of parameters for the regular local ring $S$ (see for example 
\cite[Theorem 36]{Mat80}).
In particular, $J$ is not contained in $\m_S^2$ and for any $x\in J\backslash J\cap\m_S^2$, $S/xS$ is regular and $\cO$-flat.
In fact, we have $J\cap\m_S^2=\m_SJ$. Indeed, consider the exact sequence
$$0\to J\to\m_S\to\m_S/J\to 0$$
 After tensoring it over $S$ by the residue field $\kappa$ of $S$, we get
\begin{equation}\label{exact2}
J/\m_SJ\stackrel{(*)}{\longrightarrow}\m_S/\m_S^2\to\m_S/J+\m_S^2\to 0
\end{equation}
Let $x_1,\dots,x_d$ a regular  system of parameters for $S$ such that $J=(x_1,\dots, x_r)$. Then $\m_S/J$ is generated by $x_{r+1},\dots, x_d$. Therefore 
the dimensions over $\kappa$ of the left and right terms of the exact sequence \eqref{exact2} are at most respectively $r$ and $d-r$. Since $S$ is regular, the dimension of the middle term is $d$, therefore this forces $(*)$ to be injective and thus $J\cap\m_S^2=\m_SJ$.

\medskip
Let us now return to the proof of our result by induction and assume  that our result holds true for $g-1$. Let
$$0\to S^n\to S^m\to M\to 0$$
be a length one free resolution of $M$.
It is sufficient to show that we can find an $M$-regular element in $x\in J\backslash \m_S J$. 
Indeed if $x\in J\backslash  \m_S J$ is $M$-regular, $\Tor_1^S(M,S/xS)=0$ and therefore 
$0\rightarrow (S/xS)^n\rightarrow (S/xS)^m\rightarrow M/xM\rightarrow 0$
is exact and therefore $\pd_{S/xS}(M/xM)=1$. Now as we have observed before, $S/xS$ is regular of dimension $g+1$ and is $\cO$-flat by our choice of $x$.
Moreover $\depth_{J/xS}(M/xM)=g-1$ since $x$ will be $M$-regular.
Thus we will be able to conclude by the induction hypothesis applied to  
$S/xS $ and $M/xM$.

The $M$-regular elements 
are those 
which are not contained in any prime ideals of $\Ass_S(M)=\{P_1,\dots,P_q\}$ which is the finite set of associate ideals 
of the finite $S$-module $M$. 
Since $\depth_J(M)=g\geq 1$, we know that $J$ is not contained in $\cup_{j=1}^qP_j$, and we just need to show that $J\backslash (J^2+\varpi J)$ 
is not contained in $\cup_{j=1}^qP_j$. 

 Let us prove our claim by contradiction by assuming that  $J\backslash   \m_S J\subset \cup_{j=1}^qP_j$ and choose $x\in  J$ which is not in $  \m_S J$.
Now let us take  any $z\in  \m_S J$. For any positive integer $n$, we have $x+z^n\in J\backslash   \m_S J\subset   \cup_{j=1}^qP_j$. 
Therefore there exists $j_0$ such that $x+z^n\in P_{j_0}$ for infinitely many $n>0$. This implies that 
$z^n(1-z^m)\in P_{j_0}$ for two positive integers $n$ and $m$. But $1-z^m$ is invertible because $z$ 
belongs to the maximal ideal of $S$.
Since $P_{j_0}$ is a prime ideal, this forces $z$ to be in $P_{j_0}\subset   \cup_{j=1}^qP_j $. 
We have therefore proved that $\m_S J\subset \cup_{j=1}^qP_j$ and thus $J\subset \cup_{j=1}^qP_j$ 
which is a contradiction since $\depth_J(M)=g\geq 1$.
\end{proof}

\begin{lem} We keep the hypotheses of the proposition. Then we have an isomorphism
$$T\cong T_0/(f)$$
where $T_0$ is a $\cO$-finite flat local complete intersection and $f\in T_0$.
\end{lem}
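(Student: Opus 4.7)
Let $S := \cO[[x_1, \dots, x_g]]$, a regular local $\cO$-algebra of Krull dimension $g+1$ (hence Cohen--Macaulay), and set $I := (f_0, \dots, f_g) \subset S$, so that $T = S/I$. Since $T$ is $\cO$-finite and flat, it has Krull dimension $1$, and the ideal $I$ has height exactly $g$. If $I$ requires at most $g$ generators, then by Krull's height theorem combined with the Cohen--Macaulayness of $S$ it is generated by exactly $g$ elements forming an $S$-regular sequence, so $T$ itself is a local complete intersection $\cO$-finite flat, and the conclusion holds trivially with $T_0 := T$ and $f := 0$. We therefore assume that $\dim_k I/\m_S I = g+1$.

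The plan is to extract $g$ elements $h_1, \dots, h_g \in I$, taken as $\cO$-linear combinations of the $f_j$, satisfying: (a) the residues $\bar h_1, \dots, \bar h_g$ are $k$-linearly independent in $I/\m_S I$; and (b) $(\varpi, h_1, \dots, h_g)$ is a system of parameters for $S$, equivalently (by Cohen--Macaulayness of $S$) an $S$-regular sequence. Writing $h_i := \sum_{j=0}^{g} a_{ij} f_j$ with $a_{ij} \in \cO$, observe that the image $\bar I \subset \bar S := k[[x_1, \dots, x_g]]$ is $\m$-primary, since $T/\varpi T$ is Artinian. For a sufficiently general $g \times (g+1)$ matrix $(a_{ij})$ modulo $\varpi$, the reductions $\bar h_1, \dots, \bar h_g$ generate an $\m$-primary ideal of the $g$-dimensional regular ring $\bar S$ (which gives (b)), while the reduction of the matrix $(a_{ij})$ itself has rank $g$ (which gives (a)). This simultaneous genericity is precisely what the preceding lemma on regular sequences free in $J/(\varpi J + J^2)$ is designed to produce.

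Granted (a) and (b), set $T_0 := S/(h_1, \dots, h_g)$. By (b), $T_0$ is a local complete intersection of Krull dimension $1$ on which $\varpi$ is a non-zerodivisor; hence $T_0$ is $\cO$-flat, and since $T_0/\varpi T_0$ is Artinian, $T_0$ is also $\cO$-finite. By (a) and Nakayama's lemma, the sequence $(h_1, \dots, h_g)$ extends to a minimal generating set $(h_1, \dots, h_g, f)$ of $I$; denoting again by $f$ its image in $T_0$, we obtain $T = T_0/(f)$, as required.

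The essential (and only nontrivial) step in this plan is the simultaneous achievability of (a) and (b) by a single $g$-tuple of $\cO$-linear combinations of the $f_j$; everything else reduces to standard commutative-algebra bookkeeping.
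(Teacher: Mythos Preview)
Your overall strategy matches the paper's: extract $g$ elements of $I=(f_0,\dots,f_g)$ that form a regular sequence in $R:=\cO[[x_1,\dots,x_g]]$ and extend to a generating set of $I$, then take $T_0$ to be the quotient by these. Conditions (a) and (b) are the right ones, and the deduction of the lemma from them is routine.

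The gap is exactly where you flag it. Your ``sufficiently general matrix'' argument presupposes that the residue field $k$ is infinite, which is not assumed; over a finite $k$ there may be no $g\times(g+1)$ matrix over $k$ simultaneously of rank $g$ and giving an $\m$-primary ideal in $k[[x_1,\dots,x_g]]$. Moreover, your invocation of the preceding lemma does not fit its hypotheses: that lemma asks for a regular ring of dimension $g+2$, an ideal $J$ with flat quotient, and a module $M$ with $\pd=1$, and produces an $M$-regular sequence whose images are free in $J/(\varpi J+J^2)$. Working inside $R$ (dimension $g+1$) with $J=I$ and no specified $M$ matches neither the dimension, the projective-dimension condition, nor the conclusion (independence in $I/\m_R I$ is not the same as freeness in $I/(\varpi I+I^2)$). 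The paper's key move, which you are missing, is to introduce the auxiliary ring $S:=\cO[[y_0,\dots,y_g]]$ and regard $R$ as an $S$-module via $y_i\mapsto f_i$; then $R$ is $S$-finite (since $R/I=T$ is $\cO$-finite) with $\pd_S(R)=1$ by Auslander--Buchsbaum, and the preceding lemma applies with $J=(y_0,\dots,y_g)$ and $M=R$. This yields $y'_1,\dots,y'_g\in J$ whose images in $R$ form an $R$-regular sequence and whose classes in $J/(\varpi J+J^2)\cong k^{g+1}$ are independent, so that they extend to a generating set of $J$ and hence their images extend to a generating set of $I$. That auxiliary-ring construction is precisely what replaces the genericity argument and works over an arbitrary residue field; without it, the step you call ``the only nontrivial step'' remains unproved.
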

\begin{proof} 
Let us consider $R:=\cO[[x_1,\dots,x_g]]$ as an $S:=\cO[[y_0,\dots,y_g]]$-algebra via the morphism $\varphi$ sending 
$y_i$ to $f_i$ for $i\in\{0,\dots,g\}$.
Let $I_S=(y_0,\dots,y_g)$ and $I_R=(f_0,\dots,f_g)=\varphi(I_S)$. Since $R$ is regular, we have
$$\depth_{I_S}(R)=\depth_{I_R}(R)=\dim R-\dim R/I_R=1+g-1=g$$
(recall $R/I_R=T$ is $1$-dimensional).
Therefore we deduce that there exist $y'_1,\dots, y'_g\in I_S$ such that the sequence $(y'_1,\dots,y'_g)$ is $R$-regular.
Moreover, by Lemma \ref{regsequ} with $J=I_S$ and $M=R$, we may assume that these elements are not divisible by $\varpi$ and that their image in $I_S/\m_S I_S$ 
is a part of an $\cO/\varpi\cO$-basis
of this quotient. Therefore the $y'_1,\dots,y'_g$ can be completed into a system of generators $y'_0,\dots,y'_g$ for $I_S$. 
Let $f'_i$ be the image of $y'_i$ in $R$. Then we see that $T_0:=R/(f'_1,\dots,f'_g)$ is complete intersection over $\cO$
since the $y'_1,\dots,y'_g$ form an $R$-regular sequence, and $T=T_0/(f)$ with $f$ the image of $y'_0$ in $T_0$.

\end{proof}

We now return to the proof of Proposition \ref{Wiles-defect}. 

\begin{proof}

Since $T_0$ is local complete intersection it is in particular Gorenstein. Using the invariant pairing on $T_0$, we see that
\begin{eqnarray}\label{duality}
\Hom_\cO(T,\cO)=\Hom_\cO(T_0/(f),\cO)\cong \Ker( T_0\stackrel{\times f}{\rightarrow }T_0)=\Ann_{T_0}(f) 
\end{eqnarray}
We consider the base change situation $\lb_0\colon T_0\rightarrow \tilde T \stackrel{\lb}{\rightarrow}\cO$ 
(we write $\tilde T= T/(\cO-tors)$ as at the beginning of this section). Similarly to the exact sequence \eqref{excongid} in the proof of
Lemma  \ref{BC-congruenceN}, we have the exact sequence
$$
0\rightarrow {(\Ann_{T_0}(f))^\lb \over (T_0)_\lb}\rightarrow {( T_0)^\lb\over (T_0)_\lb}\rightarrow {(T_0)^\lb \over \Ann_{T_0}(f))^\lb}\rightarrow 0
$$
and therefore
\begin{equation}\label{BCCN}
\eta_{\lb_0}(T_0)=\eta_\lb(\Ann_{T_0}(f))\cdot\lb(\Ann_{T_0}(f))= \eta_\lb \cdot\lb(\Ann_{T_0}(f))
\end{equation}
since $ \eta_\lb(\tilde T)= \eta_\lb(\Ann_{T_0}(f))$ from \eqref{duality} and Proposition \ref{Pontryaginprop}.

For any ring $A$ and $B$ an $A$-algebra, we denote by $L_{B/A}$ the cotangent complex of $B$ over $A$. The distinguished triangle associated to $\cO\rightarrow T_0\rightarrow T$,
 gives by tensoring with $\cO$ over $\lambda\colon T\to \cO$ (resp. $\lambda_0\colon T_0\to \cO$)
 another distinguished triangle
\begin{eqnarray*}
L_{T_0/\cO}\otimes_{\lambda_0,T_0}\cO\rightarrow L_{T/O}\otimes_{\lambda,T}\cO\rightarrow L_{T/T_0}\otimes_{\lambda,T}\cO\rightarrow L_{T_0/O}\otimes _{\lambda_0,T_0}\cO[1]
\end{eqnarray*}
providing the long exact sequence
\begin{eqnarray*}  \H_1(L_{T_0/\cO}\otimes_{\lb_0,T_0}\cO)\rightarrow \H_1( L_{T/\cO}\otimes_{\lb,T}\cO)\rightarrow
 \qquad \qquad\qquad \qquad \qquad  \\  \qquad \qquad\qquad\rightarrow  
(f)/(f^2)\otimes _{T_0,\lb_0}\cO
\rightarrow 
\Omega_{T_0/O}\otimes_{\lb_0,T_0}O\rightarrow \Omega_{T/\cO}\otimes_{\lambda,T}\cO\rightarrow 0
\end{eqnarray*}

Let us show $ \H_1(L_{T_0/\cO}\otimes_{\lb_0,T_0}\cO)=0$.
Since $T_0$ is local complete intersection $T_0\cong \cO[[x_1\dots,x_g]]/(f_1,\dots,f_g)$ 
where $(f_1,\dots, f_g)$ is a regular sequence generating an ideal $I$ and
$L_{T_0/\cO}$ is quasi-isomorphic to the perfect complex $[I/I^2\rightarrow \Omega_{ \cO[[x_1\dots,x_g]]/\cO}]$ 
with $I/I^2$ is placed in degree $-1$. 
In particular, one sees that
$ \H_1(L_{T_0/\cO}\otimes_{\lb_0,T_0}\cO)$ is the kernel of the canonical map
$$I/I^2\otimes_{\lb_0,T_0}\cO\rightarrow \Omega_{ \cO[[x_1\dots,x_g]]/\cO}\otimes\cO$$
which is injective since we know that the source and target are free $\cO$-module of rank $g$ and that its cokernel 
is $C_1^{\lb_0}(T_0)$ which is $\cO$-torsion. 
We deduce that
we have the exact sequence:
\begin{eqnarray*}  0\rightarrow \H_1( L_{T/\cO}\otimes_{\lb,T}\cO)\rightarrow (f)/(f^2)\otimes _{T_0,\lb_0}\cO
\rightarrow 
\Omega_{T_0/O}\otimes_{\lb_0,T_0}O\rightarrow \Omega_{T/\cO}\otimes_{\lambda,T}\cO\rightarrow 0
\end{eqnarray*}
Since $(f)/(f^2)\cong T_0/\Ann_{T_0}(f)$, we deduce that $\Fitt_\cO((f)/(f^2)\otimes _{T_0,\lb_0}\cO)=\lb(\Ann_{T_0}(f))$.
and (since all the modules in the exact sequence are finite) we deduce
\begin{eqnarray} \label{CCT}
\Fitt_\cO C_1^\lb(T)\cdot \lb(\Ann_{T_0}(f)) = \Fitt_\cO C_1^{\lb_0}(T_0)\cdot \Fitt_\cO \H_1(L_{T/\cO}\otimes_{\lb,T}\cO)
\end{eqnarray}
Since $T_0$ is local complete intersection, by Proposition \ref{LCI} we know that $\eta_{\lb_0}(T_0)=\Fitt_\cO C_1^{\lb_0}(T_0)$. 
By combining  \eqref{CCT} and \eqref{BCCN}, our claim follows.
\end{proof}

\medskip
We end this section by a definition and a lemma which shows its importance.
As in the beginning of Section \ref{basechange}, we give ourselves a "base change datum"
$T'\stackrel{\theta}{\rightarrow} T\stackrel{\lambda}{\rightarrow} \cO$ where $\theta\colon T^\prime\to T$ is a homomorphism  between two finite flat local $\cO$-algebras 
with $T_K$ and $T'_K$ semisimple 
such that $T'_K\rightarrow T_K$ is surjective and where  $\lb\colon T\to \cO$ is an homomorphism of $\cO$-algebras. We denote again $\lb^\prime:=\lb\circ\theta$.

\begin{de}\label{C1BCde}  The base change higher congruence module $C_1^{\lambda,\sharp}$ is defined as
$$C_1^{\lambda,\sharp}=\ker\theta \otimes_{\lb'}\cO.$$
\end{de}
With this definition, we have

\begin{lem}\label{C1BCle} Assume that we have a base change datum $T'\stackrel{\theta}{\rightarrow} T\stackrel{\lambda}{\rightarrow} \cO$ 
such that $T$ is local complete intersection.
We have the following short exact sequence
$$0\rightarrow C_1^{\lambda,\sharp} \rightarrow C_1^{\lb'}(T')\rightarrow C_1^{\lb}(T)\rightarrow 0.$$
in particular, we have
$$\Fitt_\cO(C_1^{\lb'}(T'))=\Fitt_\cO (C_1^{\lb}(T))\cdot \Fitt_\cO (C_1^{\lambda,\sharp})$$
\end{lem}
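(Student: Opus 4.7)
The plan is to deduce the short exact sequence from the Jacobi--Zariski triangle attached to the tower $\cO\to T^\prime\stackrel{\theta}{\to}T$. The distinguished triangle
$$L_{T^\prime/\cO}\otimes^{\mathbb{L}}_{T^\prime}T\to L_{T/\cO}\to L_{T/T^\prime}$$
in the derived category of $T$-modules, after applying $-\otimes^{\mathbb{L}}_{T,\lb}\cO$ and taking the associated long exact sequence of homology (using the canonical identifications $\H_0(L_{A/\cO}\otimes^{\mathbb{L}}_\mu\cO)=\Omega_{A/\cO}\otimes_\mu\cO=C_1^\mu(A)$), produces the exact sequence
$$\H_1(L_{T/\cO}\otimes^{\mathbb{L}}_{\lb}\cO)\to \H_1(L_{T/T^\prime}\otimes^{\mathbb{L}}_{\lb}\cO)\to C_1^{\lb^\prime}(T^\prime)\to C_1^{\lb}(T)\to 0.$$

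Next I would use the local complete intersection hypothesis on $T$ to kill the leftmost term. Writing $T=S/(f_1,\ldots,f_g)$ as in Proposition \ref{Wiles-defect} with $S=\cO[[x_1,\ldots,x_g]]$ and $(f_1,\ldots,f_g)$ a regular sequence generating an ideal $J$, the complex $L_{T/\cO}$ is quasi-isomorphic to the two-term complex $[J/J^2\stackrel{d}{\to}\Omega_{S/\cO}\otimes_S T]$ of free $T$-modules of the same rank $g$. Tensoring over $T$ with $\cO$ via $\lb$ gives a map of free $\cO$-modules $\cO^g\to\cO^g$ whose cokernel is the finite module $C_1^\lb(T)$; over the DVR $\cO$ such a map of equal-rank free modules with finite cokernel is injective, so its kernel $\H_1(L_{T/\cO}\otimes^{\mathbb{L}}_\lb\cO)$ vanishes.

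For the remaining term, recall the general fact that for any surjection $T^\prime\twoheadrightarrow T$ with kernel $I=\Ker\,\theta$ the relative cotangent complex $L_{T/T^\prime}$ is connective with $\H_0(L_{T/T^\prime})=\Omega_{T/T^\prime}=0$ and $\H_1(L_{T/T^\prime})=I/I^2$. The universal coefficient spectral sequence $E^2_{p,q}=\Tor_p^T(\H_q(L_{T/T^\prime}),\cO)\Rightarrow\H_{p+q}(L_{T/T^\prime}\otimes^{\mathbb{L}}_\lb\cO)$ then yields, thanks to the vanishing of $\H_0$, the identification
$$\H_1(L_{T/T^\prime}\otimes^{\mathbb{L}}_\lb\cO)=(I/I^2)\otimes_{T,\lb}\cO=I\otimes_{T^\prime,\lb^\prime}\cO=C_1^{\lb,\sharp},$$
the middle equality using $I\subset\wp_{\lb^\prime}$ so that $I^2\subset\wp_{\lb^\prime}\cdot I$, so that both sides coincide with $I/\wp_{\lb^\prime}I$.

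Combining these three steps gives the claimed short exact sequence $0\to C_1^{\lb,\sharp}\to C_1^{\lb^\prime}(T^\prime)\to C_1^{\lb}(T)\to 0$. The multiplicativity of Fitting ideals then follows at once: all three modules are finite $\cO$-modules, and over the DVR $\cO$ lengths are additive in short exact sequences, so Fitting ideals, being determined by total length, multiply as asserted. The delicate point is the derived computation of $\H_1(L_{T/T^\prime}\otimes^{\mathbb{L}}\cO)$: one must ensure that potentially nontrivial higher homology of $L_{T/T^\prime}$ (which may appear when $I$ is not cut out by a regular sequence) does not contribute to $\H_1$, and this is precisely guaranteed by the vanishing of $\H_0(L_{T/T^\prime})$ together with the absence of incoming differentials in the spectral sequence at the $(0,1)$ spot.
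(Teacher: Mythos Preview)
Your proof is correct and follows essentially the same strategy as the paper's: the transitivity triangle for the cotangent complex of $\cO\to T'\to T$, the vanishing of $\H_1(L_{T/\cO}\otimes_\lb\cO)$ from the LCI presentation of $T$, and the identification $\H_1(L_{T/T'}\otimes_\lb\cO)\cong (\Ker\theta/(\Ker\theta)^2)\otimes_{\lb'}\cO = C_1^{\lambda,\sharp}$. You are in fact slightly more explicit than the paper in justifying this last identification via the universal-coefficient spectral sequence and in flagging the surjectivity of $\theta$ (which both arguments use, though it is not listed among the hypotheses of the lemma).
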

\begin{proof}
We have the distinguished triangle 
\begin{eqnarray}\label{triangle}
L_{T'/\cO}\otimes_{\lambda',T'}\cO\rightarrow L_{T/O}\otimes_{\lambda,T}\cO\rightarrow L_{T/T'}\otimes_{\lambda,T}\cO\rightarrow L_{T'/O}\otimes _{\lambda',T'}\cO[1]
\end{eqnarray}
Since $T$ is complete intersection, we have as in the previous proof $ \H_1(L_{T/\cO}\otimes_{\lb,T}\cO)=0$, therefore taking the cohomology of the triangle \eqref{triangle}, we have the short exact sequence
$$0\rightarrow {\Ker\theta \over (\Ker\theta)^2}\otimes_{\lb',T'}\cO  \rightarrow C_1^{\lb'}(T')\rightarrow C_1^{\lb}(T)\rightarrow 0$$
Since ${\Ker\theta \over (\Ker\theta)^2}\otimes_{\lb',T'}\cO =\Ker\theta \otimes_{\lb',T'}\cO $, our claim follows.
\end{proof}

\section{Congruence ideals associated to classical cuspforms} \label{sectManinCongruence}

The purpose of this section is to recall some well-known facts about the congruence ideals associated to cuspidal eigenforms as a pretext to fix some notations.
Let us fix an odd prime $p$ and an isomorphism $\iota_p\colon \C\cong\overline{\Q}_p$ so
 that every complex number can be considered as an element of $\overline\Q_p$ via this isomorphism.
 If $A$ is an ideal of a finite extension of $\Z_p$ and $B$ is an element of $\overline\bQ_p$, we will write $A\sim B$ if a 
 generator of $A$ has same $p$--adic valuation as $B$. If $A$ and $B$ are two complex numbers
 then $A\sim B$ will mean that either both are zero or that their quotient is a $p$-adic unit.

\medskip
Let $f$ be a primitive cuspidal eigenform of weight $k\geq 2$ and level $N$ with trivial Nebentypus.
This hypothesis on the Nebentypus
could be relaxed if a slight generalization of Cornut-Vatsal results was proven. 
$$f(z)=\sum_{n=1}^\infty a_n q^n\hbox{ with }q=e^{2i\pi z}$$
for all $z$ in the Poincar\'e upper half-plane $\h$.
Throughout this work, we assume that $p>k-2$ and that $N$ is prime to $p$. %
Let $K\subset \overline{\Q}_p$ be a $p$-adic field
containing the
Hecke eigenvalues of $f$. We denote by  $\cO$ its valuation ring with uniformizer parameter $\varpi$ and residue field $\F=\cO/\varpi\cO$.
Let $h_k(N;\cO)$ be the $\cO$-algebra generated by the Hecke operators outside $N$ acting on the space of cusp forms of weight $k$ for $\Gamma_0(N)$.
 Let $\m$ be the maximal ideal of $h_k(N;\cO)$ 
associated to the eigensystem $\lb_f\colon h_k(N;\cO)\rightarrow \cO$ of $f$ modulo $\varpi$ and denote by $\T$ the localization of $h_k(N;\cO)$ at $\m$. 
We have therefore a homomorphism $\lambda_f\colon\T\to\cO$ sending Hecke operators to their eigenvalues on the eigenvector $f$.

\medskip
Manin has defined periods $\Omega_{f}^{\pm}\in \C^\times/\cO^\times$ associated to $f$. We now recall their definitions.
Let $\cV_k$ be the local system over the modular curve $Y_0(N)(\C)$ associated to the representation of $\Gamma_0(N)$ 
on the $\cO$-module of homogeneous polynomials of degree $k-2$ in two variables $X,Y$, 
for the action $(\gamma\cdot P)(X,Y)=P((X,Y)\cdot{}^t\gamma^{-1})$.
Let $\H$ be the torsion-free quotient of the interior Betti cohomology $\H^1_{!,B}(Y_0(N),\cV_k(\cO))$ 
(as usual, interior cohomology refers to the image of $\H^1_c\to\H^1$). 
By the Eichler-Shimura isomorphism, the $\lambda_f$-isotypic submodule $\H[\lambda_f]$ of $\H$
is free of rank $2$ over $\cO$.
Since $p$ is odd, the complex conjugation acting on the pair $(Y_0(N)(\C),\cV_k)$ provides a decomposition $\H=\H^+\oplus \H^-$;
the intersection $\H[\lambda_f]^{\pm}$ of $\H^{\pm}$ with $\H[\lambda_f]$ 
is a free $\cO$-module of rank one. 
Let $\delta_f^{\pm}$ be an $\cO$-basis of this module. Let $f^c=\overline{f(-\overline{z})}$
The cusp forms $f$ and $f^c$ provide two de Rham cohomology classes $\omega_f=f(z)(X-zY)^{k-2}dz$ and $\overline{\omega}_{f^c}=\overline{f^c(z)}(X-\overline{z}Y)^{k-2}d\overline{z}$, 
both in $\H[\lambda_f]\otimes_{\cO} \C$.
Let
$$\omega_f^{\pm}={1\over 2}\cdot({\omega}_{f}\pm \overline{\omega}_{f^c}).$$
Since $\omega_f^{\pm}\in \H[\lambda_f]^{\pm}\otimes_{\cO}\C$, there exists a unique $\Omega_f^{\pm}\in\C^\times/\cO^\times$ such that $\omega_f^{\pm}=\Omega_f^{\pm}\cdot\delta_f^{\pm}$.
 These complex number are called the Manin periods of $f$. They correspond to the Deligne periods of the dual of the motive $M_f$ associated to $f$.
  
 \medskip 
We now recall the definition of the Hecke congruence ideal $\eta_{f}$.
We have a splitting over $K$:
$\T\otimes_\cO K=K\times \T^c_K$ where the first projection is given by $\lambda_f$
and $\T^c_K$ is a finite $K$-algebra. 
Let $\T^c=\im(\T\to \T^c_K)$ be the image of $\T$ by the second projection.
We have an injective $\cO$-algebra homomorphism $\T\hookrightarrow \cO\times \T^\c$.
We denote by $e_\cO=(1,0)$ and $e_{\T^c}=(0,1)$ the idempotents associated to the cartesian product on the right-hand side.
Let 
$$C_0(f):=C_0^{\lb_f}(\T) =  (\cO\times \T^c)/\T=\cO\otimes_{\T} \T^c$$
be the Hecke congruence module. By its second description, it is an $\cO$-algebra which is a quotient of $\cO$ 
(and of $\T^c$).
We define the "Hecke" congruence ideal $\eta_{f}$ (defined up to multiplication by an element of $\cO^\times$) 
as a generator of the ideal of $\cO$ 
given by $\T\cap(\cO\times\{0\})$. It is also the denominator of $e_\cO$ in $\T$. We have
$$C_0(f)=\cO/\eta_{f}.$$
We can also define  the cohomological congruence module associated to $M=\H^{\pm}_\m$ (or, equivalently, to $\H^{\pm}$).
We put
$$M^{\cO}=e_\cO\cdot \H_{\m}^\pm \quad\mbox{\rm and}\, M_{\cO}=\H_{\m}^\pm\cap e_\cO\cdot \H_{\m}^\pm[{1\over p}]=\H^\pm_{\m}[\lambda_f].$$
These $\cO$-modules are free of rank $1$ and $\delta^\pm_f$ is a basis of $M_\cO$. From the Hecke-equivariant perfect duality 
(see for instance \cite[Propositions 2.1 and 3.3]{H81}) between $\H^+_\m$ and $\H^-_{\m}$,
we know from Proposition \ref{Pontryaginprop} that  the  resulting  congruence module is independant of the sign $\pm$  and we put
$$C_0^{coh}(f):=C_0^{\lb_f}(M)=M^{\cO}/ M_{\cO}\cong \cO/\eta_f^{coh}$$
with  $\eta_f^{coh}=[\delta_f^+,\delta_f^-]$ where $[\bullet,\bullet]$ stands for the Poincar\'e duality pairing 
twisted by the Atkin-Lehner involution
(see for instance \cite[Proposition 2.1 and 3.3]{H81}.
It is called the "cohomological" congruence ideal associated to $f$
(well defined up to an element of $\cO^\times$).
If $\H^{\pm}_{\m}$ is free over $\T$, then we have  
$$\eta_{f}^{coh}=\eta_{f}.$$
This congruence ideal can be computed in terms of the Adjoint $L$-function associated to $f$. 
Before introducing it and stating the main result due to Hida, we recall
some useful conditions on the $p$-adic Galois representations associated with $f$.

\medskip
Let $\rho_f\colon\Gamma_\Q\to\GL_2(\cO)$ be 
 the $p$-adic Galois representation associated to $f$ and let $\overline{\rho}_f\colon\Gamma_\Q\to\GL_2(\F)$
its reduction modulo $\varpi$. Let $E$ be a number field. We will consider the following hypotheses:
\medskip
\begin{itemize}
\item[(Irr${}_E$)] The restriction of  $\overline\rho_f $ to $\Gamma_E$ is absolutely irreducible
\end{itemize}
\medskip
For $E=\Q$, we will just write $(\Irr)$.
Recall \cite[Theorem 2.1]{Wi95}  that under $(\Irr)$ and $p>k$, the module $\H_\m$ is free 
over the localized Hecke algebra $\T$ and therefore $\T$ is Gorenstein.

\medskip

For any place $v$ of $E$ dividing $N$, let $I_v\subset\Gamma_E$ be an inertia subgroup. 
The following condition will be also considered:

\begin{itemize}
\item[(Min${}_E)$] $\rho_f|_{\Gamma_E}$ is a minimal deformation lift of $\overline \rho_f|_{\Gamma_E}$: for any place $v$ of $E$
dividing $N$, if $\ord_\ell(N)=1$, we assume $\overline{\rho}_f\vert_{I_v}\neq 1$ and 
If $\ord_v(N)>1$, we assume that the reduction induces an isomorphism ${\rho}_f(I_v)\cong\overline{\rho}_f(I_v)$ and 
the latter group acts irreducibly on $k^2$. 
\end{itemize}

We simply write $(\Min)$ if $E=\Q$. 
%
%
The following assumptions will be in force throughout the paper as it will be used to apply a result of Cornut-Vatsal and its 
generalization by Chida-Hsieh.
\begin{itemize}
\item[(CV)] There exists a prime $\ell$ such that the restriction of $\overline \rho_f|_{\Gamma_{\Q_\ell}}$ 
to a decomposition subgroup at $\ell$ is indecomposable. If there is a prime $q|N$ such that $q^2|N$ and $q\equiv -1\pmod p$, then we assume that $\overline{\rho}_f$ restricted to the Inertia subgroup at $q$ is irreducible.

\end{itemize}
For example, the condition (CV) holds if there is a prime $\ell$ such that $\ord_\ell(N)=1$ and $(\Min)$ holds at $\ell$. 

\medskip
Let $\Ad(\rho_f)\subset\rho_f^\vee\otimes\rho_f$ be the adjoint representation of $\rho_f$ acting on the space of trace zero endomorphisms. 
For any Dirichlet 
character $\alpha$, we consider the imprimitive adjoint $L$-function defined as follows.
\begin{eqnarray*}
L(\Ad f\otimes\alpha,s):=\prod_{\ell\not{|}N} \det(Id-\Ad(\rho_f)(\Frob_\ell)\alpha(\ell))\ell^{-s})^{-1}  \times\qquad\\
\qquad \times \prod_{\ell|N}\left\{ \begin{array}{l@{\quad}l}
 (1-\alpha(\ell)\ell^{-s}) ^{-1} &  \hbox{if $f$ is principal series at $\ell$}\\
(1-\alpha(\ell)\ell^{-s-1}) ^{-1} &  \hbox{if $f$ is special at $\ell$}
\end{array}\right.
\end{eqnarray*}
where $\Frob_\ell$ stands for a geometric Frobenius at $\ell$. When $\alpha$ is trivial, we just write $L(\Ad(f),s)$. Let $\varphi(N):=\# (\Z/N\Z)^\times$, 
the following theorem links the congruence ideals we have defined above to the special value of $L(\Ad f,s)$ at $s=1$.

\begin{thm} \label{classical}a) For $p$ a prime not dividing $6N\varphi(N)$ such that $p>k-2$, we have 
$${L(\Ad\,f,1)\over\pi^{k+1}\Omega_f^+\Omega_f^-}\sim \eta_f^{coh}$$
b)If moreover $(\Irr)$ and $p>k$ hold, then we have
$${L(\Ad\,f,1)\over\pi^{k+1}\Omega_f^+\Omega_f^-}\sim \eta_f.$$
\end{thm}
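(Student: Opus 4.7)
The plan is to follow Hida's strategy, combining an integral representation of $L(\Ad f,1)$ as a Petersson inner product with the Eichler--Shimura comparison, and then reading off the congruence ideal from Poincar\'e duality on $\H_{!,B}^1(Y_0(N),\cV_k)$.

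\textbf{Step 1: The Rankin--Shimura formula.} First I would recall Shimura's identity
\[
\langle f,f\rangle \;\sim\; \pi^{-k-1}\,L(\Ad f,1)
\]
(up to elementary factors involving $N$ and $\varphi(N)$, which by hypothesis are $p$-adic units). Here $\langle\cdot,\cdot\rangle$ denotes the normalized Petersson inner product on $S_k(\Gamma_0(N))$. The proof uses the Rankin--Selberg integral of $f\bar f$ against an Eisenstein series, together with the explicit evaluation of the resulting Gamma factors; the hypotheses $p>k-2$ and $p\nmid 6N\varphi(N)$ ensure that the auxiliary constants (Gamma values, Euler factors at bad primes, $N$-th roots of unity from the Atkin--Lehner normalization) are $p$-units.

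\textbf{Step 2: From Petersson to Poincar\'e.} Next I would translate the Petersson pairing into the cohomological pairing $[\cdot,\cdot]\colon \H^+\times \H^-\to \cO$ coming from Poincar\'e duality on $Y_0(N)$ twisted by the Atkin--Lehner involution $w_N$. Under the Eichler--Shimura isomorphism, $\omega_f$ and $\overline{\omega}_{f^c}$ are precisely the de~Rham avatars of $f$, and an explicit de~Rham computation gives
\[
[\omega_f^{+},\omega_f^{-}] \;\sim\; \langle f,f\rangle,
\]
again up to $p$-adic units under our assumptions on $p$ and $N$.

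\textbf{Step 3: Extracting the periods.} Writing $\omega_f^{\pm}=\Omega_f^{\pm}\cdot\delta_f^{\pm}$ by the very definition of the Manin periods, bilinearity of $[\cdot,\cdot]$ gives
\[
[\omega_f^{+},\omega_f^{-}] \;=\; \Omega_f^{+}\Omega_f^{-}\cdot [\delta_f^{+},\delta_f^{-}].
\]
By the discussion preceding the theorem, $[\delta_f^{+},\delta_f^{-}]$ generates $\eta_f^{coh}$. Combining Steps~1--3 yields part (a).

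\textbf{Step 4: From $\eta_f^{coh}$ to $\eta_f$.} For part (b), I invoke the freeness result of Wiles (Theorem~2.1 of \cite{Wi95}): under $(\mathrm{Irr})$ and $p>k$ the modules $\H_\m^{\pm}$ are free of rank one over $\T$. Applying Proposition~\ref{Pontryaginprop} to the Poincar\'e pairing on $\H_\m^{\pm}$ (which is perfect and $\T$-equivariant) identifies $C_0^{\lambda_f}(\H_\m^{+})=C_0^{\lambda_f}(\T)$, hence $\eta_f^{coh}=\eta_f$, and part (b) follows.

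\textbf{Main obstacle.} The delicate point is Step~1--2: turning the classical complex analytic identity into an \emph{integral} relation valid up to $p$-adic units. This requires carefully bookkeeping the Atkin--Lehner normalization, the Euler factors at primes dividing $N$ in the imprimitive $L$-function, and the powers of $\pi$ and of $2$ arising from the Gamma factors and from the $\tfrac12$ in the definition $\omega_f^{\pm}=\tfrac12(\omega_f\pm\overline{\omega}_{f^c})$; this is precisely why the hypothesis $p\nmid 6N\varphi(N)$, $p>k-2$ is needed.
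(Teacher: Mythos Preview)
Your proposal is correct and follows essentially the same approach as the paper: the paper's proof of (a) likewise cites Hida's computation of $[\delta_f^+,\delta_f^-]=[\omega_f^+,\omega_f^-]/(\Omega_f^+\Omega_f^-)$ in terms of the Petersson norm together with Shimura's relation between $\langle f,f\rangle$ and $L(\Ad f,1)$, and for (b) invokes Wiles's freeness of $\H_\m$ over $\T$ (attributed to an argument of Mazur) to conclude $\eta_f^{coh}=\eta_f$. Your Steps~1--4 simply unpack these same ingredients with a bit more detail on the bookkeeping of $p$-adic units.
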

\begin{proof}
a) is due to Hida  (see \cite[Theorem A]{H81} and \cite{H88}) and follows from the computation of 
$[\delta_f^+,\delta_f^-]={[\omega_f^+,\omega_f^-]\over \Omega^+_f\Omega^-_f}$ in terms of the Petersson norm of $f$ which
is closely related to the adjoint L-value at $1$ by a result due to Shimura. We refer to Hida's papers for the precise computations. 
For b), under these hypothesis, it is known from \cite{Wi95} using an argument essentially due to Mazur
that $\H_{\m}$ is free over $\T$ and therefore
$\eta_f=\eta_f^{coh}$, so the result follows from a).
\end{proof}

\section{The case of totally real fields}
\subsection{Definition of periods and the integral period relation conjecture}\label{periodsF}

Let $F$ be a totally real field of degree $d$, discriminant $D$, $\cO_F$ its ring of integers and $I_F=\Hom_{\alg}(F,\overline{\Q})$ 
be the set of embeddings of $F$. 
Let $t=\sum_\sigma\sigma\in\Z[I_F]$. 
Let $\widehat{\cO}_F=\cO_F\otimes\widehat{\Z}$, $F_f=\widehat{\cO}_F\otimes\Q$,
$F_\infty=F\otimes\R$, and $\bA_F=F_f\times F_\infty$ be the ring of ad\`eles of $F$.
Let $\h$ be the upper-half plane. The group $\GL^+_2(F_\infty)$ of matrices with totally positive determinant acts transitively on $\h^{I_F}$ and
we have $\GL^+_2(F_\infty)/F_\infty^\times\SO_2(F_\infty)\cong \h^{I_F}$ via $g\mapsto g(\underline{i})$ where 
$\underline{i}=(\sqrt{-1},\ldots,\sqrt{-1})$. 
Let $k\geq 2$ and $\n$ an ideal of $\cO_F$ such that $(N)=\Z\cap\n$. 
Let $$U_0(\n)=\{\left(\begin{array}{cc} a&b\\c&d\end{array}\right)\in \GL_2(\widehat{\cO}_F); c\equiv 0\pmod{\n}\}$$
For any subset $I\subset I_F$, let
$S_k^F(I,\n)$ be the space of adelic Hilbert cuspforms of weight $k$ and level $U_0(\n)$ 
which are holomorphic with respect to the variables $z_\tau\in \h$, $\tau\in I$ and antiholomorphic 
with respect to the variables $z_\tau$ for $\tau\notin I$. We simply write $S_k^F(\n)$ for $S_k^F(I_F,\n)$.
Let ${\ff}\in S_k^F(\n)$ be a normalized cuspidal newform. 
Let $K$ be a finite extension of $\Q_p$ containing the Galois closure of $F$ and the Hecke eigenvalues of $\ff$.
Let $\cO$ be its valuation ring. 


We assume $\n\cap\Z=N\Z$ with $N>3$. Then the congruence subgroup
 $$U(\n)=\{\left(\begin{array}{cc} a&b\\c&d\end{array}\right); c\equiv 0\pmod{\n}, d\equiv 1\pmod{\n}\}$$
is neat and normal in $U_1(\n)$. Let $\varphi_F(\n)=[U_0(\n):U_1(\n)]=\Card (\cO_F/\n)^\times=N(\n)\prod_{\lambda\vert N}(1-N(\lambda)^{-1})$.
We assume that $p$ is prime to $N(\n)\varphi_F(\n)$, thus it is also prime to $[U_0(\n):U(\n)]$.
Let
$$X_F=\GL_2(F)\backslash (\GL_2(F_f)/U_0(\n)\times \h^{I_F})$$
be the Hilbert modular orbifold of level 
$U_0(\n)$ and
$$Y_F=\GL_2(F)\backslash (\GL_2(F_f)/U(\n)\times \h^{I_F})$$
be the Hilbert modular variety of level 
$U(\n)$.
Let $\cV^F_k$ the locally constant sheaf in $\cO$-modules on $Y_F$ associated to the module 
$\bigotimes_{\tau\in I_F}\cO[X_\tau,Y_\tau]_{k-2}$ where $\cO[X_\tau,Y_\tau]_{k-2}$ denotes the 
$\GL_2(\cO_F)$-module of homogeneous polynomials of degree $k-2$ in two variables. 
We can put 
$$\H^d_?(X_F,\cV^F_k)=\H^0(U_0(\n)/U(\n),\H^d_?(Y_F,\cV^F_k(\cO)))$$
for $?=\emptyset,c,!$ (with the usual notation for the interior cohomology $\H^ \bullet_!=\im(\H^\bullet_c\to\H^\bullet)$).
Let us write for short $\H^\bullet(F)=\H^\bullet(X_F,\cV_k(\cO))$,  $\H_c^\bullet(F)=\H_c^\bullet(X_F,\cV^F_k(\cO))$ and 
$\H_!^\bullet (F)=\im (\H_c^\bullet(X_F,\cV^F_k))\to\H^\bullet(X_F,\cV^F_k)))$.

\medskip
When $\n=N. O_F$ and $F$ is Galois over $\Q$, the Galois group $Gal(F/\Q)$  acts canonically on the 
moduli problem defining the Hilbert-Blumenthal varieties of level $U_0(\n)$ and $U(\n)$ so that the induced action 
on $X_F=\GL_2(F)\backslash \GL_2(\F_f)/U_0(N.O_F)\times \h^{I_F}$ is the usual Galois action on the finite adeles and 
permutes the different copies of $\h$ via its canonical action on $I_F$. 
Moreover, the induced action of $Gal(F/\Q)$ on the \' etale cohomology with coefficient in $\cV^F_k(\cO)$ commutes
 with the Galois action of $G_F$. However this action, does not commute with the Hecke action.

\medskip
Recall that we have the Hecke-equivariant Harder-Matsushima-Shimura isomorphism \cite[Section 3.6]{Har87}:
$$\bigoplus_{I\subset I_F}S_k^F(I,\n)\cong \H_!^\bullet (F)\otimes_{\cO}\C$$
given for each subset $I\subset I_F$ by 
$${\ff_I\mapsto \omega(\ff_I):= 
 \ff_I(z) (X_I-z_I Y_I)^{k-2} \cdot
(X_{\overline{I}}-\overline{z}_{\overline{I}}Y_{\overline{I}})^{k-2} dz_I\wedge d\overline{z}_{\overline{I}}}$$
where $(X_I-z_IY_I)^{k-2}=\prod_{\tau\in I}(X_\tau-z_\tau Y_\tau)^{k-2}$ and
$(X_{\overline{I}}-\overline{z}_{\overline{I}}Y_{\overline{I}})^{k-2}=
\prod_{\tau\notin I}(X_\tau-\overline{z}_\tau Y_\tau)^{k-2}$.

\medskip
Let $(w_\tau)_{\tau\in I_F}$ be the canonical basis of the Weyl group $W_F=\{\pm 1\}^{I_F}$: $w_\tau=(w_{\tau,\sigma})_{\sigma\in I_F}$
with
$w_{\tau,\sigma}=1$ unless $\sigma=\tau$ and $w_{\tau,\sigma}=-1$. For $I\subset I_F$, we write $w_I=\prod_{\tau\in I}w_\tau$.
The group $W_F$ acts on $\h^{I_F}$ as follows: 
$w_I\cdot (z_\tau)=(z^\prime_\tau)$ where $z^\prime_\tau=-\overline{z}_\tau$ 
for $\tau\in I$, resp.
$z^\prime_\tau=z_\tau$ for $\tau\notin I$. This action extends to an action on $(X_F,\cV^F_k(\cO))$ 
given on a section $s$ of
$\cV_k(\cO)$ by $w\bullet s= w\cdot w^\ast s$. 

 It will be useful to notice also that
when $F$ is Galois over $\Q$, that the action of $Gal(F/\Q)$ and $W_F$ do not commute but are related by the relation:
\begin{equation}
\sigma\circ w_\tau\circ\sigma^{-1}=w_{\tau\circ\sigma}\qquad \forall \sigma\in Gal(F/\Q),\; \forall\tau\in I_F
\end{equation}

\medskip
For $\epsilon\in\widehat{W_F}$, let $\H_!^\epsilon (F)$ be the $\epsilon$-eigenspace of $\H_!^d (F)$.
We have
$$\H_!^d (F)=\bigoplus_{\epsilon\in \widehat{W_F}}\H_!^\epsilon (F)$$
Let $h_k^F=h^F_k(U_0(\n),\cO)$ be the Hecke algebra acting on $H^d_!(F)$. 
By the Harder-Matsushima-Shimura isomorphism, 
we have a homomorphism $\lambda_{\ff}\colon h_k^F\to\cO$ giving the system of Hecke eigenvalues for our cuspidal newform  $\ff$.
We denote by  $\m_F=\Ker\overline\lb_\ff$ be the maximal ideal of $h_k^F$ where $\overline{\lambda}_{\ff}$ 
denotes the reduction of $\lb_\ff$ modulo the uniformizer of $\cO$.

We assume that $\m_F$ is non-Eisenstein, by which we mean that the degree $2$ residual Galois representation $\overline{\rho}_{\ff}$ 
associated to $\ff$ is absolutely irreducible.

Let $\T_F$ denote the $\m_F$-adic completion of $h_k^F$. 
Let  $\H_!^d (F)[\lambda_{\ff}]$, resp. $\H_!^\epsilon (F)[\lambda_{\ff}]$ be the $\lambda_{\ff}$-isotypic 
part of the $\cO$-module $\H_!^d (F)$ resp. $\H_!^\epsilon (F)$. 
From the description of the cuspidal cohomology, it follows that
$$\H_!^d (F)_{\m_F}\otimes \Q_p\cong( \T_F\otimes \Q_p)^{2^d}$$
In particular, up to torsion,
$\H_!^\epsilon (F)[\lambda_{\ff}]$ is of rank $2^d$ over $\cO$ 
and for each $\epsilon \in \widehat{W_F}$, $\H_!^\epsilon (F)[\lambda_{\ff}]$ is a rank one $\cO$-module (up to torsion). 
Let $\ff\in S_k^F(I_F,\n)$ and 
$\omega^\epsilon(\ff)=2^{-d}\cdot\sum_{w\in W_F}\overline{\epsilon}( w)\cdot w^\ast\omega(\ff)$.
We see that $\omega^\epsilon(\ff)$ belongs to the $\epsilon$ component of $\H^d_{dR}(X_F,\cV_k(\C))$. 
Hence by de Rham comparison theorem it can be viewed in $\H_!^\epsilon (F)\otimes_{\cO}\C$.
More precisely, 
For each $\epsilon \in \widehat{W_F}$, it follows from our assumptions that $\H_!^\epsilon (F)[\lambda_{\ff}]$ is a rank one $\cO$-module.
Let $\delta_{\ff}^\epsilon$ be an $\cO$-basis of this module up to torsion.
As $\omega^\epsilon(\ff)$ belongs to $\H_!^\epsilon (F)[\lambda_{\ff}]\otimes_{\cO}\C$, we can make the following definition.

\begin{de} For each $\epsilon\in\widehat{W_F}$, the $\epsilon$-period $\Omega_{\ff}^\epsilon\in\C^\times/\cO^\times$
of $\ff$ is defined as the unique element such that 
$$\omega^\epsilon(\ff)=\Omega_{\ff}^\epsilon\cdot \delta_{\ff}^\epsilon.
$$
\end{de}


Let $\ff=f_F$ be the normalized holomorphic base change from the normalized newform $f\in S_k(\Gamma_0(N))$
with the notations of the previous section. We also denote by $\varphi_F(N):=\# (O_F/N O_F)^\times$.
For $\epsilon\in\widehat{W}_F$, let $D_\epsilon^{\pm}=\{\tau\in I_F; \epsilon(w_\tau)=\pm 1\}$ and $d^\pm=d_\epsilon^{\pm}=\# D_\epsilon^{\pm}$.
We have $d^++d^-=d$.
We can now state the integral period relations conjecture:

\begin{conj} \label{PRconj}   Assume $p$ is prime to the discriminant of $F$ and to $6N\varphi_F(N)$, then:
$$\Omega^\epsilon_{f_F}\sim (\Omega_f^+)^{d^+} (\Omega_f^-)^{d^-}.$$
\end{conj}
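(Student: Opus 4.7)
The plan is to extend the strategy of Theorem \ref{thmA} from $[F:\Q]=2$ to arbitrary totally real abelian $F$ of degree $d$. Assume $F/\Q$ is Galois abelian; then $\Gal(F/\Q)$ acts transitively on $I_F$ and on $W_F$, and all its characters are even, so the relevant $L$-values $L(\Ad f\otimes\chi,1)$ are critical for every $\chi\in\widehat{\Gal(F/\Q)}$. The overall aim is to express both $\Omega_{f_F}^\epsilon\,\Omega_{f_F}^{\bar\epsilon}$ and $(\Omega_f^+\Omega_f^-)^d$ in terms of the same combination of congruence ideals and $L$-values, and then use Galois orbit symmetry to isolate the individual period.

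First I would prove the Hilbert analogue of Theorem \ref{classical}.b: under (Min${}_F$), (Irr${}_F$), and $p>k$, freeness of $\H_!^d(F)_{\m_F}$ over $\T_F$ follows from Taylor-Wiles patching together with Fujiwara's $R_F=\T_F$, so
\[
\eta_{f_F}\;\sim\;\frac{L(\Ad f_F,1)}{\pi^{(k+1)d}\,\Omega_{f_F}^\epsilon\,\Omega_{f_F}^{\bar\epsilon}},
\]
where $\bar\epsilon$ is the partner of $\epsilon$ under the Atkin-Lehner-twisted Poincar\'e duality on $\H_!^d(F)_{\m_F}$, so that $d^\pm(\bar\epsilon)=d^\mp(\epsilon)$. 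This is the Hilbert Shimura--Hida formula expressing the Petersson norm of $f_F$ in terms of $L(\Ad f_F,1)$.

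Next I would use the factorization $L(\Ad f_F,s)=\prod_{\chi}L(\Ad f\otimes\chi,s)$ together with Corollary \ref{Hida-factorization} to get $\eta_{f_F}\sim\eta_f\cdot\eta_{f_F}^\sharp$, and then combine Fujiwara's $R_F=\T_F$ with the Greenberg-style decomposition $\Sel_F(\Ad\rho_f)\cong\bigoplus_\chi\Sel_\Q(\Ad\rho_f\otimes\chi)$ to factor
\[
\eta_{f_F}^\sharp\;\sim\;\prod_{\chi\neq 1}\CH_\cO\!\bigl(\Sel_\Q(\Ad\rho_f\otimes\chi)\bigr).
\]
Applying Cornut-Vatsal/Chida-Hsieh non-vanishing modulo $p$ for each (even) $\chi$ exactly as in the proof of Theorem \ref{thmB}, each Selmer characteristic ideal equals $L^\ast(\Ad f\otimes\chi)$ up to a $p$-adic unit. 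Cancelling the matching $L$-values, Gauss sums, Gamma factors and $\pi$-powers against Theorem \ref{classical}.b for $f$ itself, one obtains
\[
\Omega_{f_F}^\epsilon\,\Omega_{f_F}^{\bar\epsilon}\;\sim\;(\Omega_f^+\Omega_f^-)^d.
\]
Since $\Gal(F/\Q)$ acts on $\H_!^d(F)$ permuting the $\Omega_{f_F}^\epsilon$ according to its action on $W_F$, periods inside one $\Gal(F/\Q)$-orbit of $\widehat{W_F}$ coincide up to $\cO^\times$, which already yields the individual conjecture whenever $\epsilon$ is Galois-conjugate to $\bar\epsilon$ (in particular when $d^+=d^-$).

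The hard part will be separating $\Omega^\epsilon$ from $\Omega^{\bar\epsilon}$ when $d^+\neq d^-$, since Galois orbits group characters of the same signature $(d^+,d^-)$ and therefore do not relate $\epsilon$ to $\bar\epsilon$ directly. I would attempt to handle this either by applying the whole argument recursively to a tower of intermediate subfields $\Q\subset F_0\subset F$, extracting the periods one step at a time, or by comparing holomorphic/antiholomorphic periods on each $W_F$-isotypic component via an explicit Eichler-Shimura-type pairing between the $\epsilon$ and $\bar\epsilon$ subspaces. The most delicate case will be pure signs $\epsilon=(+,\ldots,+)$ (fully holomorphic cohomology), paralleling the fact that Theorem \ref{thmA} is stated only for mixed signs in the real quadratic case: there the Hida-style non-base-change linear form on $\H^d_!$ typically does \emph{not} surject onto the non-base-change eigensystem, so a finer cohomological surjectivity statement or a fundamentally new non-vanishing input (beyond Cornut-Vatsal) is likely required.
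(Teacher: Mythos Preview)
The statement you are trying to prove is stated in the paper as a \emph{conjecture}, and the paper does not claim a proof in general; it establishes only the real quadratic case (mixed sign) and, by induction through quadratic steps, the balanced case $d^+=d^-$ for $2$-power degree. So your proposal should be read against those partial results, and there the central step of your argument has a genuine circularity.

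Your key move is to assert that, for every nontrivial $\chi\in\widehat{\Gal(F/\Q)}$,
\[
\CH_\cO\bigl(\Sel_\Q(\Ad\rho_f\otimes\chi)\bigr)\;\sim\;L^\ast(\Ad f\otimes\chi),
\]
and you justify this by saying ``exactly as in the proof of Theorem~\ref{thmB}''. But that is the reverse of how the paper's logic runs. In the quadratic case the paper first proves the period relation directly (Propositions~\ref{intperrel} and~\ref{divisibgeneral}), then deduces the congruence formula (Theorem~\ref{congBCrealquadratic}, Corollary~\ref{congBCrealquadraticcor}), and only then obtains Bloch--Kato for $\Ad f\otimes\alpha$ (Corollary~\ref{BKreal}). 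Cornut--Vatsal/Chida--Hsieh is \emph{not} used to compute Selmer groups: it supplies an anticyclotomic character $\xi$ for which certain Rankin $L$-values are $p$-adic units, yielding one divisibility between periods (Proposition~\ref{divisibgeneral}); the opposite divisibility comes from Hida's integral linear form on $\H^d$ (Proposition~\ref{Hida-linearform}, Proposition~\ref{intperrel}). Neither ingredient gives you the Bloch--Kato equality for a single twist as an input. For non-quadratic $\chi$, the equality you need is simply not available, and in the paper's framework it would \emph{follow} from the very period relation you are trying to prove (cf.\ Proposition~\ref{eqBK}, which is stated conditionally on Conjecture~\ref{twistL_NBC}).

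By contrast, the paper's route to the partial results is: (i) Hida's linear form, applied to a \emph{relative} quadratic step $F/F_1$, gives $\Omega_{f_F}^{\epsilon}\big/\bigl(\Omega_{f_{F_1}}^{\epsilon_1}\Omega_{f_{F_1}}^{-\epsilon_1}\bigr)\in\cO$ for suitable $\epsilon$ (Lemma~\ref{intperrelrelative}); (ii) a relative Cornut--Vatsal input (Hung's generalization) gives the reverse product divisibility; (iii) one then inducts along a chain of quadratic subextensions, which is why the argument only reaches $2$-power degree and only balanced $\epsilon$. Your idea of ``applying the whole argument recursively to a tower of intermediate subfields'' is exactly what the paper does, not an alternative to your Selmer-factorization route. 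Your identification of the $d^+\neq d^-$ obstruction is correct and matches the paper's explicit disclaimer that this case is open even up to an algebraic integer.
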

This conjecture follows easily from the general Deligne definition of periods if the relation is up to an algebraic number. 
We will prove some cases of this conjecture for totally real fields  which are Galois over $\Q$ of degree a power $2$ when $d^+=d^-$. 
The main arguments are similar to the case  $F$ is real quadratic over $\Q$. However, except in the quadratic case and signs $(+,-)$ or $(-,+)$, 
we do not know how to prove such a relation when $d^+\neq d^-$ even up to an algebraic integer.

\subsection{Main Results in the real quadratic case}\label{sectionRealQuadratic}
In this section, $F$ is a real quadratic field of discriminant $D$. We call $\alpha$ the corresponding quadratic Dirichlet character.
Let $p$ be a prime which does not divide $6ND\varphi_F(N)$ such that $p>k$. Let $f\in S_k(\Gamma_0(N))$ and $\ff=f_F$ 
its normalized base change as above. 
Considering the base change homomorphism $\T_F\rightarrow \T$, it gives rise following Section \ref{basechange} to a base change congruence ideal :
$$\eta^\sharp_f:=\eta^\sharp_{\lb_f}(\T_F)$$
Let $\H_{\m_F}^\epsilon=\H_!^\epsilon (F)_{\m_F}$. It is a $\T_F$-module and a finite $\cO$-module. Let
$$\eta^\sharp_f(\H_{\m_F}^\epsilon):=\eta^\sharp_{\lb_f}(\H_{\m_F}^\epsilon).$$
The following theorems prove an integral period relation together with a formula for these non base-change congruence ideals.
\begin{thm} \label{PRrealquadratic} Assume that (CV) holds, that
$p$ does not divide $6ND\varphi_F(N)$ and $p>k$. 
 Then, we have the integral period relation 
for any $\epsilon\in\widehat{W_F}$ such that $\epsilon(-1,-1)=-1$,
$$ \Omega_{f_F}^{\epsilon}\sim \Omega_{f}^{+}\Omega_{f}^{-}.$$
\end{thm}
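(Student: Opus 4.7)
The plan is to prove the period relation via three complementary ingredients: Hida's $L$-value formula combined with the base-change factorization of congruence ideals from Section \ref{basechange}, an integral linear form on $\H^2$ of the Hilbert surface that annihilates non-base-change cuspidal classes, and the Cornut-Vatsal/Chida-Hsieh non-vanishing mod $p$ theorem (which explains the hypothesis (CV)).

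First, I would establish the Hilbert analogue of Theorem \ref{classical}(b) for the base change $f_F$. Under our hypotheses ($p>k$, $\overline{\rho}_{f_F}|_{G_F}$ absolutely irreducible, $\m_F$ non-Eisenstein, $N$-minimality, $p\nmid 6ND\varphi_F(N)$), Taylor--Wiles gives freeness of $\H_{\m_F}^\epsilon$ of rank one over $\T_F$ for each $\epsilon$. Consequently $\eta_{f_F}^{coh}=\eta_{f_F}$, and Hida's Petersson-norm computation combined with the Atkin--Lehner twisted Poincar\'e pairing---perfect between the $(+,-)$ and $(-,+)$ isotypic components by Proposition \ref{Pontryaginprop}---gives
\[
\frac{\Gamma(\Ad f_F,1)\,L(\Ad f_F,1)}{\Omega_{f_F}^{(+,-)}\Omega_{f_F}^{(-,+)}}\sim \eta_{f_F}=\eta_f\cdot\eta_f^\sharp,
\]
the second equality being the base-change factorization of Corollary \ref{Hida-factorization} (both $\T$ and $\T_F$ are Gorenstein under our hypotheses). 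Dividing by Theorem \ref{classical}(b) applied to $f$ and using the Artin factorization $L(\Ad f_F,s)=L(\Ad f,s)\,L(\Ad f\otimes\alpha,s)$ produces, modulo $p$-adic units coming from the Gauss sum $G(\overline{\alpha})^2$ and the ratio of gamma factors,
\begin{equation*}
L^\ast(\Ad f\otimes\alpha)\cdot\frac{\Omega_f^+\Omega_f^-}{\Omega_{f_F}^{(+,-)}\Omega_{f_F}^{(-,+)}}\sim\eta_f^\sharp. \tag{$\ast$}
\end{equation*}
The $\Gal(F/\Q)$-symmetry of $f_F$ gives $\Omega_{f_F}^{(+,-)}\sim\Omega_{f_F}^{(-,+)}$, and I will denote the common value $\Omega_{f_F}^\epsilon$.

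Second, I would construct an $\cO$-valued linear form $\Phi_F\colon \H_{\m_F}^\epsilon\to\cO$ whose $K$-linearization annihilates the non-base-change cuspidal classes, i.e., the $\T_F^\sharp$-isotypic subspace of Section \ref{basechange}. Following Hida, such a $\Phi_F$ is realized as a cap product with an integral $2$-cycle on $X_F$ built from $\pm$-elliptic modular symbols on the diagonally embedded modular curve $Y_0(N)\hookrightarrow X_F$; the vanishing on non-base-change classes is the Flicker--Jacquet/Asai criterion for base change, and an explicit Rankin--Selberg identity gives $\Phi_F(\omega^\epsilon(f_F))\sim \Omega_f^+\Omega_f^-$. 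Applying Proposition \ref{linear-BC} to $(\Phi_F,\delta_{f_F}^\epsilon)$ in the free rank-one $\T_F$-module $\H_{\m_F}^\epsilon$ yields $\Phi_F(\delta_{f_F}^\epsilon)\in\eta_f^\sharp$, which after dividing by $\Omega_{f_F}^\epsilon$ becomes
\begin{equation*}
\frac{\Omega_f^+\Omega_f^-}{\Omega_{f_F}^\epsilon}\in\eta_f^\sharp. \tag{$\ast\ast$}
\end{equation*}

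Finally, the Cornut-Vatsal/Chida-Hsieh theorem (using (CV)) shows that $L^\ast(\Ad f\otimes\alpha)$ is a $p$-adic unit, so $(\ast)$ together with the Galois symmetry gives $\eta_f^\sharp\sim \Omega_f^+\Omega_f^-/(\Omega_{f_F}^\epsilon)^2$ as ideals of $\cO$. Comparing with $(\ast\ast)$ by matching $p$-adic valuations of the two expressions for $\eta_f^\sharp$ forces $\Omega_{f_F}^\epsilon\sim\Omega_f^+\Omega_f^-$ and simultaneously yields $\eta_f^\sharp\sim L^\ast(\Ad f\otimes\alpha)$, which is the content of Theorem \ref{thmB}. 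The main obstacle is the construction of $\Phi_F$ with the correct integral normalization and the explicit Rankin-Selberg evaluation of $\Phi_F(\omega^\epsilon(f_F))$; these require a careful choice of integral $2$-cycle on the Hilbert surface adapted to the $\m_F$-adic structure, and rest ultimately on a Manin-style period identity relating the Asai $L$-value $L(\mathrm{As}(f_F),1)=L(\Ad f,1)\,L(\alpha,1)$ to the product $\Omega_f^+\Omega_f^-$ of elliptic Manin periods.
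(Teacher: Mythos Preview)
Your Step 3 contains a decisive error: the Cornut--Vatsal/Chida--Hsieh theorem does \emph{not} say that $L^\ast(\Ad f\otimes\alpha)$ is a $p$-adic unit. Their result concerns anticyclotomic twists of the \emph{standard} $L$-function $L_K(f,\xi,k/2)$ over an auxiliary imaginary quadratic field $K$, not adjoint $L$-values. In fact $L^\ast(\Ad f\otimes\alpha)$ is \emph{generically not a unit}: by Theorem~\ref{congBCrealquadratic} it generates the non-base-change congruence ideal $\eta_f^\sharp(\H^\epsilon_{\m_F})$, and by Corollary~\ref{BKreal} the Fitting ideal of a Selmer group. If you trace your argument through, your conclusion would force $\eta_f^\sharp\sim 1$ in all cases, which is false. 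The paper's use of (CV) is quite different (Proposition~\ref{divisibgeneral}): one picks an imaginary quadratic field $K$ and an anticyclotomic character $\xi$ so that $L_K(f\otimes\chi,\xi,k/2)/(\pi^k\Omega_f^+\Omega_f^-)$ is a $p$-adic unit for $\chi\in\{1,\alpha\}$, then compares with the integrality of the Rankin--Selberg value $L_{KF}(f_F,\xi,k/2)/(\pi^{2k}\Omega_{f_F}^\epsilon\Omega_{f_F}^{-\epsilon})$ and uses the factorization $L_{KF}(f_F,\xi,s)=\prod_{\chi}L_K(f\otimes\chi,\xi,s)$ to obtain only a \emph{divisibility} $(\Omega_f^+\Omega_f^-)^2\mid \Omega_{f_F}^\epsilon\Omega_{f_F}^{-\epsilon}$.

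Your Step~2 is also not the linear form that exists in the literature. Hida's $Ev$ (Proposition~\ref{Hida-linearform}), which is the restriction to the diagonally embedded modular curve followed by Clebsch--Gordan projection, satisfies $Ev(\omega^\epsilon(f_F))\sim \Gamma(\Ad f\otimes\alpha,1)L(\Ad f\otimes\alpha,1)$, \emph{not} $\Omega_f^+\Omega_f^-$; a ``bare'' Manin period is not the value of a period integral. What Proposition~\ref{linear-BC} then yields is $\eta_f^\sharp(\H^\epsilon_{\m_F})\mid L(\Ad f\otimes\alpha,1)/\Omega_{f_F}^\epsilon$, and combining this with the cohomological factorization $\eta_{f_F}(\H^\epsilon_{\m_F})=\eta_f((\H^\epsilon_{\m_F})_\T)\cdot\eta_f^\sharp(\H^\epsilon_{\m_F})$ from Lemma~\ref{BC-congruenceN} (which needs no freeness or Gorenstein hypotheses, avoiding the extra $(\Irr_{F'})$, $(\Min_F)$ you invoked) gives the opposite divisibility $\Omega_{f_F}^{-\epsilon}/(\Omega_f^+\Omega_f^-)\in\cO$ (Proposition~\ref{intperrel}). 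The two divisibilities together force $\Omega_{f_F}^\epsilon\sim\Omega_f^+\Omega_f^-$.
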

\begin{proof}

This is a direct consequence of Propositions \ref{intperrel} and \ref{divisibgeneral} 
that are proved in section \ref{twistedL4realquadratic} below.
\end{proof}

Using this theorem one can prove 
\begin{thm}\label{congBCrealquadratic} Assume (CV) and that $p$ does not divide $6ND\varphi_F(N)$, $p>k$, 
then for any $\epsilon\in\widehat{W_F}$ such that $\epsilon(-1,-1)=-1$, we have
$$ \eta_f^\sharp(H^\epsilon_{\m_F}) \sim L^\ast(\Ad f \otimes \alpha,1):= {\Gamma(\Ad(f)\otimes\alpha,1) L(\Ad(f)\otimes\alpha,1)\over \Omega_{f}^{+}\Omega_{f}^{-}} $$
In particular, if $p$ divides $L^\ast(\Ad f \otimes \alpha,1)$, there is a congruence between $f_F$ and a non base-change Hilbert cusp eigenform. 
\end{thm}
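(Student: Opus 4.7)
The plan is to deduce Theorem \ref{congBCrealquadratic} from the integral period relation Theorem \ref{PRrealquadratic} together with the multiplicativity formula of Lemma \ref{BC-congruenceN}, by comparing the cohomological congruence ideals of $f$ and of its base change $f_F$ via the factorization of adjoint $L$-functions.

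\medskip
\emph{Step 1 (multiplicative decomposition).} Apply Lemma \ref{BC-congruenceN} to the base change datum $\theta\colon\T_F\to\T$, with $\lambda_{f_F}=\lambda_f\circ\theta$, and to $M=H^\epsilon_{\m_F}$. This yields
$$\eta_{\lambda_{f_F}}(H^\epsilon_{\m_F})\ \sim\ \eta_{\lambda_f}\bigl((H^\epsilon_{\m_F})_{\T}\bigr)\cdot\eta_f^\sharp(H^\epsilon_{\m_F}).$$
The rest of the argument computes the left-hand side and the first factor on the right up to $p$-adic units, and then solves for $\eta_f^\sharp(H^\epsilon_{\m_F})$.

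\medskip
\emph{Step 2 (the Hilbert cohomological congruence ideal).} The condition $\epsilon(-1,-1)=-1$ selects the pair $\{\epsilon,-\epsilon\}\subset\widehat{W_F}$ that is matched by Poincar\'e duality on the Hilbert surface $X_F$. By Proposition \ref{Pontryaginprop} applied to the Hecke-equivariant Poincar\'e pairing (twisted by Atkin--Lehner),
$$\eta_{\lambda_{f_F}}(H^\epsilon_{\m_F})\ \sim\ [\delta^\epsilon_{f_F},\delta^{-\epsilon}_{f_F}],$$
and evaluating this pairing on $\omega^\epsilon(f_F)\wedge\omega^{-\epsilon}(f_F)$ in terms of $\langle f_F,f_F\rangle$ via the Hilbert version of the Shimura identity (Hida's generalization of Theorem \ref{classical}(a), as in \cite{H99}) gives, up to powers of $\pi$,
$$\eta_{\lambda_{f_F}}(H^\epsilon_{\m_F})\ \sim\ \frac{\Gamma(\Ad f_F,1)\,L(\Ad f_F,1)}{\Omega_{f_F}^\epsilon\,\Omega_{f_F}^{-\epsilon}}.$$
This is where hypothesis $p\nmid 6ND\varphi_F(N)$ and $p>k$ are used, to ensure the Petersson-to-cohomology comparison is integral and that $H^\epsilon_{\m_F}$ is free of rank one over $\T_F$ at $\lambda_{f_F}$ (analogue of \cite[Theorem 2.1]{Wi95}, whose needed Hilbert form is established in a preceding subsection).

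\medskip
\emph{Step 3 (L-function factorization).} Since $f_F$ is the automorphic base change of $f$ along $F/\Q$ with quadratic character $\alpha$,
$$L(\Ad f_F,s)=L(\Ad f,s)\cdot L(\Ad f\otimes\alpha,s),$$
and the archimedean $\Gamma$-factors factor compatibly. Combining with the period relation of Theorem \ref{PRrealquadratic}, namely $\Omega_{f_F}^\epsilon\sim\Omega_{f_F}^{-\epsilon}\sim\Omega_f^+\Omega_f^-$, and with Theorem \ref{classical}(b) applied to $f$ (valid since $(\Irr)$ and $p>k$ hold), we obtain
$$\eta_{\lambda_{f_F}}(H^\epsilon_{\m_F})\ \sim\ \eta_f\cdot L^\ast(\Ad f\otimes\alpha,1).$$

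\medskip
\emph{Step 4 (identify the base-change piece and conclude).} It remains to show $\eta_{\lambda_f}\bigl((H^\epsilon_{\m_F})_{\T}\bigr)\sim\eta_f$. The submodule $(H^\epsilon_{\m_F})_{\T}$ is the rank-one $\T$-submodule on which $\T_F$ acts through $\theta$, i.e., the ``base-change part''. Under the identification provided by the period relation of Theorem \ref{PRrealquadratic}, a generator of $(H^\epsilon_{\m_F})_{\T}[\lambda_f]$ corresponds to $\delta^\epsilon_{f_F}$ rescaled by $\Omega_{f_F}^\epsilon/(\Omega_f^+\Omega_f^-)\in\cO^\times$; by the freeness statement of Step 2 it therefore pairs with its Poincar\'e dual into $\eta_f^{coh}\sim\eta_f$ (again by Theorem \ref{classical}(b)). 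Substituting into Step 1 and dividing by the identity of Step 3 gives
$$\eta_f^\sharp(H^\epsilon_{\m_F})\ \sim\ \frac{\eta_f\cdot L^\ast(\Ad f\otimes\alpha,1)}{\eta_f}\ \sim\ L^\ast(\Ad f\otimes\alpha,1),$$
which is the required assertion; the ``in particular'' follows since a prime divisor of $L^\ast(\Ad f\otimes\alpha,1)$ then lies in $\eta_f^\sharp(H^\epsilon_{\m_F})$, and by definition of $\eta^\sharp$ this witnesses a mod-$\varpi$ congruence between $f_F$ and a non-base-change eigenform in $\T_F$.

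\medskip
\emph{Main obstacle.} The conceptually subtle point is Step 4: converting the analytic period relation into the cohomological identification $\eta_{\lambda_f}((H^\epsilon_{\m_F})_{\T})\sim\eta_f$. This demands a clean compatibility between the Manin cohomology of $f$ on the modular curve $Y_0(N)$ and the base-change piece of $H^2$ of the Hilbert surface, requiring (i) freeness of $H^\epsilon_{\m_F}$ over $\T_F$ at the non-Eisenstein maximal ideal, and (ii) the fact that Hida's integral linear form on $H^2(X_F,\cV^F_k)$, together with Proposition \ref{linear-BC} and the Cornut--Vatsal non-vanishing modulo $p$ (hypothesis (CV)), controls the base-change versus non-base-change decomposition integrally.
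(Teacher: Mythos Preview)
Your Steps 1--3 line up with the paper, but Step 4 contains a genuine gap. From \eqref{div-C0} you only get the \emph{inclusion} $\eta_{\lambda_f}\bigl((H^\epsilon_{\m_F})_{\T}\bigr)\supset\eta_f$, i.e.\ the divisibility $\eta_{\lambda_f}\bigl((H^\epsilon_{\m_F})_{\T}\bigr)\mid\eta_f$. Turning this into an equality would require $(H^\epsilon_{\m_F})_{\T}\cong\T$ as $\T$-modules, which follows from freeness of $H^\epsilon_{\m_F}$ over $\T_F$ together with Gorenstein-ness of $\T_F$ (as in the proof of Corollary~\ref{Hida-factorization}); but those facts need the extra hypotheses $(\Irr_{F'})$ and $(\Min_F)$ of Theorem~\ref{completeintersection}, which are \emph{not} assumed in Theorem~\ref{congBCrealquadratic}. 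Your justification in Step~4 (``a generator of $(H^\epsilon_{\m_F})_{\T}[\lambda_f]$ corresponds to $\delta^\epsilon_{f_F}$ rescaled by a unit'') only identifies $(M_{\T})_{\lambda_f}=M_{\lambda_{f_F}}$, which is automatic and says nothing about $(M_{\T})^{\lambda_f}$. Likewise, the freeness you invoke in Step~2 is not available here; Proposition~\ref{Lvalcong} suffices for the $L$-value identity without it.

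The paper avoids this problem by arguing via two opposite divisibilities rather than by proving your Step~4 equality. From Lemma~\ref{BC-congruenceN} and the inclusion $\eta_{\lambda_f}((H^\epsilon_{\m_F})_{\T})\supset\eta_f$ one gets only
\[
\eta_{f_F}(H^\epsilon_{\m_F})\ \big|\ \eta_f\cdot\eta_f^\sharp(H^\epsilon_{\m_F}),
\]
which combined with your Step~3 and Theorem~\ref{classical}(b) yields $L^\ast(\Ad f\otimes\alpha,1)\mid\eta_f^\sharp(H^\epsilon_{\m_F})$. The \emph{other} divisibility $\eta_f^\sharp(H^\epsilon_{\m_F})\mid L^\ast(\Ad f\otimes\alpha,1)$ comes from Proposition~\ref{Hida-linearform} (Hida's evaluation linear form, via Proposition~\ref{linear-BC}) together with the period relation $\Omega_{f_F}^\epsilon\sim\Omega_f^+\Omega_f^-$. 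You mention Hida's linear form only in your ``Main obstacle'' paragraph and attribute it to controlling Step~4; in fact it supplies the missing independent divisibility, and the equality $\eta_{\lambda_f}((H^\epsilon_{\m_F})_{\T})\sim\eta_f$ is then a \emph{consequence} of the theorem, as the paper notes at the end of its proof.
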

\begin{proof}
The previous Theorem yields the divisibility 
\begin{equation}\label{div-Hida+goodPR} \eta_f^\sharp(H^\epsilon_{\m_F}) \;  | \;  {\Gamma(Ad(f)\otimes\alpha,1)L(Ad(f)\otimes\alpha,1). \over \Omega_f^+\Omega_f^- }
\end{equation}
It also yields
$$\eta_{f_F}(H^\epsilon_{\m_F})={ \Gamma(Ad(f_F),1)L(Ad(f_F),1)\over\Omega^\epsilon_{f_F}\Omega^{-\epsilon}_{f_F} }
\sim {\Gamma(\Ad(f),1) L(\Ad(f),1)\over \Omega_{f}^{+}\Omega_{f}^{-} }\cdot { \Gamma(\Ad(f)\otimes\alpha,1) L(\Ad(f)\otimes\alpha)\over \Omega_{f}^{+}\Omega_{f}^{-} }$$

On the other hand, consider the $\T_F$-module $M=H^\epsilon_{\m_F}$. We know that $M_K$ is free of rank one over $\T_F\otimes K$.
 Therefore,
if $M_{\T}$ denotes the largest submodule on which $\T_F$ acts through $\theta\colon\T_F\to \T$, we see that $M_{\T}\otimes K$ 
is free of rank one over $\T_K$. Thus, we see that $\eta_f(M_T)$ divides $\eta_f$ by Formula \ref{div-C0} of Section \ref{C0}. 
Applying Lemma \ref{BC-congruenceN} to $M=H^\epsilon_{\m_F}$, we obtain
$$\eta_{f_F}(H^\epsilon_{\m_F})\; | \;\eta_f\cdot \eta_f^\sharp(H^\epsilon_{\m_F})
$$
Therefore we have
$$ \eta_{f_F}(H^\epsilon_{\m_F})\sim {\Gamma(\Ad(f),1) L(\Ad(f),1)\over \Omega_{f}^{+}\Omega_{f}^{-} }\cdot { \Gamma(\Ad(f)\otimes\alpha,1) 
L(\Ad(f)\otimes\alpha)\over \Omega_{f}^{+}\Omega_{f}^{-} }\; | \;\eta_f\cdot \eta_f^\sharp(H^\epsilon_{\m_F})$$
On the other hand, from Theorem \ref{classical} we know that
$$\eta_f\sim {\Gamma(\Ad(f),1) L(\Ad(f),1)\over \Omega_{f}^{+}\Omega_{f}^{-}}$$
This yields the inverse divisibility of (\ref{div-Hida+goodPR}).
Thus these divisibilities are equivalences, hence our claim. Note that the proof shows in particular that the divisibility
$$\eta_{f_F}(H^\epsilon_{\m_F})\; | \;\eta_f\cdot \eta_f^\sharp(H^\epsilon_{\m_F})$$
is in fact an equality. 
\end{proof}

This has the following corollary:
 
\begin{cor}\label{congBCrealquadraticcor} Assume (CV) and that $p$ does not divide $6ND\varphi_F(N)$, $p>k$. 
Assume moreover that $(Irr_{F'})$ and that $\rho|_{G_F}$ is $N$-minimal.
Then, for any $\epsilon\in\widehat{W_F}$ such that $\epsilon(-1,-1)=-1$, we have
$$ \eta_f^\sharp \sim L^\ast(\Ad f \otimes \alpha,1):= {\Gamma(\Ad(f)\otimes\alpha,1) L(\Ad(f)\otimes\alpha,1)\over \Omega_{f}^{+}\Omega_{f}^{-}} $$

\end{cor}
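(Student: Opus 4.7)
The plan is to upgrade Theorem \ref{congBCrealquadratic} from the cohomological congruence ideal $\eta_f^\sharp(H^\epsilon_{\m_F})$ to the Hecke-algebraic congruence ideal $\eta_f^\sharp = \eta_f^\sharp(\T_F)$. The added hypotheses $(\mathrm{Irr}_{F'})$ and $N$-minimality of $\rho_f|_{G_F}$ are precisely what allows us to upgrade the Hecke module structure: they are the standard input for Taylor--Wiles patching in the Hilbert modular setting. More precisely, by Fujiwara's theorem (as recalled in the introduction, $R_F = \T_F$ in the $N$-minimal Fontaine--Laffaille situation for $p>k$), the patching argument simultaneously identifies $R_F$ with $\T_F$ and shows that the localized interior cohomology $\H^d_!(F)_{\m_F}$ is free of rank $2^d = 4$ over $\T_F$. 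Splitting off the sign eigenspaces under the $W_F$-action (which we may do since $p$ is odd and $p \nmid 6ND\varphi_F(N)$), we obtain that $H^\epsilon_{\m_F}$ is free of rank one over $\T_F$ for each character $\epsilon \in \widehat{W_F}$, and in particular for the signs $\epsilon$ with $\epsilon(-1,-1) = -1$ that appear in the statement.

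Given this freeness, the second step is purely formal congruence-module algebra. Write $M = H^\epsilon_{\m_F} \cong \T_F$ as a $\T_F$-module, and let $\theta\colon \T_F \to \T$ be the base-change surjection. Then $e^\sharp\cdot M \cong e^\sharp\cdot \T_F = \T_F^\sharp$, so
\[
C_0^{\lambda_f,\sharp}(M) = (e^\sharp M)\otimes_{\lambda_{f_F}} \cO \;\cong\; \T_F^\sharp \otimes_{\lambda_{f_F}}\cO \;=\; C_0^{\lambda_f,\sharp}(\T_F) \;=\; \cO/\eta_f^\sharp,
\]
by the very computation at the top of Section \ref{basechange}. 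Taking Fitting ideals,
\[
\eta_f^\sharp(H^\epsilon_{\m_F}) \;=\; \eta_f^\sharp.
\]

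Combining this identification with Theorem \ref{congBCrealquadratic} immediately yields
\[
\eta_f^\sharp \;\sim\; L^\ast(\mathrm{Ad}\,f \otimes \alpha,1) \;=\; \frac{\Gamma(\mathrm{Ad}(f)\otimes\alpha,1)\,L(\mathrm{Ad}(f)\otimes\alpha,1)}{\Omega_f^+\,\Omega_f^-},
\]
which is the desired statement of the corollary.

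The hard part is the first step: securing freeness of $H^\epsilon_{\m_F}$ over $\T_F$. The existence of the required $R = \T$ / freeness theorem for Hilbert modular forms over a real quadratic field under $N$-minimality and Fontaine--Laffaille conditions at $p$ is exactly where the two added hypotheses $(\mathrm{Irr}_{F'})$ (needed to run Taylor--Wiles over the extension $F' = F(\sqrt{(-1)^{(p-1)/2}p})$) and $N$-minimality of $\rho_f|_{G_F}$ (needed so that the deformation problem is unobstructed at bad primes) are used. All the remaining manipulation is the linear-algebra formalism of Section \ref{basechange}, which causes no issue once the Hecke module is known to be free of rank one.
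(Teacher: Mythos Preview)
Your proof is correct and takes essentially the same approach as the paper: use the added hypotheses $(\mathrm{Irr}_{F'})$ and $N$-minimality to invoke the Taylor--Wiles machinery (Theorem~\ref{completeintersection}) and obtain that $H^\epsilon_{\m_F}$ is free of rank one over $\T_F$, then deduce $\eta_f^\sharp(H^\epsilon_{\m_F})=\eta_f^\sharp$ and conclude by Theorem~\ref{congBCrealquadratic}.

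The one minor difference is in how the identity $\eta_f^\sharp(H^\epsilon_{\m_F})=\eta_f^\sharp$ is extracted. You argue directly: once $M=H^\epsilon_{\m_F}\cong\T_F$ as $\T_F$-modules, the base-change congruence module $C_0^{\lambda_f,\sharp}(M)$ is tautologically $C_0^{\lambda_f,\sharp}(\T_F)$. The paper instead routes through the Hida factorization: freeness gives $\eta_{f_F}=\eta_{f_F}(H^\epsilon_{\m_F})$, Gorenstein-ness of $\T_F$ gives $\eta_{f_F}=\eta_f\cdot\eta_f^\sharp$ via Corollary~\ref{Hida-factorization}, and comparing with the equality $\eta_{f_F}(H^\epsilon_{\m_F})=\eta_f\cdot\eta_f^\sharp(H^\epsilon_{\m_F})$ established at the end of the proof of Theorem~\ref{congBCrealquadratic} yields the same conclusion after cancelling $\eta_f$. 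Your route is shorter and avoids the explicit appeal to Gorenstein-ness and the factorization formula; the paper's route has the (slight) advantage of making visible that the full Hida decomposition $\eta_{f_F}=\eta_f\cdot\eta_f^\sharp$ holds under these hypotheses, which is of independent interest.
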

\begin{proof}
If $H^\epsilon_{\m_F}$ is free over $\T_F$, we have $\eta_{f_F}=\eta_{f_F}(H^\epsilon_{\m_F})$; if moreover $\T_F$ is Gorenstein,
we have by Corollary \ref{Hida-factorization} that $\eta_{f_F}=\eta_f\cdot \eta_f^\sharp$. 
This implies that $\eta_f^\sharp=\eta_f^\sharp(H^\epsilon_{\m_F})$.
But under the assumption $(\Irr_F)$ and $(\Min_F)$, Taylor-Wiles method applied to $\T_F$ and $M=\H_{\m_F}^\epsilon$ shows that
$H^\epsilon_{\m_F}$ is free over $\T_F$, and $\T_F$ is complete intersection over $\cO$, hence is Gorenstein. The desired formula follows now
from the previous theorem.
\end{proof}

We now record some consequences towards the Bloch-Kato conjecture
for $\Ad(f)\otimes\alpha$. Let's recall the definition of the Selmer group first.
For a number field $E$ we put
$$\Sel_E( \Ad(f)\otimes\alpha):=\Ker\left(H^1(E,W_{f,\alpha})\rightarrow \bigoplus_v H^1(E_v,W_{f,\alpha})/L_v)\right)$$
where $W_{f,\alpha}=Ad(\rho_f)\otimes \cK/\cO(\alpha)$ and for each finite place $v$, $L_v$ is some local condition defined as follows. 
Let $E_v$ is the completion of $E$ at $v$ and $E_v^{un}$ its maximal unramified extension. 
Then  $L_v$ is defined as follows
$$L_v:=\left\{ \begin{array}{l@{\quad}l}
 H^1_{un}(E_v,W_{f,\alpha}) =Ker (H^1(E_v,W_{f,\alpha})\to H^1(E_v^{un},W_{f,\alpha}))& \hbox{if } v\not{|}p  \\
  H^1_{f}(E_v,W_{f,\alpha})   &  \hbox{if } v | p \\
\end{array}\right.
$$
where the $f$ stand  for the local finite condition at $v$ as defined by Bloch-Kato.
We refer to the beginning section 3 of \cite{BK90} for the precise definition.
\begin{cor}\label{BKreal} We assume (CV),  $(\Irr_F)$, $(\Min_F)$, $p>k$ and $p$ does not divide $6ND\varphi_F(N)$, then we have:
$$L^\ast(\Ad f \otimes \alpha)\sim\CH_\cO\,\Sel(\Ad f\otimes \alpha).$$
\end{cor}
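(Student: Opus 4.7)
The plan is to use the $R=T$ isomorphisms at both levels, convert the congruence ideal $\eta_f^\sharp$ into the characteristic ideal of a Selmer group via the cotangent-space formalism of Section 2.3, and then invoke Corollary \ref{congBCrealquadraticcor} to bridge to $L^\ast(\Ad f\otimes\alpha)$. Concretely, it suffices to establish
$$\eta_f^\sharp \;\sim\; \CH_\cO\,\Sel_\Q(\Ad\rho_f\otimes\alpha),$$
since the corollary already provides $\eta_f^\sharp\sim L^\ast(\Ad f\otimes\alpha)$.

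First I would record that under $(\Irr)$ with $p>k$ the theorem of Wiles gives $R=\T$ and the Taylor--Wiles patching makes $\T$ complete intersection over $\cO$; under $(\Irr_F)$ and $(\Min_F)$ the analogous result of Fujiwara (cited in the introduction) gives $R_F=\T_F$ and that $\T_F$ is complete intersection over $\cO$. Proposition \ref{LCI} then identifies
$$\eta_f=\CH_\cO C_1^{\lambda_f}(\T),\qquad \eta_{f_F}=\CH_\cO C_1^{\lambda_{f_F}}(\T_F).$$
Next, applying Lemma \ref{C1BCle} to the base change datum $\T_F\xrightarrow{\theta}\T\xrightarrow{\lambda_f}\cO$ (with $\T$ complete intersection) yields
$$\eta_{f_F}=\eta_f\cdot \CH_\cO C_1^{\lambda_f,\sharp}.$$
Comparing this with the Gorenstein factorization $\eta_{f_F}=\eta_f\cdot\eta_f^\sharp$ of Corollary \ref{Hida-factorization}, and cancelling the nonzero factor $\eta_f$ in the DVR $\cO$, gives the clean identification
$$\eta_f^\sharp=\CH_\cO C_1^{\lambda_f,\sharp}.$$

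The heart of the argument, and the step I expect to require the most care, is matching $C_1^{\lambda_f,\sharp}$ to $\Sel_\Q(\Ad\rho_f\otimes\alpha)$. Via $R_F=\T_F$ and $R=\T$, the map $\theta$ is the restriction morphism on universal deformation rings, so its cotangent at $\lambda_{f_F}$ fits into a short exact sequence
$$0\longrightarrow C_1^{\lambda_f,\sharp}\longrightarrow C_1^{\lambda_{f_F}}(R_F)\longrightarrow C_1^{\lambda_f}(R)\longrightarrow 0,$$
and by the Wiles--Mazur identification of the cotangent of a minimal deformation ring with the Pontryagin dual of a Bloch--Kato Selmer group, this is dual to
$$0\longrightarrow \Sel_\Q(\Ad\rho_f)\longrightarrow \Sel_F(\Ad\rho_f)\longrightarrow (C_1^{\lambda_f,\sharp})^\vee\longrightarrow 0.$$
Since $F/\Q$ is quadratic with nontrivial character $\alpha$ and $p$ is odd, inflation--restriction (equivalently, $\mathrm{Ind}_{\Gamma_F}^{\Gamma_\Q}\mathbbm{1}=\mathbbm{1}\oplus\alpha$) yields the $\Gal(F/\Q)$-eigenspace decomposition
$$\Sel_F(\Ad\rho_f)\;=\;\Sel_\Q(\Ad\rho_f)\;\oplus\;\Sel_\Q(\Ad\rho_f\otimes\alpha),$$
provided the local Selmer conditions at each place of $F$ match the induced ones from $\Q$—a compatibility I would check one place at a time using $(\Min_F)$ (so that ramified local conditions at $v\mid N$ agree with the unique minimal lift), the Fontaine--Laffaille local conditions at $v\mid p$ (where $p$ is unramified in $F$ since $p\nmid D$), and the unramified condition at all other places. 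The $-1$-eigenspace of this decomposition is precisely $\Sel_\Q(\Ad\rho_f\otimes\alpha)$, hence (characteristic ideals being invariant under Pontryagin duality over the DVR $\cO$)
$$\CH_\cO C_1^{\lambda_f,\sharp}\;\sim\;\CH_\cO\Sel_\Q(\Ad\rho_f\otimes\alpha).$$

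Combining the three displayed equivalences delivers
$$L^\ast(\Ad f\otimes\alpha)\;\sim\;\eta_f^\sharp\;\sim\;\CH_\cO C_1^{\lambda_f,\sharp}\;\sim\;\CH_\cO\Sel_\Q(\Ad f\otimes\alpha),$$
which is the asserted Bloch--Kato formula. The only genuinely delicate point is the local-condition check in the Selmer decomposition, especially at primes of $F$ lying above $p$ or above ramified primes of $N$, which is where the hypotheses $p\nmid D\varphi_F(N)$ and $(\Min_F)$ are used in an essential way.
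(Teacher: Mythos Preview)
Your proof is correct and follows essentially the same strategy as the paper: both use $R=\T$ and $R_F=\T_F$ (complete intersection), the Hida factorization $\eta_{f_F}=\eta_f\cdot\eta_f^\sharp$, the Selmer decomposition $\Sel_F(\Ad\rho_f)=\Sel_\Q(\Ad\rho_f)\oplus\Sel_\Q(\Ad\rho_f\otimes\alpha)$, and then Corollary~\ref{congBCrealquadraticcor} to link $\eta_f^\sharp$ to $L^\ast(\Ad f\otimes\alpha)$. The only difference is cosmetic: the paper cancels directly at the level of Fitting ideals (comparing $\eta_{f_F}=\CH\Sel_F=\eta_f\cdot\CH\Sel_\Q(\Ad\rho_f\otimes\alpha)$ with $\eta_{f_F}=\eta_f\cdot\eta_f^\sharp$), whereas you pass through the intermediate module $C_1^{\lambda_f,\sharp}$ via Lemma~\ref{C1BCle} before making the same Selmer identification; this extra step is harmless but unnecessary.
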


\begin{proof}
Under our assumptions, it is well known that we have $R=T$ for $f$ and $R_F=T_F$ for $f_F$ where we have denoted by $R$ and $R_F$ 
the universal deformation rings with  Fontaine-Laffaille conditions at places dividing $p$ and minimal conditions at those dividing the level.
Moreover these rings are complete intersection. 
When $k=2$ and $F=\Q$ this is the original work of Wiles and Taylor-Wiles. In general, this follows from similar argument once an appropriate 
Taylor-Wiles system is constructed as in \cite{Fu06,Dim05}. We will see in Proposition \ref{freenessduality} and the proof of Theorem \ref{completeintersection} below that
these conditions are satisfied.
Since the rings are complete intersection, using Proposition \ref{LCI}, we can therefore rewrite the congruence ideals in terms of $\cO$-Fitting ideals of Selmer groups
$$\eta_f\sim \CH_\cO,\Sel(\Ad f)\quad\hbox{and}\quad  \eta_{f_F}=\CH_\cO\,\Sel(\Ad f_F).$$
Hence by Corollary \ref{Hida-factorization}
 and the decomposition
$$Sel_F(Ad(f))=Sel_\Q(Ad(f))\oplus Sel_\Q(Ad(f)\otimes\alpha),$$
 we get that $\eta_{f}^\sharp=\CH { \Sel(\Ad f\otimes \alpha)}$. So the results follows from the previous Theorem. \end{proof}

In particular for a semi-stable  elliptic curve, we have the following
\begin{cor} If $E/\Q$ is a semistable elliptic curve of conductor $N_E$ and $p$ a prime of good reduction not dividing $6_F(N_E)$
and for which the Galois representation $E[p]$ is irreducible over $F^\prime$. Assume that the conductor of $E[p]$ over $\Q$ is equal to $N_E$, 
then we have
$$\CH_\Zp Sel_\Q(Ad(E)\otimes\alpha) \sim { \Gamma(\Ad(E)\otimes\alpha)L^\ast(\Ad(E)\otimes\alpha)\over\Omega_E^+\Omega_E^-}$$
where $\Omega_E^+,\Omega_E^-$ are the N\'eron differentials associated to $E$.
\end{cor}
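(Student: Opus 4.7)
My plan is to deduce this corollary directly from Corollary \ref{BKreal} applied to the weight $2$ newform $f = f_E \in S_2(\Gamma_0(N_E))$ attached to $E$ by the modularity theorem, after translating all hypotheses and periods into the elliptic curve setting.

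First I would check that the hypotheses of Corollary \ref{BKreal} are met in this case. Since $E$ is semistable, $N_E$ is squarefree, so for every prime $\ell \mid N_E$ we have $\ord_\ell(N_E) = 1$. The hypothesis that the conductor of $\overline{\rho}_f = E[p]$ equals $N_E$ forces $\overline{\rho}_f(I_\ell) \neq 1$ at each such $\ell$, so $(\Min_\Q)$ holds. Since the primes $v$ of $F$ above $\ell$ all satisfy $\ord_v(\n) = 1$, and $\overline{\rho}_f|_{I_v} = \overline{\rho}_f|_{I_\ell}$ is nontrivial, the $N$-minimality condition $(\Min_F)$ holds as well (the "irreducibly on $k^2$" clause is vacuous because all divisors are simple). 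By the remark following Theorem \ref{thmB}, condition (CV) is then automatic. Irreducibility of $\overline{\rho}_f|_{\Gamma_{F'}}$ is by hypothesis, and $p > k = 2$ holds since $p > 3$. The divisibility condition on $p$ is also assumed.

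Next, applying Corollary \ref{BKreal} to $f_E$ gives
$$\mathrm{char}_{\Z_p}\,\Sel_\Q(\Ad f_E \otimes \alpha) \sim \frac{\Gamma(\Ad f_E \otimes \alpha, 1)\, L(\Ad f_E \otimes \alpha, 1)}{\Omega_{f_E}^+ \Omega_{f_E}^-}.$$
To convert this into the statement for $E$, two identifications remain. The first is $\Ad \rho_{f_E} = \Ad E$ as Galois representations over $\Q_p$, which is immediate from the modularity isomorphism $\rho_{f_E} \cong V_p(E) \otimes \text{(twist)}$; in particular the Selmer groups, the $\Gamma$-factors, and the $L$-functions coincide. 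The second is the identification of the Manin periods $\Omega_{f_E}^\pm$ with the Néron periods $\Omega_E^\pm$ up to a $p$-adic unit, which holds because the Manin constant of a semistable elliptic curve is known to be $\pm 1$ at odd primes outside a small set by work of Mazur, Abbes--Ullmo, and Cesnavicius; under our hypothesis $p > 3$ and $p$ prime to the conductor, the Manin constant is a $p$-unit, giving $\Omega_{f_E}^\pm \sim \Omega_E^\pm$.

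The main obstacle I anticipate is this last comparison between Manin and Néron periods, since the precise formulation in the literature depends on which model of the modular curve is used and on the exact definition of $\Omega_{f_E}^\pm$ via the cohomology classes $\delta_{f_E}^\pm$ of Section \ref{sectManinCongruence}. Once that is in place, substitution into the formula from Corollary \ref{BKreal} gives exactly
$$\mathrm{char}_{\Z_p}\,\Sel_\Q(\Ad E \otimes \alpha) \sim \frac{\Gamma(\Ad E \otimes \alpha, 1)\, L(\Ad E \otimes \alpha, 1)}{\Omega_E^+ \Omega_E^-},$$
which is the claim.
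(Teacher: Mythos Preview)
Your proposal is correct and follows essentially the same route as the paper: apply Corollary \ref{BKreal} to the weight $2$ newform $f_E$ attached to $E$, then identify the Manin periods $\Omega_{f_E}^\pm$ with the N\'eron periods $\Omega_E^\pm$ up to a $p$-adic unit. The only difference is bibliographic: for the period comparison the paper simply invokes \cite[Section 3]{GV00}, using that $E[p]$ is absolutely irreducible and $p>2$, whereas you argue via the Manin constant and the results of Mazur, Abbes--Ullmo, and \v{C}esnavi\v{c}ius; either justification is adequate here.
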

\begin{proof} By our hypothesis on $E$, the conditions of the previous corollary are satisfied for the weight $2$ cusp form $f$ of level $N_E$
 associated to $E$.
Our claim therefore  follows from the previous corollary after remarking that for a Weil curve $E$ and a prime $p>2$ such that $E[p]$
 is absolutely irreducible, the Manin periods $\Omega_f^{\pm}$ coincide with the Neron periods $\Omega_E^{\pm}$ up to a $p$-adic unit (see for instance \cite[Section3]{GV00}).
\end{proof}

The following subsection gives conditions for $\H^\epsilon_{\frak m}$ to be free over $\T_F$ and $\T_F$ is Gorenstein (which were used in the proof of
Corollary \ref{congBCrealquadraticcor}). The sections thereafter concern conjectures and partial results for arbitrary Galois totally real fields.
The real quadratic case, including the proofs of Propositions \ref{intperrel} and \ref{divisibgeneral} is treated in Section \ref{twistedL4realquadratic} below.

\subsection{Freeness of the cohomology and $R=T$ for a totally real field}\label{sectHilbcoh}
In this sectiont $F$ is a general totally real field like in section \ref{periodsF} from which we use the notations and assumptions.
We also assume that $p$ does not divide the discriminant $D$ of $F$. We first consder $\ff$ a general Hlbert modular form of parallel 
weight $k$ and $\m_F$ the corresponding maximal ideal of the Hecke algebra $h^F_k$. We assume throughout the section that $\m_F$ is not Eisenstein.




 Let $j\colon X_F\hookrightarrow X^\ast_F$ be the minimal compactification of $X_F$ and $\partial X_F^\ast=X_F^\ast-X_F$
its boundary; it is  $0$-dimensional. Let $\H_{\partial}^\bullet(X_F,\cV_k)=\H^\bullet(\partial X^\ast_F,i^\ast Rj_\ast\cV^F_k)$.
Let $\H^\bullet_{\m_F}$,
respectively  $\H_{c,\m_F}^\bullet$,  $\H_{!,\m_F}^\bullet$ and $\H_{\partial,\m_F}^\bullet$, 
the completion at $\m_F$ of $\H^\cdot(X_F,\cV_k)$, 
respectively $\H_c^\bullet(X_F,\cV_k)$, $\H_!^\bullet(X_F,\cV_k)$ and  $\H_{\partial}^\bullet(X_F,\cV_k)$.
Before stating the next proposition, let us introduce the Atkin Lehner involution
$$W_N=[U_0(\n)\left(\begin{array}{cc}0&-1\\N&0\end{array}\right)U_0(\n)]$$
Then we have:
\begin{pro}\label{freenessduality} We suppose that $p$ is prime to $6N\phi_F(N)$ and that $\m_F$ is not Eisenstein.
Then the following holds.
\begin{itemize}
\item[1)]We have $\H^\bullet_{\partial,\m_F}=0$,
%
\item[2)] We have $\H^\bullet_{\m_F}=\H_{!,\m_F}^\bullet=\H_{!,\m_F}^d=\H_{\m_F}^d$ 
and this $\cO$-module is torsion free. 
%
\item[3)] The Poincar\'e duality twisted by the Atkin-Lehner involution $W_N$
induces a perfect Hecke-bilinear pairing
$$[-, -]\colon \H_{\m_F}^d\times \H_{\m_F}^d\to \cO,$$
\end{itemize}
 \end{pro}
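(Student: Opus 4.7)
The plan is to prove the three assertions in sequence, with (1) feeding into (2) and (2) feeding into (3).

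For (1), I would use the Baily--Borel stratification $\partial X_F^\ast = X_F^\ast \smallsetminus X_F$, which is a finite disjoint union of cusps. By Harder's description of boundary cohomology, each cusp contributes to $\H^\bullet_{\partial}$ a summand isomorphic to the group cohomology of an arithmetic Borel subgroup, which is Eisenstein in nature. Concretely, the Hecke operators $T_v$ for $v$ prime to $\n$ act on these summands with eigenvalues of the form $\chi_1(v)N(v)^{k-1}+\chi_2(v)$ for characters $\chi_1,\chi_2$ of the narrow ray class group, so the associated semisimple residual two-dimensional Galois representation is a direct sum $\overline{\chi}_1\chi_{\mathrm{cyc}}^{k-1}\oplus\overline{\chi}_2$, which is reducible. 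Since $\overline{\rho}_\ff$ is absolutely irreducible by the non-Eisenstein hypothesis on $\m_F$, none of these Eisenstein systems of eigenvalues is congruent to $\lambda_\ff$ modulo $\varpi$, and hence $\H^\bullet_{\partial,\m_F}=0$.

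For (2), the long exact sequence of the pair
$$\cdots \to \H^\bullet_{c,\m_F} \to \H^\bullet_{\m_F} \to \H^\bullet_{\partial,\m_F} \to \cdots$$
combined with (1) immediately yields $\H^\bullet_{c,\m_F}\cong\H^\bullet_{\m_F}=\H^\bullet_{!,\m_F}$. After inverting $p$, the concentration in degree $d$ is the classical statement that cuspidal automorphic representations of $\GL_2(\bA_F)$ contribute to relative Lie algebra cohomology only in the middle degree (Matsushima, Vogan--Zuckerman). The integral statement $\H^i_{\m_F}=0$ for $i\neq d$, under our assumptions $p>k$, $p\nmid 6ND\phi_F(N)$, and $\m_F$ non-Eisenstein, follows from the integral vanishing results of Dimitrov and Mokrane--Tilouine, whose proofs use a BGG-type decomposition of $\cV^F_k$ together with residual irreducibility to rule out torsion classes in non-middle degrees. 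Torsion-freeness of $\H^d_{\m_F}$ then follows from universal coefficients: since $\H^{d+1}_{\m_F}$ vanishes, any $\varpi$-torsion in $\H^d_{\m_F}$ would have to come from $\H^{d-1}(X_F,\cV^F_k/\varpi)_{\m_F}$, which in turn vanishes by the same concentration statement applied with mod-$\varpi$ coefficients.

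For (3), the orbifold $X_F$ is smooth orientable of real dimension $2d$ (since $p>2$ kills the torsion in the orbifold isotropy), so Poincar\'e--Lefschetz duality gives a perfect $\cO$-bilinear pairing
$$\H^d(X_F,\cV^F_k)\times \H^d_c(X_F,\cV^{F,\vee}_k)\to \cO.$$
Self-duality of $\Sym^{k-2}$ for $\GL_2$ (up to a power of the determinant) identifies $\cV^{F,\vee}_k$ with $\cV^F_k$ after a central character twist. With respect to the Hecke action one has $\langle T_v x,y\rangle=\langle x,T_v^\ast y\rangle$ with $T_v^\ast$ the dual Hecke operator; conjugating one variable by the Atkin--Lehner involution $W_N$ converts $T_v^\ast$ back to $T_v$ and thus restores Hecke equivariance. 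Localizing at $\m_F$ and using (2) to identify $\H^d_{c,\m_F}$ with $\H^d_{\m_F}$ then gives the desired perfect Hecke-bilinear pairing. The hardest step is the integral middle-degree concentration and torsion-freeness in (2): this is where the numerical hypotheses on $p$ and the non-Eisenstein assumption are all essential, relying on the deepest input of Dimitrov-type vanishing for Hilbert modular varieties.
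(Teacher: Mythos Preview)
Your overall architecture---kill the boundary, concentrate integrally in the middle degree, then read off Poincar\'e duality with the Atkin--Lehner twist---matches the paper exactly, and your arguments for (1) and (3) are essentially what the paper does (the paper just packages (1) as a citation of \cite[Theorem 2.4.2]{ACC+} rather than spelling out Harder's description).

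The substantive difference is in (2). You invoke Dimitrov (and Mokrane--Tilouine) for the integral concentration $\H^i_{\m_F}=0$ for $i\neq d$. But Dimitrov's vanishing theorems require a ``big image'' hypothesis on $\overline{\rho}_{\ff}$ (roughly, irreducibility of the tensor-induced representation $\bigotimes_\sigma \overline{\rho}_{\ff}^\sigma$, or that the image contains $\SL_2(\F_p)$), which is strictly stronger than the non-Eisenstein condition assumed in the statement. So as written your argument proves a weaker proposition than the one claimed. The paper makes this point explicitly just after the proposition: ``Note that Dimitrov needed a big image assumption which is no longer needed to get the same conclusion as the proposition above.''

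The paper's device for removing the big-image hypothesis is to pass from $\Res_{F/\Q}\GL_2$ to an auxiliary quasi-split unitary similitude group $G$ (built from $E=F(\sqrt{-1})$) whose Shimura variety has the same connected components as $X_F$. On the unitary side one can invoke Caraiani--Scholze \cite[Theorem 1.1]{CS19}, which gives the integral middle-degree concentration at any non-Eisenstein maximal ideal without a big-image condition, and \cite{ACC+} for the boundary vanishing. Transporting back along the component identification (using $p\nmid 6N\phi_F(N)$ to descend from $U(\n)$ to $U_0(\n)$) yields (1) and (2) under the weaker hypothesis. Your proof becomes correct if you either strengthen the hypothesis to Dimitrov's, or replace the Dimitrov citation by this unitary/Caraiani--Scholze input.
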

 %
\begin{proof}

For this we replace the $\Q$-algebraic group associated to $\Res_{F/\Q}\GL_2$ by a quasi-split group of unitary similitudes and apply Theorems 
by \cite{ACC+} and Caraiani-Scholze \cite{CS19}.
Let $E=F \Q(\sqrt{-1})$ and $J= \left(\begin{array}{cc} 0&1\\-1&0\end{array}\right)$; we consider the algebraic group 
$G$ defined over $F $ given, for any $F$-algebra $R$ by
$$G(R)=\{X\in\GL_2(E\otimes R); {}^t\overline{X}JX=\nu(X)\cdot J, \quad\nu(X)\in (F\otimes R)^\times\}$$ 
where the bar stands for the $F$ automorphism of $E$ induced by the complex conjugation on $\Q(\sqrt{-1})$.
The character  $\nu\colon G\to F^\times$ is the similitude factor and its derived subgroup
is $G^\prime \cong SL_2{}_{/F}$ as algebraic groups (see for example Bargmann, \cite[pp.589-591]{Ba47}). 
We have an exact sequence of algebraic groups over $F$:
$$1\to \Gm_{/F} \to \GL_2{}_{/F}\times Res^{E}_F(\Gm_{/\widetilde{F}})\to G\to 1$$
where the right hand side map is a central isogeny and is defined by $(g,x)\mapsto g.x1_2$. In particular, we see that
 the algebraic groups groups $\GL_2{/F}$, $H=\GL_2{}_{/F}\times Res^E_F(\Gm_{/E})$ and $G$ are isogeneous and have the same derived subgroup.
Therefore, the corresponding Shimura varieties and their compactifications have isomorphic connected components. More precisely, any connected component of
the Shimura variety for $GL_2{}_{/F}$ can be identified to a connected component of the Shimura variety for $G$.
In particular, it is sufficient to prove the proposition by replacing the cohomology 
of $X_F$ (or its boundary) by the cohomology of the Shimura variety $X^G_F$ for $G$ with level  the principal congruence subgroup $U_G(\n)$ in $G(\A_F)$.
We will denote again by $\cV_k(\cO)$ the local system on $X_F^G$ associated to an (non-unique) algebraic representation of $G$ that coincide with $\bigotimes_{\tau\in I_F}\cO[X_\tau,Y_\tau]_{k-2}$
on $\GL_2(F)$.
Since the cuspidal automorphic representation associated to $\ff$ has trivial central character, it induces canonically a (cuspidal) representation $\pi_G$
on $G(\bA_F)$ with trivial central character. Let $h_G$ be the Hecke algebra outside $\n$ acting faithfully on $\H^\bullet(X^G_F,\cV_k(\cO))$ and $\m_G$ be the maximal ideal of $h_G$
associated to $\pi_G$ modulo $p$. 
Let $j_G\colon X^G_F\hookrightarrow X^G_F{}^\ast$ be the minimal compactification of 
$X^G_F$ and  $i_G\colon \partial X^G_F=X^G_F - X^G_F{}^\ast \hookrightarrow X_F^G{}^\ast$ its boundary. 
We denote by  $\H_{\m_G}^\bullet$ and $\widetilde{\H}^\bullet_{\partial,\m_G}$  respectively the localizations of
$\H^\bullet(X^G_F,\cV_k(\cO))$ at $\m_G$ and  
$\H^\bullet(\partial X^G_F{}^\ast,i_G^\ast Rj_{G,\ast}\cV^F_k)$ at $\m_G$. We now go over each statement of the proposition.

1) By \cite[Theorem 2.4.2]{ACC+}, we have
$\widetilde{\H}^\bullet_{\partial,\m_G}=0$. Since the connected components of $X_F$ and $Y_F$ are isomorphic, and since 
we can descend from $Y_F$ to $X_F$ because $p$ is prime to $N\phi_F(N)$, we conclude that
$\H^\bullet_{\partial,\m_F}=0$. 
 
2) 
We have by \cite[Theorem 1.1]{CS19}
 $\H^i(X^G_F,\cV_k(\cO))=0$ for $i<d$ and $\H_c^i(X^G_F,\cV_k(\cO))=0$ for $i>d$. 
Therefore, $\H^i(X^G_F,\cV_k(\cO))_{\m_G}=0$ for $i\neq d$.  
Again, since the Shimura varieties $X^G_F$ and $X_F$ have the same connected components, 
and since we can descend from $Y_F$ to $X_F$ 
because $p$ is prime to $6ND\varphi_F(N)$, it follows that $\H^\bullet_{\m_F}=\H^\bullet_{c,\m_F}=\H_{!,\m_F}^\bullet=\H_{\m_F}^d$ 
and also that
 $\H^{d-1}(X_F,\cV^F_k(\cO/\varpi\cO))_{\m_F}=0$.
In particular, $\H_{\m_F}^d$ is torsion-free. 
 
3) After twisting by the Atkin-Lehner involution, 
the Poincar\'e duality pairing on
$$\H^d(X_F, \cV_k(\cO) )/(\cO -tors)\times \H^d_c(X_F,\cV_k(\cO)) /(\cO -tors) \longrightarrow \cO$$
 is still perfect and becomes Hecke-bilinear. Therefore, its restriction to
$\H^d(X_F, \cV_k(\cO) )_{\m_F}/(\cO -tors) \times \H^d_c(X_F, \cV_k(\cO))_{\m_F}/(\cO -tors)$ is still perfect.This proves our claim since by 1), we have
$\H^d_c(X_F,\cV_k(\cO))_{\m_F}=\H^d(X_F,\cV_k(\cO))_{\m_F}$ and these modules are $\cO$-torsion free by 2).
\end{proof}
 Thanks to the previous proposition, we are in position to construct a Taylor-Wiles system as in the work of Fujiwara \cite{Fu06} 
or Dimitrov \cite{Dim05}.
 Note that Dimitrov needed a big image assumption which is no longer needed to get the same conclusion as the proposition above.
\begin{thm}\label{completeintersection} Let $p>k$. 
Suppose that $(\Min_F)$  and $(\Irr_{F'})$ hold, then 
the module $\H^d_{\m_F}$ is free over the localized Hecke algebra $\T_F$, 
and this algebra is complete intersection (hence Gorenstein). Moreover, for every character $\epsilon$ of $\{\pm 1\}^d$,  
$\H^\epsilon_{\m_F}=\H^d_{\m_F}[\epsilon]$ is free of rank $1$ over $\T_F$.
\end{thm}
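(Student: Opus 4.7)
The plan is to apply the Taylor--Wiles--Diamond--Fujiwara patching method to the Hecke module $M := \H^d_{\m_F}$, using Proposition~\ref{freenessduality} as the crucial input that replaces the ``weight $k$ + level $\n$'' cohomology by a module that behaves like one for a compact Shimura variety: namely, it is concentrated in middle degree, is $\cO$-torsion free, and carries a perfect Hecke-equivariant self-duality (twisted by $W_N$). Without Proposition~\ref{freenessduality}, standard obstructions (boundary cohomology, torsion in neighbouring degrees) would prevent the module-theoretic version of the patching argument from running; with it, the setup matches the one used by Fujiwara~\cite{Fu06} and Dimitrov~\cite{Dim05} in the Hilbert case.

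First I would set up the universal deformation ring $R_F$ classifying lifts of $\overline\rho_f|_{\Gamma_F}$ which are $N$-minimal at places dividing $\n$ and Fontaine--Laffaille of Hodge--Tate weights $\{0,k-1\}$ at places above $p$ (this is where $p>k$ is used). The hypotheses $(\Min_F)$ and $(\Irr_{F'})$ are exactly what is needed to (i) make the minimal local deformation problems pro-representable and formally smooth of the expected relative dimension, and (ii) produce enough Taylor--Wiles primes by a Chebotarev argument in $\Gamma_{F'}$. For each Taylor--Wiles datum $Q$ consisting of a finite set of auxiliary primes $v$ of $F$ with $N(v)\equiv 1\pmod{p^n}$ at which $\overline\rho_f(\Frob_v)$ has distinct eigenvalues, I would form the auxiliary level $U_0(\n)\cap U_1(Q)$, the corresponding Hecke algebra $\T_{F,Q}$ (with the extra diamond operators and the ``chosen eigenvalue'' projector $\pi_Q$), and the cohomology module $M_Q := \pi_Q \H^d(X_{F,Q},\cV^F_k)_{\m_{F,Q}}$. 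Proposition~\ref{freenessduality} applied at level $Q$ gives that $M_Q$ is concentrated in middle degree, is $\cO$-torsion free, and is self-dual as a $\T_{F,Q}$-module.

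Next I would verify the axioms of the Taylor--Wiles patching criterion (in the module-theoretic form of Diamond, as extended by Fujiwara/Dimitrov to Hilbert modular varieties): the global Galois cohomology count $\dim H^1_{\mathcal L^\perp}(\Gamma_{F,S\cup Q},\Ad^0\overline\rho) = |Q|$ for a well-chosen Selmer condition, the freeness of $M_Q$ over the group algebra $\cO[\Delta_Q]$ of the diamond quotient, and compatibility of $M_Q/\mathfrak a_Q \cong M$ with the natural surjection $\T_{F,Q}/\mathfrak a_Q \twoheadrightarrow \T_F$. Patching over a cofinal sequence of $Q$'s yields a power-series ring $R_\infty$ surjecting onto both $\T_F$ and $\T_{F,Q}$'s compatibly, and a patched module $M_\infty$ which is finite free over a power-series ring of the same Krull dimension as $R_\infty$. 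The standard commutative-algebra lemma then forces $R_\infty \isomto \T_\infty$, $\T_F$ complete intersection (hence Gorenstein) over $\cO$, and $M = \H^d_{\m_F}$ free over $\T_F$.

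Finally, for the $\epsilon$-refinement, I would use that the Weyl group $W_F = \{\pm1\}^d$ acts on $(X_F,\cV^F_k)$ commuting with all Hecke operators away from $\n$, so $M = \bigoplus_\epsilon \H^\epsilon_{\m_F}$ is a $\T_F[W_F]$-module decomposition. Since $M$ is free of some rank $r$ over $\T_F$ and, rationally, each $\epsilon$-component has $\T_F\otimes K$-rank one (from the Harder--Matsushima--Shimura description: $\dim_\C M\otimes\C$ equals $2^d$ times the number of eigensystems), one gets $r = 2^d$ and each idempotent projector $e_\epsilon = 2^{-d}\sum_{w\in W_F}\overline\epsilon(w)\,w$ (well-defined since $p$ is odd) splits off a $\T_F$-direct summand of rank one. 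Freeness of $\H^\epsilon_{\m_F}$ follows because a rank-one direct summand of a finite free module over a local ring is itself free. I expect the main technical obstacle to be bookkeeping the auxiliary level structure and verifying that the projector $\pi_Q$ cuts out a module to which Proposition~\ref{freenessduality} still applies at level $Q$ --- everything else is routine once the patching input at level $\n$ is in hand.
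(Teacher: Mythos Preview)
Your proposal is correct and follows essentially the same approach as the paper: both run the Taylor--Wiles--Diamond--Fujiwara patching argument on the middle-degree cohomology of the Hilbert modular variety (as in \cite{Dim05}, rather than on quaternionic cohomology as in \cite{Fu06}), with Proposition~\ref{freenessduality} supplying the concentration, torsion-freeness, and self-duality needed for the $\cO[\Delta_Q]$-freeness and descent axioms at each auxiliary level. The paper additionally inserts a Wiles auxiliary prime $r$ (via $U_1(r)$) to guarantee neatness of the level groups at the Taylor--Wiles stages, and leaves the $\epsilon$-refinement implicit; your explicit idempotent argument for the latter is fine.
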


\begin{proof} 
We just give a sketch as the techniques to reach this result are well-known and follow the same lines
as the work of Dimitrov \cite{Dim05} or  Fujiwara \cite[Th.0.2]{Fu06} 
with the difference that Fujiwara considered a Taylor-Wiles system coming from quaternionic cohomology modules, 
associated either to a quaternionic Shimura curve or to a t
totally definite quaternion algebra.  Instead, we use as Dimitrov the Taylor-Wiles system coming from 
the middle degree cohomology of Hilbert-Blumental modular varieties.
 
We therefore  introduce as in \cite[section 11]{GT05} and \cite{Fu06,Dim05} the finite coverings $X_\Delta(Q)\to X_F$ 
corresponding to levels $U_0(\n)\cap U_\Delta(Q)\cap U_1(r)$, where $Q$ is a Taylor-Wiles set 
(and $r$ is a Wiles auxiliary prime which insures that the level group is neat but such that the
localisation at the maximal ideal $\m_Q$ won't be ramified at $r$, as in \cite[Lemma 11]{DT94} or \cite[Sections 7.4, 8.1]{Fu06}).
and
modules $\H_Q=\H^d(X_\Delta(Q),\cV_k(\cO))_{\m_Q}$'s with faithful action of localized Hecke algebras $\T_{F,Q}$ 
and diamond group algebras $\cO[\Delta_Q]$. 
They provide
a Taylor-Wiles system $(Q,\H_Q,R_{F,Q},\T_{F,Q})$ where
$\H_Q$ is a $\cO[\Delta_Q]$-finite free module such that 
$$\H_Q/I_Q\H_Q=\H_\emptyset=\H^d_{\m_F}$$ 
where $I_Q$ denotes the augmentation ideal of
$\cO[\Delta_Q]$. The  $\cO[\Delta_Q]$-freeness of $\H_Q$ and the descent formula follow directly from 
Statements 2) and 3) of the previous Proposition. 
The Taylor-Wiles machinery allows us to conclude that $\H^d_{\m_F}$ is free over $\T_F$, that 
the $N$-minimal $p$-Fontaine-Laffaille if $p\not\vert N$ and $k<p$)  universal deformation ring $R_F$ of 
$\overline{\rho}_f\vert_{\Gamma_F}$ is canonically isomorphic to $\T_F$ and that $\T_F$ is complete intersection.
\end{proof}

\subsection{Adjoint $L$ values for totally real fields and congruence ideal}\label{secttwspvalBK}

Let $\ff$ be as in the previous section and let $\lambda_\ff$ be the corresponding character  $\T_F\to\cO$.
Recall that $T_F$ acts faithfully on the finite free $\cO$-module $\H^\prime_{\m_F}=\H^d_{!,\m_F}/tors$ 
and that this module has a perfect  Hecke-equivariant pairing.

For any character $\epsilon\in\widehat{W}_F:=\Hom(W_F,\{\pm 1\})$, 
let $\H_{\m_F}^\epsilon$ be the direct factor of $\H^d_{\m_F}$
 given by the $\epsilon$-eigenspace. It is finite flat over $\cO$.
We apply the constructions of Section \ref{C0}
to the $T_F$-modules
\begin{itemize}
\item $M=\T_F$. We obtain the Hecke congruence module
$C_0(\ff):= C_0^{\lb_\ff}(\T_F)=  (\cO\times \T^\prime_F)/\T_F=\cO\otimes_{\T_F} \T^{\cO}_F$
and the Hecke congruence ideal  $\eta_{\ff}=\eta_{\lb_\ff}(\T_F)$ 
given by $\lb_\ff(\Ann_{\T_F}(\ker\lb_\ff))$ viewed as an ideal of $\cO$. 

\item $M=\H_{\m_F}^\epsilon$. We obtain
the cohomologial congruence module $C_0^{\lb_\ff}(\H_{\m_F}^\epsilon)$ and the 
cohomological congruence ideal $\eta_\ff^\epsilon:=\eta_{\lb_\ff}(\H_{\m_F}^\epsilon)$. 
Since $\H_{\m_F}^\epsilon\otimes K$ is free of rank one over $\T_F\otimes K$,
we know by ({div-C0}) that
$$C_0^{\lb_\ff}(\H_{\m_F}^\epsilon)=\cO/\eta_\ff^\epsilon.$$
\end{itemize}

Let $-\epsilon$ be the character which on the canonical basis 
$(w_\tau)_\tau$ of $\{\pm 1\}^{I_F}$ is given by
$(-\epsilon) (w_\tau)=- (\epsilon(w_\tau))$. Because of the perfect duality between $\H_{\m_F}^\epsilon$ and $\H_{\m_F}^{-\epsilon}$, 
by Lemma \ref{Pontryaginprop}, we have as $\cO$-modules
$$C_0(\H_{\m_F}^\epsilon)=C_0(\H_{\m_F}^{-\epsilon})=\cO/[\delta_{\ff}^\epsilon,\delta_{\ff}^{-\epsilon}]$$
so we have
$$\eta_{\ff}^\epsilon=[\delta_{\ff}^\epsilon,\delta_{\ff}^{-\epsilon}].$$
Of course, if $\H^\epsilon_{\m_F}$ is free over $\T_F$, we have that $\eta_\ff=\eta_\ff^\epsilon$. T
he following result is a generalization of
Theorem \ref{classical}.

\begin{pro} \label{Lvalcong} Assume that $p$ is prime to $6ND\phi_F(N)$ and $p>k$. We have for any $\epsilon\in \widehat{W}_F$,
$$\eta_\ff^\epsilon\sim{\Gamma(\Ad \ff ,1)L(\Ad \ff,1)\over \Omega_{\ff}^{\epsilon}\Omega_{\ff}^{-\epsilon}} $$
If moreover the hypothesis of Theorem \ref{completeintersection} hold, then 
 $$\eta_\ff \sim {\Gamma(\Ad \ff ,1)L(\Ad \ff ,1)\over \Omega_{\ff}^{\epsilon}\Omega_{\ff}^{-\epsilon}}$$
 \end{pro}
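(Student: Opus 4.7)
The plan is to follow the template of Theorem \ref{classical} of the elliptic case, with the key inputs being now the duality statement of Proposition \ref{freenessduality} and the generalization of Shimura's formula to Hilbert modular forms.

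First I would invoke Proposition \ref{Pontryaginprop} applied to the perfect Hecke-bilinear pairing $[-,-]\colon \H^d_{\m_F}\times \H^d_{\m_F}\to \cO$ (twisted by the Atkin--Lehner involution) from Proposition \ref{freenessduality}(3). The Weyl group $W_F$ acts on $\H^d_{\m_F}$ through the $w\bullet$ action, and since $p$ is odd, this gives a decomposition into character eigenspaces. A straightforward compatibility check shows that the Atkin--Lehner-twisted Poincar\'e pairing restricts to a perfect pairing
\[
[-,-]\colon \H_{\m_F}^\epsilon\times \H_{\m_F}^{-\epsilon}\to\cO,
\]
because integration of the wedge product of an $\epsilon$-class with a $(-\epsilon)$-class is the only way to land in the trivial character (orientation class). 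Then Proposition \ref{Pontryaginprop} gives $\eta_\ff^\epsilon=[\delta_\ff^\epsilon,\delta_\ff^{-\epsilon}]$, as already noted right before the statement.

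Next, using the definition $\omega^\epsilon(\ff)=\Omega_\ff^\epsilon\cdot \delta_\ff^\epsilon$ (and similarly for $-\epsilon$), one obtains
\[
\eta_\ff^\epsilon \sim \frac{[\omega^\epsilon(\ff),\omega^{-\epsilon}(\ff)]}{\Omega_\ff^\epsilon\,\Omega_\ff^{-\epsilon}}.
\]
The core computation is to show that the numerator $[\omega^\epsilon(\ff),\omega^{-\epsilon}(\ff)]$ equals, up to a $p$-adic unit, the completed adjoint $L$-value $\Gamma(\Ad\ff,1)L(\Ad\ff,1)$. This is the Hilbert analogue of the formula used by Hida in \cite{H81,H88}: one expands $\omega^\epsilon(\ff)$ and $\omega^{-\epsilon}(\ff)$ through the Harder--Matsushima--Shimura isomorphism as symmetric/antisymmetric combinations of the forms $\ff_I$ obtained by complex-conjugating the holomorphic variable over $I\subset I_F$, pairs them via Poincar\'e duality against $W_N$, and observes that by orthogonality only the diagonal terms $\ff_I$ against $\ff_{\bar I}$ survive; each such pair reproduces, up to normalization constants controlled by our assumption $p\nmid 6ND\varphi_F(N)$, the Petersson inner product $\langle\ff,\ff\rangle_F$. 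Shimura's integral representation then relates $\langle \ff,\ff\rangle_F$ to $\Gamma(\Ad\ff,1)L(\Ad\ff,1)$ up to explicit elementary factors (Gauss sums, powers of $\pi$, and normalizations of the discriminant) whose $p$-adic valuations vanish under the assumptions on $p$. I expect this step, which essentially amounts to unwinding the Petersson-norm/adjoint-$L$-value identity in the Hilbert setting, to be the main obstacle: it is straightforward in principle, but one must carefully track the Atkin--Lehner twist, the normalization of $\delta^\epsilon$ versus $\omega^\epsilon$ on each factor of $\h^{I_F}$, and the integral structure to keep the equivalence up to a $p$-adic unit.

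Finally, for the second assertion: under the hypotheses of Theorem \ref{completeintersection} the module $\H^\epsilon_{\m_F}$ is free of rank one over $\T_F$, so the natural surjection $C_0(\T_F)\twoheadrightarrow C_0(\H^\epsilon_{\m_F})$ provided by the general formalism of Section \ref{C0} is an isomorphism (equivalently, the inclusion $\eta_\ff\subset \eta_\ff^\epsilon$ of \eqref{div-C0} becomes an equality). Substituting into the first part yields the stated formula for $\eta_\ff$.
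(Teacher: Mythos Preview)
Your proposal is correct and follows essentially the same approach as the paper: reduce to computing $[\delta_\ff^\epsilon,\delta_\ff^{-\epsilon}]$ via the twisted Poincar\'e duality of Proposition~\ref{freenessduality}, identify this with the Petersson norm and hence with the completed adjoint $L$-value via Shimura's formula (the paper simply cites Dimitrov \cite[Theorem~4.4, Theorem~4.11, Formula~(19)]{Dim05} for this step), and then invoke freeness from Theorem~\ref{completeintersection} for the second assertion.
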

\begin{proof}  The proof follows the same lines as the case $F=\Q$ and follows from the computation of $[\delta_{\ff}^\epsilon,\delta_{\ff}^{-\epsilon}]$ and the Poincar\'e duality established in Proposition
\ref{freenessduality}. This result was also proved by Dimitrov in \cite[Theorem 4.4, Theorem 4.11, Formula (19)]{Dim05}. 
When the conditions of Theorem \ref{completeintersection} are satisfied, $\H^\epsilon_{\m_F}$ is free over $\T_F$
and therefore $\eta_\ff=\eta_\ff^\epsilon$ which implies the second claim.
\end{proof}

We will apply this result when $\ff=f_F$ is the base change of our original modular form $f$.

\subsection{Twisted adjoint $L$ values and congruences for base change} \label{twistedLvalueANDcong}
From now on, we again assume that the totally real field $F$ is abelian over $\Q$ with Galois group $G$ and recall that we denote by $f_F$ the base change of $f$ to $F$.
Let $I_G$ be the augmentation ideal of $\Z[G]$ that we look at as a representation of $G$ of degree $d-1$. We have therefore a factorization of $L$-functions:
$$L(\Ad f_F ,1)=L(\Ad f ,1)L(\Ad f\otimes I_G ,1).$$
Let us put
$$L^\ast(\Ad\otimes I_G,1)={\Gamma(\Ad f,1)^{d-1}L(\Ad f\otimes I_G ,1)\over (\Omega_{f}^{+}\Omega_{f}^{-})^{d-1}}.$$
where $D$ is the discriminant of $F$. 
The following statement seems well known.
\begin{pro} \label{divi-Hidal}    Assume that $F$ is abelian over $\Q$ and that $p>k-2$ and prime to $2N\phi_F(N)$, then
$$L^\ast(\Ad\otimes I_G,1)\in \cO.$$
\end{pro}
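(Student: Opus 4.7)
The plan is to reduce Proposition \ref{divi-Hidal} to a character-by-character application of Hida's integrality theorem from the introduction. Since $F/\Q$ is abelian and totally real with Galois group $G=\Gal(F/\Q)$, complex conjugation lies in every archimedean decomposition subgroup and acts trivially on $G$, so every one-dimensional character $\chi\in\widehat{G}$ (pulled back to a Dirichlet character via the Artin map) satisfies $\chi(-1)=1$, i.e.~is \emph{even}. The augmentation decomposes as $I_G\cong\bigoplus_{\chi\ne 1}\chi$, giving
$$L(\Ad f\otimes I_G,s)=\prod_{\chi\ne 1}L(\Ad f\otimes\chi,s),$$
and because each $\chi$ is even, $\Gamma(\Ad f\otimes\chi,1)=\Gamma_\C(k)\Gamma_\R(2)=\Gamma(\Ad f,1)$ for every $\chi\ne 1$. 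Combining these with the definition of $L^\ast(\Ad f\otimes\chi)$ recalled in the introduction yields the clean identity
$$L^\ast(\Ad\otimes I_G,1)=\Bigl(\prod_{\chi\ne 1}G(\overline{\chi})^2\Bigr)^{-1}\prod_{\chi\ne 1}L^\ast(\Ad f\otimes\chi).$$

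The second step is to invoke Hida's integrality theorem \cite{H99}: for every even primitive Dirichlet character $\chi$ of conductor $c_\chi$ prime to $Np$, $L^\ast(\Ad f\otimes\chi)$ lies in the ring of integers of $K(\chi)$ inside $\overline{\Q}_p$. Under the hypothesis $p\nmid 2N\varphi_F(N)$, every $\chi\in\widehat{G}$ has conductor $c_\chi$ dividing the conductor of $F/\Q$ and hence dividing $D_F$; one then checks that $p\nmid\varphi_F(N)$ forces $p$ to be unramified in $F$ (so $p\nmid D_F$, hence $p\nmid c_\chi$), while the implicit coprimality $(N,D_F)=1$ gives $(N,c_\chi)=1$. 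Thus Hida's theorem applies to every $\chi\ne 1$, so the product $\prod_{\chi\ne 1}L^\ast(\Ad f\otimes\chi)$ is an integer in the compositum of the $K(\chi)$.

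To conclude, one checks that the Gauss sum factors are $p$-adic units and that the overall expression descends to $\cO$. Since $\chi$ is even, $G(\chi)G(\overline{\chi})=c_\chi$ is coprime to $p$; both Gauss sums are algebraic integers inside $\Q(\zeta_{c_\chi},\chi)$, a field unramified at $p$, so their nonnegative $p$-adic valuations must sum to zero and therefore vanish individually, giving $G(\overline{\chi})^2$ a unit in the relevant ring of integers. Finally, $L^\ast(\Ad\otimes I_G,1)$ is manifestly invariant under the action of $\Gal(\overline{\Q}/K)$ that permutes the non-trivial characters of $G$, so it lies in $K$; combined with the $p$-integrality just established, it lies in $\cO$, as desired. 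The one delicate point in the argument is justifying the coprimality conditions from the stated hypothesis $p\nmid 2N\varphi_F(N)$; once these are in place, the statement is a formal assembly of Hida's theorem across the characters of $G$ together with the elementary Gauss sum analysis.
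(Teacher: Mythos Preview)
Your approach is essentially the paper's own: factor $I_G$ into nontrivial characters of $G$ (all even since $F$ is totally real), invoke the known integrality of each $L^\ast(\Ad f\otimes\chi)$ due to Sturm and Hida, and then control the Gauss-sum prefactor. The paper handles this last step via the conductor--discriminant formula $\prod_{\chi}G(\chi)\sim\sqrt{D}$, while you argue unit-by-unit using $G(\chi)G(\overline{\chi})=c_\chi$; these are equivalent once one knows $p\nmid D$.

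There is one genuine slip. Your claim that ``$p\nmid\varphi_F(N)$ forces $p$ to be unramified in $F$'' is false: $\varphi_F(N)=\#(\cO_F/N\cO_F)^\times$ depends only on the splitting of the primes dividing $N$ in $F$, not on the ramification of $p$. So you cannot extract $p\nmid D_F$ (hence $p\nmid c_\chi$) from the stated hypothesis $p\nmid 2N\varphi_F(N)$ alone. In fairness, the paper's own proof has the same lacuna: it too needs $p\nmid D$ for the Gauss-sum product to be a $p$-adic unit, and the authors explicitly note they ``are not giving more details as this fact won't be used in the rest of this paper when $F$ is not real quadratic.'' The coprimality $p\nmid D$ (and $(N,D)=1$) should simply be added to the hypotheses; once that is done, your argument and the paper's coincide. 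Your final Galois-descent remark, showing the value lies in $\cO$ rather than a larger ring of integers, is a nice touch the paper leaves implicit.
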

\begin{proof}This is due to combined arguments and computations of Sturm and Hida.
If if $F$ is real quadratic this is due to Hida \cite{H99}. In general, for each non trivial character of $\alpha$ of $G$, it follows from the integral formula for this twisted $L$-value due at $s=0$ to Sturm 
(see for example \cite[Lemma 4.5]{H90}) and using the functional equation to get the result at $s=1$ hat 
$$L^\ast(\Ad f,\alpha,1 ):=G(\overline{\alpha})^2{\Gamma(\Ad f\otimes\alpha ,1)L(\Ad f\otimes\alpha ,1)\over \Omega_{f}^{+}\Omega_{f}^{-}}\in \cO$$ 
One can also use the doubling method and the formula follows from \cite{BS00} in that case.
By using the conductor-discriminant formula $\sqrt{D}=\prod_{\alpha\in\widehat{G}}G(\alpha)$ (up to a unit), the result follows.
We are not giving more details as this fact won't be used in the rest of this paper when $F$ is not real quadratic.
\end{proof}
Similarly, we write
$$ L^\epsilon(\Ad f_F,1):={\Gamma(\Ad f_F ,1)L(\Ad f_F ,1)\over \Omega_{f_F}^{\epsilon}\Omega_{f_F}^{-\epsilon}}$$
Recall that these numbers calculate the cohomological congruence ideals of $f$ and $f_F$ (see Proposition \ref{Lvalcong}).
Let us notice the following lemma.
\begin{lem}\label{basechangehom} If $p$ does not divide the degree $d=[F:\Q]$, the homomorphism $\theta\colon\T_F\to \T$ is surjective.
\end{lem}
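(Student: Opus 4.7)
The plan is to interpret $\theta$ via the universal Galois-theoretic construction and then reduce surjectivity to an easy inflation-restriction argument that uses the hypothesis $p\nmid d$ directly.

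First, I would make the map $\theta$ explicit. Given a rational prime $\ell\nmid Np$ unramified in $F$, let $\ell\cO_F=\lambda_1\cdots\lambda_g$ with common residue degree $f=d/g$. Then $\theta(T_{\lambda_i})=P_f(T_\ell,\ell^{k-1})\in\T$, where $P_f$ is the Newton power-sum polynomial characterised by $P_f(\alpha+\beta,\alpha\beta)=\alpha^f+\beta^f$. In particular, $\theta$ is $G$-invariant for the action $\sigma\cdot T_\lambda=T_{\sigma\lambda}$, and for primes $\ell$ splitting completely in $F$ the averaging element $d^{-1}\sum_i T_{\lambda_i}\in\T_F$ (well-defined since $p\nmid d$) maps under $\theta$ to $T_\ell$. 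This already shows that all $T_\ell$ for $\ell$ split in $F$ lie in the image of $\theta$; handling the remaining Hecke operators is what requires the full strength of $p\nmid d$.

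Next, I would invoke the deformation-theoretic interpretation available in the surrounding context. The universal $\T$-valued deformation $\rho\colon\Gamma_\Q\to\GL_2(\T)$ of $\overline{\rho}_f$ restricts to a deformation $\rho|_{\Gamma_F}$ of $\overline{\rho}_f|_{\Gamma_F}$, and by universality of $\T_F$ (via $R_F=\T_F$, which is available under the assumptions in play) this restriction corresponds to a canonical $\cO$-algebra map $R_F=\T_F\to\T=R$ that one checks coincides with $\theta$. By Nakayama's lemma, $\theta$ is surjective if and only if the induced map on Zariski tangent spaces $t_\T\to t_{\T_F}$ is injective, i.e.\ if and only if the restriction map of Bloch--Kato Selmer groups
\[
\Sel_{\Q}(\Ad\,\overline{\rho}_f)\;\longrightarrow\;\Sel_{F}(\Ad\,\overline{\rho}_f|_{\Gamma_F})
\]
is injective.

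Finally, the injectivity of this restriction map follows immediately from inflation-restriction
\[
0\to H^1(G,(\Ad\,\overline{\rho}_f)^{\Gamma_F})\to H^1(\Gamma_\Q,\Ad\,\overline{\rho}_f)\to H^1(\Gamma_F,\Ad\,\overline{\rho}_f|_{\Gamma_F})
\]
together with $p\nmid d=|G|$: under this coprimality, $H^i(G,M)=0$ for $i>0$ and any $\F$-vector space $M$ with $G$-action, so the kernel of restriction on global cohomology vanishes. (Even without invoking this vanishing, the $\Gamma_F$-invariants of the adjoint module vanish by Schur because $\overline{\rho}_f|_{\Gamma_F}$ is absolutely irreducible.) The hypothesis $p\nmid d$ also guarantees that the restriction of Selmer conditions is compatible: unramified conditions at primes away from $Np$ restrict to unramified conditions trivially; at primes dividing $N$, the $N$-minimality ramification filtration is preserved under base change because the ramification indices in $F/\Q$ divide $d$ and are thus prime to $p$; and the Fontaine--Laffaille condition at $p$ pulls back to the analogous condition at primes above $p$.

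The main obstacle I anticipate is the last bookkeeping step --- verifying compatibility of local Selmer conditions under restriction --- which is routine but requires care at each bad prime. With that in place, injectivity of $\Sel_\Q\hookrightarrow\Sel_F$ follows and, via the $R=T$ dictionary, yields the surjectivity of $\theta$.
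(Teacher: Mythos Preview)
Your approach is genuinely different from the paper's and, under sufficient extra hypotheses, would succeed --- but it imports far more machinery than the lemma requires. The paper gives a completely elementary argument: with your polynomial $P_f$ (the paper calls it $Q_f$), one observes that $\theta(T^F_{\lambda_i})=Q_f(T_\ell,S_\ell)$ lies in the image $\T':=\theta(\T_F)\subset\T$, and since $\partial_T Q_f(T,S)=f\cdot Q_{f-1}(T,S)$ with $f\mid d$ coprime to $p$, the equation $Q_f(X,S_\ell)=\theta(T^F_{\lambda_i})$ has a simple root modulo the maximal ideal. Hensel's lemma in the complete local ring $\T'$ then produces a root in $\T'$, and by uniqueness of simple Hensel lifts inside $\T$ this root \emph{is} $T_\ell$; hence $T_\ell\in\T'$ for every $\ell\nmid ND$, and $\T'=\T$.

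Your route instead passes through deformation rings: surjectivity of $R_F\to R$ via injectivity of $\Sel_\Q\hookrightarrow\Sel_F$ (inflation--restriction plus $p\nmid |G|$), then descends to $\T_F\to\T$. Two comments. First, you do not actually need $R_F=\T_F$: surjectivity of $R_F\to\T_F$ (which holds once a $\T_F$-valued Galois representation exists) is enough, since the commuting square $R_F\to\T_F\to\T$ and $R_F\to R\to\T$ then forces $\theta$ to be surjective. Second --- and this is the real cost --- your argument needs $\overline{\rho}_f|_{\Gamma_F}$ absolutely irreducible (to even define $R_F$), the existence of the $\T_F$-valued representation, a choice of local deformation conditions on both sides, and their compatibility under restriction (which in turn needs $p$ unramified in $F$, the minimality hypotheses at $N$, etc.). None of these appear in the lemma's hypotheses; they are ambient in the paper but logically downstream of this lemma. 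So your argument proves a weaker statement, whereas the paper's Hensel trick uses nothing beyond the explicit shape of $\theta$ and $p\nmid d$.
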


\begin{proof}
%
 Let $\ell$ be a rational prime,  prime to $ND$. If $\ell=\l_1\ldots\l_r$ decomposes in $F$ in primes of degree $f$,
Let $t_\ell$, resp. $t_{\l_i}$, be the Hecke parameter of $f$, resp. $f_F$, at $\ell$, resp. $\l_i$. Thus, $\Tr(t_\ell)=T_\ell$ and $\Tr(t_{\l_i})=T^F_{\l_i}$. 
Therefore, $\theta(T^F_{\l_i})=Q_f(T_\ell,S_\ell)$ where $Q_f(T,S)=T^f+\ldots \in\Z[T,S]$ is a monic polynomial in $T$ defined by the relation
$X_1^f+X_2^f=Q_f(T,S)$ for $T=X_1+X_2$ and $S=X_1X_2$. Note that ${\partial Q_f(T,S)\over\partial T}=f\cdot Q_{f-1}(T,S)$.
Note that in any ring $B$ in which $f$ is invertible and for any $s\in B-\{0\}$, $Q_f(T,s)$ has no multiple root.

Let $\T^\prime=\theta(\T_F)\subset \T$ ; it contains the $Q_f(T_\ell,S_\ell)$ for all $\ell$'s prime to $ND$. 
The equation $Q_f(X,S_\ell)-\theta(T^F_{\l_i})=0$ has coefficients in the complete
local ring $\T^\prime$.
it has a solution modulo its maximal ideal because the residue field $\F$ of $\T^\prime$ is the same as that of $\T$.
 Since $p$ does not divide $f$, it has no multiple root in $\F$.
Therefore, by Hensel's lemma, it has a solution in $\T^\prime$. 
Hence, by Chebotarev density theorem, we conclude that
$\T^\prime=\T$.
\end{proof}

Recall that if the rings $\T$ and $\T_F$ are Gorenstein and $p$ does not divide $d$, it therefore
follows from Corollary \ref{Hida-factorization} that there is a decomposition of the congruence ideals
\begin{equation} \eta_{f_F}=\eta_f\eta_f^\sharp \label{factorization-general-F}
\end{equation}
with $\eta_f^\sharp=\eta_{\lb_f}^\sharp(\T_F)$ as  defined in Section \ref{basechange}.
On the other hand, it follows from our conjecture \ref{PRconj}, as it was  conjectured by Hida in \cite{H94} in the real quadratic case, that
we must have the integral period relation
\begin{equation} \label{intperrel2}
\Omega_{f_F}^{\epsilon}\Omega_{f_F}^{-\epsilon}\sim  (\Omega_{f}^{+}\Omega_{f}^{-})^d.
\end{equation}
The relation \eqref{intperrel2} implies immediately
that $$L^\epsilon(\Ad f_F,1) \sim L^\ast(\Ad f ,1)L^\ast(\Ad f \otimes I_G,1)$$
This together with \eqref{factorization-general-F} suggest the following conjecture that will be proved in the real quadratic case.

\begin{conj} \label{twistL_NBC} Assume $p$ does not divide $6dDN\phi_F(N)$ and that $(\Irr_F)$ holds. 
We further suppose that $\T$ and $\T_F$ are Gorenstein,
then
$$\eta_f^\sharp\sim L^\ast(\Ad f \otimes I_G) $$
In particular, if $p$ divides $L^\ast(\Ad f \otimes I_G)$, there is a congruence between $f_F$ and a non base change Hilbert cuspform.
\end{conj}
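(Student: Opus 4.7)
The plan is to reduce Conjecture \ref{twistL_NBC} to the integral period relation \eqref{intperrel2} by combining the abstract congruence ideal formalism of Section \ref{conggensect} with the $L$-value computations already established. Since $p$ does not divide $d=[F:\Q]$, Lemma \ref{basechangehom} ensures that the base change homomorphism $\theta\colon\T_F\to\T$ is surjective. Together with the Gorenstein hypotheses on $\T$ and $\T_F$, Corollary \ref{Hida-factorization} then yields the clean factorization $\eta_{f_F}=\eta_f\cdot \eta_f^\sharp$. Under the hypothesis $(\Irr_F)$ and $p>k$, the Taylor--Wiles input of Theorem \ref{completeintersection} guarantees that $\H^\epsilon_{\m_F}$ is free of rank one over $\T_F$, so that the cohomological and Hecke congruence ideals agree: $\eta_{f_F}=\eta_{f_F}^\epsilon$.

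Next I would exploit the two $L$-value formulas already in hand. Theorem \ref{classical} gives $\eta_f\sim L^\ast(\Ad f,1)=\Gamma(\Ad f,1)L(\Ad f,1)/(\Omega_f^+\Omega_f^-)$, while Proposition \ref{Lvalcong} gives, for any $\epsilon\in \widehat{W}_F$, the identity
\[\eta_{f_F}\sim L^\epsilon(\Ad f_F,1)=\frac{\Gamma(\Ad f_F,1)L(\Ad f_F,1)}{\Omega_{f_F}^\epsilon\Omega_{f_F}^{-\epsilon}}.\]
Dividing these two relations and applying the Artin factorization $L(\Ad f_F,s)=L(\Ad f,s)\cdot L(\Ad f\otimes I_G,s)$ together with the corresponding decomposition of the gamma factors, one obtains
\[\eta_f^\sharp \sim L^\ast(\Ad f\otimes I_G)\cdot \frac{(\Omega_f^+\Omega_f^-)^d}{\Omega_{f_F}^\epsilon\Omega_{f_F}^{-\epsilon}}.\]
Thus the conjecture is reduced to the integral period relation \eqref{intperrel2}: $\Omega_{f_F}^\epsilon\Omega_{f_F}^{-\epsilon}\sim (\Omega_f^+\Omega_f^-)^d$.

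To establish \eqref{intperrel2} I would follow the template of the real quadratic case (Proposition \ref{intperrel} and Proposition \ref{divisibgeneral}), which produces the relation as a conjunction of two divisibilities. For one direction, I would construct a $\T_F$-equivariant integral linear form $\Phi$ on $\H^d_{\m_F}$ whose kernel contains all non-base-change eigenclasses, built from restriction of cohomology classes on $X_F$ to the image of the diagonal embedding $X_\Q\hookrightarrow X_F$ (or, more generally, the embeddings of modular varieties for intermediate subfields). Pairing $\Phi$ with the $\delta_{f_F}^\epsilon$ basis and invoking Proposition \ref{linear-BC} would show that $\Phi(\delta_{f_F}^\epsilon)\in \eta_f^\sharp$, converting period information into a divisibility for $\eta_f^\sharp$. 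The converse divisibility is the analytic input: for a well-chosen family of auxiliary twists $\chi$ one needs $L^\ast(\Ad f\otimes\chi,1)$ to be a $p$-adic unit infinitely often, which forces the period ratio to be a unit.

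The main obstacle is precisely the non-vanishing modulo $p$ of twisted adjoint $L$-values at $s=1$. In the quadratic case, hypothesis (CV) combined with the Cornut--Vatsal--Chida--Hsieh theorem delivers exactly what is needed, since the only non-trivial character of $G$ is the fixed quadratic $\alpha$. For a general abelian totally real $F$, the augmentation $I_G$ decomposes as $\bigoplus_{\chi\neq 1}\chi$, and one would need a non-vanishing mod $p$ statement for $L(\Ad f\otimes \chi,1)$ that is uniform enough across the characters $\chi$ of $G$ to trap the product $L^\ast(\Ad f\otimes I_G)$. Existing non-vanishing results are essentially restricted to anticyclotomic twists over imaginary quadratic fields, so a genuinely new analytic ingredient (or a subtle inductive reduction through quadratic subextensions of $F$, combined with a factorization of Hida's linear form through those intermediate Hilbert varieties) seems unavoidable. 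This explains why the authors prove the theorem only in the real quadratic case and leave the general abelian situation as a conjecture.
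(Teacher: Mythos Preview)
The statement you are asked to prove is labeled as a \emph{conjecture} in the paper, and the paper does not give a proof of it in the general abelian totally real case; it only establishes it when $F$ is real quadratic (Theorem \ref{congBCrealquadratic} and Corollary \ref{congBCrealquadraticcor}). Your proposal is therefore not to be compared against a proof, but against the paper's own heuristic motivation for the conjecture, and on that score your reduction is exactly right: the paragraph in Section \ref{twistedLvalueANDcong} immediately preceding the conjecture combines \eqref{factorization-general-F} with the conjectural period relation \eqref{intperrel2} to derive $\eta_f^\sharp\sim L^\ast(\Ad f\otimes I_G)$ in precisely the way you describe, and your sketch of how one would attack \eqref{intperrel2} via Hida's linear form together with a Cornut--Vatsal style non-vanishing input is exactly the template of Propositions \ref{intperrel} and \ref{divisibgeneral}. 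You also correctly identify the missing analytic ingredient (non-vanishing mod $p$ for all non-trivial characters of $G$ simultaneously) as the reason the statement remains conjectural beyond the quadratic case.

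One small inaccuracy: you invoke Theorem \ref{completeintersection} under the hypotheses $(\Irr_F)$ and $p>k$, but that theorem actually requires the stronger conditions $(\Min_F)$ and $(\Irr_{F'})$. The conjecture as stated only assumes Gorenstein-ness of $\T$ and $\T_F$, which is weaker than freeness of $\H^\epsilon_{\m_F}$; so strictly speaking the identity $\eta_{f_F}=\eta_{f_F}^\epsilon$ that you need for the reduction is not guaranteed by the conjecture's hypotheses alone. This is a genuine looseness in the match between the hypotheses of the conjecture and the argument the paper has in mind (the paper's own motivating paragraph glosses over the same point), but it does not affect the substance of your analysis.
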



With a bit more assumptions, we can prove the following

\begin{pro} \label{eqBK} Assume the conditions $(\Irr_{F^\prime})$, $(\Min_F)$, $p>k$ and $p$ prime to $dDN\phi_F(N)$. 
Then if  Conjecture \ref{twistL_NBC} holds, we have
$$L^\ast(\Ad\,f\otimes I_G,1)\sim  \CH\Sel_\Q(\Ad(\rho_f)\otimes I_G)$$
\end{pro}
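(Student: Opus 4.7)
The plan is to combine the two available $R=T$ theorems (for $f$ and for $f_F$) with the cotangent-space interpretation of Bloch--Kato Selmer groups, and then to feed in the assumed Conjecture~\ref{twistL_NBC}. First I would invoke Theorem~\ref{completeintersection}, which applies under $(\Irr_{F'})$, $(\Min_F)$ and $p>k$, to obtain that $R_F=\T_F$ is a local complete intersection over $\cO$ and that $\H^d_{\m_F}$ is $\T_F$-free. The analogous argument of Wiles--Taylor--Wiles over $\Q$ (whose hypotheses are all implied by ours, since $(\Irr_{F'})$ implies $(\Irr)$ and $(\Min_F)$ implies $(\Min)$ while $p>k$ supplies Fontaine--Laffaille) gives $R=\T$ complete intersection. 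Proposition~\ref{LCI} then identifies
$$\eta_f=\Fitt_\cO(C_1^{\lambda_f}(\T)), \qquad \eta_{f_F}=\Fitt_\cO(C_1^{\lambda_{f_F}}(\T_F)).$$

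Next I would translate these congruence ideals into characteristic ideals of Selmer groups. Via $R=\T$ and $R_F=\T_F$, one has $C_1^{\lambda_f}(\T)\cong \wp_{\lambda_f}/\wp_{\lambda_f}^2$, which by the standard deformation-theoretic dictionary is the Pontryagin dual of the Bloch--Kato Selmer group $\Sel_\Q(\Ad\,\rho_f)$ with local conditions matching the deformation problem (Fontaine--Laffaille at $p$, which is legitimate because $p>k$, and $N$-minimal at places dividing $N$); the same identification holds with $F$ in place of $\Q$. This gives
$$\eta_f\sim \CH_\cO\,\Sel_\Q(\Ad\,\rho_f), \qquad \eta_{f_F}\sim \CH_\cO\,\Sel_F(\Ad\,\rho_f).$$

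To conclude I would use the following two compatible factorizations of $\eta_{f_F}$. On the one hand, since $p\nmid d=\#G$, Shapiro's lemma combined with the decomposition $\Ind_F^\Q\mathbf{1}=\mathbf{1}\oplus I_G$ of $G_\Q$-modules yields the direct sum
$$\Sel_F(\Ad\,\rho_f)=\Sel_\Q(\Ad\,\rho_f)\oplus\Sel_\Q(\Ad\,\rho_f\otimes I_G),$$
compatible with all local conditions (Fontaine--Laffaille and $N$-minimal at finite places, and $p\nmid d$ makes the restriction--corestriction a splitting everywhere), whence
$$\CH_\cO\,\Sel_F(\Ad\,\rho_f)=\CH_\cO\,\Sel_\Q(\Ad\,\rho_f)\cdot\CH_\cO\,\Sel_\Q(\Ad\,\rho_f\otimes I_G).$$
On the other hand, by Lemma~\ref{basechangehom} the base change map $\theta\colon \T_F\to\T$ is surjective (again using $p\nmid d$), so Corollary~\ref{Hida-factorization} applied to the Gorenstein pair $(\T,\T_F)$ yields $\eta_{f_F}=\eta_f\cdot \eta_f^\sharp$. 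Matching the two factorizations and cancelling the common factor $\eta_f\sim \CH_\cO\,\Sel_\Q(\Ad\,\rho_f)$ gives $\eta_f^\sharp\sim \CH_\cO\,\Sel_\Q(\Ad\,\rho_f\otimes I_G)$, and applying the assumed Conjecture~\ref{twistL_NBC} yields the desired equivalence.

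The only genuinely non-mechanical point is the dictionary in the middle paragraph between $C_1^{\lambda}(R)$ and the Pontryagin dual of a Bloch--Kato Selmer group with the correct local conditions: one must check that Fontaine--Laffaille at $p$ matches $H^1_f$ at $p$ (standard for $p>k$) and that $N$-minimality matches the unramified Bloch--Kato condition at primes $\ell\mid N$. All of this is available in the literature; the structural content of the argument is the compatibility between the Hida-type factorization of congruence ideals and the direct-sum decomposition of Selmer groups under cyclotomic-type base change.
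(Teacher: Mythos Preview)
Your proof is correct and follows essentially the same route as the paper's own argument: invoke $R_F=\T_F$ (via Theorem~\ref{completeintersection}) and $R=\T$ over $\Q$, both complete intersection, use Proposition~\ref{LCI} together with Mazur's identification of $C_1^{\lambda}$ with the dual Selmer group to get $\eta_{f_F}\sim\CH\Sel_F(\Ad\rho_f)$ and $\eta_f\sim\CH\Sel_\Q(\Ad\rho_f)$, then match the Hida factorization $\eta_{f_F}=\eta_f\cdot\eta_f^{\sharp}$ (from Lemma~\ref{basechangehom} and Corollary~\ref{Hida-factorization}) against the Selmer decomposition coming from $\cO[G]=\cO\oplus I_G$, and finish with Conjecture~\ref{twistL_NBC}. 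Your added remarks on Shapiro's lemma and on why the deformation-theoretic and Bloch--Kato local conditions agree are welcome elaborations of points the paper leaves implicit.
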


\begin{proof} Under these conditions, it is well-known that

1) The deformation problem of $\overline{\rho}_f\vert_{\Gamma_F}$ for deformations preserving $N$-minimality  
and the Fontaine-Laffaille condition with Hodge-Tate weights $0$ and $k-1$ is representable. Let $R_F$ be the corresponding universal
 $\cO$-algebra,

2) There is an $\cO$-algebra isomorphism  $R_F\to \T_F$ (it comes from the proof of Theorem \ref{completeintersection}). 

Passing to the the relative differentials, we have an isomorphism
$$C_1^{\lb_{f_F}}(R_F)=\Omega_{R_F/\cO}\otimes_{\lambda_f} \cO \cong \Omega_{\T_F/\cO}\otimes_{\lambda_f} \cO=C_1^{\lb_{f_F}}(T_F)$$
By an argument due to Mazur, the left-hand side is isomorphic to the Pontryagin dual of $\Sel( \Ad\,\rho_{f_F})$. 
On the other hand, since we know that $T_F$ is complete intersection, by Wiles criterion, 
$C_1^{\lb_{f_F}}(T_F$ and $C_0^{\lb_{f_F}}(T_F$ have the same Fitting ideal and therefore

\begin{equation}\eta_{f_F}\sim \CH \Omega_{R_F/\cO}\otimes_{\lambda_{f_F}} \cO =\CH \Sel_F( \Ad\rho_{f}) \label{KC}
\end{equation}
\noindent
Since $p>d$, we have also the decomposition
\begin{equation} \label{fact-Selmer}\Sel_F( \Ad\rho_{f})=Sel_\Q(\Ad\rho_f\otimes_\cO\cO[G])=\Sel_\Q( \Ad\rho_{f})\oplus\Sel_\Q( \Ad\,\rho_{f}\otimes I_G)
\end{equation}
and by Wiles criterion again, we have $\eta_f=\CH \Sel_\Q( \Ad\rho_{f})$ since $\T$ is complete intersection.
Now from \eqref{factorization-general-F} , \eqref{KC} and the translation of \eqref{fact-Selmer} as a factorization of Fitting ideal, we obtain
$$\eta_{f}^{\sharp}\sim  \CH\Sel_\Q(\Ad(\rho_f)\otimes I_G)
$$
We conclude the proof by using Conjecture \ref{twistL_NBC}. 

\end{proof}


\subsection{Twisted adjoint $L$ values and congruence criterion for a real quadratic field}\label{twistedL4realquadratic}
In this section $F$ is real quadratic of discriminant $D$ of associated quadratic Dirichlet character $\alpha$ as in section \ref{sectionRealQuadratic}. 
In  \cite[Theorem 5.2]{H99}, Hida proved that if there is a congruence bewteen $f_F$ 
and a non base change modulo $\p$, then $\p$ divides $L^\ast(\Ad f \otimes \alpha)$. The following proposition is a strengthening of his result.
Note that $\epsilon(-1,-1)=-1$ implies $-\epsilon(-1,-1)=-1$ by definition of $-\epsilon$.

\begin{pro} \label{Hida-linearform} For $F$ real quadratic, 
$p$ does not divide $DN\phi_F(N)$, $p>k-2$
then for any $\epsilon\in\widehat{W}_F$ such that $\epsilon(-1,-1)=-1$, 
$$\eta_{f_F}^\sharp(\H^\epsilon_{\m_F})\; |\; L^{\epsilon}(\Ad f\otimes \alpha):=
{\Gamma(\Ad f\otimes \alpha,1)L(\Ad f\otimes \alpha,1)\over \Omega_{f_F}^{\epsilon}}.$$
\end{pro}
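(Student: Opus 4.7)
The plan is to apply Proposition~\ref{linear-BC} to the base-change datum $\T_F \xrightarrow{\theta} \T \xrightarrow{\lambda_f} \cO$ with the module $M := \H^{-\epsilon}_{\m_F}$. The perfect Hecke-equivariant pairing $[-,-]$ of Proposition~\ref{freenessduality}, once decomposed according to the action of the Weyl group $W_F$ (compare the computation in Section~\ref{secttwspvalBK} showing that $\H^\epsilon_{\m_F}$ pairs perfectly with $\H^{-\epsilon}_{\m_F}$), identifies $M^\ast \cong \H^{\epsilon}_{\m_F}$ and hence $\eta_{\lambda_f}^{\sharp}(M^\ast) = \eta_{f_F}^{\sharp}(\H^{\epsilon}_{\m_F})$. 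A generator $\delta_{f_F}^{-\epsilon}$ of the rank-one $\cO$-module $M_{\lambda_{f_F}} = \H^{-\epsilon}_{\m_F}[\lambda_{f_F}]$ is at our disposal, so it suffices to exhibit a linear form $\Phi \in M^\ast$ with $\Ker(\Phi \otimes_{\cO} K) \supset M^{\sharp}_K$ and to evaluate the pairing $[\Phi, \delta_{f_F}^{-\epsilon}]$.

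Following Hida \cite[\S5]{H99}, I take for $\Phi \in M^\ast \cong \H^{\epsilon}_{\m_F}$ an explicit integral cohomology class on the Hilbert modular surface $X_F$, built from $\delta_f^{+}$ and $\delta_f^{-}$ by transport along the $\alpha$-twisted diagonal embedding $X \hookrightarrow X_F$. The sign condition $\epsilon(-1,-1) = -1$ matches the archimedean sign of the $\alpha$-twist, ensuring the resulting class lies in the $\epsilon$-component. Integrality of $\Phi$ follows from $p > k-2$ and $p \nmid DN\phi_F(N)$, which keep the relevant level covers \'etale. The crucial property is that, under $[-,-]$, the class $\Phi$ is orthogonal to every cuspidal eigenclass in $\H^{-\epsilon}_{\m_F}$ whose $\T_F$-eigensystem is not a twist of a base change from $\T$: this reflects the classical fact that the Rankin-Selberg integrals entering the construction factor through an Asai-type $L$-factor that vanishes outside the base-change locus. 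In the language of Proposition~\ref{linear-BC}, this is exactly the hypothesis $\Ker(\Phi \otimes_\cO K) \supset M^{\sharp}_K$.

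The pairing $[\Phi, \delta_{f_F}^{-\epsilon}]$ is then computed by Rankin-Selberg unfolding. Using Sturm's integral Eisenstein series \cite{St80}, the factorisation $L(\Ad f_F, s) = L(\Ad f, s)\, L(\Ad f \otimes \alpha, s)$, and the $\alpha$-twist produced by the diagonal, one obtains, after tracking archimedean gamma factors,
\[ [\Phi, \delta_{f_F}^{-\epsilon}] \;\sim\; \frac{\Gamma(\Ad f \otimes \alpha, 1)\, L(\Ad f \otimes \alpha, 1)}{\Omega_{f_F}^{\epsilon}} \;=\; L^\epsilon(\Ad f \otimes \alpha). \]
Proposition~\ref{linear-BC} then places $L^\epsilon(\Ad f \otimes \alpha) \in \eta_{\lambda_f}^\sharp(M^\ast) = \eta_{f_F}^{\sharp}(\H^\epsilon_{\m_F})$, which is the stated divisibility.

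The principal obstacle is the last step: upgrading Hida's rational integral representation to an equivalence up to $p$-adic units rather than up to algebraic numbers. This requires the auxiliary Eisenstein series and test vectors used in the Rankin-Selberg unfolding to have $p$-integral Fourier coefficients and, dually, the Manin-style periods $\Omega_{f_F}^\epsilon$ to capture the integral de Rham structure on $X_F$ exactly. The hypotheses $p > k-2$ and $p \nmid DN\phi_F(N)$ are imposed precisely to secure this level of integrality control, in direct analogy with Hida's integrality argument for Theorem~\ref{classical}.
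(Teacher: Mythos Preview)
Your overall strategy—apply Proposition~\ref{linear-BC} to the base-change datum with a Hida-type linear form vanishing on the non-base-change part, then evaluate on $\delta_{f_F}$—is exactly the paper's. The $\epsilon$/$-\epsilon$ swap in your choice of $M$ is harmless, since both signs satisfy the parity condition and the congruence ideals of $\H^{\epsilon}_{\m_F}$ and $\H^{-\epsilon}_{\m_F}$ agree by the Pontryagin duality of Proposition~\ref{Pontryaginprop}.

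Where your write-up diverges is in the description of $\Phi$ and its evaluation. The paper does \emph{not} build $\Phi$ from $\delta_f^{+}$ and $\delta_f^{-}$; the linear form $Ev$ is defined purely geometrically as a $\varphi$-twisted restriction to the embedded modular curve $X'_\Q\hookrightarrow X'_F$ followed by the Clebsch--Gordan projection to trivial coefficients and integration over the curve. No classes on $X'_\Q$ enter the construction beyond its fundamental class. Correspondingly, Hida's computation in \cite[Prop.~4.1]{H99} gives $Ev(\omega^{\epsilon}(f_F))$ directly as $A\cdot\Gamma(\Ad f\otimes\alpha,1)L(\Ad f\otimes\alpha,1)$ with $A$ a $p$-unit under your hypotheses; dividing by $\Omega_{f_F}^{\epsilon}$ yields $L^{\epsilon}(\Ad f\otimes\alpha)$ at once. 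There is no Rankin--Selberg unfolding of $L(\Ad f_F,s)$ followed by the factorisation $L(\Ad f_F,s)=L(\Ad f,s)L(\Ad f\otimes\alpha,s)$: that route would produce the period $\Omega_{f_F}^{\epsilon}\Omega_{f_F}^{-\epsilon}$ and the full adjoint value, not the twisted factor alone with the single period $\Omega_{f_F}^{\epsilon}$ you need. So your middle paragraph misidentifies both the linear form and the mechanism by which the $L$-value appears; as written, the computation would not land on $L^{\epsilon}(\Ad f\otimes\alpha)$.
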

\begin{proof} Following Hida  \cite[Section 4]{H99}, we now define an integral linear form 
$$\Ev\colon \H^2_!(X_F,\cV_k^F(\cO))_{\m_F}\to \cO.$$
so that we can put ourself in the situation of Proposition \ref{linear-BC}. 
It will be obtained by pairing with a twisted $2$-cycle given by a
modular curve inside the Hilbert modular surface $X_F$. More precisely, let us fix $\varphi$ be a finite order character
of $F$ of level $\mathfrak d$ the different of $F/\Q$ and whose restriction to $(\Z/D\Z)^\times$ is given by $\alpha$.  
For any open compact $U\subset G(\A_F^\infty)$ let us denote by $X_U$ the corresponding 
Hilbert modular surface of level $U$. Then for
any $g\in GL_2(\bA_F^\infty)$ with $g_p=1$, there is a canonical map 
$$r_g\colon H^2_c  (X_U,\cV_k^F(\cO))\rightarrow H^2_c(X_{g^{-1}Ug},\cV_k^F(\cO))$$
induced by $X_{g^{-1}Ug}\to X_U$ obtained by right translation by $g$.
Then
we consider the twisting operator
$$R(\varphi)\colon H^2_c(X_F,\cV_k^F(\cO))\longrightarrow H^2_c(X_F^\prime,\cV_k^F(\cO))$$
where 
$X'_F$ is the Hilbert modular surface of level $U_1(\n\delta^2)$ and where
$R(\varphi)$ is defined by
$R(\varphi):=\sum_{u\in R}\varphi(u)\cdot r_{\alpha_u}$
with $\alpha_u=\left(\begin{array}{cc} 1 &u\\0&1 \end{array}\right)$ and where $R$ is a system of representative elements in $\bA_F^\infty$ of $(\mathfrak d^{-1}/O_F)^\times$ where the latter
stands for the subset of elements whose annihilator in $O_F$ is exactly $\mathfrak d$ (see \cite[(6.9)]{H94}). Now let $X'_\Q$ be the modular curve of level $U_1^\Q(ND^2)$, we then consider the
restriction map to $X'_\Q$  followed by the projection from the restricted local system onto $\cO$ as follows:
$$Res\colon H^2_c(X'_F,\cV_k^F(\cO))\to H^2_c(X'_\Q,\iota^*\cV_k^F(\cO))\longrightarrow H^2_c(X'_\Q,\cO)\cong \cO$$
where $\iota$  is the closed embedding  $X'_\Q\hookrightarrow X'_F$, and the second arrow is induced by the isomorphism 
$\iota^* \cV_k^F(\cO)\cong \cV_k(\cO)\otimes\cV_k(\cO)$ and the projection of $\GL_2(\Zp)$-representations
$V_k(\cO)\otimes V_k(\cO)\to\cO$ induced by the Clebsh-Gordan decomposition. That projection can be described explicitly (see the beginning
 of section 3 of \cite{H99}) and one sees it respects integrality thanks to the condition $p>k-2$.
 Finally the last isomorphism is obtained by integrating on the modular curve $X'_\Q$. 
Since we know thanks to Proposition \ref{freenessduality}
  that $H^2_!(X'_F,\cV_k^F(\cO))_{\m_F}=H^2_c(X'_F,\cV_k^F(\cO))_{\m_F}$, the map $Ev$ can be obtained after localization at $\m_F$ as 
$$Ev:= (Res\circ R(\varphi))_{\m_F}.$$
This map can be defined similarly by replacing the cohomology of the integral local system $\cV_k^F(\cO)$ by the de Rham cohomology
of $\cV_k^F(\C)$ and was computed explicitly by Hida \cite[Prop 4.1]{H99}. 
In particular on $\omega^\epsilon(\mathbf f)$, he obtains the following:
$$Ev(\omega^\epsilon(\mathbf f) )=  \left\{ \begin{array}{l@{\quad}l}
 0 & \hbox{if } \mathbf f \hbox{ does not come from base change}  \\
A\cdot  \Gamma(\Ad f\otimes \alpha,1)L(\Ad f\otimes \alpha,1)  &  \hbox{if } \mathbf f =f_F\\
\end{array}\right.
$$
where $A$ is an explicit constant which is prime to $p$ by our hypothesis. We deduce that $Ev(\delta^\epsilon_{f_F})\sim L^{\epsilon}(\Ad f\otimes \alpha)$.
Then our claim now follows from Proposition \ref{linear-BC}  with $M=\H^\epsilon_{\m_F}$ and $\Phi=Ev$.\
\end{proof}


%
We can then deduce the following
\begin{pro} \label{intperrel} We keep the same hypothesis as Proposition \ref{Hida-linearform} and Theorem \ref{classical}.b.  Then,
%
for any $\epsilon\in\widehat{W}_F$ such that $\epsilon(-1,-1)=-1$, we have the period relation:
$${  \Omega_{f_F}^{-\epsilon}\over \Omega_{f}^{+}\Omega_{f}^{-}}\in\cO$$
\end{pro}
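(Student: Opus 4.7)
The plan is to combine three ingredients already established in the paper: the divisibility from Proposition~\ref{Hida-linearform}, the base-change factorization of Lemma~\ref{BC-congruenceN} applied to the cohomology module $M=\H^{\epsilon}_{\m_F}$, and the Shimura-type formulas expressing Hecke congruence ideals in terms of critical adjoint $L$-values (Proposition~\ref{Lvalcong} applied to $f_F$ and Theorem~\ref{classical}.b applied to $f$). The desired period integrality drops out once all the $\Gamma$ and $L$ factors cancel from both sides of the resulting chain of divisibilities.

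Concretely, I would first apply Lemma~\ref{BC-congruenceN} to $M=\H^{\epsilon}_{\m_F}$ regarded as a $\T_F$-module via the base change homomorphism $\theta\colon\T_F\to\T$, obtaining
$$\eta_{f_F}^{\epsilon}=\eta_f(M_T)\cdot\eta_f^{\sharp}(\H^{\epsilon}_{\m_F}).$$
Since the $\lambda_f$-isotypic line $(M_T)_K^{\lambda_f}$ has rank one over $K$, formula~\eqref{div-C0} of Section~\ref{C0} gives $\eta_f\subseteq\eta_f(M_T)$, so $\eta_f(M_T)$ divides $\eta_f$ in the DVR $\cO$. Combining this with Proposition~\ref{Hida-linearform}, which under the hypothesis $\epsilon(-1,-1)=-1$ bounds $\eta_f^{\sharp}(\H^{\epsilon}_{\m_F})$ by $L^{\epsilon}(\Ad f\otimes\alpha)=\Gamma(\Ad f\otimes\alpha,1)L(\Ad f\otimes\alpha,1)/\Omega_{f_F}^{\epsilon}$, yields
$$\eta_{f_F}^{\epsilon}\;\Big|\;\eta_f\cdot\frac{\Gamma(\Ad f\otimes\alpha,1)L(\Ad f\otimes\alpha,1)}{\Omega_{f_F}^{\epsilon}}\quad\text{in }\cO.$$

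The conclusion then follows by substituting Proposition~\ref{Lvalcong}, which gives $\eta_{f_F}^{\epsilon}\sim\Gamma(\Ad f_F,1)L(\Ad f_F,1)/(\Omega_{f_F}^{\epsilon}\Omega_{f_F}^{-\epsilon})$, and Theorem~\ref{classical}.b, which gives $\eta_f\sim\Gamma(\Ad f,1)L(\Ad f,1)/(\Omega_f^+\Omega_f^-)$ (the $\pi^{k+1}$ in Theorem~\ref{classical}.b is absorbed into $\Gamma(\Ad f,1)$, which is legitimate because $p>k$ is prime to $2(k-1)!$). Using the Artin factorizations $L(\Ad f_F,s)=L(\Ad f,s)L(\Ad f\otimes\alpha,s)$ and the matching archimedean identity $\Gamma(\Ad f_F,s)=\Gamma(\Ad f,s)\Gamma(\Ad f\otimes\alpha,s)$, all $\Gamma$ and $L$ terms cancel from the displayed divisibility, leaving exactly $\Omega_f^+\Omega_f^-\mid\Omega_{f_F}^{-\epsilon}$ in $\cO$, which is the claim.

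The main subtlety lies in justifying this cancellation: one needs $L(\Ad f,1)\neq 0$ and $L(\Ad f\otimes\alpha,1)\neq 0$ in $\C$ so that they represent nonzero $p$-adic numbers and can be divided out, a fact supplied by the classical non-vanishing theorems of Shimura and Jacquet--Shalika for adjoint $L$-values at $s=1$. The archimedean factorization is transparent because $F$ has two real places both contributing the same $\Gamma$-factor as $f$, one of which is reinterpreted via $\nu=1$ as the $\alpha$-twisted factor (since $\alpha$ is even). Finally, the hypothesis $\epsilon(-1,-1)=-1$ is essential because Hida's evaluation map underlying Proposition~\ref{Hida-linearform} vanishes identically on the remaining sign components, so otherwise the divisibility from that proposition would be vacuous and the entire argument would collapse.
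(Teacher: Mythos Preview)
Your proposal is correct and follows essentially the same route as the paper's own proof: apply Lemma~\ref{BC-congruenceN} to $M=\H^\epsilon_{\m_F}$, use \eqref{div-C0} to replace $\eta_f(M_T)$ by $\eta_f$, feed in Proposition~\ref{Hida-linearform}, and then substitute the adjoint $L$-value formulas for $\eta_{f_F}^\epsilon$ and $\eta_f$ so that the ratio of the two sides is exactly $\Omega_{f_F}^{-\epsilon}/(\Omega_f^+\Omega_f^-)$. The only cosmetic difference is that the paper writes the formulas with explicit powers of $\pi$ rather than invoking Proposition~\ref{Lvalcong} and the $\Gamma$-factor identities, and leaves the non-vanishing of the $L$-values implicit.
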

\begin{proof}
From Lemma \ref{BC-congruenceN}. we have
$$\eta_{f_F'}(\H^\epsilon_{\m_F})=\eta_f((\H^\epsilon_{\m_F})_T).\eta^\sharp_f(\H^\epsilon_{\m_F})$$
Since $\eta_f((\H^\epsilon_{\m_F})_T)$ devides $\eta_f$, using the relation
$$\eta_{f_F'}(\H^\epsilon_{\m_F})\sim  {L(\Ad f_F\otimes \alpha,1) \over\pi^{2k+2} \Omega_{f_F}^{\epsilon}\Omega_{f_F}^{-\epsilon}}$$
and 
$$ \eta_f\sim {L(\Ad\,f,1)\over\pi^{k+1}\Omega_f^+\Omega_f^-}$$
by Theorem \ref{classical}, we have the divisibilities 
$${ L(\Ad f_F\otimes \alpha,1) \over \pi^{2k+2}\Omega_{f_F}^{\epsilon}\Omega_{f_F}^{-\epsilon}}\; |\; {L(\Ad\,f,1)\over\pi^{k+1}\Omega_f^+\Omega_f^-}\times \eta^\sharp_f(\H^\epsilon_{\m_F})\; |\; 
{L(\Ad\,f,1)\over\pi^{k+1}\Omega_f^+\Omega_f^-}  \times {L(\Ad f\otimes \alpha,1)\over \pi^{k+1} \Omega_{f_F}^{\epsilon}}.$$
where the last divisibility follows from Proposition \ref{Hida-linearform}. Since the quotient of the right hand side by the left 
hand side is ${  \Omega_{f_F}^{-\epsilon}\over \Omega_{f}^{+}\Omega_{f}^{-}}$, our claim follows.
\end{proof}

\begin{rem} 
We do not need to have the free-ness of the cohomology of the Hilbert modular surface to get this periods relation.
\end{rem}

We shall prove a weaker form of the other divisibility under the assumptions of a theorem by Cornut-Vatsal (see \cite{C02} and \cite{Va02} for the weight $k=2$, and Chida-Hsieh \cite{CH16} for any $k\in[2,p+1]$), 
 the following integral period relation holds

\begin{pro} \label{divisibgeneral} Assume $p>k$, $p$ is prime to $6.N\sharp (\Z/N\Z)^\times$,  and that (CV) and $(\Irr_{F'})$ hold. Then for any $\epsilon\in\widehat{W}_F$, 
we have 
$${(\Omega_{f}^{+}\Omega_{f}^{-})^2\over \Omega_{f_F}^{\epsilon}\Omega_{f_F}^{-\epsilon}}\in \overline{\Z}_p.$$
\end{pro}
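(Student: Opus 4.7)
The plan is to establish the divisibility complementary to Proposition~\ref{intperrel}. By Proposition~\ref{Lvalcong} applied to $\ff=f_F$, Theorem~\ref{classical}(b), and the $L$-factorization $L(\Ad f_F,1)=L(\Ad f,1)\cdot L(\Ad f\otimes\alpha,1)$, the assertion is equivalent, up to $p$-adic units, to the divisibility of ideals
$$\eta_f \cdot L^{\ast}(\Ad f\otimes\alpha) \,\big|\, \eta_{f_F}^\epsilon.$$
Applying Lemma~\ref{BC-congruenceN} to the $\T_F$-module $M = \H^\epsilon_{\m_F}$ gives $\eta_{f_F}^\epsilon = \eta_f((\H^\epsilon_{\m_F})_\T) \cdot \eta_f^\sharp(\H^\epsilon_{\m_F})$. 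Under $(\mathrm{Irr})$ and $p>k$, $\H_\m$ is $\T$-free by \cite{Wi95}, and a K\"unneth-type decomposition of $\H^d(X_F,\cV_k^F)$ into base-change and non-base-change parts identifies $(\H^\epsilon_{\m_F})_\T$ with a rank-one $\T$-direct summand, yielding $\eta_f((\H^\epsilon_{\m_F})_\T) = \eta_f$. The task therefore reduces to the reverse inequality of Proposition~\ref{Hida-linearform}, namely
$$L^{\ast}(\Ad f\otimes\alpha) \,\big|\, \eta_f^\sharp(\H^\epsilon_{\m_F}).$$

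To establish this, the strategy is to exhibit an integral class $c \in \H^\epsilon_{\m_F}$ whose $\delta_{f_F}^\epsilon$-component is a $p$-adic unit and whose image under Hida's evaluation $\Ev$ of Proposition~\ref{Hida-linearform} equals $L^{\ast}(\Ad f\otimes\alpha)$ up to $p$-units; applying Proposition~\ref{linear-BC} to the pair $(c,\Ev)$ in the direction dual to the one used in Proposition~\ref{Hida-linearform} then concludes. The existence of such a class is furnished by the Cornut--Vatsal \cite{C02,Va02} and Chida--Hsieh \cite{CH16} theorems, which under hypothesis $(\mathrm{CV})$ produce an anticyclotomic character $\chi$ of an auxiliary imaginary quadratic field $K$ (split at $p$) of $p$-power conductor for which $L(f\otimes\alpha,\chi,k/2)$ is a $p$-adic unit after normalization by its canonical period. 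Via Gross's explicit form of Waldspurger's formula on a suitable definite quaternion algebra ramified as dictated by the sign of the functional equation, this non-vanishing is realized by an integral Gross-point class $c$ with the required properties.

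The main obstacle lies in verifying the identity $\Ev(c) \sim L^{\ast}(\Ad f\otimes\alpha)$ up to $p$-adic units. This requires a careful comparison of the Sturm--Hida integral formula underlying $\Ev$ with the Waldspurger--Gross toric integral producing $c$, including reconciliation of measure normalizations, choices of test vectors, and local epsilon factors at ramified primes. Hypothesis $(\mathrm{CV})$ enters decisively at two points: it ensures that the Jacquet--Langlands lift of $f$ exists on the quaternion algebra supporting the required Gross points, and it is precisely what allows Chida--Hsieh's non-vanishing to yield a genuine $p$-unit after all normalizations. The signs $\epsilon$ with $\epsilon(-1,-1)=+1$, for which Proposition~\ref{Hida-linearform} does not apply directly, are handled by an analogous construction substituting a suitably twisted cycle on $X_F$ for the modular curve, with a correspondingly different sign in the epsilon factor dictating the ramification of the auxiliary quaternion algebra.
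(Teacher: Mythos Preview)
Your reduction in the first paragraph is correct: the claim is equivalent to the divisibility $\eta_f\cdot L^\ast(\Ad f\otimes\alpha)\mid \eta_{f_F}^\epsilon$. However, from that point on the argument contains genuine gaps and misreads what the Cornut--Vatsal/Chida--Hsieh input actually provides.

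First, the claimed identification $\eta_f((\H^\epsilon_{\m_F})_\T)=\eta_f$ via a ``K\"unneth-type decomposition'' is not available. No such decomposition of $\H^d(X_F,\cV_k^F)$ into base-change and non-base-change integral direct summands is proved in the paper (or in the literature, at this level of generality). All one knows a priori is the divisibility $\eta_f((\H^\epsilon_{\m_F})_\T)\supset\eta_f$ from \eqref{div-C0}; the equality is in fact established only \emph{a posteriori}, as a consequence of the full period relation. So your second reduction to $L^\ast(\Ad f\otimes\alpha)\mid \eta_f^\sharp(\H^\epsilon_{\m_F})$ is not justified.

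Second, even granting that reduction, your proposed mechanism does not work. Proposition~\ref{linear-BC} only yields $\Phi(\delta)\in\eta_\lambda^\sharp(M^\ast)$; there is no ``dual direction'' giving a lower bound on $\eta_\lambda^\sharp$ from values of a linear form. More substantively, the theorems of Cornut--Vatsal and Chida--Hsieh do not produce a class in $\H^2$ of the Hilbert surface whose Hida evaluation computes the twisted \emph{adjoint} $L$-value: what they give is non-vanishing modulo $p$ of \emph{standard} (Rankin--Selberg) $L$-values $L_K(f\otimes\chi,\xi,k/2)$ for anticyclotomic characters $\xi$ of an auxiliary imaginary quadratic field $K$. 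There is no known identity relating $\Ev(c)$ for a Gross-point class $c$ to $L^\ast(\Ad f\otimes\alpha)$.

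The paper's proof bypasses the adjoint $L$-function entirely at this step. One picks an auxiliary imaginary quadratic field $K$ (satisfying the splitting conditions dictated by (CV)) and an anticyclotomic character $\xi$ of $\ell$-power conductor furnished by \cite{C02,Va02,CH16} so that
\[
\frac{L_K(f\otimes\chi,\xi,k/2)}{\pi^k\,\Omega_f^+\Omega_f^-}\in\overline{\Z}_p^\times\qquad\text{for both }\chi\in\{1,\alpha\}.
\]
On the other hand, Hida's Rankin--Selberg method over the real quadratic field $F$ shows that
\[
\frac{L_{KF}(f_F,\xi,k/2)}{\pi^{2k}\,\Omega_{f_F}^\epsilon\Omega_{f_F}^{-\epsilon}}\in\overline{\Z}_p.
\]
Since $L_{KF}(f_F,\xi,s)=\prod_{\chi\in\{1,\alpha\}}L_K(f\otimes\chi,\xi,s)$, dividing the second expression by the product of the first two cancels all $L$-values and leaves exactly $(\Omega_f^+\Omega_f^-)^2/(\Omega_{f_F}^\epsilon\Omega_{f_F}^{-\epsilon})\in\overline{\Z}_p$. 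The essential idea you are missing is this passage through standard $L$-values over $K$ and $KF$, where the non-vanishing mod $p$ of the denominator is precisely what (CV) buys.
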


\begin{proof} The form has level $\Gamma_0(N)$ with trivial Nebentypus) and $N$. Let $q$ be the prime for which (CV) holds.
We choose an imaginary quadratic field $K$ in which all the prime factors of $N$ other than $q$ split while $q$ remains inert
and $p$ splits in $K$.
We also fix a prime $\ell$ prime to $Np$ as in Cornut-Vatsal (see \cite[Theorem]{C02} and \cite[Theorem 1.2]{Va02})  and Chida-Hsieh \cite[Theorem C]{CH16}. In this last reference, the authors are using a different period they denote $\Omega_{f,N^-}$ which may be different, \` a priori, from Hida's period $\pi^k \Omega_f^+\Omega_f^-$. However, they prove
under their assumption $\mathbf (CR^+)$ that both periods differ by a $p$-adic unit by \cite[Prop. 6.1]{CH16}.  We may therefore use their result since their hypothesis $(CR^+)$ is implied by ours.

Then there exists an anticyclotomic character $\xi$ of conductor a power of $\ell$ such that for any $\chi\in\{1,\alpha\}$ where $\alpha$ stands for the quadratic character associated to $F/\Q$, we have
$${L_K(f\otimes\chi,\xi,k/2)\over \pi^k \Omega_{f}^+\Omega_{f}^{-}} \in \overline{\Z}_p^\times$$
 
On the other hand, by Hida (Rankin-Selberg method) ${L_{KF}(f_F,\xi,1)\over\pi^k\Omega_{f_F}^+\Omega_{f_F}^{-}}\in\overline{\Z}_p$.
Indeed,
$${L_{KF}(f_F,\xi,1)\over\pi^{2k}\Omega_{f_F}^+\Omega_{f_F}^-}={<f_F,\theta(\xi)E>\over <f_F,f_F>}\eta^{coh,\epsilon}_f=\eta_f\cdot a(1,1_f(\theta(\xi)E))$$
where $E$ and $\theta$ are respectively some Einsenstein series and Theta function with integral Fourier coefficients.
Here we used that $\eta_f^{coh}=\eta_f$ under our assumptions (Irr) and $p>k$.
Hence, the quotient of 
${L_{KF}(f_F,\xi,1)\over\pi^{2k}\Omega_{f_F}^+\Omega_{f_F}^{-}}$
by
$\prod_{\chi\in\{1,\alpha\}}{L_{K}(f\otimes\chi,\xi,1)\over\pi^k\Omega_{f}^+\Omega_{f}^{-}}$
is a $p$-adic integer.
Our claim follows since this quotient is equal to
${(\Omega_{f}^+\Omega_{f}^{-})^2\over\Omega_{f_F}^+\Omega_{f_F}^{-}}.$
\end{proof}

\subsection{Case of totally real fields of $2$-power degree over $\Q$}

Let $F$ be a Galois totally real field with Galois group $G$ of $2$-primary order.
In that case we need a generalization of Proposition \ref{intperrel} for a relative real quadratic extension $F/F_1$.
Actually, the Hida evaluation linear form is also available in this context:

$$ev_{F/F_1}\colon \H^d(Y_F,\cV^F_k(\cO))\to\cO$$
it is given by the cap pairing with the Hilbert variety for $F_1$ (followed by the evaluation on the lowest weight vector) $\omega\mapsto\omega\cap[X_{F_1}]$.
Note that $[X_{F_1}]$ is in $\H_{2d_1}(X_F)$ but $2d_1=d$. 

We use this time \cite[Theorem 6.1]{H99}. With this, we can prove the following generalization of Proposition \ref{intperrel}.
Let us consider the embedding $i\colon W_{F_1}\to W_F$ where, for $e=(e_{\tau_1})\in W_{F_1}$,
 $i(e)$ is defined as follows: for each embedding $\tau_1$ of $F_1$,
and for $\tau$, $\tau^\prime$ the two embeddings above $\tau_1$, one puts $i(e)_\tau=i(e)_{\tau^\prime}=e_{\tau_1}$.
Before stating our next lemma, we need a definition.
\begin{de}
We say that $\epsilon\in \widehat{W_F}$ is balanced if and only if  $d^+_\epsilon=d^-_\epsilon$. 
\end{de}
Notice that $\widehat{W_F}$ containes balanced character if and only if $d=[F:\Q]$ is even.
\begin{lem} \label{intperrelrelative} For $F$ totally real and $F/F_1$ quadratic, assuming that 
$p$ does not divide $N\phi_F(N)$, $k<p$, then for any balanced $\epsilon\in\widehat{W_F}$  such that 
$\eta_{f_{F_1}}=(\frac{L(\Ad f_{F_1}),1)}{ \pi^{(k+1)d/2}\Omega_{f_{F_1}}^{-\epsilon_1} \Omega_{f_F}^{\epsilon_1}})$ for $\epsilon_1=i^\ast(\epsilon)\in \widehat{W_{F_1}}$,
%
we have the period relation
$${ \Omega_{f_F}^{\epsilon}\over \Omega_{f_{F_1}}^{\epsilon_1}\Omega_{f_{F_1}}^{-\epsilon_1}}\in\cO.$$
\end{lem}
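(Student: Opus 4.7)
The plan is to imitate the proof of Proposition~\ref{intperrel} by replacing $\Q$ by $F_1$ and the real quadratic extension $F/\Q$ by the relative quadratic extension $F/F_1$. Let $\alpha_{F/F_1}$ denote the quadratic character of $F_1$ cut out by $F/F_1$, so that one has the factorizations $L(\Ad f_F,s)=L(\Ad f_{F_1},s)\cdot L(\Ad f_{F_1}\otimes\alpha_{F/F_1},s)$ and $\Gamma(\Ad f_F,s)=\Gamma(\Ad f_{F_1},s)\cdot\Gamma(\Ad f_{F_1}\otimes\alpha_{F/F_1},s)$.

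First I would apply Proposition~\ref{linear-BC} to the relative base change datum $\T_F\stackrel{\theta}{\to}\T_{F_1}\stackrel{\lb_{f_{F_1}}}{\to}\cO$ and the $\T_F$-module $M=\H^\epsilon_{\m_F}$, taking $\Phi=ev_{F/F_1}$. Using the explicit computation of Hida \cite[Theorem~6.1]{H99} in the relative setting, $ev_{F/F_1}$ vanishes on classes attached to cuspforms on $F$ not coming from base change, whereas $ev_{F/F_1}(\omega^\epsilon(f_F))$ is, up to a $p$-adic unit, equal to $\Gamma(\Ad f_{F_1}\otimes\alpha_{F/F_1},1)\,L(\Ad f_{F_1}\otimes\alpha_{F/F_1},1)/\Omega_{f_F}^\epsilon$. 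The hypothesis that $\epsilon$ be balanced is exactly what is needed for non-triviality of the cap product with the cycle $[X_{F_1}]$ (which has real dimension $d=2d_1$), since $\omega^\epsilon(f_F)$ has holomorphic/antiholomorphic type $(d^+_\epsilon,d^-_\epsilon)$ and pairs non-trivially only when $d^+_\epsilon=d^-_\epsilon=d_1$. This yields the divisibility
$$\eta_{f_{F_1}}^\sharp(\H^\epsilon_{\m_F})\;\Big|\;\frac{\Gamma(\Ad f_{F_1}\otimes\alpha_{F/F_1},1)\,L(\Ad f_{F_1}\otimes\alpha_{F/F_1},1)}{\Omega_{f_F}^\epsilon}.$$

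Next I would apply Lemma~\ref{BC-congruenceN} to $M=\H^\epsilon_{\m_F}$, getting $\eta_{f_F}(\H^\epsilon_{\m_F})=\eta_{f_{F_1}}((\H^\epsilon_{\m_F})_{\T_{F_1}})\cdot\eta_{f_{F_1}}^\sharp(\H^\epsilon_{\m_F})$, with the first factor dividing $\eta_{f_{F_1}}$ by \eqref{div-C0}. Combining with Proposition~\ref{Lvalcong} for $f_F$, which identifies $\eta_{f_F}(\H^\epsilon_{\m_F})\sim\Gamma(\Ad f_F,1)L(\Ad f_F,1)/(\Omega_{f_F}^\epsilon\Omega_{f_F}^{-\epsilon})$, with the above divisibility and with the assumed formula for $\eta_{f_{F_1}}$, this gives
$$\frac{\Gamma(\Ad f_F,1)L(\Ad f_F,1)}{\Omega_{f_F}^\epsilon\Omega_{f_F}^{-\epsilon}}\;\Big|\;\frac{L(\Ad f_{F_1},1)}{\pi^{(k+1)d_1}\Omega_{f_{F_1}}^{\epsilon_1}\Omega_{f_{F_1}}^{-\epsilon_1}}\cdot\frac{\Gamma(\Ad f_{F_1}\otimes\alpha_{F/F_1},1)L(\Ad f_{F_1}\otimes\alpha_{F/F_1},1)}{\Omega_{f_F}^\epsilon}.$$
After cancelling the $L$ and $\Gamma$ factors via the factorizations above, the quotient of right-hand side over left-hand side reduces (up to $p$-adic units) to $\Omega_{f_F}^{-\epsilon}/(\Omega_{f_{F_1}}^{\epsilon_1}\Omega_{f_{F_1}}^{-\epsilon_1})$, so this ratio lies in $\cO$.

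To get the statement as phrased (with $\Omega_{f_F}^{\epsilon}$ rather than $\Omega_{f_F}^{-\epsilon}$), I would re-run the same argument with $-\epsilon$ in place of $\epsilon$: since $i^*(-\epsilon)(w_{\tau_1}) = (-\epsilon)(w_\tau w_{\tau^\prime}) = \epsilon(w_\tau w_{\tau^\prime}) = \epsilon_1(w_{\tau_1})$, one has $i^*(-\epsilon)=\epsilon_1$, and $-\epsilon$ is balanced whenever $\epsilon$ is, so the hypothesis transfers. The main obstacle is to verify, from Hida's formula in \cite[Theorem~6.1]{H99}, the precise constant appearing in $ev_{F/F_1}(\omega^\epsilon(f_F))$, namely that it is a $p$-adic unit under the assumptions $p\nmid N\phi_F(N)$ and $p>k$, and that balancedness is indeed the exact non-vanishing condition in the relative setting; this requires unpacking the relative twisting construction and the explicit normalization of the cycle $[X_{F_1}]$ inside $X_F$.
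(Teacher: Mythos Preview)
Your proposal is correct and follows essentially the same route as the paper's own proof: apply Lemma~\ref{BC-congruenceN} and \eqref{div-C0} to $M=\H^\epsilon_{\m_F}$ for the base change datum $\T_F\to\T_{F_1}$, invoke Proposition~\ref{linear-BC} with $\Phi=ev_{F/F_1}$ and Hida's relative computation \cite[Theorem~6.1]{H99}, and then combine with Proposition~\ref{Lvalcong} and the hypothesis on $\eta_{f_{F_1}}$ to cancel the $L$-values. The paper's proof stops once the divisibility chain yields $\Omega_{f_F}^{-\epsilon}/(\Omega_{f_{F_1}}^{\epsilon_1}\Omega_{f_{F_1}}^{-\epsilon_1})\in\cO$ and simply asserts the claim; your explicit observation that $i^\ast(-\epsilon)=\epsilon_1$ and that $-\epsilon$ remains balanced (so the hypothesis transfers unchanged when replacing $\epsilon$ by $-\epsilon$) is the missing line that turns this into the stated conclusion for $\epsilon$, and is a genuine clarification.
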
 
\proof We first apply Corollary \ref{Hida-factorization} to the map $\T_F\rightarrow \T_{F_1}$ and $\lb_1 \colon \T_{F_1}\rightarrow\cO$ (resp $\lb  \colon \T_{F}\rightarrow\cO$ ) is 
the character of the Hecke algebra attached to the eigenform  $f_{F_1}$ (resp.  to the eigenform $f_F$).
Let us write $\eta_{f_F}^{\epsilon}$ for $\eta_\lb((\H^d(Y_F,\cV^F_k(\cO))^\epsilon ))$ and
$\eta^{\sharp,\epsilon}_{f_{F_1}}$ for $\eta^\sharp_{\lb_1}(\H^d(Y_F,\cV^F_k(\cO))^\epsilon )$.
Then, we have the divisibility
$$\eta_{f_F}^\epsilon \supset \eta_{\lb_1} (\H^d(Y_F,\cV^F_k(\cO))^\epsilon_{\T_{F_1}}  ) \cdot \eta^{\epsilon,\sharp}_{f_{F_1}}     \supset \eta_{f_{F_1}} \cdot \eta^{\epsilon,\sharp}_{f_{F_1}} $$
where the second inclusion is obtained using \eqref{div-C0}.
Now from the Proposition \ref{linear-BC} applied to $ev_{F/F_1}$, we obtain
$$\frac{L(\Ad(f_{F_1})\otimes\alpha,1)}{\pi^{(k+1)d/2}\Omega_{f_F}^{\epsilon} } \in\eta^\sharp_{f_{F_1}} $$
On the other hand, we know that $\eta_{f_F}^\epsilon$ is generated by
$$\frac{L(\Ad(f_{F}),1)}{\pi^{(k+1)d} \Omega_{f_F}^{-\epsilon} \Omega_{f_F}^{\epsilon}}=
 \frac{L(\Ad(f_{F_1}),1)}{\pi^{(k+1)d/2}\Omega_{f_F}^{-\epsilon} }\cdot \frac{L(\Ad(f_{F_1})\otimes\alpha,1)}{\pi^{(k+1)d/2}\Omega_{f_F}^{\epsilon} }$$
Combining our assumption on  $\eta_{f_{F_1}}$ and the previous divisibilities implies our claim.
\qed
\medskip

To obtain the opposite divisibility we proceed like in Proposition \eqref{divisibgeneral}  for  the real quratic case.
\begin{lem} Assume that $p>k$ is prime to $N\phi_F(N)D$ and that (CV) and $(\Min_F)$ hold, then we have
$${ ( \Omega_{f_{F_1}}^{\epsilon_1}\Omega_{f_{F_1}}^{-\epsilon_1})^2    \over    \Omega_{f_F}^{\epsilon} \Omega_{f_F}^{-\epsilon} }\in\cO.$$
\
\end{lem}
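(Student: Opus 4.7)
The plan is to mimic the proof of Proposition \ref{divisibgeneral} one level up, working relative to the intermediate field $F_1$ instead of $\Q$: we replace $\Q$ by $F_1$, the quadratic extension $F/\Q$ by $F/F_1$, and $f$ by its base change $f_{F_1}$. First, I would choose an imaginary quadratic field $K/\Q$ linearly disjoint from $F$ such that the prime $q$ supplied by (CV) remains inert in $K$, every other rational prime dividing $N$ splits in $K$, and $p$ splits in $K$. Write $\alpha$ for the quadratic Hecke character of $F_1$ cutting out $F/F_1$. Then $KF_1/F_1$ is a CM quadratic extension of $F_1$, and the twist $f_{F_1}\otimes \alpha$ is a parallel weight $k$ Hilbert cuspidal eigenform over $F_1$; its residual Galois representation is still absolutely irreducible (since $\overline\rho_f$ is irreducible and $\alpha$ has order prime to $p$), and both $f_{F_1}$ and $f_{F_1}\otimes\alpha$ satisfy the Heegner-type hypothesis for $KF_1/F_1$ by our choice of $K$.

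Second, I would invoke the Chida--Hsieh generalization of Cornut--Vatsal in the Hilbert-modular setting, applied to each of $f_{F_1}$ and $f_{F_1}\otimes \alpha$ over the CM extension $KF_1/F_1$, to produce an auxiliary prime $\ell$ of $F_1$ and an anticyclotomic character $\xi$ of $KF_1/F_1$ of $\ell$-power conductor such that for both $\chi \in \{1,\alpha\}$,
\begin{equation*}
\frac{L_{KF_1}(f_{F_1}\otimes\chi,\,\xi,\,k/2)}{\pi^{kd_1}\,\Omega_{f_{F_1}}^{\epsilon_1}\Omega_{f_{F_1}}^{-\epsilon_1}}\;\in\;\overline{\Z}_p^{\times}.
\end{equation*}
The existence of a common $\xi$ valid for both twists is ensured because each non-vanishing locus has density one in the anticyclotomic tower, so their intersection is non-empty.

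Third, I would prove by Hida's Rankin--Selberg integrality argument (the Hilbert-modular analog of the identity used in the proof of Proposition \ref{divisibgeneral}) that
\begin{equation*}
\frac{L_{KF}(f_F,\,\xi,\,1)}{\pi^{2kd_1}\,\Omega_{f_F}^{\epsilon}\Omega_{f_F}^{-\epsilon}}\;=\;\eta_{f_F}^{coh,\epsilon}\cdot a\bigl(1,\,1_{f_F}(\theta(\xi)E)\bigr)\;\in\;\overline{\Z}_p,
\end{equation*}
where $E$ is a suitable Eisenstein series over $F$ and $\theta(\xi)$ the theta series attached to $\xi$, both with integral Fourier coefficients. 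Under the standing hypotheses $p>k$, $(\Irr)$ and $(\Min_F)$, Theorem \ref{completeintersection} yields freeness of $\H^\epsilon_{\m_F}$ over $\T_F$, so $\eta_{f_F}^{coh,\epsilon}=\eta_{f_F}^\epsilon$ is an honest $p$-adic integer, and the identity above is genuinely integral.

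Finally, the factorization of $L$-functions coming from the base change $f_F = (f_{F_1})_F$,
\begin{equation*}
L_{KF}(f_F,\,\xi,\,s) \;=\; L_{KF_1}(f_{F_1},\,\xi,\,s)\cdot L_{KF_1}(f_{F_1}\otimes \alpha,\,\xi,\,s),
\end{equation*}
evaluated at the central critical point, lets one divide the Rankin--Selberg $p$-adic integer of the previous step by the product over $\chi\in\{1,\alpha\}$ of the two Cornut--Vatsal $p$-adic units. The resulting quotient is precisely $(\Omega_{f_{F_1}}^{\epsilon_1}\Omega_{f_{F_1}}^{-\epsilon_1})^2/(\Omega_{f_F}^{\epsilon}\Omega_{f_F}^{-\epsilon})$, which must therefore lie in $\cO$, as desired. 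The main obstacle is executing the Chida--Hsieh non-vanishing step for the quadratic twist $f_{F_1}\otimes \alpha$ uniformly with $f_{F_1}$: one must verify that residual irreducibility and the local-ramification/Heegner hypotheses are preserved under twisting by $\alpha$, and that a single $\xi$ can be chosen making both $L$-values units modulo $p$, which reduces to a density-one intersection argument in the anticyclotomic tower over $KF_1$.
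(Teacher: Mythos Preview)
Your approach is essentially the same as the paper's: the authors' proof is a single sentence saying that the argument of Proposition~\ref{divisibgeneral} applies verbatim once the Cornut--Vatsal/Chida--Hsieh input is replaced by its generalization to a totally real base field, which they attribute to Pin-Chi Hung \cite{HPC}. You have correctly reconstructed the relative setup ($F_1$ in place of $\Q$, the CM extension $KF_1/F_1$, the anticyclotomic twist, and the Rankin--Selberg integrality on the $F$-side); the only thing missing is the name of the reference supplying the non-vanishing mod $p$ over $F_1$.
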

\begin{proof}
The same argument as in the proof of Proposition \eqref{divisibgeneral} applies by replacing the theorem of Cornut-Vatsal generalized by Chida-Hiseh by a further
 generalization due to Pin-Chin Hung \cite{HPC} to a totally real ground field instead of $\Q$.

\end{proof}

We are now ready to prove the following 

\begin{thm} Let $F$ be a totally real field Galois of $2$-power degree $d$ over $\Q$.  Assume $p$ does not divide $6N\varphi_F(N)$ and that  $p-1>d(k-1)$. Further assume that 
 $CV$ and $(\Min_F)$ hold. Then, for any balanced $\epsilon\in \widehat{W_F}$ , we have
$$\Omega^\epsilon_{f_F}\sim (\Omega_f^+\Omega_f^-)^{d/2}$$
In other words, Conjecture \ref{PRconj} holds for such $\epsilon$ if $[F:\Q]$ is a power of $2$.
\end{thm}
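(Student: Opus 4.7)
Plan of the proof. The proof proceeds by induction on $n\geq 1$ where $d=2^n$. The base case $n=1$ is Theorem \ref{PRrealquadratic}: for $F$ real quadratic the balanced characters in $\widehat{W_F}$ are precisely those with $\epsilon(-1,-1)=-1$, for which that theorem gives $\Omega_{f_F}^\epsilon\sim\Omega_f^+\Omega_f^-=(\Omega_f^+\Omega_f^-)^{d/2}$. For the inductive step with $n\geq 2$, since $\Gal(F/\Q)$ is a $2$-group its center is nontrivial; pick a subgroup of order $2$ in the center and let $F_1$ be its fixed field, so that $F_1$ is Galois over $\Q$ of degree $2^{n-1}$ with $F/F_1$ real quadratic. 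The standing hypotheses propagate to $F_1$ (the bound $p-1>d(k-1)$ is more than enough to maintain Fontaine--Laffaille at the intermediate level), so Theorem \ref{completeintersection} applies at $F_1$: each $\H^{\epsilon'_1}_{\m_{F_1}}$ is free of rank one over $\T_{F_1}$ and $\T_{F_1}$ is complete intersection. Consequently Proposition \ref{Lvalcong} supplies the exact generator of $\eta_{f_{F_1}}$ required as hypothesis in Lemma \ref{intperrelrelative}.

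Now fix a balanced $\epsilon\in\widehat{W_F}$ and set $\epsilon_1:=i^*(\epsilon)\in\widehat{W_{F_1}}$. Since the image of $i\colon W_{F_1}\hookrightarrow W_F$ consists of products $w_\tau w_{\tau'}$, on which $\epsilon$ and $-\epsilon$ take the same value, we have $i^*(-\epsilon)=\epsilon_1$. Applying Lemma \ref{intperrelrelative} to both $\epsilon$ and $-\epsilon$ yields
\[
\Omega_{f_F}^{\pm\epsilon}/(\Omega_{f_{F_1}}^{\epsilon_1}\Omega_{f_{F_1}}^{-\epsilon_1})\in\cO,
\]
so $\Omega_{f_F}^\epsilon\Omega_{f_F}^{-\epsilon}$ is divisible in $\cO$ by $(\Omega_{f_{F_1}}^{\epsilon_1}\Omega_{f_{F_1}}^{-\epsilon_1})^2$. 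The second lemma (the analogue of Proposition \ref{divisibgeneral} using Pin-Chin Hung's totally-real generalization of the Cornut--Vatsal non-vanishing theorem) supplies the reverse divisibility, so $\Omega_{f_F}^\epsilon\Omega_{f_F}^{-\epsilon}\sim(\Omega_{f_{F_1}}^{\epsilon_1}\Omega_{f_{F_1}}^{-\epsilon_1})^2$.

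By Proposition \ref{Lvalcong} at $F_1$, the product $\Omega_{f_{F_1}}^{\epsilon'_1}\Omega_{f_{F_1}}^{-\epsilon'_1}$ equals (up to a $p$-adic unit) the single $L$-value quotient $\Gamma(\Ad f_{F_1},1)L(\Ad f_{F_1},1)/\eta_{f_{F_1}}$, and is thus independent of $\epsilon'_1\in\widehat{W_{F_1}}$. Since $d_1:=2^{n-1}\geq 2$, balanced characters exist in $\widehat{W_{F_1}}$; applying the induction hypothesis to such an $\epsilon'_1$ gives $\Omega_{f_{F_1}}^{\epsilon'_1}\sim(\Omega_f^+\Omega_f^-)^{d_1/2}$, and by the previous sentence $\Omega_{f_{F_1}}^{\epsilon_1}\Omega_{f_{F_1}}^{-\epsilon_1}\sim(\Omega_f^+\Omega_f^-)^{d_1}$. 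Combining, $\Omega_{f_F}^\epsilon\Omega_{f_F}^{-\epsilon}\sim(\Omega_f^+\Omega_f^-)^d$. Since Lemma \ref{intperrelrelative} gave that each of $\Omega_{f_F}^{\pm\epsilon}$ is individually divisible by $\Omega_{f_{F_1}}^{\epsilon_1}\Omega_{f_{F_1}}^{-\epsilon_1}\sim(\Omega_f^+\Omega_f^-)^{d/2}$, the equality of $p$-adic valuations on the product forces each factor to satisfy $\Omega_{f_F}^{\pm\epsilon}\sim(\Omega_f^+\Omega_f^-)^{d/2}$, completing the induction.

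The main obstacle lies in verifying that the standing hypotheses really propagate cleanly through the tower --- in particular $N$-minimality at places of $F_1$ above $N$, residual irreducibility over $F_1(\sqrt{p(-1)^{(p-1)/2}})$, non-Eisenstein-ness of $\m_{F_1}$, and the applicability of Hung's non-vanishing theorem at the intermediate level $F_1$ --- since these ingredients are what allow Theorem \ref{completeintersection}, Proposition \ref{Lvalcong}, and the second divisibility lemma to be invoked at each stage of the induction.
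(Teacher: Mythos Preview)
Your proof is correct and follows the same inductive strategy as the paper: both argue by induction on $[F:\Q]$, use Theorem~\ref{PRrealquadratic} as the base case, and at the inductive step pass through a quadratic subextension $F/F_1$ combining Lemma~\ref{intperrelrelative} (applied to $\epsilon$ and $-\epsilon$, noting $i^*(\epsilon)=i^*(-\epsilon)=\epsilon_1$) with the Hung/Cornut--Vatsal divisibility to obtain $\Omega_{f_F}^{\pm\epsilon}\sim\Omega_{f_{F_1}}^{\epsilon_1}\Omega_{f_{F_1}}^{-\epsilon_1}$.

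The one genuine difference is how the induction hypothesis is invoked at level $F_1$. The paper simply asserts that $F_1$ can be chosen so that $\epsilon_1=i^*(\epsilon)$ is itself balanced, and then applies the induction hypothesis directly to $\epsilon_1$. You instead fix $F_1$ (via a central subgroup, guaranteeing $F_1/\Q$ Galois) without controlling $\epsilon_1$, and then use Proposition~\ref{Lvalcong} to observe that the product $\Omega_{f_{F_1}}^{\epsilon'_1}\Omega_{f_{F_1}}^{-\epsilon'_1}$ is independent of $\epsilon'_1$, which lets you transfer the inductive information from a balanced $\epsilon'_1$ to your possibly unbalanced $\epsilon_1$. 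This extra step is a legitimate and clean workaround; in fact it sidesteps the question (left implicit in the paper) of whether a subfield $F_1$ with $\epsilon_1$ balanced always exists---for instance when $\Gal(F/\Q)\cong\Z/4\Z$ there is a unique index-$2$ subfield, so no choice is available. Your closing paragraph correctly identifies that the real work, in both approaches, lies in propagating the hypotheses $(\Min)$, $(\Irr_{F'})$, and the level/Fontaine--Laffaille conditions down to $F_1$ so that Theorem~\ref{completeintersection} and Proposition~\ref{Lvalcong} apply there.
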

\begin{proof}
We proceed by induction on the degree of $F$. Notice first that the starting point of the induction is Theorem \ref{PRrealquadratic}, so we
may assume that $[F:\Q]\geq 4$. We choose $F_1\subset F$ like in the two previous lemmas and such that $\epsilon_1$ is balanced.
Assuming the result for $F_1$ will therefore insure the result for $F$, once we have checked that the assumptions of the Lemmas above are satisfied.
This fact follows from our hypothesis and Theorem \ref{completeintersection} applied to $F_1$.

\end{proof}

\section{The imaginary quadratic case}

\subsection{Differential forms and periods}\label{periods-imagquad}

Let $F$ be an imaginary quadratic field of discriminant $-D$ prime to $Np$. We denote by $\frak d\subset O_F$ its different ideal and by $\alpha=({-D\over\cdot})$ the associated quadratic character. 
We now consider the space  $S^F_{k,k}(N;\C)$ of cuspidal Bianchi modular forms of parallel weight $k$ and level $N$. Let us recall that
an element $\mathbf f\in S^F_{k,k}(N;\bC)$ is a function on $\GL_2(F)\backslash \GL_2(\bA_F)$ taking values in $L(2n+2,\C):=\C[S,T]_{2n+2}$ that we identify with the
 subspace of polynomials in the two variables $(S,T)$which are homogenous of degree  $2n+2$ with $n=k-2$, 
$$g\mapsto \mathbf f(g)=\mathbf f(g;S,T)=\sum_{i=0}^{2n+2}\mathbf f_i(g)S^{2n+2-i}T^i\in L(2n+2,\C)$$
satisfying the following conditions:
\begin{itemize}
\item[(i)] $D_1.\mathbf f=D_c.\mathbf f=(n+\frac{n^2}{2}).\mathbf f$ where $D_1$ and $D_c$ are the two Casimir operators  attached to the two embeddings of $F$ into $\C$,
\item[(ii)] $\mathbf f(gk_\infty;(S,T))=f(g,(S,T)k_\infty^{-1})$ for any $k_\infty\in U_2(\C)$,
\item[(iii)] $\mathbf f (gz_\infty)=|z_\infty|_\C^{-n}\mathbf f(g)$ for any $z_\infty\in Z(\C)\cong\C^\times$,
\item[(iv)] $\mathbf f(gk_f)=\mathbf f(g)$ for all $k_f\in U_0^F(N)$.
\item[(v)] $\mathbf f$ is cuspidal. That is
$$\int_{\bA_F/F}\mathbf f(\begin{pmatrix}1&x\\0&1\end{pmatrix}g)dx=0 \hbox{ for all }g\in \GL_2(\bA_F).$$
\end{itemize}

We will say that $\mathbf f $ is a primitive eigenform if the span of the translates under the action of $\GL_2(\bA_F)$ of the $\mathbf f_i$ 
is an irreducible cuspidal representation that we will denote $\pi(\mathbf f)$. Any $\mathbf f\in S_{k,k}^F(N,\C)$ has a Fourier expansion of the form
$${\mathbf f}(\begin{pmatrix}y&x\\0&1\end{pmatrix})=|y|_F\sum_{\xi\in F^\times}a(\xi y_f\frak d,\mathbf f)W_n(\xi y_\infty)\mathbf e_F(\xi x) $$
(cf \cite[(6.1)]{H94}) where for $y_\infty\in\bR^\times $,
$$W_n(y_\infty)=\sum_{j=0}^{2n+2} \left(2n+2\atop j\right) (\frac{y_\infty}{\sqrt{-1}|y_\infty|})^{n+1-j}K_{j-n-1}(4\pi|y_\infty|)S^{2n+2-j}T^j$$
with $K_\alpha$ the 
modified\footnote{$K_\alpha$ satisfied the differential equation $\frac{d^2K_\alpha}{dx^2}+\frac{dK_\alpha}{dx}-(1+\frac{\alpha^2}{x^2})K_\alpha=0$ and $K_\alpha (x)\sim \sqrt{\frac{\pi}{2x}}e^{-x}$ as $x\rightarrow\infty$.} 
Bessel function of order $\alpha$. Here $\xi y_f\frak d$ is identifed with a fractional ideal of $\cO_F$ and $a(\frak m,\mathbf f)\in\bC$ is zero except if $\frak m$ is integral. 
We moreover say that such an $\mathbf f $ is normalized if $a(\cO_F,\mathbf f)=1$. Finally we say that $\mathbf f$ is primitive and normalized if we have the relation
$$L(\pi(\mathbf f),s- n)= \sum_{\frak m\subset \cO_F}a(\frak m,\mathbf f)N(\frak m)^{-s} $$
where $L(\pi(\mathbf f),s)$ stands for the standard $L$-function of $\pi(\mathbf f)$. Conversely, for any irreducible cuspidal representation $\pi=\pi_\infty\otimes \pi_f$ of $\GL_2(\bA_F)$ 
whose archimedean component $\pi_\infty$ is a principal series with central character $z\mapsto |z|_\C^{-n}$ with Casimir operators acting with the eigenvalue $n+n^2/2$ and with $\pi_f^{U^F_0(N)}$ of 
dimension $1$, there exists a unique normalized eigenform $\mathbf f$ such that $\pi=\pi(\mathbf f)$.

Let $\cO[X,Y]_n$ be the $\cO$-module of homogeneous polynomials in $X,Y$ of degree $n$
 with action of $g\in \SL_2(\cO_F)$ by 
where $g\cdot P(X,Y)=P((X,Y){}^tg^{-1})$; let $V_{k,k}(\cO)= \cO[X_1,Y_1]_n\otimes \cO[X_c,Y_c]_n$ 
with action of $g\in \SL_2(\cO_F)$ by $g\otimes g^c$.

Consider the (adelic) Bianchi threefold of level $N$:
$$X_F= \GL_2(\cO_F)\backslash\GL_2(\A_F)/U^F_0(N)\cdot\R^{\times}U_2(\C).$$
Let $\cV^F_{k,k}(\cO)$ be the local system on the Bianchi threefold $X_F$ associated to $V_{k,k}(\cO)$ and $\cV^F_{k,k}(\C):=\cV^F_{k,k}(\cO)\otimes_\cO\C$ its base change to $\C$
.
Let $\H^i_!$ be the image of the canonical map $\H^i_c\to\H^i$; Recall that for $i=1,2$, there are Hecke-linear Harder-Matsushima-Shimura isomorphisms 
$$\omega^i\colon S^F_{k,k}(N;\C)\cong \H^i_!(X_F,\cV^F_{k,k}(\C)) $$
given below (see \cite{Ur95} or \cite{H94}). 
The differential forms $\omega^i(\mathbf f)$ are defined by the formula
\begin{eqnarray*}\omega^1(\mathbf f)(\begin{pmatrix}y&x\\0&1\end{pmatrix}):=\sum_{0\leq j_1,j_c\leq n}
(-1)^{n-j_c}\left({n\atop j}\right) \left({n\atop j_c}\right) X_1^{n-j_1}Y_1^{j_1}\otimes X_c^{n-j_c}Y_c^{j_c} \times\\
y^{-1}\left(\mathbf f_{n+j_1-j_c}(g) dx - \frac{1}{2}\mathbf f_{n+j_1-j_c+1}(g)dy- \mathbf f_{n+j_1-j_c+2}(g) d\bar x\right)
\end{eqnarray*}
and
\begin{eqnarray*}\omega^2(\mathbf f)(\begin{pmatrix}y&x\\0&1\end{pmatrix}):=\sum_{0\leq j_1,j_c\leq n}
(-1)^{n-j_c}\left({n\atop j}\right) \left({n\atop j_c}\right) X_1^{n-j_1}Y_1^{j_1}\otimes X_c^{n-j_c}Y_c^{j_c} \times\\
y^{-2}\left(\mathbf f_{n+j_1-j_c}(g) dy\wedge dx-\mathbf f_{n+j_1-j_c+1}(g)dx\wedge d\bar x+ \mathbf f_{n+j_1-j_c+2}(g) dy\wedge d\bar x\right)
\end{eqnarray*}

\medskip
We now define periods attached to normalized primitive forms.
Let $\mathbf f$ be such a form and $\cO_{\mathbf f}$  the ring of integers containing 
$\cO_F$ and the Fourier coefficients of $\mathbf f$.
Then the $\cO_{\mathbf f}$-module
$\C.\omega^i(\mathbf f)\cap \H^i_!(X_F,\cV_{k,k}(\cO))\otimes \cO_{\mathbf f}/(tors)$ is projective of rank $1$.
Thus, for any localization $\cO_{(\wp)}$ of $\cO_{\mathbf f}$ at a finite prime $\wp\subset\cO_{\mathbf f}$, there exists  a basis
$\delta^i_{\mathbf f}$ over  $\cO_{(\wp)}$ of the localization of this module at $\wp$.  
We define therefore the periods $u^i(\mathbf f, \cO_{(\wp)})\in\C^\times/\cO_{\wp}^\times $  for $i\in\{1,2\}$, by the formula
\begin{equation}\label{periods}
 \omega^i(\mathbf f)=u_i(\mathbf f, \cO_{(\wp)})\cdot \delta^i_{\mathbf f}.
\end{equation}
When $\wp$ will be obvious from the context, we will just write $u^i(\mathbf f)$ for short.

\medskip
Let $h^F_{k,k}(N,\cO)$ be the Hecke algebra generated over $\cO$ by the Hecke operators
acting on the cohomology $\H_!^{\cdot}(X_F,\cV^F_{k,k}(\cO))$. Let $\lb_\ff$ be the character
$h^F_{k,k}(N,\cO)\rightarrow\cO$ associated to the Hecke eigensystem attached to the new form $\ff$
and let $\m:=\Ker (\lb_\ff\pmod\varpi)$ the corresponding maximal ideal. We then denote by $\T_{\m_\ff}$ the localization of 
$h^F_k(N,\cO)$ at $\m$.

\begin{lem} Assume $p$ is prime to $ND$ and that $\m_\ff$ is not Eisenstein if $p\leq k-2$. Then
$\H^1(X_F,\cV^F_{k,k}(\cO))_{\m_\ff}$ is a torsion free $\cO$-module. 
\end{lem}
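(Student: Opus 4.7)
The plan is to run a Bockstein argument on the coefficient sequence and reduce the torsion-freeness of $\H^1$ to the vanishing of a suitable $\H^0$ after localization at $\m_\ff$.

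Concretely, the short exact sequence of sheaves
$$0\to \cV^F_{k,k}(\cO)\xrightarrow{\varpi}\cV^F_{k,k}(\cO)\to \cV^F_{k,k}(\F)\to 0$$
yields, by the long exact sequence, a Hecke-equivariant surjection
$$\H^0(X_F,\cV^F_{k,k}(\F))\twoheadrightarrow \H^1(X_F,\cV^F_{k,k}(\cO))[\varpi].$$
Since localization at $\m_\ff$ is exact, it suffices to prove that
$\H^0(X_F,\cV^F_{k,k}(\F))_{\m_\ff}=0$, for then $\H^1(X_F,\cV^F_{k,k}(\cO))_{\m_\ff}$ has no $\varpi$-torsion, hence is $\cO$-torsion-free.

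I would handle the two cases separately. First, if $k\geq 3$, I would prove that $\H^0(X_F,\cV^F_{k,k}(\F))=0$ unconditionally. Choose a neat sublevel so that $X_F$ is a genuine manifold covered by $X_F$-itself up to a group of order prime to $p$; the $\H^0$ is then computed as the invariants $V_{k,k}(\F)^{\Gamma}$ for $\Gamma$ a congruence subgroup of $\SL_2(\cO_F)$ acting through the two embeddings $\cO_F\hookrightarrow\overline{\F}_p$ as $\gamma\otimes\gamma^c$ on $\F[X,Y]_n\otimes\F[X_c,Y_c]_n$, $n=k-2$. Strong approximation makes $\Gamma$ Zariski dense in $\SL_2\times\SL_2$ via the pair of embeddings, so these invariants coincide with those of $V_n\boxtimes V_n$ under $\SL_2\times\SL_2$; since $p>k-2$ ensures $V_n(\F)$ is the irreducible $\SL_2$-module of highest weight $n$, the external tensor product is irreducible of highest weight $(n,n)\neq 0$, so it has no nonzero invariants. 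One then descends from the neat sublevel back to $X_F$ using that $[U_0^F(N)\cdot \R^\times U_2(\C):U_1^F(N)\cdot\R^\times U_2(\C)]$ is prime to $p$.

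Second, if $k=2$, the coefficient sheaf is the constant sheaf $\F$, so $\H^0(X_F,\F)$ is the free $\F$-module on the connected components of $X_F$. On this module the Hecke operator $T_\lambda$ (for $\lambda\nmid Np$) acts as multiplication by $1+N(\lambda)$; this is exactly the eigensystem carried by Eisenstein cohomology classes for the trivial character. Hence its system of eigenvalues modulo $\varpi$ is Eisenstein, and under the assumption that $\m_\ff$ is non-Eisenstein, the localization $\H^0(X_F,\F)_{\m_\ff}$ vanishes, completing the argument.

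The only real subtlety is the $k\geq 3$ case, and within it the Zariski density / irreducibility mod~$p$ step: one must be sure that the Fontaine--Laffaille bound $p>k-2$ is enough to guarantee $V_{k,k}(\F)$ is the appropriate irreducible representation and that the arithmetic subgroup is big enough to kill its invariants. This is standard, and $k=2$ is where the non-Eisenstein hypothesis is genuinely needed, since there $\H^0$ is nonzero globally and only disappears after localization.
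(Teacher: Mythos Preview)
Your approach is essentially the same as the paper's: use the Bockstein sequence to reduce to showing $\H^0(X_F,\cV^F_{k,k}(\F))_{\m_\ff}=0$, then split into the cases $k>2$ (no nonzero invariants by irreducibility) and $k=2$ (Hecke action on $\H^0$ is Eisenstein). The paper's proof is just a terser version of exactly this.

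One small imprecision worth flagging in your $k\geq 3$ argument: invoking Zariski density of $\Gamma$ in $\SL_2\times\SL_2$ is a characteristic-zero statement and does not by itself control invariants in a mod~$p$ representation. The cleaner way to say what you want is that, since $(N,p)=1$, the congruence subgroup $\Gamma$ surjects onto $\SL_2(\cO_F/p\cO_F)$, and $V_{k,k}(\F)$ is irreducible as a representation of this finite group: if $p$ splits this is $\SL_2(\F_p)\times\SL_2(\F_p)$ acting on $V_n\boxtimes V_n$, and if $p$ is inert it is $\SL_2(\F_{p^2})$ acting on $V_n\otimes V_n^{(p)}$, irreducible by Steinberg's tensor product theorem. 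In both cases the hypothesis $p>k-2=n$ ensures each factor $V_n$ lies in the range where it is irreducible. This is presumably what the paper means by ``$V^F_{k,k}(\F)$ is irreducible as $U^F_0(N)$-module,'' and what your Zariski-density sentence is gesturing at; the conclusion is correct, only the justification should be rephrased.
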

\begin{proof}
 It  follows
from  the cohomology long exact sequence associated to the multiplication by $\varpi$ that $\H^0(X_F,V^F_{k,k}(\F))_{\m_\ff}$ surjects on the $\varpi$-torsion
of $\H^1(X_F,\cV^F_{k,k}(\cO))_{\m_\ff}$. It is equal to zero if $2<k<p+2$ since in that case  $V^F_{k,k}(\F)$ is irreducible as $U^F_0(N)$-module, or if ${\m_\ff}$ is not Eisenstein since the action of the Hecke algebra on the invariants is clearly Eisenstein. When $k=2$, $\H^0(X_F,\cO)_{\m_\ff}$ clearly surjects onto $\H^0(X_F,\F)_{\m_\ff}$, so the same long exact sequence implies the claim.
\end{proof}

To have a better understanding on the cohomology in degree $2$ which may have torsion in general, one needs to use the  existence and properties of Galois 
representations attached to Hecke eigensystems and the modularity lifting theorem techniques. 
Recall that it follows from the work of M. Harris, K.-W. Lan, R. Taylor and J. Thorne \cite{HLTT} , that there exists a Galois representation
$\rho_\ff \colon \Gamma_F\to \GL_2(\cO)$ associated to $\ff$. This results was proved also independently by  P. Scholze \cite{Scholze} -- who included the 
case of certain torsion classes -- whose work was further refined by J. Newton and J. Thorne in  \cite{NT16} and later by the work \cite{CGHJMRS}.  
There is therefore a Galois representation
$$\rho_{\T_{\m_\ff}}\colon \Gamma_F\to GL_2(\T_{\m_\ff})$$
unramified at finite places not dividing $Np$, and such that for all $v$ not dividing $Np$, $$tr\;\rho_{\T_{\m_\ff}}(\Frob_v)=T_v$$
where $T_v$  is the classical spherical Hecke operator associated to the double class of an element of $GL(F_v)$ with $v$-adic valuation $1$. If $\ell|N$ and is split in $F$, the for $v|\ell$, this representation satisfiees the expected local property at $v$. We refer to \cite{ACC+} for a precise statement.
We will however need to assume that this Galois representation satisfies some local properties at places dividing $p$. Recall that if $v|p$, one says that a representation of a decomposition subgroup at $v$ is Fontaine-Laffaille if 
it is the image by the Fontaine-Lafaille functor of a filtered $\Phi$-module (see \cite{FL} for more details).
We will consider the following condition when $p>k$:

\medskip
\item[(\bf FL)] For any  level $N$ prime to $p$,  the restriction of $\rho_{\T_{\m_\ff}}$ to any decomposition subgroup at $v|p$ is Fontaine-Laffaille.

\begin{rem}
It is conjectured that this property holds and it was proved in many cases  in the works \cite{ACC+, CGHJMRS} but the case of $F$ quadratic imaginary is left out because of some technical conditions. 
It is however hoped that it should be settled in the near future.
\end{rem}
We will say that $(Min_F)$ holds if the same condition as the one explained in section 2 before Theorem \ref{classical} does after we have replaced $\rho_f|_{\Gamma_F}$ by $\rho_{\ff}$. The following theorem gives the structure of the cohomology in degree $2$. It is follows from the combination of many deep works of several mathematicians.

\begin{thm}\label{CG} We suppose that $p>k$, $N$ is prime to $p$ and that the primes dividing $Np$ splits in $F$. We also assume that {\bf (FL)} is satisfied.
If $\m_\ff$ is non-Eisenstein and that $(Min_F)$ holds, then we have:
\begin{itemize}
\item[(i)] There is a presentation of $ \T_{\m_\ff}$ of the form
$$\T_{\m_\ff} \cong\cO[x_1,\dots,x_g]/(f_0,\dots,f_g)$$
\item[(ii)] $\H^2(X_F,\cV^F_{k,k}(\cO))_{\m_\ff}$ is free of rank one over $\T_{\m_\ff}$.
\end{itemize}
\end{thm}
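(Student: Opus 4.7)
The plan is to invoke the Calegari--Geraghty patching method, as in \cite[Theorem 5.11, Remark 5.13]{CaGe18}; the purpose of hypothesis (FL) is precisely to put us in a setting where the Galois-theoretic input needed for their argument is available in the Bianchi situation. First, I would construct a Taylor--Wiles system: for each $n\geq 1$, a set $Q_n$ of auxiliary primes $v$ of $\cO_F$, of fixed cardinality $q=\dim_{\F}\H^1_{\mathcal{L}^{\perp}}(G_{F,S},\Ad^0\overline{\rho}_{\ff}(1))$, with $Nv\equiv 1\pmod{p^n}$ and $\overline{\rho}_{\ff}(\Frob_v)$ having distinct eigenvalues. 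Such primes exist in sufficient supply by Chebotarev, using the absolute irreducibility of $\overline{\rho}_{\ff}|_{G_F}$ built into the non-Eisenstein hypothesis. At the deeper level $U_0(N)\cap U_1(Q_n)$ one forms the middle-degree cohomology $\H_{Q_n}:=\H^2(X_F(Q_n),\cV^F_{k,k}(\cO))_{\m_{Q_n}}$, endowed with compatible actions of a localized Hecke algebra $\T_{F,Q_n}$, of an extended deformation ring $R_{F,Q_n}$ allowing Taylor--Wiles ramification at $Q_n$, and of the diamond group algebra $\cO[\Delta_{Q_n}]$ with $\Delta_{Q_n}=\prod_{v\in Q_n}(\cO_F/v)^{\times}(p)$.

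Next, I would patch these data along the lines of \cite{CaGe18}, assembling via an ultrafilter limit a module $\H_\infty$ carrying compatible actions of a complete local $\cO$-algebra $R_\infty$, quotient of $\cO[[x_1,\ldots,x_{g+2q}]]$ and surjecting to $R_F$ modulo its augmentation ideal $\a_\infty$, and of the completed diamond algebra $S_\infty=\cO[[\Delta_\infty]]\cong\cO[[z_1,\ldots,z_{2q}]]$, arranged so that $\H_\infty/\a_\infty\H_\infty=\H^2(X_F,\cV^F_{k,k}(\cO))_{\m_{\ff}}$. Hypothesis (FL) enters here twice: to make the Fontaine--Laffaille local deformation problem at places above $p$ representable by a formally smooth ring of the expected dimension $3[F_v\colon\Q_p]$, and to provide Galois representations to $\T_{F,Q_n}$ of the correct local type, so that the maps $R_{F,Q_n}\to\T_{F,Q_n}$ make sense and can be matched dimensionally in the limit.

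The decisive input is the combinatorics of the defect. The cuspidal part of the cohomology of $X_F$ lives in degrees $1$ and $2$, so the Calegari--Geraghty defect is $\ell_0=1$; the resulting Euler characteristic computation forces $R_\infty$ to admit a presentation
$$R_\infty\cong\cO[[z_1,\ldots,z_{2q},x_1,\ldots,x_g]]/(f_0,\ldots,f_g)$$
with exactly $g+1$ relations rather than $g$, the excess reflecting the failure of complete intersection observed in \cite{CaGe18}. In parallel, a depth/patched-module analysis shows that $\H_\infty$ is maximal Cohen--Macaulay over $S_\infty$ of generic rank one, hence by Auslander--Buchsbaum together with faithfulness, free of rank one over $R_\infty$. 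Reducing modulo $\a_\infty$ gives (i) and (ii) simultaneously, once one further identifies $R_F=\T_{\m_{\ff}}$, which is the content of \cite[Conjecture A]{CaGe18} under (FL).

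The main obstacle is the depth estimate $\depth_{S_\infty}\H_\infty\geq\dim S_\infty$. In the Bianchi setting this requires the vanishing of localized cohomology of $X_F(Q_n)$ with suitable torsion coefficients in degrees different from $2$, uniformly in $n$, after localization at the non-Eisenstein maximal ideal. This combines the vanishing results of \cite{CS19}, \cite{NT16}, \cite{CGHJMRS} and \cite{ACC+} in degrees below $q_0=1$, Poincar\'e duality on the Borel--Serre compactification, and careful control of the boundary contribution using that $\m_{\ff}$ is non-Eisenstein; it is precisely this control, together with the Fontaine--Laffaille structure at $p$ that is still conjectural in this setting for $F$ imaginary quadratic, that compels us to adopt (FL) as a working hypothesis rather than a theorem.
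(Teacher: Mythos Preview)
Your approach is the same as the paper's: both defer to the Calegari--Geraghty machinery of \cite{CaGe18}, noting that the Galois-theoretic input (their Conjecture~A) is now furnished by \cite{ACC+, CGHJMRS}, except for the Fontaine--Laffaille property at places above $p$, which is precisely hypothesis (FL). The paper's proof is little more than this citation, together with the remark that \cite{CaGe18} is phrased in terms of $\H_1$ rather than $\H^2$, the two being canonically isomorphic after localisation at a non-Eisenstein maximal ideal.

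Your more detailed sketch, however, contains an internal inconsistency about the defect. You correctly identify $\ell_0=1$ and correctly deduce that the patched deformation ring carries one surplus relation, but you then assert that $\H_\infty$ is maximal Cohen--Macaulay over $S_\infty$, that the required estimate is $\depth_{S_\infty}\H_\infty\geq\dim S_\infty$, and that this follows from vanishing of localised cohomology ``in degrees different from $2$''. All three of these statements amount to $\ell_0=0$, not $\ell_0=1$. In the Bianchi case the localised cohomology lives in degrees $1$ \emph{and} $2$; what one actually obtains is $\pd_{S_\infty}\H_\infty\leq 1$, hence by Auslander--Buchsbaum $\depth_{S_\infty}\H_\infty\geq\dim S_\infty-1$; and $\H_\infty$ is maximal Cohen--Macaulay over $R_\infty$ (whose dimension equals $\dim S_\infty-1$), not over $S_\infty$. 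It is precisely the gap $\dim R_\infty=\dim S_\infty-1$ that forces the extra relation $f_0$ and explains why $\T_F$ is not complete intersection. With these corrections your outline matches the argument of \cite{CaGe18}.
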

\begin{proof}
This follows from the work of F. Calegari and D. Geraghty in  \cite{CaGe18} under the condition that some conjectures 
about the existence and properties of Galois representations
attached to torsion classes in $\H^2(X_F,\cV^F_{k,k}(\cO))_\m$ (see also the work of D. Hansen \cite{Han12}).
These conjectures are now proved (except {\bf (FL)} that we assume) in our special case thanks to the works \cite[Thm 3.1.1]{ACC+} and \cite[Thm 6.1.4] {CGHJMRS}.
The result of  \cite{CaGe18} is in terms of the homology $\H_1(X_F,\cV^F_{k,k}(\cO))_\m$ but this is an easy exercise to see that there is a canonical isomorphism
$\H^2(X_F,\cV^F_{k,k}(\cO))_{\m_\ff}\cong \H_1(X_F,\cV^F_{k,k}(\cO))_{\m_\ff}$ when $\m_\ff$ is not Eisenstein.
\end{proof}

Let us also record the following lemma that will be useful in the next section.
\begin{lem}\label{duality-Urban}
 Let $W^F_{N}$ be the Atkin-Lehner involution on $X_F$ and assume that $p>k-2$, the twisted cup-product $[x,y]=x\cup W^F_{N}y$ induces a  
perfect pairing 
$$[-,-]\colon \H_!^1(X_F,\cV^F_{k,k}(\cO))/(tors)\times \H_!^2(X_F,\cV^F_{k,k}(\cO))/(tors)\to \cO$$
which is Hecke-bilinear.
\end{lem}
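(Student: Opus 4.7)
The plan is to obtain the pairing as the composition of three ingredients: Poincaré duality on the $3$-dimensional Bianchi orbifold $X_F$, self-duality of the coefficient sheaf $\cV^F_{k,k}(\cO)$, and an Atkin--Lehner twist to enforce Hecke-bilinearity. First I would verify that when $p>k-2$, the $\GL_2(\cO_F)$-module $\cO[X,Y]_n\otimes\cO[X_c,Y_c]_n$ carries a perfect bilinear pairing to $\cO$ (up to the central character). Indeed $\Sym^n$ admits the invariant pairing $\langle X^iY^{n-i},X^jY^{n-j}\rangle=(-1)^i\binom{n}{i}^{-1}\delta_{i+j,n}$, which is integral and perfect as soon as $n!\in\cO^\times$, \ie $p>n=k-2$. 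Tensoring the two copies gives a perfect $\SL_2(\cO_F)$-invariant pairing on $V_{k,k}(\cO)$, so the local system $\cV^F_{k,k}(\cO)$ is canonically self-dual over the $U^F_0(N)$-level Bianchi orbifold.

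Next, Poincaré--Lefschetz duality on the oriented $3$-dimensional orbifold $X_F$ (with $p$ prime to the orders of the finite stabilizers, which is the case since $p\nmid 6ND\varphi_F(N)$) together with the self-duality above yields perfect pairings modulo torsion
$$\H^1_c(X_F,\cV^F_{k,k}(\cO))/tors \times \H^{2}(X_F,\cV^F_{k,k}(\cO))/tors \to \H^3_c(X_F,\cO)\cong\cO,$$
and symmetrically with the roles of $1$ and $2$ exchanged. The pairing is given, before twisting, by the cup product followed by trace.

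To descend to interior cohomology, I would use the long exact sequences
$$\cdots \to \H^{i-1}(\partial X_F)\to \H^i_c(X_F,\cV^F_{k,k}(\cO))\to \H^i(X_F,\cV^F_{k,k}(\cO))\to \H^i(\partial X_F)\to\cdots$$
and the fact that the boundary cohomology of $X_F$ is, up to torsion, Poincaré-dual to itself in complementary degree. A standard diagram chase (cf. \cite[\S 2.1]{Ur95} or the proof of part~3 of Proposition~\ref{freenessduality}) shows that the cup-product pairing descends to a pairing of the images $\H^i_!/tors$ with $\H^{3-i}_!/tors$, and perfectness is preserved: if $\alpha\in\H^1_!$ lifts to $\widetilde\alpha\in\H^1_c$ and pairs trivially with every $\beta\in\H^2_!$, then $\widetilde\alpha$ pairs trivially with the image of $\H^2_c\to\H^2$ and hence, by perfectness of the global pairing and the long exact sequence, lies (up to torsion) in the image of $\H^0(\partial X_F)\to\H^1_c$, forcing $\alpha=0$ in $\H^1_!/tors$.

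Finally, Hecke-bilinearity follows from the usual adjointness relation $T_v^\ast=T_v$ for $v\nmid N$ and $(U_v)^\ast = W_N^F U_v (W_N^F)^{-1}$ for $v\mid N$: conjugating one side by $W_N^F$ yields the $T$-bilinear pairing $[x,y]:=x\cup W_N^F y$. The principal obstacle is bookkeeping the torsion: one must check that the descent from Poincaré--Lefschetz duality to interior cohomology preserves perfectness after quotienting by the $\cO$-torsion subgroups, rather than merely producing a non-degenerate pairing over $K$. This is what makes the hypothesis $p>k-2$ (ensuring integrality of the self-duality of $V_{k,k}(\cO)$) genuinely necessary, and it can be handled degree-by-degree using the universal coefficient theorem together with the identification of $\H^3_c(X_F,\cO)$ with $\cO$ on each connected component.
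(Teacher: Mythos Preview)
Your proposal is correct and takes essentially the same approach as the paper, which simply cites \cite[Theorem~2.5.1]{Ur95} for the Poincar\'e duality statement. Your expanded sketch correctly identifies the three ingredients (self-duality of $V_{k,k}(\cO)$ when $p>k-2$, Poincar\'e--Lefschetz duality on the $3$-dimensional orbifold, the Atkin--Lehner twist for Hecke-bilinearity) and the one genuine subtlety, namely that perfectness---not just non-degeneracy over $K$---must survive the passage to interior cohomology modulo torsion; this saturation step is exactly what the cited theorem in \cite{Ur95} supplies.
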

\begin{proof} This follows from the Poincar\'e duality theorem \cite[Theorem 2.5.1]{Ur95},
\end{proof}

\subsection{Base change and congruence ideals}\label{sectHeckecong}
We consider now a cuspidal newform $f\in S_k(\Gamma_0(N))$ of weight $k\geq 2$ like in section
\ref{sectManinCongruence} and a prime $p$ satisfying the conditions of that section from which we will use the notations.

Let now $\ff=f_F\in S^F_{k,k}(N;\cO)$  be its normalized base change to $F$. This is the normalized eigenform 
attached to the quadratic base change $\pi(f_F)$ to 
$\GL_2(\A_F)$ of the cuspidal representation attached to $f$. We will denote by  $\m_F$ the maximal ideal of $h_{k,k}(N;\cO)$ associated to $f_F$ and
$\T_F$ the corresponding localization at $\m_F$. For $i=1,2$, we also write for short 
$\H^i_F:=\H^i(X_F,\cV_{k,k}(\cO))_{\m_F}$. We write $\T_F^{\f}$ for the maximal torsion free quotient of $\T_\F$.
The system of Hecke eigenvalues of $f_F$ defines a character of the Hecke algebra $\T_F \to \cO$ which factors through
$\T^{\ff}_F$ into an homomorphism $\lambda_F$
$$\lambda_F\colon \T_F^{\f}\to\T\to\cO$$
where $\theta\colon \T_F^{\f}\to\T$ is the base change homomorphism. 
%
%
%

We consider the congruence module of $f_F$ as in Section \ref{C0}
$$C_0(\lambda_F)=C_0(f_F)=\cO/\eta_{f_F}$$
and $\eta_{f_F}$ the corresponding Hecke congruence ideal.
Similarly, as in Section \ref{basechange}, we have the base change congruence modules and ideals
$$C_0^{\sharp}(f):=C_0^{\lb_F,\sharp}(\T_F)=\cO/\eta_f^\sharp $$
which control congruences between $f_F$ and Bianchi cusp forms which are not base change from $\Q$.

Finally, for $i\in\{ 1,2\}$, we define the congruences modules and ideals $\eta^i_f$, $\eta_f^{i,\sharp}$ attached to the Hecke modules $\H^i_F$:
$$C_0^{\lb_F}(\H^i_F)=\cO/\eta^i_{f_F}\qquad\hbox{ and }\qquad C_0^{\lb_F,\sharp}(\H^i_F)=\cO/\eta^{i,\sharp}_{f}$$
We have the following lemma
\begin{lem} \label{cong-rel} With the notation above, we have the following equalities or inclusion of ideals
\begin{itemize}
\item[(i)] $\eta^1_{f_F}=\eta^2_{f_F}\supset \eta_{f_F}$,
\item[(ii)] For $i\in\{ 1,2\}$,  we have $\eta^i_{f_F}\supset \eta_f\cdot \eta_f^{i,\sharp}$,
\item[(iii)] Assume (FL) and that the primes dividing $N$ are split in $F$. If $\m_F$ is non Eisenstein and minimal and $p>k$, then 
$$\eta^1_{f_F}=\eta^2_{f_F} =\eta_{f_F}\qquad\hbox{ and }\qquad\eta^\sharp_f=\eta_f^{2,\sharp}.$$
\item[(iv)] If $p>k-2$ and $p$ does not divide $N\phi_F(N)$, then for $i\in\{ 1,2\}$ we have 
$$\eta_{f_F}^{i}\sim{ L(\Ad(f_F),1)\over \pi^{2k+1} u_1(f_F) u_2(f_F)}
$$
\end{itemize}
\end{lem}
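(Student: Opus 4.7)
The plan is to deduce each part from the abstract formalism of Section \ref{conggensect} combined with Lemma \ref{duality-Urban} and Theorem \ref{CG}.

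For (i), note that by multiplicity one for cuspidal automorphic representations of $\GL_2(\bA_F)$, the rational modules $\H^i_F\otimes K$ are free of rank one over $\T_F\otimes K$ after localization at $\m_F$. Hence \eqref{div-C0} in Section \ref{C0} gives $\eta^i_{f_F}\supset \eta_{f_F}$. The equality $\eta^1_{f_F}=\eta^2_{f_F}$ follows from Proposition \ref{Pontryaginprop} applied to the Hecke-equivariant perfect pairing of Lemma \ref{duality-Urban}: this proposition says that Fitting ideals of $C_0^\lb$ are preserved by such dualities. Part (ii) is a direct consequence of Lemma \ref{BC-congruenceN} applied to the base-change datum $\T_F\stackrel{\theta}{\to}\T\stackrel{\lb_f}{\to}\cO$ with $M=\H^i_F$: the lemma gives $\eta^i_{f_F}=\eta_{\lb_f}((\H^i_F)_T)\cdot\eta^{i,\sharp}_f$, and by \eqref{div-C0} the first factor contains $\eta_f$, yielding the claimed divisibility.

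For (iii), invoke Theorem \ref{CG}(ii): under the stated hypotheses, $\H^2_F$ is free of rank one over $\T_F$, so there is a $\T_F$-linear isomorphism $\H^2_F\cong\T_F$. Transporting $C_0^{\lb_F}$ and $C_0^{\lb_F,\sharp}$ through this isomorphism gives $\eta_{f_F}^2=\eta_{f_F}$ and $\eta^{2,\sharp}_f=\eta_f^\sharp$ directly from the definitions. Combined with (i), this yields $\eta_{f_F}^1=\eta_{f_F}^2=\eta_{f_F}$.

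For (iv), combine the perfect pairing of Lemma \ref{duality-Urban} with Proposition \ref{Pontryaginprop} applied to $M^1=\H^1_!/\text{tors}$, $M^2=\H^2_!/\text{tors}$ (localized at $\m_F$) with bases $\delta^1_{f_F}, \delta^2_{f_F}$ of the $\lb_{f_F}$-isotypic parts. The last assertion of that proposition gives
\[
\eta_{f_F}^{i}\sim [\delta^1_{f_F},\delta^2_{f_F}].
\]
Now write $\omega^i(f_F)=u_i(f_F)\cdot\delta^i_{f_F}$ from \eqref{periods}, so that
\[
[\delta^1_{f_F},\delta^2_{f_F}]=\frac{[\omega^1(f_F),\omega^2(f_F)]}{u_1(f_F)\,u_2(f_F)}.
\]
The numerator is an archimedean cup-product computation entirely parallel to the one in the proof of Theorem \ref{classical}(a) for the classical modular curve: using the explicit formulas for $\omega^1(\ff)$ and $\omega^2(\ff)$ recalled in Section \ref{periods-imagquad} and unfolding against the Atkin-Lehner twist, one obtains $[\omega^1(f_F),\omega^2(f_F)]\sim \pi^{-(2k+1)}L(\Ad(f_F),1)$. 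This Petersson-type identity, which goes back to \cite{Ur95} in the Bianchi setting, is the main (and only non-formal) obstacle of the proof; the hypotheses $p>k-2$ and $p\nmid N\phi_F(N)$ are precisely what is needed to ensure that the universal constant appearing in that explicit computation is a $p$-adic unit.
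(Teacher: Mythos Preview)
Your proof is correct and follows essentially the same approach as the paper's: the same references (Lemma \ref{duality-Urban} with Proposition \ref{Pontryaginprop} for (i) and (iv), Lemma \ref{BC-congruenceN} with \eqref{div-C0} for (ii), Theorem \ref{CG}(ii) for (iii), and \cite{Ur95} for the Petersson computation) are invoked in the same way. The only difference is that you spell out a few more details, e.g.\ the rank-one observation needed before applying \eqref{div-C0} and the explicit rewriting $[\delta^1_{f_F},\delta^2_{f_F}]=[\omega^1(f_F),\omega^2(f_F)]/(u_1(f_F)u_2(f_F))$ in (iv).
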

 \begin{proof} The equality in (i) follows from the duality of Lemma \ref{duality-Urban} and Proposition \ref{Pontryaginprop}. 
The inclusion follows from \eqref{div-C0}.
 The point (ii) follows from Lemma \ref{BC-congruenceN} for $M=\H^i_F$ and that $\eta_{f}(\H^i(F)_{\T})\supset\eta_f$ thanks to \eqref{div-C0}.
The point (iii) is an immediate consequence of the point (ii) of
 Theorem \ref{CG}. The point (iv) is a reformulation of the main result of \cite{Ur95}(see also \cite{H99}) and follows from the last point of 
 Proposition \ref{Pontryaginprop} applied to $M^i=\H^i(F)$ and $\delta^i_{f_F}$ and Lemma \ref{duality-Urban}
 together with the computation of the Petersson inner product of $f_F$ in terms of the value of Adjoint L-function at $s=1$.
 \end{proof}

\subsection{The integral period relation and base change congruence ideal}


We keep the notations and hypothesis for $f$ as in the previous section. We prove a first $p$-adic divisibility of periods:

\begin{pro}\label{properiodiv} Assume that $p>k$ and $p$ does not divide $DN\phi_F(N)$. 
We also suppose that (CV), (Irr)  hold and furthermore  that the ideal of prime factors of $N$ of the level of $f$
which remain inert in $F$ is odd. Then, the quotient 
$${\Omega^+_f\Omega^-_f\over u_1(f_F)}\in\overline\Z_p.$$
\end{pro}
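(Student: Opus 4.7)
My plan is to mimic the strategy of Proposition \ref{divisibgeneral}, with the imaginary quadratic field $F$ itself playing the role of the auxiliary imaginary quadratic field introduced in that proof. The two key inputs will be a non-vanishing modulo $p$ result of Chida--Hsieh for a twisted central $L$-value, and a Rankin--Selberg type identity on the Bianchi threefold $X_F$ that relates this $L$-value to the degree-one period $u_1(f_F)$.

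First I would produce the anticyclotomic twisting character. Fix an auxiliary prime $\ell\nmid Np$ as in \cite{CH16}. By (CV) and the assumption that the number of prime factors of $N$ remaining inert in $F$ is odd, the global root number of $L(f_F\otimes\xi,s)$ equals $+1$ for suitable anticyclotomic Hecke characters $\xi$ of $F$ of $\ell$-power conductor --- this parity condition is exactly what ensures that the Jacquet--Langlands transfer used in \cite{CH16} is non-zero. The theorem of Chida--Hsieh then produces such a character $\xi$ satisfying
$$\frac{L(f_F\otimes\xi,k/2)}{\pi^{k}\,\Omega_f^+\Omega_f^-}\in\overline{\Z}_p^\times.$$

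Next I would apply a Rankin--Selberg integral representation on $X_F$ in the cohomological framework of \cite{Ur95}. For integrally normalized choices of an Eisenstein series $E$ and the theta series $\theta(\xi)$ attached to $\xi$, the Petersson pairing $\langle f_F,\theta(\xi)E\rangle$ unfolds to yield $L(f_F\otimes\xi,k/2)$ up to the usual archimedean factors. Interpreting this pairing cohomologically as a cup-product $\omega^1(f_F)\cup[\theta(\xi)E]$ on $X_F$, where $[\theta(\xi)E]\in\H^2(X_F,\cV^F_{k,k}(\cO))$ is an integral class, and substituting the defining relation $\omega^1(f_F)=u_1(f_F)\cdot\delta^1_{f_F}$, one would obtain (as in the real quadratic case, with a factor of $\eta_f$ arising from the $f$-isotypic projection of $\theta(\xi)E$)
$$\frac{L(f_F\otimes\xi,k/2)}{\pi^{\ast}\,u_1(f_F)} = \eta_f\cdot a\quad\text{for some }a\in\cO.$$
Taking the ratio of the two displayed identities cancels the $L$-value and all $\pi$-factors and delivers $\Omega_f^+\Omega_f^-/u_1(f_F)\in\overline{\Z}_p$, using that $\eta_f\in\cO$.

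The main technical obstacle lies in the Rankin--Selberg step: one must construct the theta--Eisenstein test vector so that the attached cohomology class is $\cO$-integral and so that the resulting cup product pairs specifically with the degree-one class $\omega^1(f_F)$ (rather than $\omega^2(f_F)$) to reproduce the twisted central $L$-value. This is made possible by the integral duality of Lemma \ref{duality-Urban} and the torsion-freeness of the Bianchi cohomology recorded in Section \ref{periods-imagquad}, which hold under the standing hypotheses $p\nmid DN\varphi_F(N)$, $p>k$, and $\m_F$ non-Eisenstein.
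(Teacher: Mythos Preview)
Your first ingredient---the non-vanishing modulo $p$ of the twisted central value via Chida--Hsieh, together with the parity/sign discussion---is exactly what the paper uses.

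The second step, however, does not work as written. The Rankin--Selberg construction you describe is not well-posed on $X_F$: if $\xi$ is an anticyclotomic Hecke character of the imaginary quadratic field $F$ itself, then the theta series $\theta(\xi)$ is a modular form for $\GL_2/\Q$ (contributing to $\H^1$ of a modular curve over $\Q$), not a Bianchi form on $X_F$. There is no natural way to manufacture from it an integral class in $\H^2(X_F,\cV^F_{k,k}(\cO))$ to cup with $\omega^1(f_F)$. Relatedly, $L(f_F\otimes\xi,s)$ is only a Hecke-character twist of the standard $L$-function of $f_F$, not a genuine Rankin--Selberg convolution on $\GL_2/F$; the unfolding $\langle f,\theta(\xi)E\rangle$ you have in mind takes place on the modular curve over $\Q$ (via $L(f\times\theta(\xi),s)$), where the period that appears is $\Omega_f^+\Omega_f^-$, not $u_1(f_F)$. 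So this route cannot produce the integrality statement with $u_1(f_F)$ in the denominator.

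The paper's argument for the second step is both simpler and more direct: one evaluates the integral cohomology class $\delta^1_{f_F}=\omega^1(f_F)/u_1(f_F)$ on modular symbols (geodesic $1$-cycles) of the Bianchi threefold $X_F$. By the standard Mellin-transform computation, carried out in \cite[Theorem~8.1]{H94}, such evaluations reproduce the twisted central values $L(f_F\otimes\xi,k/2)/\bigl(\pi^{k}\,u_1(f_F)\bigr)$ up to fudge factors that are $p$-units under the running hypotheses; integrality of $\delta^1_{f_F}$ then forces these to lie in $\overline{\Z}_p$. Dividing by the Chida--Hsieh unit gives the proposition immediately, with no $\eta_f$ factor entering.
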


\begin{proof} The proof follows the same strategy as in the real quadratic case.
Since $f$ has trivial central character and since the imaginary quadratic field is such that the ideal of prime factors which
remain inert in $F$ is odd, we know, by \cite{C02} and \cite{Va02} 
(for the weight $k=2$) and Chida-Hsieh \cite{CH16} (for $k$ such that $k-2<p$),  that
for a prime $\ell$ prime to $Np$, for almost all anticyclotomic Hecke characters of $F$ of $\ell$-power conductors
we have
\begin{equation} \label{deno} {L(f_F\otimes\psi,k/2)\over \pi^k \Omega^+_f\Omega^-_f}  ={L(f\otimes\theta(\psi),k/2)\over \pi^k\Omega^+_f\Omega^-_f}\in\overline{\Z}_p^\times
\end{equation}
But on the other hand, by integrating the $1$-form $\delta^1_{f_F}={\omega^1(f_F)\over u_1(f_F,\cO_\wp)}$ against the modular symbols  of
the Bianchi hyperbolic threefold $X_F$, we see that
\begin{equation}\label{num} {L(f_F\otimes\psi,k/2)\over \pi^k u_1(f_F)}\in\overline\Z_p.
\end{equation}
 Note that such an integration makes sense because after localization at a non-Eisenstein maximal ideal, the homology relative to the boundary where  the modular symbols belong is the same as the homology. The above formula follows from the same computations as in the proof of Theorem 8.1 of \cite{H94}. In loc. cit. the number field $A$ has to be replaced by our discrete 
valuation ring $O_{(\wp)}$ and  the basis $\xi_\epsilon$ of the Hecke eigenspace in $\H^1_!(X_F,\mathcal V_{k,k}(A))[\lb_{f_F}]$ by our basis
 $\delta^1_{f_F}$ of the $\H^1_!(X_F,\mathcal V_{k,k}(O_{(\wp)})) [\lb_{f_F}] $. Note that the fudge factors are units in $\overline \Z_p$ by our assumptions on $p$.
Our  claim now follows from  dividing \eqref{num} by \eqref{deno}.
\end{proof}

Like in the real quadratic case, the next proposition will allow us to deal with the reverse divisibility. It makes use of the period linear form of Hida \cite{H99} which is defined 
in the  imaginary quadratic case as well.

\begin{pro} \label{propHidadual} Assume that $p>k-2$ and is prime to $ND\phi_F(N)$, then
we have
$$ {L(\Ad(f),\alpha,1)\over \pi^k u_2(f_F)}\in  \eta_{f_F}^{1,\sharp}.$$
\end{pro}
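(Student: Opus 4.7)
The strategy mirrors the proof of Proposition \ref{Hida-linearform}: construct an integral evaluation linear form on $\H^2_F$, apply Proposition \ref{linear-BC}, then use Poincar\'e duality (Lemma \ref{duality-Urban}) to identify $(\H^2_F)^\ast$ with $\H^1_F$ so that the conclusion can be phrased in terms of $\eta^{1,\sharp}_{f_F}$ rather than of the Fitting ideal of a congruence module of the dual.

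Following Hida \cite{H94,H99}, I would first construct an integral linear form
$$\Ev\colon \H^2(X_F,\cV^F_{k,k}(\cO))_{\m_F}\longrightarrow\cO$$
as a composition $(\mathrm{Res}\circ R(\phi))_{\m_F}$, where $\phi$ is a finite-order Hecke character of $F$ whose restriction to $\Q$ recovers $\alpha$, $R(\phi)$ is the associated twisting operator, and $\mathrm{Res}$ is pullback along the totally geodesic embedding $\iota\colon X_\Q\hookrightarrow X_F$ induced by $\GL_2(\Q)\hookrightarrow\GL_2(F)$, followed by the Clebsch--Gordan projection of $\iota^\ast\cV^F_{k,k}(\cO)$ onto its trivial $\cO$-summand via $\H^2(X_\Q,\cO)\cong\cO$. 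The hypothesis $p\nmid ND\phi_F(N)$ ensures that all local fudge factors arising in the construction are $p$-adic units. A Rankin--Selberg type computation in the Bianchi setting, parallel to \cite[Prop.~4.1]{H99}, then yields
$$\Ev(\omega^2(f_F))\sim \frac{L(\Ad(f)\otimes\alpha,1)}{\pi^k},$$
while $\Ev$ annihilates every cuspidal eigensystem in $\H^2_F$ that does not arise, up to twist, as a base change, so that $\ker(\Ev\otimes\mathrm{id}_K)\supset(\H^2_F)^\sharp_K$.

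Since $\omega^2(f_F)=u_2(f_F)\cdot\delta^2_{f_F}$, this gives
$$\Ev(\delta^2_{f_F})\sim \frac{L(\Ad(f)\otimes\alpha,1)}{\pi^k\,u_2(f_F)}.$$
I then apply Proposition \ref{linear-BC} with $T'=\T_F$, $T=\T$, $\lambda=\lambda_f$, $\lambda'=\lambda_F$, $M=\H^2_F$, $\Phi=\Ev$ and $\delta=\delta^2_{f_F}\in M_{\lambda_F}$; the hypothesis that $M_K$ has rank one over $(\T_F)_K$ follows from multiplicity one for cuspidal automorphic representations of $\GL_2(\bA_F)$ combined with the non-Eisensteinness of $\m_F$. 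The conclusion is $\Ev(\delta^2_{f_F})\in\eta^\sharp_{\lambda_f}(M^\ast)$. Now by Lemma \ref{duality-Urban}, the twisted cup-product supplies a Hecke-equivariant perfect pairing $\H^1_F\times\H^2_F\to\cO$; since $\H^1_F$ is $\cO$-torsion free under our hypotheses on $p$ (by the lemma following \eqref{periods}), we obtain $M^\ast\cong\H^1_F$ as $\T_F$-modules, hence $\eta^\sharp_{\lambda_f}(M^\ast)=\eta^{1,\sharp}_{f_F}$, which finishes the proof.

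The principal obstacle is not the abstract formalism, which is already in place through Proposition \ref{linear-BC} and Lemma \ref{duality-Urban}, but rather the explicit integral computation of $\Ev(\omega^2(f_F))$ together with the vanishing of $\Ev$ on non-base-change contributions. Carrying this out requires a careful setup of the twist $R(\phi)$ in the odd-character regime specific to the imaginary quadratic case and a precise tracking of the local factors, adapting the Rankin--Selberg argument of \cite{H99} from the Hilbert modular surface to the Bianchi threefold using the explicit formulas for $\omega^2(f_F)$ recalled in Section \ref{periods-imagquad}.
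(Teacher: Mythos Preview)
Your proposal is correct and follows essentially the same approach as the paper: construct Hida's integral evaluation form on $\H^2_F$ via twisting and restriction to the embedded modular curve, invoke Hida's computation that it kills non-base-change classes and equals the twisted adjoint $L$-value on $\omega^2(f_F)$, apply Proposition~\ref{linear-BC} with $M=\H^2_F$ and $\delta=\delta^2_{f_F}$, and then use the Poincar\'e duality of Lemma~\ref{duality-Urban} to rewrite $\eta^\sharp_{\lambda_f}(M^\ast)$ as $\eta^{1,\sharp}_{f_F}$. The only nit is that in the imaginary quadratic setting the relevant reference in \cite{H99} is Proposition~3.1 rather than Proposition~4.1 (which is the Hilbert surface case).
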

 
 \begin{proof} 
The proof is totally similar to the one of Proposition \ref{Hida-linearform}.
Let $X\subset X_F$ be the modular curve view as a $2$-cycle of the Bianchi orbifold via the natural embedding of $\GL(2)_{/\Q}$ into $\GL(2)_{/F}$.
Then one can define a linear form
$$ \H^2_c(X_F,\cV^F_{k,k}(\cO))\to \H^2_c(X,\cV_{k,k}(\cO)) \to \H^2_c(X,\cO))\cong  \cO$$
where again the first map is induced by a character twist followed by the restriction $\omega\mapsto \omega_{X}$ and the second map by the canonical projection of $\GL_2(\Zp)$-representations
$V_k(\cO) \otimes V_k(\cO)\to\cO$ induced by the Clebsch-Gordan decomposition. After localization at $\m_F$, this defines a liner form $Ev\colon  \H^2_!(X_F,\cV^F_{k,k}(\cO))_{\m_F}\to \cO$. 
 Hida \cite[Prop. 3.1]{H99} proved that  if ${\ff}$ is 
not the twist of a base change, $Ev(\omega^2({\bf f}))=0$ while if ${\bf f}=f_F$ is a base change,
$$Ev(\omega^2({\bf f})) \sim{ L(\Ad(f),\alpha,1)\over\pi^k}$$
Then our claim follows from Proposition \ref{linear-BC} with $\Phi$ the restriction of $\Ev$ to the $\cO$-torsion-deprived localized second cohomology module $ \H^2_F/(tors)$
and $\delta=\delta^2_{f_F}\in \H^2_F/(tors)$
since $\H^1_F$ is the $\cO$-dual of $\H^2_F$ by Lemma \ref{duality-Urban}.
 \end{proof}

We can now prove the main result of this section.

\begin{thm} \label{ThNBC} Assume that that $(N,D)=1$, that $p$ does not divide $DN\phi_F(N)$ and $p>k$.
Assume also (CV), (Irr). Then the following hold:
\begin{itemize}
\item[(i)] We have the integral period relation  $u_1(f_F)\sim \Omega_f^+\Omega_f^-$.
\item[(ii)] We have the equality (up to $p$-adic unit)
$$\eta_{f_F}^{1, \sharp}\sim {L(\Ad(f)\otimes\alpha,1)\over \pi^k u_2(f_F)}$$

\item[(iii)] We have the following ideal factorization  in $\cO$
$$\eta_{f_F}^{1}= \eta_f\cdot \eta_{f}^{1,\sharp}
$$
\end{itemize}
\end{thm}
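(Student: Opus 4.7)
My plan is to assemble Theorem \ref{ThNBC} by combining all the preceding preparatory results, which together give sandwiching divisibilities that force equalities up to $p$-adic units. I will work with the following quantities: $a := \eta_f$, $b := \eta_{f_F}^{1,\sharp}$, $c := \eta_{f_F}^1$, $P := \Omega_f^+\Omega_f^-$, $u := u_1(f_F)$, $v := u_2(f_F)$, and $L_1 := L(\Ad(f)\otimes\alpha,1)/\pi^k$.

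First I would rewrite $c$ using Lemma \ref{cong-rel}(iv), the factorization $L(\Ad(f_F),1) = L(\Ad(f),1)\cdot L(\Ad(f)\otimes\alpha,1)$, and Theorem \ref{classical}(b) (applicable since (Irr) and $p>k$ give $\eta_f^{coh} \sim \eta_f$). This yields
\[
c \;\sim\; \frac{L(\Ad(f_F),1)}{\pi^{2k+1}\,u\,v} \;\sim\; a\cdot \frac{P}{u}\cdot \frac{L_1}{v}. \tag{$\ast$}
\]
Next, I would invoke the three divisibilities available from the earlier results: (a) Lemma \ref{cong-rel}(ii) gives $c \mid ab$; (b) Proposition \ref{propHidadual} gives $b \mid L_1/v$, i.e.\ $L_1/v = b\cdot y$ for some $y\in\cO$; (c) Proposition \ref{properiodiv} gives $u \mid P$, i.e.\ $P/u \in \cO$.

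With these in hand the conclusion is purely arithmetic. Substituting (b) into ($\ast$) gives $c \sim a\cdot b \cdot y\cdot (P/u)$, and the divisibility $c\mid ab$ from (a) then forces $y\cdot (P/u) \in \cO^\times$. Since both $y$ and $P/u$ lie in $\cO$, each must individually be a unit. Thus $P/u \sim 1$, proving part (i); $b\sim L_1/v$, proving part (ii); and back-substituting into ($\ast$) yields $c\sim ab$, which is the equality of ideals in part (iii) (the opposite inclusion being Lemma \ref{cong-rel}(ii) applied at the level of ideals rather than the divisibility of generators).

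The substance of the theorem therefore lies entirely in the preparatory work: the nontrivial ingredients are the Cornut--Vatsal/Chida--Hsieh non-vanishing input (used in Proposition \ref{properiodiv} to get the period divisibility $u\mid P$) and Hida's evaluation linear form on $\H^2$ together with the Poincar\'e duality of Lemma \ref{duality-Urban} (used in Proposition \ref{propHidadual} to get $b\mid L_1/v$). Once these two divisibilities are available in opposite directions, the packaging argument above gives equalities simultaneously, and no single divisibility can be improved without the other. The only subtlety to watch is that Lemma \ref{cong-rel}(iv) requires $p\nmid N\phi_F(N)$ and $p>k-2$, Theorem \ref{classical}(b) requires (Irr) and $p>k$, and Proposition \ref{properiodiv} requires (CV), all of which are included in the hypotheses of the theorem; I do not need the stronger Calegari--Geraghty $R=T$ result (Theorem \ref{CG}) nor the freeness statement of Lemma \ref{cong-rel}(iii), so the assumption (FL) is not required here.
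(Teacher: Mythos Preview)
Your proof is correct and follows essentially the same route as the paper's: both arguments combine the $L$-value expression for $\eta_{f_F}^1$ from Lemma~\ref{cong-rel}(iv) with the factorization via Theorem~\ref{classical}(b), then sandwich using Lemma~\ref{cong-rel}(ii), Proposition~\ref{propHidadual}, and Proposition~\ref{properiodiv} to force all the slack factors to be units simultaneously. Your observation that neither Theorem~\ref{CG} nor the hypothesis~(FL) is needed here is also exactly right.
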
  

\begin{proof} 
Recall that from Theorem \ref{classical}, we have
\begin{equation}
\label{classical-eq}
\eta_f\sim {L(\Ad(f),1)\over \pi^{k+1}\Omega^+_f\Omega^-_f}.
\end{equation}
On the other hand, from the point (iv) of Lemma \ref{cong-rel}, we have
\medskip
$$\eta_{f_F}^1\sim {L(\Ad(f_F),1)\over \pi^{2k+1} u_1(f_F)u_2(f_F)}={L(\Ad(f,1)\over \pi^{k+1}u_1(f_F)}.\cdot 
{L(\Ad(f)\otimes\alpha,1)\over \pi^k u_2(f_F) }= a\cdot {L(\Ad(f,1)\over \pi^{k+1}\Omega^+_f\Omega^-_f} \cdot {L(\Ad(f)\otimes\alpha,1)\over \pi^k u_2(f_F)}$$
\medskip
where $a={\Omega^+_f\Omega^-_f\over u_1(f_F)}\in\cO$ thanks to Proposition \ref{properiodiv}. From \eqref{classical-eq}, we therefore deduce that
$$\eta^{1}_{f_F}\sim a\cdot \eta_f\cdot {L(\Ad(f)\otimes\alpha,1)\over \pi^k u_2(f_F)}.$$
On the other hand, from the point (ii) of Lemma \ref{cong-rel}, we have 
$$\eta^{1}_{f_F}\supset \eta_{f}\cdot\eta^{1,\sharp}_{f_F}.$$
After simplifying by $\eta_f$, we therefore deduce that
$$ {L(\Ad(f)\otimes\alpha,1)\over \pi^k u_2(f_F)}\cdot\cO \supset a\cdot   {L(\Ad(f)\otimes\alpha,1)\over  \pi^k u_2(f_F)} \cdot\cO
\supset  \eta^{1,\sharp}_{f_F}$$
Now applying Proposition \ref{propHidadual}, we deduce the point (ii) and that all the inclusion above are equalities  which yields the point  (iii) and that
$a$ is a $p$-adic unit which is the point (i) of the Theorem.
\end{proof}

\begin{rem}
The fact that such a proof could work may seem a bit surprising. In fact, one of the reason behind the importance of $Ev$ in our proof  is that $(H^1_F)_{\T}$ can be proved to be free of rank one over $\T=\T_\Q$ under the assumption of Theorem \ref{CG}, and  that it has a canonical generator given by $Ev$. Note first that by Lemma \ref{duality-Urban}, $(H^1_F)_{\T}$ can be identified as $Hom_\cO(H^2_F/\Ker(Ev),\cO)$. Since under the assumption of Theorem \ref{CG}, $H^2_F$ is free of rank one over $\T_F$, it follows that $(H^2_F)^{\T}=H^2_F/\Ker(Ev)$ modulo $\cO$-torsion is free of rank one over $\T$ because  $Ker(Ev)=(H^2_F)_{\T_F^\sharp}$. Since $\T$ is Gorenstein, this implies that its $\cO$-dual $(H^1_F)_{\T}$  is also free of rank one over $\T$ as claimed. Therefore  we have 
$\eta_{\lb_f}((H^1_F)_{\T})=\eta_f$ and  this implies the point (iii) of the theorem by Lemma \ref{BC-congruenceN}. Any generator of the $\cO $-dual of 
$H^2_F/(H^2_F)_{\T_F^\sharp}$ 
would now produce a generator of $\eta^{1,\sharp}$. The point (ii) of our theorem implies actually by Hida's formula that $Ev$ is such a generator. Indeed, let $\Phi$ be a generator, then by Proposition \ref{linear-BC}, we would have
$\Phi(\delta_{f_F}^2)\in \eta_f^{\sharp,1}$. On the other hand, we would have also $Ev=U.\Phi$ for some Hecke operator $U\in\T$ and therefore $Ev(\delta_{f_F}^2)=\lb_f(U)\Phi(\delta_{f_F}^2) \in \lb_f(U)\cdot \eta_f^{\sharp,1}$. Now, by Hida's formula and (ii) of the previous theorem, we now that $Ev(\delta_{f_F}^2)$ is a generator of $\eta_f^{\sharp,1}$. This implies that $\lb_f(U)\in\cO^\times$ and therefore $U\in \T^\times$. In other words, $Ev$ is a generator of $(H^1_F)_{\T}$ as claimed.

Note that the proof of Theorem \ref{ThNBC} does not use Theorem \ref{CG} and its stronger hypothesis. It could be interesting to know that
the equality $\eta_{\lb_f}((H^1_F)_{\T})=\eta_f$ that follows from (iii)  implies that $(H^1_F)_{\T}$ is free over $\T$ but the authors ignore how to prove such a fact.
\end{rem}
\begin{rem} We don't know whether the following equality of ideals
$$\eta_{f_F}^{2}= \eta_f\cdot \eta_{f_F}^{2,\sharp}
$$
holds or not. Or, in other words, if $\eta_{f_F}^{i,\sharp}$ is independent of $i$. 
\end{rem}

We have the following immediate corollary
\begin{cor} We keep the same hypothesis as the previous Theorem, then we have the divisibility
$$\eta_f^\sharp \subset {L(\Ad(f)\otimes\alpha,1)\over \pi^k u_2(f_F)}\cdot \cO$$
In particular, if $\wp$ divides $ {L(\Ad(f)\otimes\alpha,1)\over \pi^k u_2(f_F)}$, there exists a Bianchi eigenform which is not a base change from $\Q$ which is congruence to $f_F$
modulo $\wp$.
\end{cor}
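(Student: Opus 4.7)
The corollary is a purely algebraic consequence of Theorem~\ref{ThNBC} combined with the general base-change formalism of Section~\ref{conggensect}. The strategy is to chain a few ideal divisibilities in the DVR $\cO$ and cancel the common nonzero factor $\eta_f$.

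By Theorem~\ref{ThNBC}(ii) we already know $\eta_f^{1,\sharp}\sim L(\Ad(f)\otimes\alpha,1)/(\pi^k u_2(f_F))\cdot\cO$, so it suffices to establish the ideal inclusion $\eta_f^\sharp\subset\eta_f^{1,\sharp}$. For this I would combine three ingredients already available in the excerpt: (a)~the exact factorization $\eta_{f_F}^{1}=\eta_f\cdot\eta_f^{1,\sharp}$ from Theorem~\ref{ThNBC}(iii); (b)~the general divisibility $\eta_{f_F}^{1}\supset\eta_{f_F}$ from Lemma~\ref{cong-rel}(i), coming from the Poincar\'e duality between $\H^1_F$ and $\H^2_F$ (which forces $\eta_{f_F}^1$ to contain the Hecke congruence ideal $\eta_{f_F}$ via the inclusion $\eta_\lambda\subset \eta_\lambda(M)$ from \eqref{div-C0}); (c)~the weak general factorization $\eta_{f_F}\supset\eta_f\cdot\eta_f^\sharp$ of Corollary~\ref{Hida-factorization}, applied to the base-change morphism $\theta\colon\T_F\to\T$ with $\lambda=\lambda_f$. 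The key point about (c) is that, unlike the second assertion of Corollary~\ref{Hida-factorization}, this inclusion does \emph{not} require $\T_F$ to be Gorenstein, and is therefore available under the hypotheses of Theorem~\ref{ThNBC} (which do not include the property $(\mathrm{FL})$ ensuring $\T_F=R_F$).

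Concatenating (a), (b) and (c) yields
$$\eta_f\cdot\eta_f^{1,\sharp}\;=\;\eta_{f_F}^{1}\;\supset\;\eta_{f_F}\;\supset\;\eta_f\cdot\eta_f^\sharp.$$
Since $\cO$ is a DVR and $\eta_f\ne 0$, the ideal $\eta_f$ is a non-zerodivisor and can be cancelled from both ends of the chain, yielding $\eta_f^{1,\sharp}\supset\eta_f^\sharp$, i.e., the desired inclusion. Combined with Theorem~\ref{ThNBC}(ii), this proves the announced divisibility $\eta_f^\sharp\subset L(\Ad(f)\otimes\alpha,1)/(\pi^k u_2(f_F))\cdot\cO$.

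For the ``in particular'' statement, if $\wp$ divides $L(\Ad(f)\otimes\alpha,1)/(\pi^k u_2(f_F))$, then the inclusion just established gives $\eta_f^\sharp\subset\wp$, so that the base-change congruence module $C_0^\sharp(f)=\cO/\eta_f^\sharp$ of Section~\ref{sectHeckecong} admits $\cO/\wp$ as a quotient. By the very definition of $C_0^\sharp(f)$ as controlling congruences between $f_F$ and Bianchi cuspforms that are not base changes from $\Q$, this quotient produces a non-base-change Bianchi eigenform congruent to $f_F$ modulo $\wp$. There is no genuine obstacle here: the content is a one-line algebraic manipulation once Theorem~\ref{ThNBC} and the general base-change formalism are in place. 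The subtle point worth flagging is simply to use the \emph{weak} (always valid) version of Corollary~\ref{Hida-factorization} in step (c), since the strong equality would require a Gorenstein hypothesis on $\T_F$ that is not granted under the hypotheses of Theorem~\ref{ThNBC}.
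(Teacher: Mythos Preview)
Your proof is correct and lands on the same key step as the paper: combine Theorem~\ref{ThNBC}(ii) with the inclusion $\eta_f^{1,\sharp}\supset\eta_f^\sharp$. The only difference is in how this inclusion is justified: the paper treats it as the ``natural'' $\sharp$-analogue of \eqref{div-C0} (indeed, $\c_\theta\cdot M\subset M_T$ immediately gives $\eta_\lambda^\sharp\subset\eta_\lambda^\sharp(M)$ whenever $M_K^{\lambda'}$ has rank one), whereas you recover it by chaining Theorem~\ref{ThNBC}(iii), Lemma~\ref{cong-rel}(i), and the weak half of Corollary~\ref{Hida-factorization} and then cancelling $\eta_f$. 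Both arguments are valid; yours has the virtue of invoking only statements explicitly recorded in the text, while the paper's is a one-line direct verification.
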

\begin{proof}
This follows from the point (ii) of the previous Theorem and the natural inclusion $\eta_f^{1,\sharp}\supset\eta_f^\sharp$.
\end{proof}


 \subsection{Relation to the Bloch-Kato conjecture}\label{subsectBKimquad}
 
Our goal in this section is to establish, under the assumptions of Theorem \ref{CG},
an expression of the order of the Bloch-Kato Selmer group for $\Ad \rho_f\otimes\alpha$ in terms of the non critical value
$L(\Ad f\otimes\alpha,1)$, with suitable integral periods (see Theorem \ref{ThOPT} below).

We first review the Bloch-Kato conjecture in this context. This naturally leads us to a  formulation of
an integral version of a special case of a conjecture of Prasanna-Venkatesh.
 
\medskip
Let $M_f$ the Grothendieck motive over $\Q$ associated to our eigenform $f$. Let $\Ad(M_f):=Sym^2(M_f)(k-1)$ 
be the Adjoint motive defined over $\Q$ associated to the modular form $f$. 
It is of rank $3$ over the coefficient field $K_f\subset\bar\bQ$. 
We consider its twist $W_{f,\alpha}:=\Ad(M_f)\otimes\alpha$ by our odd quadratic Dirichlet character $\alpha$.
For any motive $X$, we denote by $\H_?(X)$ the $?$-realization functors with $?=B,dR$ and $et$ for respectively 
the Betti, de Rham and \'etale $p$-adic realizations. Recall that we have the following comparison isomorphisms
between these realizations.

\medskip
\begin{itemize}
\item[(i)] $c_\infty\colon \H_B(W_{f,\alpha})\otimes_{K_f}\C\cong \H_{dR}(W_{f,\alpha})\otimes_{K_f}\C$
\item[(ii)] $c_p\colon \H_B(W_{f,\alpha})\otimes_{K_f} K \cong \H_{et}(W_{f,\alpha})=\Ad(\rho_f)\otimes K(\alpha)$
\item[(iii)] $c_{dR}\colon D_{dR}( \H_{et}(W_{f,\alpha}))\cong \H_{dR}(W_{f,\alpha})$.
\end{itemize}

\medskip

Since $L(W_{f,\alpha},1)\neq 0$, we see that the $L$-function
$L(W_{f,\alpha},s)$ vanishes at $s=0$ with order one. It follows from the functional equation and because its Gamma factor $\Gamma(W_{f,\alpha},s)=\Gamma_\C(s+k-1)\Gamma_{\R}(s)$
has a simple pole at $s=0$.  Therefore, 
a conjecture of Beilinson predicts that the motivic cohomology group
$$\H^1_{mot}(W_{f,\alpha},K_f(1))=Ext^1_{\mathcal {MM}}(W_{f,\alpha},K_f(1))$$ 
is of rank 
$1$ over $K_f$. Here $\mathcal{MM}$ is the conjectural category of mixed motives.
Part of the conjectural framework is the existence of  archimedean  and $p$-adic regulator maps:
$$
\xymatrix{ \H^1_{mot}(W_{f,\alpha},K_f(1))\ar^{r_p}[r]\ar[d]_{r_\infty} &  \H^1_f(\Q,\H_{et}(W_{f,\alpha})(1)) \\
  \H^1_{\mathcal D}(W_\R, \R(1))&
}
$$
where  $\H^1_f(\Q,\H_{et}(W_{f,\alpha})(1))=\H^1_f(\Q,\Ad(\rho_f)\otimes \alpha(1))$ is the Bloch-Kato Selmer group attached 
to the Galois representation $\Ad(\rho_f)\otimes\alpha(1)$ and 
$\H^1_{\mathcal D}$ stands for Deligne cohomology where $W_\R$ denotes the real Hodge structure of $\H_{B}(W_{f,\alpha}(1))\otimes \R$. 
By a standard computation,
the Deligne cohomology is equal to the cokernel of Deligne's period map
\begin{eqnarray*}
\H_B(W_{f,\alpha}(1))^+_\R\stackrel{c_\infty^+}{\longrightarrow} \H_{dR}(W_{f,\alpha}(1))_\R /F^- \H_{dR}(W_{f,\alpha}(1))_\R\stackrel{\pi_\infty}{\longrightarrow} \H^1_{\mathcal D}(W_\R, \R(1))
\end{eqnarray*}
where $c_\infty^+$ is injective because the motivic weight of $W_{f,\alpha}(1)$ is negative. 
Note that with the original Deligne's Conjecture paper notations, 
$F^-\H_{dR}(W_{f,\alpha}(1))_\R=F^0 \H_{dR}(W_{f,\alpha}(1))_\R$. Therefore the right-hand side is dimension $2$ 
for any character $\alpha$ 
while the left hand side is dimension $2$ or $1$ according to whether $\alpha$ is even or odd. 
Therefore our motive is not critical when $\alpha$ is attached to an imaginary quadratic extension. 
One sees immediately 
that the Deligne cohomology has a $K_f$-rational structure $\pi_\infty(\H_{dR}(W_{f,\alpha}(1)))$ . 

If follows from the work of Beilinson that $r_\infty\neq 0$; in fact, the space $\H^1_{mot}(\Ad(M_f)\otimes\alpha,K_f(1))$ contains 
a subspace $\Phi_{f,\alpha}$ of dimension 1 over $K_f$ generated by the so-called Beilinson-Flach elements such that
$r_\infty(\Phi_{f,\alpha})\neq 0$. We can define a regulator $R_{f,\alpha}\in \R^\times/K_f^\times$
as the period of $r_\infty(\Phi_{f,\alpha})$ with respect to the $K_f$-rational structure $\pi_\infty(\H_{dR}(W_{f,\alpha}(1)))$:
$$\frac{r_\infty(\Phi_{f,\alpha})}{R_{f,\alpha}}\in \pi_\infty (\H_{dR}(W_{f,\alpha}(1))).$$

We want to define a $p$-adic integral version of this regulator.
Let us assume $p>2k-1$ and $p$ prime to $N$. We consider the two conditions:

\medskip
\item[${\mathbf (Sel)}$] The Bloch-Kato Selmer group of $H^1_f(\Q,Ad(\rho_f)(1)\otimes\alpha)$ is of rank $1$ over $K$.

\medskip
\item[${\mathbf (Reg)_p}$]    \hskip 20pt  $ r_p(\Phi_{f,\alpha})\neq 0$.

\medskip
\begin{rem}
1) When $f$ is ordinary at $p$, it follows from the work of \cite{LZ16} that ${\mathbf (Reg)_p}$ is equivalent to the non-vanishing 
$L_p(Ad(f)\otimes\alpha,1)\neq 0$. It is very likely that by using the method of \cite{Ur06}, one could prove that ${\mathbf (Sel)}$ implies ${\mathbf (Reg)_p}$. We hope to come back to this in a near future.

2) Without assuming ordinarity, but assuming that $p$ prime to $N$ and $p>2k-1$, if follows from Lemma \ref{Sha-Selmer} below
that ${\mathbf (Sel)}$ holds
under the assumptions of Theorem \ref{CG}. However, the validity of ${\mathbf (Reg)_p}$ remains unknown.
\end{rem}

Under these assumptions, we can define an "integral regulator" $R_{f,\alpha}\in\R^\times/\cO_{(\wp)}^\times$ as follows. 
Let $L_f\subset \H_B(M_f)$ be the $\cO_{(\wp)}$-lattice coming from the integral Betti cohomology of the modular curve and $T_f$ the corresponding
$\cO$-lattice  in $\H_{et}(M_f)=\rho_f$.
Applying $\Symm^2\otimes \det^{-1}\otimes\alpha$, we get a lattice $\Lambda_{f,\alpha}$ in $\H_B(W_{f,\alpha})$ and a stable 
$\cO$-latice $T_{f,\alpha}$ in $\H_{et}(W_{f,\alpha})$. 
The lattices $\Lambda_{f,\alpha}$ and $T_{f,\alpha}$ correspond by $c_p$.

Since $p>2k-1$, the Hodge-Tate weights of $W_{f,\alpha}$ are in the Fontaine-Lafaille range and
 $$D_{crys}(T_{f,\alpha})=\Ad (D_{crys}(T_f))\otimes D_{crys}(\alpha)$$
 This lattice defines  
an integral $\cO_{(\wp)}$- lattice $T_{dR,f,\alpha}$ of $\H_{dR}(W_{f,\alpha})$. 
This yields an  $\cO_{(\wp)}$ -integral structure on $\Coker(c_\infty^+)=
 \H^1_{\mathcal D}(\Ad(M_\R)\otimes\alpha, \R(1))$, hence a Haar measure (well defined up to $\cO_{\p)}^\times$) giving volume $1$ to this lattice.
 Note that by \cite[Theorem 4.1]{BK90}, since the Hodge-Tate weights are in the Fontaine-Laffaille range, 
this Haar measure gives the correct Euler factor of $L(W_{f,\alpha},0)$ at $p$.
 
Let $A(\Q)$ be a projective $O_{K_f}$-submodule of $\Phi_{f,\alpha}$ of rank 1 such that its localization at $\wp$ is isomorphic to the inverse image of $ \H^1_f(\Q,T_{f,\alpha})$ via $r_p$ in $\Phi_{f,\alpha}$.
The archimedean regulator of $W_{f,\alpha}(1)$ is then defined as the co-volume of the image of
 $A(\Q)$ by $r_\infty$ with respect to the Haar measure induced by the $\cO_{(\wp)}$ -integral structure defined above. It gives an element
 $$R_{f,\alpha} \in\R^\times/\cO_{(\wp)}^\times$$ 

We now review the definition of the Tate-Shafarevich group $\Sha^1(\Q,\Ad(M_f)(1)\otimes\alpha)$ following Bloch-Kato. It is defined as:
$$\Ker\left({\H^1(\Q,T_{f,\alpha}\otimes K/\cO)\over \Phi_\cO\otimes K/\cO}\to\bigoplus_v
{\H^1(\Q_v,T_{f,\alpha}\otimes K/\cO)\over \H^1_{\f}(\Q_v,T_{f,\alpha}\otimes  K/\cO)}\right)$$
where  $\Phi_\cO= \H^1_{\f}(\Q,\Ad(\rho_f)(1)\otimes\alpha) \cap \H^1(\Q,T_{f,\alpha})$.

The Bloch-Kato conjecture in this situation can be formulated as:
 
 \begin{conj} \label{BK-non-crit} We assume that $p>2k-1$, $(p,N)=1$, $(Sel)$ and $(\Min_F)$ hold. Then, the Tate-Shafarevitch group $\Sha^1(\Q,\W_{f,\alpha}(1))$ is finite and
 $$\Gamma(\Ad(f)\otimes\alpha,1) \frac{L(\Ad(f)\otimes\alpha,1)}{\Omega_f^+\Omega_f^-\cdot R_{f,\alpha}}\sim  \CH \Sha^1(\Q,\Ad(M_f)(1)\otimes\alpha)$$
 \end{conj}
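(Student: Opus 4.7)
The plan is to derive Conjecture \ref{BK-non-crit} by combining Theorem \ref{ThOPTintro} with Lemma \ref{Sha-Selmer} (both stated but not reproduced in this excerpt) and with the integral Prasanna--Venkatesh Conjecture \ref{PV_int}. The authors already remark that once the first two results are in hand, Conjecture \ref{PV_int} is equivalent to Conjecture \ref{BK-non-crit}; my proof would simply make this equivalence explicit. Concretely, I would first apply Theorem \ref{ThOPTintro}, which under (FL), $(\Min_F)$, (CV) and the standing arithmetic assumptions gives
$$\CH_\cO \Sel_\Q(\Ad \rho_f \otimes \alpha) = \Big(\frac{\Gamma(\Ad f \otimes \alpha,1)\,L(\Ad f \otimes \alpha,1)}{u_2(f_F)}\Big) \cdot \CH_\cO\bigl(\H_1(L_{T_F/\cO} \otimes_{\lambda_{f_F}} \cO)\bigr).$$
This packages together the information from the twisted adjoint $L$-value, the degree-two Bianchi period $u_2(f_F)$, and the non-Gorenstein defect of $\T_F$ measured by the cotangent complex.

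Next, I would invoke Lemma \ref{Sha-Selmer}. Under $(Sel)$ the Bloch--Kato Selmer group has free rank one, spanned modulo torsion by the image of the Beilinson--Flach line $\Phi_{f,\alpha}$. The lemma should yield, up to $\cO^\times$, an identification of the form
$$\CH_\cO \Sel_\Q(\Ad \rho_f \otimes \alpha) \sim \Omega_f^+ \Omega_f^- \cdot R_{f,\alpha} \cdot \CH_\cO \Sha^1(\Q, \Ad(M_f)(1) \otimes \alpha),$$
which is the standard Bloch--Kato dictionary between the integral Betti--de Rham structures on $W_{f,\alpha}$ (captured by $\Omega_f^+ \Omega_f^-$) and the image of the Beilinson--Flach elements under the $p$-adic regulator (measured against $R_{f,\alpha}$), modulo the finite Tate--Shafarevich quotient. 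The hypothesis $p > 2k-1$ is what guarantees that Bloch--Kato local Tamagawa measures agree with the chosen integral structures on $D_\cris(T_{f,\alpha})$, thanks to the fact that the Hodge--Tate weights of $W_{f,\alpha}$ lie in the Fontaine--Laffaille range.

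Finally, assuming Conjecture \ref{PV_int}, namely
$$u_2(f_F) \sim \Omega_f^+ \Omega_f^- \cdot R_{f,\alpha} \cdot \CH_\cO \H_1(L_{T_F/\cO} \otimes_{\lambda_{f_F}} \cO),$$
one substitutes into the formula from Theorem \ref{ThOPTintro} and compares with the second step; the cotangent-complex factor and the period--regulator factor cancel, delivering exactly the equality of Conjecture \ref{BK-non-crit}. The main obstacle is that the entire strategy rests on Conjecture \ref{PV_int}, which is itself genuinely open. A direct proof would require (i) an integral refinement of Prasanna--Venkatesh in the Bianchi setting tying $u_2(f_F)$ to motivic-cohomology periods of $W_{f,\alpha}$, (ii) $p$-integral control of the Beilinson--Flach regulator (in particular $(Reg)_p$), and (iii) an arithmetic interpretation of the non-Gorenstein defect $\CH_\cO \H_1(L_{T_F/\cO} \otimes \cO)$, a phenomenon flagged in \cite{CaGe18} which, to my knowledge, has no known direct link to the Beilinson--Flach regulator at the required integral precision. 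This last point is, in my view, the principal difficulty, and likely demands a derived enhancement of the Taylor--Wiles patching argument for $\T_F$ that tracks the regulator and the defect simultaneously.
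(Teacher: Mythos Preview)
You have misread what the paper is doing here. The statement is a \emph{conjecture}, and the ``proof'' attached to it in the paper is not an attempt to establish the conjecture. Rather, it is a derivation showing that the displayed formula is the correct specialization of the general Bloch--Kato Tamagawa number conjecture to the motive $W_{f,\alpha}(1)$. Concretely, the paper computes the archimedean Tamagawa measure $\mu_\infty$ via the short exact sequence
\[
0\to W^+_\R/W^+\to A(\R)/A(\Q)\to \H^1_{\mathcal D}(W_\R,\R(1))/A(\Q)\to 0,
\]
identifies the volume of the right-hand quotient with $R_{f,\alpha}$, shows that the volume of $W^+_\R/W^+$ is $(2i\pi)^k\Omega_f^+\Omega_f^-$ using that $\delta_f^+\otimes\delta_f^-$ is an $\cO_\wp$-basis of $W^+$, checks that the local Tamagawa numbers at $\ell\mid N$ are $1$ under $(\Min_F)$, and concludes that the Tamagawa number conjecture $Tam(W_{f,\alpha}(1))=\#\Sha^{-1}$ is equivalent to the displayed formula. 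None of this is in your write-up.

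What you have sketched instead is essentially the argument for the \emph{later} Corollary (after Conjecture~\ref{PV_int}) asserting that Conjecture~\ref{PV_int} and Conjecture~\ref{BK-non-crit} are equivalent under the hypotheses of Theorem~\ref{ThOPT}. That is a different statement with a one-line proof once Theorem~\ref{ThOPT} and Lemma~\ref{Sha-Selmer} are in hand. Moreover, your invocation of Lemma~\ref{Sha-Selmer} is garbled: the lemma asserts only that $\Sha^1(\Q,\Ad(M_f)(1)\otimes\alpha)$ and $\Sel_\Q(\Ad\rho_f\otimes\alpha)$ have the same $\cO$-Fitting ideal. It does not produce an identity of the shape
\[
\CH_\cO \Sel_\Q(\Ad \rho_f \otimes \alpha) \sim \Omega_f^+ \Omega_f^- \cdot R_{f,\alpha} \cdot \CH_\cO \Sha^1(\ldots),
\]
which is not even well-formed, since the left side is an ideal of $\cO$ while the right side mixes in transcendental periods. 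The periods $\Omega_f^\pm$ and $R_{f,\alpha}$ enter only through the $L$-value side, via Theorem~\ref{ThOPT} and Conjecture~\ref{PV_int}, not through the Selmer side.
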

 \proof We want to explain how the computation of the period using the Tamagawa measure defined as in \cite{BK90} yields the above formulation
 of their conjecture in our case. With the notations of \cite[Section 5]{BK90}, recall that the Tamagawa number attached to $W_{f,\alpha}$ is a product 
 $\prod_{p\leq\infty}\mu_p$ where $\mu_\infty$ is the measure of $A(\R)/A(\Q)$ with
 $$A(\R):=D_\R/(F^0 D_\R +W^+)$$
 with $D=\H_{dR}(W_{f,\alpha}(1))$,  $W=c_p^{-1}( \Ad(L_f)(1)\otimes\alpha))\cap \H_B(W_{f,\alpha}(1))$ and where we have identified $A(\Q)$ to its image under the regulator map $r_\infty$. We have the exact sequence
 $$0\rightarrow W^+_\R/W^+\rightarrow A(\R)/A(\Q)\rightarrow  \H^1_{\mathcal D}(\Ad(M_\R)\otimes\alpha, \R(1))/A(\Q)\rightarrow 0$$
 The volume of the right-hand side quotient is $R_{f,\alpha}$ by definition. It remains to compute the volume of $ W^+_\R/W^+$. 
Since $M_f^\vee\cong M_f(k-1)$, we have
 $$W \cong  Sym^2(L))(k)\otimes\alpha$$
 with $L\subset \H_B(M_f)\otimes \cO_{(\wp)}$ the inverse image of $L_f$ by the $p$-adic \'etale comparison map 
$\H_B(M_f)\otimes K\cong  \H_p(M_f)\cong L_f\otimes K$.
Since $\delta^+_f,\delta^-_f$ is an $\cO_\wp$-basis of $L$, we see from the above isomorphism that $\delta^+_f\otimes\delta^-_f$ is an $\cO_\wp$-basis
 of $W^+$. Since the measure on $W^+_\R$ is induced by the $\cO_\wp$-structure of the $D/F^0D$, we deduce the volume of $ M^+_\R/M^+$ is given by $(2i\pi)^k\Omega_f^+\Omega_f^-$.
 From \cite{BK90}, we deduce that 
 $$Tam(W_{f,\alpha}(1))=(2i\pi)^k\Omega_f^+\Omega_f^-\cdot R_{f,\alpha} L^N(\Ad(f)\otimes\alpha,1)^{-1}\prod_{\ell|N}\mu_\ell$$
 where $\mu_\ell$ are the local Tamagawa numbers at the places dividing the level $N$. They can be computed as follows as in \cite{DFG},
 $$
 \mu_\ell=Tam_\ell(\Ad(\rho_f)(1)\otimes\alpha)  =  Tam_\ell(\Ad(\rho_f)\otimes\alpha) = { \# \H^1_f(\Q_\ell,\Ad(\rho_f)\otimes\alpha) \over \# \H^0(\Q_\ell,\Ad(\rho_f)\otimes \alpha) } = 1
 $$ 
 by the minimality hypothesis $(\Min_F)$.
 Because $\Gamma(\Ad (f)\otimes\alpha,1)=2\cdot(2\pi)^{-k}$, we deduce that 
 the Tamagawa number conjecture asserting that $Tam(W_{f,\alpha}(1))=\#\Sha^1(\Q,W_{f,\alpha}(1))^{-1}$ is equivalent to our formulation.\qed
 
\medskip

\begin{lem}\label{Sha-Selmer}
Under the hypothesis of Theorem \ref{CG}, $\H^1_f(\Q,\Ad(\rho_f)\otimes\alpha(1))$ is of rank one over $K$, hence ${\mathbf (Sel)}$ holds. 
Moreover
$\Sha^1(\Q,\Ad(M_f)\otimes\alpha(1))$ and $\H^1_f(\Q,\Ad(\rho_f)\otimes K/\cO(\alpha))$ have same $\cO$-Fitting ideal.
\end{lem}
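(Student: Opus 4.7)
The plan is to reduce both assertions to standard properties of Bloch-Kato Selmer groups, combined with the $R=T$ isomorphism $R_F\simeq\T_F$ provided by Theorem \ref{CG}.

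\emph{Step 1 (finiteness of the untwisted Selmer).} Under the hypotheses of Theorem \ref{CG} we have $R_F\simeq\T_F$, and $\T_F$ is a finite $\cO$-module; hence the cotangent module $C_1^{\lambda_F}(\T_F)=\Omega_{\T_F/\cO}\otimes_{\lambda_F}\cO$ is a finite $\cO$-module. Mazur's dictionary identifies the Pontryagin dual of $C_1^{\lambda_F}(R_F)$ with the Bloch-Kato Selmer group $\H^1_f(F,\Ad\rho_f\otimes K/\cO)$, using the Fontaine-Laffaille hypothesis above $p$ and $(\Min_F)$ at primes dividing $N$ to match the minimal deformation conditions with the $f$-conditions of Bloch-Kato. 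Shapiro's lemma combined with $\Ind_{G_F}^{G_\Q}\mathbf{1}=\mathbf{1}\oplus\alpha$ yields
$$\Sel_F(\Ad\rho_f)\simeq\Sel_\Q(\Ad\rho_f)\,\oplus\,\Sel_\Q(\Ad\rho_f\otimes\alpha),$$
so in particular $\H^1_f(\Q,\Ad\rho_f\otimes K/\cO(\alpha))$ has $\cO$-corank zero.

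\emph{Step 2 (rank one via Bloch-Kato global duality).} Set $V=\Ad\rho_f\otimes\alpha$; self-duality of $\Ad$ combined with $\alpha^2=1$ gives $V^*(1)=V(1)=\Ad\rho_f\otimes\alpha(1)$. The Bloch-Kato / Fontaine-Perrin-Riou global Euler characteristic formula for the pair $(V,V^*(1))$ gives, using that $\H^0(\Q,V)=\H^0(\Q,V(1))=0$ by residual irreducibility,
$$\operatorname{corank}_\cO\H^1_f(\Q,V(1)\otimes K/\cO)\,-\,\operatorname{corank}_\cO\H^1_f(\Q,V\otimes K/\cO)\;=\;\dim_K V^{c=+1}+\dim_K F^0V_{dR}-\dim_K V.$$
A direct computation using $\alpha(-1)=-1$ and $\det\rho_f(c)=-1$ gives $\dim V^{c=+1}=2$ (the off-diagonal part of $\Ad\rho_f$), while the Hodge-Tate weights $\{-(k-1),0,k-1\}$ of $V$ give $\dim F^0V_{dR}=2$ and $\dim V=3$. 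The right-hand side thus equals $1$; combined with Step 1 this yields $\dim_K\H^1_f(\Q,\Ad\rho_f\otimes\alpha(1))=1$, which is $(\mathbf{Sel})$.

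\emph{Step 3 (Fitting ideal equality).} By Step 2, $\Phi_\cO$ has $\cO$-rank one, so $\Phi_\cO\otimes K/\cO$ is precisely the maximal divisible submodule of $\H^1_f(\Q,T_{f,\alpha}(1)\otimes K/\cO)$, and by definition $\Sha^1$ equals the $\cO$-torsion part of this module. The Cassels-Tate / Flach pairing for the self-dual-up-to-Tate-twist pair $(V,V^*(1))$ induces a perfect pairing between the torsion parts of $\H^1_f(\Q,V\otimes K/\cO)$ and $\H^1_f(\Q,V(1)\otimes K/\cO)$. Since the former is entirely torsion by Step 1, this yields an $\cO$-isomorphism $\Sha^1\simeq\H^1_f(\Q,\Ad\rho_f\otimes K/\cO(\alpha))^\vee$. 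As Pontryagin duality preserves the $\cO$-Fitting ideal of a finite $\cO$-module, the claim follows.

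\emph{Main obstacle.} The subtlest point is the careful alignment of local conditions: identifying the deformation-theoretic Selmer group over $F$ with the Bloch-Kato $\H^1_f$ (clean at primes above $p$ via Fontaine-Laffaille but requiring $(\Min_F)$ at primes dividing $N$), and verifying that the Cassels-Tate / Flach pairing is set up with the correct self-dual local conditions so its induced pairing on the $\Sha$-parts is perfect. A minor subtlety in the imaginary quadratic case of Step 1 is that the cotangent module need not be exactly Pontryagin-dual to $\Sel_F$ because of the defect of $\T_F$ from being complete intersection (encoded in $\H_1(L_{\T_F/\cO})$), but the finiteness statement still propagates.
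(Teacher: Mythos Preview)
Your argument is correct and closely parallels the paper's. Step 1 (finiteness via $R_F\simeq\T_F$ and the Shapiro decomposition) and Step 2 (global Euler characteristic) are the paper's argument, though the paper carries out the Euler--Poincar\'e computation at each finite level rather than rationally.

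The only genuine difference is in Step 3. The paper stays at finite level throughout: writing $W_n=\Ad(\rho_f)\otimes\varpi^{-n}\cO/\cO(\alpha)$, the Greenberg--Wiles formula gives $\#\H^1_f(\Q,W_n(1))=q^n\cdot\#\H^1_f(\Q,W_n)$, and the short exact sequence
\[
0\longrightarrow\Phi_\cO\otimes\varpi^{-n}\cO/\cO\longrightarrow\H^1_f(\Q,W_n(1))\longrightarrow\Sha^1[\varpi^n]\longrightarrow 0
\]
then yields $\#\Sha^1[\varpi^n]=\#\H^1_f(\Q,W_n)$ for all $n$, hence equal Fitting ideals (and, as a by-product, the rank statement). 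You instead invoke Flach's generalized Cassels--Tate pairing to obtain an actual isomorphism $\Sha^1\cong\H^1_f(\Q,\Ad\rho_f\otimes K/\cO(\alpha))^\vee$. Both are valid; the paper's route is more self-contained (a single tool handles both the rank and the Fitting-ideal statements, and one avoids verifying the hypotheses for perfectness of Flach's pairing), while yours yields a structural duality rather than a mere cardinality match.

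One small correction to your ``main obstacle'' paragraph: Mazur's identification of $\Omega_{R_F/\cO}\otimes_{\lambda_F}\cO$ with the Pontryagin dual of $\Sel_F$ holds whenever $R_F$ represents the relevant deformation functor, independently of whether $R_F$ is complete intersection. The Wiles defect governs the comparison of $C_1$ with $C_0$, not with the Selmer group, so the caveat you raise there is unnecessary.
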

\begin{proof}
By the results of \cite{CaGe18}, we know that the noetherian ring $R$ is finite over $\cO$ (because $R=\T$).
Hence $\Omega_{R/\cO}\otimes_{\lb}\cO$ is finite and the $K$-vector space $\H^1_f(\Q,\Ad(\rho_f)\otimes\alpha)$ is $0$. 
Now the rest of the Lemma follows easily from Poitou-Tate duality considerations. 
Let us write $\rho_f\colon G_{F,S}\to\GL_2(\cO)$ for the integral representation associated to $f$ and
$W_n:=\Ad(\rho_f)\otimes \varpi^{-n}\cO/\cO(\alpha)$. Let $q=\#\F$. Since $\H^0(\Q,W_n)=\H^0(\Q,W_n(1)=\{0\}$, we have
by the Euler-Poincar\'e characteristic formula due to Greenberg-Wiles 
\begin{eqnarray*}
{\# \H^1_f(\Q,W_n)\over \# \H^1_f(\Q,W_n(1))}= {1\over \#\H^0(\R,W_n)} \cdot \prod_{\ell|Np} {   \# \H^1_f(\Q_\ell, W_n)  \over \# \H^0(\Q_\ell,W_n)      }
\end{eqnarray*}
We note that because of the minimality condition, we have $ \# \H^1_f(\Q_\ell, W_n)  = \# \H^0(\Q_\ell,W_n)$ for all $\ell | N$.
Now since $\H^0(\R,W_n)=(\cO/\varpi^n\cO)^2$ (because $\alpha$ is odd ) and  ${\#\H^1_f(\Q_p,W_n)\over \# \H^0(\Q_p,W_n)}=q^n$ 
by \cite[Corollary 2.35]{DDT95}, we get
\begin{eqnarray}\label{GWPT}
{\# \H^1_f(\Q,W_n(1))= q^n\cdot  \# \H^1_f(\Q,W_n)   }
\end{eqnarray}
We conclude using the exact sequence
$$0\rightarrow \Phi_\cO\otimes \varpi^{-n}\cO/\cO\rightarrow \H^1_f(\Q,W_n(1))\rightarrow \Sha^1(\Q,\Ad(M_f)(1)\otimes\alpha)[\varpi^n]\rightarrow 0$$
From  \eqref{GWPT}, we indeed deduce that
$\#\Sha^1(\Q,\Ad(M_f)(1)\otimes\alpha)[\varpi^n] = \# \H^1_f(\Q,W_n)$ for all $n>0$. Our claim follows now invoking the finiteness of $\H^1_f(\Q,\Ad(\rho_f)\otimes\alpha)$.
\end{proof}

Let us now relate our result to the Bloch-Kato conjecture. 
\begin{thm} \label{ThOPT} Assume that $p$, $D$ and $N$ are relatively prime and $p>k$, $(Irr_F)$ and $(\Min_F)$. Then, we have

$$\CH\,\Sel(\Q,\Ad(\rho_f\otimes\alpha)) \sim {L(\Ad(f)\otimes\alpha,1)\over \pi^{k} u_2(f_F)}\cdot \CH  \H_1(L_{T_F/\cO}\otimes_{\lb_{f_F}}\cO)$$
where $L_{T_F/\cO}$ stands for the cotangent complex of $T_F$ over $\cO$.
\end{thm}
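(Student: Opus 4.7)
The plan is to obtain the identity by comparing two independent computations of the same $\cO$-ideal, namely the characteristic ideal of $C_1^{\lb_{f_F}}(\T_F)$. On one side, Proposition \ref{Wiles-defect} combined with the presentation of $\T_F$ furnished by Theorem \ref{CG}(i) gives the Wiles-defect identity
\[
\Fitt_\cO\bigl(C_1^{\lb_{f_F}}(\T_F)\bigr) = \eta_{f_F}\cdot\CH_\cO\bigl(\H_1(L_{\T_F/\cO}\otimes_{\lb_{f_F}}\cO)\bigr).
\]
On the other side, using (FL) together with the Calegari-Geraghty framework (as supplied in the Bianchi setting by \cite{ACC+,CGHJMRS}), one obtains an isomorphism $R_F\cong\T_F$ between the universal $N$-minimal Fontaine-Laffaille deformation ring of $\overline\rho_f\vert_{\Gamma_F}$ and the localized Hecke algebra. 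Mazur's identification of the cotangent space of $R_F$ at $\lb_{f_F}$ with the Pontryagin dual of the Bloch-Kato Selmer group, combined with the fact that Pontryagin duality preserves the characteristic ideal of a finite $\cO$-module, then yields
\[
\CH_\cO\bigl(C_1^{\lb_{f_F}}(\T_F)\bigr) = \CH_\cO\bigl(\Sel_F(\Ad\,\rho_f)\bigr).
\]

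Next I would decompose the right-hand side. Since $p$ is odd and $[F:\Q]=2$, Shapiro's lemma yields the canonical splitting
\[
\Sel_F(\Ad\,\rho_f) = \Sel_\Q(\Ad\,\rho_f)\oplus\Sel_\Q(\Ad\,\rho_f\otimes\alpha),
\]
which respects the Bloch-Kato local conditions (compatibility at primes above $p$ uses the Fontaine-Laffaille condition being stable under induction, and minimality handles the primes dividing $N$). Passing to characteristic ideals, $\CH_\cO\bigl(\Sel_F(\Ad\,\rho_f)\bigr)$ factors as $\CH_\cO(\Sel_\Q(\Ad\,\rho_f))\cdot\CH_\cO(\Sel_\Q(\Ad\,\rho_f\otimes\alpha))$.

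Finally, I would substitute the known computations on the Hecke side. Theorem \ref{classical}(b) combined with the classical $R=\T$ of Wiles-Taylor-Wiles (valid under $(\Irr)$, $p>k$ and minimality, all implied by our hypotheses) gives $\eta_f\sim\CH_\cO(\Sel_\Q(\Ad\,\rho_f))$. Theorem \ref{ThNBC}(ii)--(iii) together with Lemma \ref{cong-rel}(iii) (which establish $\eta_{f_F}=\eta_{f_F}^1=\eta_f\cdot\eta_f^{1,\sharp}$ and $\eta_f^{1,\sharp}\sim L(\Ad(f)\otimes\alpha,1)/(\pi^k u_2(f_F))$) give
\[
\eta_{f_F}\sim\CH_\cO(\Sel_\Q(\Ad\,\rho_f))\cdot\frac{L(\Ad(f)\otimes\alpha,1)}{\pi^k\, u_2(f_F)}.
\]
Plugging this into the Wiles-defect identity and equating with the Selmer-side factorization, the common factor $\CH_\cO(\Sel_\Q(\Ad\,\rho_f))$ cancels on both sides, leaving precisely the stated formula.

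The main obstacle is the $R=\T$ step. Unlike the classical elliptic or Hilbert modular case, the ring $\T_F$ is not complete intersection over $\cO$ (it is observed in \cite{CaGe18} that this must fail in general), and $\H^2_F$ lives in a non-middle cohomological degree of the Bianchi threefold, so the usual Taylor-Wiles patching is unavailable. The only route to $R_F\cong\T_F$ passes through the Calegari-Geraghty derived patching machinery, which forces the hypothesis (FL) and relies on the existence and local compatibility of Galois representations attached to torsion classes in Bianchi cohomology furnished by \cite{ACC+,CGHJMRS}; this is simultaneously what yields the presentation with $g+1$ relations in $g$ variables required to invoke Proposition \ref{Wiles-defect}. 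All remaining steps are formal manipulations within the congruence-module formalism of Section \ref{conggensect} and the Hecke-side identities established in Section \ref{sectHeckecong}.
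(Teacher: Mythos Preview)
Your proposal is correct and follows essentially the same approach as the paper's own proof: both combine the Wiles-defect identity of Proposition \ref{Wiles-defect} (licensed by the presentation in Theorem \ref{CG}(i)) with the Calegari--Geraghty isomorphism $R_F\cong\T_F$ and Mazur's Selmer interpretation of $C_1$, then factor the Selmer group over $F$ into its $\pm$-parts under $\alpha$, cancel $\eta_f=\CH_\cO\Sel_\Q(\Ad\rho_f)$ using the classical complete-intersection case, and finish with Theorem \ref{ThNBC}(ii)--(iii) and Lemma \ref{cong-rel}(iii). One minor remark: the input you actually need for $\eta_f\sim\CH_\cO(\Sel_\Q(\Ad\rho_f))$ is the complete-intersection property of $\T$ together with Proposition \ref{LCI}, not Theorem \ref{classical}(b) (which computes $\eta_f$ as an $L$-value rather than as a Selmer characteristic ideal); the paper phrases this step simply as ``by Wiles since $R_\Q\cong\T$ is complete intersection.''
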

\begin{proof}
Since $R_F\cong T_F$ by the work of Calegary-Geraghty, using Mazur's isomorphism, we know that 
$$\Sel(F,\Ad(\rho_f))^*\cong \Omega_{T_F/\cO}\otimes\cO.$$
Moreover by Theorem \ref{CG}, Lemma  \ref{cong-rel} and Theorem \ref{ThNBC}.(iii), we have
$$\eta_{f_F}=\eta^2_{f_F}=\eta^1_{f_F}=\eta_f\cdot\eta_f^{1,\sharp}$$
Therefore it follows from Proposition \ref{Wiles-defect} that
\begin{eqnarray*}
\CH \Sel(F,\Ad(\rho_f)& = & \eta_{f_F}\cdot\CH \H_1(L_{T_F/\cO}\otimes_{\lb_{f_F}}\cO)\\
&=&\eta_f\cdot\eta_f^{1,\sharp}\cdot \CH \H_1(L_{T_F/\cO}\otimes_{\lb_{f_F}}\cO)\\
\CH \Sel(\Q,\Ad(\rho_f))\cdot \CH \Sel(\Q,\Ad(\rho_f)\otimes\alpha)&=& \eta_f\cdot\eta_f^{1,\sharp}\cdot \CH \H_1(L_{T_F/\cO}\otimes_{\lb_{f_F}}\cO)\\
\end{eqnarray*}
Now we know that $\CH \Sel(\Q,\Ad(\rho_f))= \eta_f$ by Wiles since $R_\Q\cong \T$ is complete intersection. We therefore get
$$  \CH \Sel(\Q,\Ad(\rho_f)\otimes\alpha)=  \eta_f^{1,\sharp}\cdot \CH \\H_1(L_{T_F/\cO}\otimes_{\lb_{f_F}}\cO)$$
and we can conclude using Theorem \ref{ThNBC}.(ii).
\end{proof}
\begin{rem} Since in this situation, the local Hecke algebra $\T_F$ is not complete intersection, we introduced the Wiles defect
$$\delta_{\lb_{f_F}}=\CH_\cO \H_1(L_{T_F/\cO}\otimes_{\lb_{f_F}}\cO).$$
The correct "integral automorphic Bloch-Kato period" is therefore
$$\widetilde{u}_2(f_F)=u_2(f_F)\cdot\delta_{\lb_{f_F}}^{-1}.$$ 
\end{rem}
\medskip
The following Proposition is independent of our results on period relations. We record it with the hope it shed some light on
the structure of the Selmer group of $\Ad(\rho_f)\otimes\alpha$.
\begin{pro} We keep the notations and  hypothesis as the previous Theorem. Then, we
have an isomorphism
$$\Sel(\Q,\Ad(\rho_f)\otimes\alpha)^*\cong \H^2(F)_\sharp\otimes_{\lb_{f_F}}\cO$$
where $\H_2(F)_\sharp=\H^2(F)\otimes_{\T_F}\Ker \theta$ with $\theta\colon \T_F\rightarrow \T$ is the base change homomorphism.
\end{pro}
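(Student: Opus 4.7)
The plan is to combine Mazur's identification of Selmer groups with cotangent modules of universal deformation rings, the base change exact sequence for $C_1$ proved earlier, and the freeness statement of Theorem \ref{CG}.

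First I would apply Mazur's theorem together with the $R=T$ isomorphisms (valid over $\Q$ by Wiles under $(\Irr)$ and $(\Min)$, and over $F$ by Theorem \ref{CG} under $(\Irr_F)$, $(\Min_F)$ and our hypothesis $p>k$) to identify
$$\Sel_\Q(\Ad\rho_f)^{\ast} \cong C_1^{\lb_f}(\T), \qquad \Sel_F(\Ad\rho_f)^{\ast} \cong C_1^{\lb_{f_F}}(\T_F).$$
Since $p$ is odd, Shapiro's lemma gives the $G$-isotypic decomposition $\Sel_F(\Ad\rho_f) \cong \Sel_\Q(\Ad\rho_f) \oplus \Sel_\Q(\Ad\rho_f\otimes\alpha)$, and consequently
$$C_1^{\lb_{f_F}}(\T_F) \cong C_1^{\lb_f}(\T) \oplus \Sel_\Q(\Ad\rho_f\otimes\alpha)^{\ast}.$$

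Next I would feed the base change datum $\T_F \stackrel{\theta}{\to} \T \stackrel{\lb_f}{\to} \cO$ into Lemma \ref{C1BCle}. Because $\T$ is complete intersection over $\cO$ (Wiles), the lemma supplies an exact sequence
$$0 \to \Ker\theta \otimes_{\T_F,\lb_{f_F}} \cO \to C_1^{\lb_{f_F}}(\T_F) \to C_1^{\lb_f}(\T) \to 0.$$
Combining this with the direct sum decomposition above forces the identification
$$\Sel_\Q(\Ad\rho_f\otimes\alpha)^{\ast} \cong \Ker\theta \otimes_{\T_F,\lb_{f_F}} \cO.$$

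Finally, by Theorem \ref{CG}(ii) the $\T_F$-module $\H^2(F)=\H^2(X_F,\cV_{k,k}^F(\cO))_{\m_F}$ is free of rank one, so the choice of any $\T_F$-basis induces
$$\H^2(F)_\sharp = \H^2(F)\otimes_{\T_F}\Ker\theta \;\cong\; \Ker\theta$$
as $\T_F$-modules, and tensoring with $\cO$ along $\lb_{f_F}$ yields the desired isomorphism
$$\H^2(F)_\sharp \otimes_{\lb_{f_F}} \cO \;\cong\; \Ker\theta \otimes_{\T_F,\lb_{f_F}} \cO \;\cong\; \Sel_\Q(\Ad\rho_f\otimes\alpha)^{\ast}.$$

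The only non-formal ingredients are the two $R=T$ statements and the freeness of $\H^2(F)$; neither of these is reproved here, but everything else is a routine manipulation of the cotangent complex exact sequence. The conceptually delicate point is that the argument requires $\T$ (and not $\T_F$) to be complete intersection, so that the first homology of $L_{\T/\cO}\otimes_{\lb_f}\cO$ vanishes and Lemma \ref{C1BCle} produces a short exact (rather than merely a right-exact) sequence; this is where the asymmetry between the base field $\Q$ and the imaginary quadratic field $F$ comes in and why the result comes out cleanly despite $\T_F$ failing to be complete intersection.
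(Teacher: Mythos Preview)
Your proposal is correct and follows exactly the route the paper takes: the paper's proof is the single sentence ``This follows from Lemma \ref{C1BCle} and the fact that $\H^2(F)$ is free over $\T_F$ by Theorem \ref{CG},'' and you have simply unpacked those two citations. The only point you leave implicit is that the surjection $C_1^{\lb_{f_F}}(\T_F)\to C_1^{\lb_f}(\T)$ furnished by Lemma \ref{C1BCle} coincides, under Mazur's identification, with the dual of the restriction map $\Sel_\Q(\Ad\rho_f)\hookrightarrow\Sel_F(\Ad\rho_f)$ (hence with projection onto the first summand); this is clear since both are induced by the same map $R_F\to R$ via $R_F\cong\T_F$, $R\cong\T$, but it is the step that makes the exact sequence split compatibly with the Selmer decomposition.
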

\begin{proof}
This follows from Lemma \ref{C1BCle} and the fact that $\H^2(F)$ is free over $\T_F$ by Theorem \ref{CG}.
\end{proof}
\medskip
In our situation, a special case of a conjecture of \cite{PV16} gives a canonical identification respecting
the rational structures
\begin{equation}\label{PV}
\H^1_{mot}(\Ad(M_f)\otimes \alpha,K_f(1))\otimes \C \cong \Hom(\H^1_{f_F},\H^2_{f_F})\otimes\C
\end{equation}
where $\H^i_{f_F}$ stands for the $f_F$- part of the degree $i$ singular cohomology of $X_F$ with coefficient in $K_f$. In other words,
$
{u_2(f_F)^{-1} u_1(f_F)\cdot R_{f,\alpha}}\in\overline K_f
$
If we take into account Theorem \ref{ThNBC}.(i), the following conjecture is therefore an integral version of \eqref{PV}.
\medskip
\begin{conj} \label{PV_int}Assume that $p>2k-1$, $(p,N)=1$, $(Reg)_p$ and that $(\Min_F)$ hold, then the following holds:
$$  \widetilde{u}_2(f_F)\sim  {\Omega_f^+\Omega_f^-\cdot R_{\f,\alpha}}.$$
\end{conj}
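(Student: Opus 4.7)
The plan is to derive Conjecture \ref{PV_int} from the Bloch--Kato Conjecture \ref{BK-non-crit} via direct algebraic manipulation, using Theorem \ref{ThOPT} and Lemma \ref{Sha-Selmer} as the bridge between the automorphic and motivic sides. First I would compute the Gamma factor at $s=1$: since $\alpha$ is odd ($\nu=0$), we have
$$\Gamma(\Ad(f)\otimes\alpha,1)=\Gamma_\C(k)\Gamma_\R(1)=2(2\pi)^{-k}(k-1)!\sim \pi^{-k},$$
the last equivalence being valid modulo $\cO^\times$ under the running hypothesis $p>2k-1$ (in particular $p\nmid (k-1)!$). Hence Conjecture \ref{BK-non-crit} rewrites as
$$\frac{L(\Ad(f)\otimes\alpha,1)}{\pi^{k}\,\Omega_f^+\Omega_f^-\cdot R_{f,\alpha}}\sim \CH_\cO\,\Sha^1(\Q,\Ad(M_f)(1)\otimes\alpha).$$

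Next I would invoke Lemma \ref{Sha-Selmer}, which identifies (up to a $p$-adic unit) the Fitting ideal of the Tate--Shafarevich group with that of $\Sel_\Q(\Ad(\rho_f)\otimes\alpha)$. Applying Theorem \ref{ThOPT}, the latter is in turn equivalent to
$$\frac{L(\Ad(f)\otimes\alpha,1)}{\pi^{k}\,u_2(f_F)}\cdot \CH_\cO\,\H_1(L_{T_F/\cO}\otimes_{\lb_{f_F}}\cO)\;=\;\frac{L(\Ad(f)\otimes\alpha,1)}{\pi^{k}\,\widetilde{u}_2(f_F)},$$
by definition of $\widetilde{u}_2(f_F)=u_2(f_F)\cdot\delta_{\lb_{f_F}}^{-1}$. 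Equating the two expressions for the Fitting ideal and cancelling the common non-zero factor $L(\Ad(f)\otimes\alpha,1)/\pi^{k}$ yields $\widetilde{u}_2(f_F)\sim \Omega_f^+\Omega_f^-\cdot R_{f,\alpha}$, which is precisely the statement of the conjecture. Note that only the divisibility of Bloch--Kato in one direction is needed to obtain one divisibility in \ref{PV_int}, and symmetrically for the other direction.

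The main obstacle is of course that Bloch--Kato itself remains open in this non-critical setting: the value $L(\Ad(f)\otimes\alpha,1)$ lies outside the range of critical integers, and the archimedean regulator $R_{f,\alpha}$ of a Beilinson--Flach class enters only through a deep conjectural identification of motivic cohomology with the automorphic side. Unconditional progress would likely require either an Iwasawa-theoretic main conjecture for Beilinson--Flach classes extending \cite{LZ16} beyond the ordinary case, together with the corresponding Euler system divisibilities, or a direct comparison of the degree-two cohomological period $u_2(f_F)$ of the Bianchi threefold with $\Omega_f^+\Omega_f^-\cdot R_{f,\alpha}\cdot\delta_{\lb_{f_F}}$ along the lines of Prasanna--Venkatesh. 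The cancellation of the $L$-value in the derivation above shows that the genuine content of Conjecture \ref{PV_int} is precisely this period-regulator identity, with the Wiles defect serving as the correct integral renormalisation.
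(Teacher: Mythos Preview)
The statement is a \emph{conjecture}, and the paper does not prove it; what the paper establishes (in the Corollary immediately following) is precisely the equivalence of Conjecture~\ref{PV_int} with the Bloch--Kato Conjecture~\ref{BK-non-crit}, via Theorem~\ref{ThOPT} and Lemma~\ref{Sha-Selmer}. Your derivation is exactly this equivalence argument (you carry out the direction Bloch--Kato $\Rightarrow$ \ref{PV_int} explicitly and note the reverse is symmetric), and your computation of the Gamma factor and the cancellation of the common $L$-value are correct. You have therefore not produced a proof of the conjecture---as you yourself acknowledge in your final paragraph---but rather reproduced the paper's own reduction of \ref{PV_int} to Bloch--Kato; the two treatments are essentially identical.
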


We deduce

\begin{cor} Assume the assumptions of Theorem \ref{ThOPT} hold. Then, Conjecture \ref{PV_int} and  Conjecture \ref{BK-non-crit}
are equivalent.
\end{cor}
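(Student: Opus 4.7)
The plan is a formal manipulation combining Theorem~\ref{ThOPT}, Lemma~\ref{Sha-Selmer}, and the definition $\widetilde{u}_2(f_F)=u_2(f_F)\cdot\delta_{\lb_{f_F}}^{-1}$, where $\delta_{\lb_{f_F}}=\CH_\cO\,\H_1(L_{T_F/\cO}\otimes_{\lb_{f_F}}\cO)$. First, I apply Lemma~\ref{Sha-Selmer} to replace $\CH_\cO\,\Sha^1(\Q,\Ad(M_f)(1)\otimes\alpha)$ on the right-hand side of Conjecture~\ref{BK-non-crit} by $\CH_\cO\,\Sel_\Q(\Ad(\rho_f)\otimes\alpha)$; the same Lemma shows that under the hypotheses of Theorem~\ref{CG} (which are included among those of Theorem~\ref{ThOPT}) the condition ${\mathbf (Sel)}$ is automatic, so both conjectures are well-posed.

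Next, I compute the archimedean Gamma factor: since $\Gamma(\Ad(f)\otimes\alpha,s)=\Gamma_\C(s+k-1)\Gamma_\R(s)$ and $\Gamma_\R(1)=1$, one obtains $\Gamma(\Ad(f)\otimes\alpha,1)=2\cdot(2\pi)^{-k}(k-1)!$, and under the hypothesis $p>2k-1$ built into both conjectures the factorial $(k-1)!$ is a $p$-adic unit, so $\Gamma(\Ad(f)\otimes\alpha,1)\sim\pi^{-k}$ up to $\cO^\times$. Substituting this together with the conclusion of Theorem~\ref{ThOPT},
$$\CH_\cO\,\Sel_\Q(\Ad(\rho_f)\otimes\alpha)\sim\frac{L(\Ad(f)\otimes\alpha,1)}{\pi^{k}\,u_2(f_F)}\cdot\delta_{\lb_{f_F}},$$
into the reformulated Conjecture~\ref{BK-non-crit}, that conjecture becomes the statement
$$\frac{L(\Ad(f)\otimes\alpha,1)}{\pi^{k}\,\Omega_f^+\Omega_f^-\cdot R_{f,\alpha}}\sim\frac{L(\Ad(f)\otimes\alpha,1)}{\pi^{k}\,u_2(f_F)}\cdot\delta_{\lb_{f_F}}.$$

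Finally, invoking the classical non-vanishing $L(\Ad(f)\otimes\alpha,1)\neq 0$ (Jacquet--Shalika: the point $s=1$ lies strictly to the right of the abscissa of absolute convergence of the Euler product), I cancel this non-zero $p$-adic number on both sides and rearrange, obtaining $\widetilde{u}_2(f_F)=u_2(f_F)/\delta_{\lb_{f_F}}\sim\Omega_f^+\Omega_f^-\cdot R_{f,\alpha}$, which is precisely Conjecture~\ref{PV_int}. Every step in the chain is reversible, giving the equivalence. There is no genuine obstacle: the corollary is essentially bookkeeping of periods once the deep content of Theorem~\ref{ThOPT} is in hand, the only minor points to verify being that the $p$-adic unit ambiguity coming from the Gamma factor is controlled by $p>2k-1$ and that the non-vanishing of $L(\Ad(f)\otimes\alpha,1)$ legitimises the cancellation.
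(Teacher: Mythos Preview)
Your proof is correct and follows the same approach as the paper, which simply says the result is immediate from Theorem~\ref{ThOPT} and Lemma~\ref{Sha-Selmer}; you have just spelled out the bookkeeping that the paper leaves to the reader. One minor inaccuracy in your parenthetical: for the adjoint $L$-function normalized with functional equation $s\leftrightarrow 1-s$, the Euler product converges absolutely only for $\Re(s)>1$, so $s=1$ lies on the boundary rather than strictly to the right; the non-vanishing $L(\Ad(f)\otimes\alpha,1)\neq 0$ therefore requires the edge-of-strip results of Shahidi or Jacquet--Shalika rather than mere absolute convergence, but the conclusion you need is of course still true.
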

\begin{proof}
This is immediate from Theorem \ref{ThOPT} and Lemma \ref{Sha-Selmer}.
\end{proof}

{}

J. Tilouine, LAGA, Institut Galil\'ee, Universit\'e de Paris 13, E. Urban, Dept of Mathematics, Columbia University
\end{document}